\newtheorem{theorem}{Theorem}[subsection]
\newtheorem{proposition}[theorem]{Proposition}
\newtheorem{corollary}[theorem]{Corollary}
\newtheorem{lemma}[theorem]{Lemma}
\newtheorem*{theorem*}{Theorem}
\newtheorem*{proposition*}{Proposition}
\newtheorem*{corollary*}{Corollary}
\newtheorem*{lemma*}{Lemma}
\theoremstyle{definition}
\newtheorem{definition}[theorem]{Definition}
\newtheorem*{punto*}{ }
\newtheorem{example}[theorem]{Example}
\newtheorem{remark}[theorem]{Remark}
\newtheorem*{remark*}{Remark}
\newtheorem*{definition*}{Definition}
\newtheorem{problem}{Problem}
\newcommand{\coring}[1]{\mathfrak{#1}}
\newcommand{\tensor}[1]{\otimes_{\Sscript{#1}}}
\newcommand{\tensfun}[1]{\underset{{#1}}{\otimes}}
\newcommand{\rmod}[1]{\mathsf{Mod}_{#1}}
\newcommand{\lmod}[1]{{}_{#1}\mathsf{Mod}}
\newcommand{\rcomod}[1]{\mathsf{Comod}_{#1}}
\newcommand{\bimod}[2]{{}_{#1}\mathsf{Mod}_{#2}}
\newcommand{\frcomod}[1]{\mathsf{comod}_{#1}}
\renewcommand{\hom}[3]{\mathrm{Hom}_{\Sscript{#1}}(#2,#3)}
\newcommand{\rend}[2]{\mathrm{End}(#1_{#2})}
\newcommand{\rcomatrix}[2]{#2^* \tensor{#1} #2}
\newcommand{\bara}[1]{\overline{#1}}
\newcommand{\lr}[1]{\left(\underset{}{}  #1 \right)}
\newcommand{\cat}[1]{\mathcal{#1}}
\newcommand{\fk}[1]{\mathfrak{#1}}
\newcommand{\Sf}[1]{\mathsf{#1}}
\newcommand{\Scr}[1]{\mathscr{#1}}
\newcommand{\counidad}{\varepsilon}
\newcommand{\can}[1]{\mathsf{can}_{#1}}
\newcommand{\dosmatrix}[4]{\begin{pmatrix} #1 & #2 \\ #3 & #4
\end{pmatrix}}
\newcommand{\bo}[1]{\boldsymbol{#1}}
\newcommand{\peque}[1]{\scriptscriptstyle{#1}}
\newcommand{\rhom}[3]{\mathrm{Hom}_{{\text{-}} #1}(#2,#3)}
\newcommand{\lhom}[3]{\mathrm{Hom}_{#1{\text{-}}}(#2,#3)}
\newcommand{\recons}[1]{\mathscr{R}(#1)}
\newcommand{\Ae}{A^{\Sscript{\Sf{e}}}}
\newcommand{\LR}[1]{\left(\underset{}{} #1 \right)}
\newcommand{\bcirc}{\boldsymbol{\circ}}
\newcommand{\Der}[2]{\mathbf{Der}_{#1}(#2)}
\newcommand{\diama}{{\scriptstyle{\lozenge}}}
\newcommand{\mat}[1]{\mathit{mat}(#1)}
\newcommand{\columna}[2]{\left(\begin{array}{c} #1 \\ \vdots \\ #2 \end{array}\right)}
\newcommand{\Diff}[1]{\mathbf{Diff}_{\Sscript{#1}}}
\newcommand{\Sscript}[1]{\scriptscriptstyle{#1}}
\newcommand{\Um}{U^{\bcirc}_{\Sscript{(M)}}}
\newcommand{\Umx}{U^{\bcirc}_{\Sscript{(M), \, x}}}
\newcommand{\Mo}{\langle M \rangle_{\Sscript{\otimes}}}
\newcommand{\omegam}{\omega_{|\Sscript{\Mo}}}
\newcommand{\hHm}{\mathscr{H}_{\Sscript{M}}}
\newcommand{\Algc}{\mathsf{Alg}_{\Sscript{\mathbb{C}}}}
\newcommand{\Grpds}{\mathsf{Grpds}}
\newcommand{\vect}[1]{{\rm vect}_{\Sscript{#1}}}
\newcommand{\II}{\mathbb{I}}
\newcommand{\Ao}{A^{\Sscript{o}}}
\newcommand{\ao}{a^{\Sscript{o}}}
\newcommand{\bop}{b^{\Sscript{o}}}
\newcommand{\RA}[1]{\rR_{\Sscript{A}}(#1)}
\newcommand{\RAA}[1]{\rR_{\Sscript{A\tensor{}A}}(#1)}
\newcommand{\RAAu}[1]{\rR_{\Sscript{A_1\tensor{}A_1}}(#1)}
\newcommand{\RAAd}[1]{\rR_{\Sscript{A_2\tensor{}A_2}}(#1)}
\newcommand{\Rdos}[2]{\rR_{\Sscript{A_1\tensor{}A_2}}(#1, #2)}
\newcommand{\Roma}[1]{\rR_{\Sscript{A_{#1}\tensor{}A_{#1}}}(\omega_{#1})}
\newcommand{\Romaa}[2]{\rR_{\Sscript{A_{#1}\tensor{}A_{#2}}}(\omega_{#1}, \omega_{#2})}
\newcommand{\Raa}[2]{\rR_{\Sscript{A}}(\omega_{#1}, \omega_{#2})}
\newcommand{\Ra}[1]{\rR_{\Sscript{A}}(\omega_{#1})}
\newcommand{\IsomF}[2]{ \underline{{\rm Isom}}^{\Sscript{\otimes}}{}_{\Sscript{A_1\tensor{}A_2}}\big( \omega_{#1}, \omega_{#2}\big)  }
\newcommand{\Isomf}[2]{ \underline{{\rm Isom}}^{\Sscript{\otimes}}{}_{\Sscript{A}}\big( \omega_{#1}, \omega_{#2}\big)  }
\newcommand{\AutF}[1]{\underline{{\rm Aut}}^{\Sscript{\otimes}}(#1)}
\newcommand{\Alg}[1]{{\rm Alg}_{\Sscript{#1}}}
\newcommand{\sR}{{}_{\Sscript{s}}R}
\newcommand{\tR}{{}_{\Sscript{t}}R}
\newcommand{\balpha}{\bo{\alpha}}
\newcommand{\bbeta}{\bo{\beta}}
\newcommand{\bsigma}{\bo{\sigma}}
\newcommand{\bgamma}{\bo{\gamma}}
\newcommand{\bdelta}{\bo{\delta}}
\newcommand{\brho}{\bo{\varrho}}
\newcommand{\bpartial}{\bo{\partial}}
\newcommand{\biota}{\bo{\iota}}
\newcommand{\bV}{\bo{V}}
\newcommand{\bW}{\bo{W}}
\newcommand{\bv}{\bo{v}}
\newcommand{\tensorC}{\tensor{\mathbb{C}}}
\newcommand{\class}[1]{\Big[ \bara{#1} \Big]}
\newcommand{\bCx}{\mathbb{C}_{\Sscript{x}}}
\newcommand{\omegax}{\omega_{\Sscript{x}}}
\newcommand{\Autd}[2]{{\rm Aut}_{\Sscript{(#1,\, \partial)}}\big( (#2,\bpartial)\big)}
\newcommand{\AutdF}[2]{\underline{\rm Aut}_{\Sscript{(#1,\, \partial)}}\big( (#2,\bpartial)\big)}
\newcommand{\Autdt}[1]{{\rm Aut}_{\Sscript{(A\tensor{}#1,\, \partial\tensor{}#1)}}\big( (\cP\tensor{}#1,\bpartial\tensor{}#1)\big)}
\newcommand{\Partial}[1]{\partial^{\Sscript{#1}}}
\newcommand{\kn}{\bo{k}_{(k_{\Sscript{1}},\, \cdots,\, k_{\Sscript{n}})}}
\newcommand{\KK}[1]{\bo{k}(#1)}
\newcommand{\lrfrac}[2]{\left( \frac{#1}{#2} \right)}
\newcommand{\affLine}[1]{\mathbb{A}_{\Sscript{#1}}^{1}}
\newcommand{\dP}{P^{\Sscript{\vee}}}
\newcommand{\dQ}{Q^{\Sscript{\vee}}}
\newcommand{\gG}{\mathscr{G}}
\newcommand{\hH}{\mathscr{H}}
\newcommand{\kK}{\mathscr{K}}
\newcommand{\oO}{\mathscr{O}}
\newcommand{\qQ}{\mathscr{Q}}
\newcommand{\rR}{\mathscr{R}}
\newcommand{\sS}{\mathscr{S}}
\newcommand{\uU}{\mathscr{U}}
\newcommand{\Cc}{\mathbb{C}}
\newcommand{\cA}{{\mathcal A}}
\newcommand{\cD}{{\mathcal D}}
\newcommand{\cH}{{\mathcal H}}
\newcommand{\cI}{{\mathcal I}}
\newcommand{\cJ}{{\mathcal J}}
\newcommand{\cK}{{\mathcal K}}
\newcommand{\cM}{{\mathcal M}}
\newcommand{\cO}{{\mathcal O}}
\newcommand{\cP}{{\mathcal P}}
\newcommand{\cV}{{\mathcal V}}
\newcommand{\cW}{{\mathcal W}}
\newcommand{\cX}{{\mathcal X}}
\begin{document}
\allowdisplaybreaks

\title[Finite dual of a cocommutative Hopf algebroid, differential matrix equations and PV theory.]{On the finite dual of a cocommutative Hopf algebroid.  Application to  linear differential matrix equations and Picard-Vessiot theory.}
\author{Laiachi El Kaoutit}
\address{Universidad de Granada, Departamento de \'{A}lgebra y IEMath. Facultad de Educaci\'{o}n, Econon\'ia y Tecnolog\'ia de Ceuta. Cortadura del Valle, s/n. E-51001 Ceuta, Spain}
\email{kaoutit@ugr.es}
\urladdr{http://www.ugr.es/~kaoutit/}
\author{Jos\'e G\'omez-Torrecillas}
\address{Department of Algebra and CITIC, Universidad de Granada,  
 E18071 Granada, Spain} 
\email{gomezj@ugr.es}
\urladdr{http://www.ugr.es/~gomezj/}
\date{\today}
\subjclass[2010]{Primary 18D10, 13N10, 16W25; Secondary  13B99, 16T10, 20G99, 34G10}
\thanks{Supported by grants MTM2016--77033-P and  MTM2013--41992-P from the Spanish Ministerio de Econom\'ia y Competitividad and the European Union FEDER}

\keywords{Lie algebroids, Hopf algebroids,  Differential modules, Linear differential equations, Rings of differential operators, Weyl algebra, Tannaka reconstruction, differential Galois  groupoid, Picard-Vessiot extension of differential rings}
\maketitle

\vspace{-1.1cm}

\begin{abstract}
A fundamental tool of Differential Galois Theory is the assignment of an algebraic group to each finite-dimensional differential module over differential field in such a way that the category of differential modules it generates is equivalent, as a symmetric monoidal category, to the category of representations of the group. Its underlying set is then recognized as the group of differential automorphisms of the Picard-Vessiot field extension of the base field for this differential module. These results can be obtained by means of a Tannaka reconstruction process, applied to the abelian category of finite-dimensional differential modules. In this paper, we explore the possibility of extending this theory when the differential field is replaced by a more general differential ring $A$. In this case, it is reasonable to deal with differential modules which are finitely generated and projective over $A$. A major obstacle is that this category is not abelian, in contrast with the classical case when $A$ is a field.  To overcome this difficulty, we develop some fundamental results concerning the \emph{finite dual}, hereby introduced, of a cocommutative Hopf algebroid, and a canonical monoidal functor sending (differential) modules to comodules. This functor is proved to be an equivalence of categories whenever a canonical ring homomorphism, which is introduced in this work, with domain in the aforementioned finite dual and with values in the convolutional ring, is injective. Module-theoretical sufficient conditions to get its injectivity are investigated.  This machinery is applied to  differential modules and their Picard-Vessiot theory.
\end{abstract}

\vspace{-0.3cm}
\begin{small}
\tableofcontents
\end{small}

\pagestyle{headings}

\section*{Introduction}\label{sec:intro}

\subsection*{Motivation, background and overview}
The classical Galois theory of linear differential matrix equations with coefficients in a differential field, has two essential interrelated parts. The first one deals with the representation theory of the differential Galois group attached to the system, while the second  seeks for  the space of solutions, namely, a simple differential algebra (the Picard-Vessiot extension) generated by a fundamental solution matrix and the inverse of its determinant. The group of differential automorphisms of this simple differential algebra coincides with the underlying set of the Galois group.  In this paper we aim to explore the possibility of extending the first part of this theory when  differential field  is replaced by a differential ring, making use of the finite dual Hopf algebroid of the co-commutative Hopf algebroid built from a suitable Lie-Rinehart algebra (the module of global sections of the tangent bundle).  
Concerning the second part of the classical theory, we follow  
 Yves Andr\'e's approach given in \cite{Andre:2001}. Although, in \cite{Andre:2001}, no use of groupoids or Hopf algebroids was made, we are able to identify the Picard-Vessiot extension of the differential polynomial algebra for any finite rank differential module, as the total isotropy Hopf algebra (over the base ring) of a commutative Hopf sub-algebroid of this finite dual Hopf algebroid.

Given a Lie-Rinehart algebra $L$ over a Dedekind domain $A$, we construct an affine groupoid over (the spectrum of) $A$ whose category of $A$--profinite\footnote{i.e.,~ finitely generated and projective $A$-modules.} representations is isomorphic, as a monoidal symmetric category, to the category of $A$--profinite right modules over the universal enveloping algebroid of $L$, that is, the category of representations of $L$. In particular, this applies to the global sections of any Lie algebroid over an irreducible smooth curve over an algebraically closed field. For instance, if $A$ is the coordinate ring of the complex affine line $\mathbb{A}^{\Sscript{1}}_{\Sscript{\mathbb{C}}}$ and $L$ is the module of global sections of the transitive Lie algebroid of vector fields, then we have a monoidal equivalence between the category of all differential modules $\Diff{A}$ and the category of $A$-profinite comodules over a commutative Hopf algebroid $U^{\bcirc}$, the \emph{finite dual} of the first Weyl $\mathbb{C}$--algebra $U$ viewed as the universal enveloping algebroid of $L$. Assume now we are given  a differential  $A$-module (or equivalently a linear differential matrix equation) $(M, \partial)$ of rank $m$ and consider the smallest category $\Mo$ generated by $(M,\partial)$ and its dual, and closed under tensor products and sub-quotients. Then, in analogy with the classical differential Galois theory, we show that there exists an affine algebraic $\mathbb{C}$--groupoid $\hHm$ whose category of representations is equivalent as a monoidal symmetric category to $\Mo$.  We construct the representing algebra of (the objects of) $\hHm$ as a finitely generated Hopf sub-algebroid $U_{(M)}^\circ$ of $U^{\circ}$. Furthermore, we show that  the (fibre) groupoid $\hHm(\mathbb{C})$ is transitive and there is a monomorphism  $\hHm(\mathbb{C})\hookrightarrow  \gG^{\Sscript{m}} $ of groupoids, where $\gG^{\Sscript{m}}=\big( \mathbb{A}_{\Sscript{\mathbb{C}}}^1 \times  GL_m(\mathbb{C}) \times  \mathbb{A}_{\Sscript{\mathbb{C}}}^1 ; \, \mathbb{A}_{\Sscript{\mathbb{C}}}^1\big)$ is the  pull-back groupoid of the general linear group $GL_m(\mathbb{C})$ along  the map $\mathbb{A}_{\Sscript{\mathbb{C}}}^1 \to \{*\}$. The algebraic groupoid  $\hHm(\mathbb{C})$ which is unique up to weak equivalences,  is then termed \emph{the differential Galois groupoid} attached to $(M,\partial)$.   Our approach is, in a sense, a naive one, since we do not construct a solution space (perhaps a differential algebra over $A \tensor{\mathbb{C}} A$) whose `groupoid of differential automorphisms' coincides with $\hHm(\mathbb{C})$. We do, however, construct a Picard-Vessiot extension, in the sense of Andr\'e \cite{Andre:2001}, associated to a differential module $M$  as a quotient Hopf $A$-algebra of $U_{(M)}^\circ$ by the Hopf ideal generated by the image of the subtraction of the source from the target. In other words, this is the total isotropy Hopf algebra of $U_{(M)}^\circ$   (Proposition \ref{prop:difPV}).  In this way, any of the isotropy groups of  $\hHm(\mathbb{C})$ is recognized as  the group of differential automorphisms of this total isotropy Hopf algebra (Proposition \ref{prop:AutAsAlgG}).

It is noteworthy to mention that in the literature there are other notions of \emph{Galois groupoid} in the differential context. For instance, Malgrange in \cite{Malgrange:2001}, based on Umemura's theory \cite{Umemura:1996},  introduces a differential Galois groupoid (a Lie groupoid of a foliation). In the previous situation of the differential module $(M,\partial)$ with rank $m=2$, the attached Hopf algebroid of Malgrange's Galois groupoid is the quotient Hopf algebroid of the Hopf algebroid $\mathbb{C}[x,x_{\Sscript{1}}, x_{\Sscript{2}}, y, y_{\Sscript{1}}, y_{\Sscript{2}}, y_{\Sscript{j}}^{\Sscript{\alpha}}, det(y_{\Sscript{j}}^{\Sscript{\epsilon_{i}}})^{-1}]_{\alpha=(\alpha_{0,}\alpha_{1},\alpha_{2}) \,  \in \,  \mathbb{N}^{3}}$ over the algebra $\mathbb{C}[x,x_{\Sscript{1}}, x_{\Sscript{2}}]$, by some differential Hopf ideal, see the last part of  Example \ref{exam:Rank2}. Thus, as a groupoid in the set theoretical sense, the objects set of Malgrange's Galois groupoid is the affine space  $\mathbb{A}_{\Sscript{\mathbb{C}}}^3$  rather than $\mathbb{A}_{\Sscript{\mathbb{C}}}^1$, as it would be expected. Thus, Malgrange's and Umemura's approaches run in a totally different direction, see Subsection \ref{ssec:MalUme}  for more details and explanations.

We already work in a more general context. Thus, let $U$ be  a co-commutative right Hopf algebroid over a commutative ring $A$, and let $\mathcal{A}_U$ be the category of all right $U$--modules which are finitely generated and projective as $A$--modules ($A$--profinite, for short). The category $\mathcal{A}_U$ is monoidal symmetric and rigid, and obviously additive, but it is not abelian in general (even if $A$ is a Dedekind domain). Thus, the Tannakian recontruction from \cite{Deligne:1990,Bruguieres:1994} will not produce a monoidal equivalence from $\mathcal{A}_U$ to the category of $A$--profinite representations of some affine groupoid. However, we will show that such a reconstruction process gives a commutative Hopf algebroid $U^{\bcirc}$ over $A$ and, what is crucial to get our results in the case where $\mathcal{A}_U$ is not abelian, a homomorphism of rings $\zeta$ from $U^{\bcirc}$ to the convolution ring $U^*$. We prove that if  $\zeta : U^{\bcirc} \to U^*$ is injective, then $\mathcal{A}_U$ is isomorphic, as a symmetric monoidal category, to the category $\mathcal{A}^{U^{\bcirc}}$ of $A$--profinite right $U^{\bcirc}$--comodules. In particular, this machinery can be applied when $U = U(L)$ is the universal enveloping algebroid of a Lie-Rinehart algebra $L$ over $A$, producing a an affine groupoid represented by $U^{\bcirc}$ with the `same' representation theory than $L$. 

We give some sufficient conditions for the injectivity of $\zeta$. In particular, we show that $\zeta$ is injective for every co-commutative Hopf algebroid $U$ whenever $A$ is a Dedekind domain.

\subsection*{Description of the main results}

The paper is organized as follows. In Section \ref{preliminares}, besides the statement of the general notations used along the paper,  we collect some basic information on Hopf algebroids an their relationship to Lie-Rinehart algebras.  Section \ref{sec:ICC} is devoted to describe those aspects of the reconstruction process related to corings (or cog\'ebro\"ids, according to \cite{Bruguieres:1994}) that will be useful in the sequel, including a review on the bialgebroids and Hopf algebroids constructed from fibre functors. Principal bi-bundles attached to  pairs of fibre functors are also considered, with the aim of clarifying the Picard-Vessiot extension constructed in Section \ref{sec:Weyl}.

The application of the reconstruction procces to a morphism of (possibly non commutative) rings $A \to R$ leads  in Section \ref{sec:FDRE} to the (right) finite dual $A$--coring $R^{\bcirc}$, and a functor $\chi$ from the category  $\mathcal{A}_R$ of $A$--profinite right $R$--modules and the category $\mathcal{A}^{R^{\bcirc}}$ of $A$--profinite right $R^{\bcirc}$--comodules. In contrast with the case of algebras over a field, it is not known if this functor is an equivalence of categories for a general $A$--ring $R$. We construct a homomorphism of $A$--bimodules $\zeta : R^{\bcirc} \to R^*$, where $R^*$ denotes the right dual, as an $A$--module, of $R$, and it is shown that, if $\zeta$ is injective, then $\chi : \mathcal{A}_R \to \mathcal{A}^{R^{\bcirc}}$ is an equivalence of categories (Proposition \ref{isomor}). A module-theoretical condition implying the injectivity of $\zeta$ are investigated in Proposition \ref{QP}. As a consequence, $\chi$ is an equivalence of categories if $A$ is a right hereditary right noetherian ring (e.g. if it is a Dedekind domain or a semi-simple Artinian ring). 

Section \ref{sec:coHalgd} starts by showing that, if $(U,A)$ is a right bialgebroid, then the map $\zeta: U^{\bcirc} \to U^*$ is a homomorphism of $A \otimes A^{op}$--rings. In fact, $\zeta$ becomes part of a canonical structure of left bialgebroid $(U^{\bcirc},A)$. Theorem \ref{thm:lduality} contains our main contribution of Section \ref{sec:coHalgd}, namely, the fact that if $U$ is a cocommutative Hopf algebroid over a commutative ring $A$, then  $\chi$ is a monoidal equivalence between $\mathcal{A}_U$ and $\mathcal{A}^{U^{\bcirc}}$. 

Section \ref{sec:Weyl} developes the aforementioned application of our theory to differential modules, as described above.  Besides, in Subsection \ref{ssec:MalUme}, we explicitly describe the  structure maps of the commutative Hopf algebroid attached to  what is in the literature known as Malgrange's groupoid (or $D$-groupoid) \cite{Malgrange:2001}. We also discuss in this section, using  some specific examples,  Umemura's \cite{Umemura:2009} approach to these groupoids.

\section{Preliminaries: Bialgebroids and Hopf algebroids}\label{preliminares}
In this section, for the convenience of the reader unfamiliar with Hopf algebroids, we introduce some basic notions and results concerning this theory. We also fix some notations and terminology to be used along the paper. 

\subsection{Notations and basic notions} 
We work over a unital commutative ground ring $K$. All additive categories will be assumed to be $K$--additive categories, and additive functors between them will be $K$--linear.  For a category $\cat{A}$, the notation $X \in \cat{A}$ means that $X$ is an object of $ \cat{A}$. The identity morphism of an object $X \in \cat{A}$ is denoted simply by the object itself or  by the symbol $1_X$. Algebra means associative and unital $K$-algebra and the notation $Z(A)$ stands for the center of an algebra $A$.   We denote by $\Ae:=A\tensor{K}\Ao$ the enveloping algebra of an algebra $A$, where $A^{\Sscript{op}}$ is the opposite algebra of $A$, whose elements are distinguished from those of $A$ by using the notation $\ao \in A^{\Sscript{op}}$, for $a \in A$.  For simplicity, the same notation $\Ae:=A\tensor{L}A^{\Sscript{op}}$ will be used when $K \to L$ is commutative ring extension and $A$ is an $L$-algebra.  Modules are unital modules,
and $(A,B)$-bimodules  are assumed to be central $K$-bimodules.
We denote by $\bimod{A}{B}$ the category of all
$(A,B)$-bimodules; $\lmod{A}$ and $\rmod{B}$ denote, respectively,
the category of left $A$-modules and the category of right
$B$-modules. The corresponding hom-set functors will be denoted, respectively, by $\lhom{A}{-}{-}$ and $\rhom{B}{-}{-}$.  For every $(A,B)$-bimodule ${}_AX_B$, we denote by
$X^*=\rhom{B}{X}{B}$ its right dual, while by  ${}^*X=\lhom{A}{X}{A}$ its left dual,  which are considered as $(B,A)$-bimodules with the actions: 
\begin{equation}\label{Eq:phipsi}
b\varphi a: x \mapsto b\varphi(ax),\;\; b\psi a: x \mapsto \psi(xb) a, \text{ for all }x \in X, \varphi \in X^*, \psi \in {}^*X, a \in A, b \in B.  
\end{equation}

For every algebra $A$, 
let $add(A_A)$ denote the full sub-category of $\rmod{A}$ whose objects are
all  finitely generated and projective (\emph{fgp}~ for short)
right $A$-modules; we also use the terminology \emph{right $A$-profinite modules} for the objects in this sub-category. Given a morphism  of algebras $\eta:A \to R$, we denote by  $\eta_{*}: \rmod{R} \to \rmod{A}$ and ${}_{*}\eta: \lmod{R} \to \lmod{A}$ the corresponding restriction of scalars functors. It is clear that, if $\eta_{*}(R)_A$ is fgp  we then have a functor $\eta_{*}: add(R_R) \to add(A_A)$. 

An algebra $R$ is said to be an \emph{$A$-ring} if there is a morphism of algebras $ \eta: A \to R$ (also called a \emph{ring extension}). By restriction of scalars, $R$ becomes an $A$-bimodule and its multiplication factors through the tensor product over $A$, that is, its multiplication can be understood as a map $\mu: R\tensor{A}R \to R$. In this way  the triple $(R,\eta, \mu)$ can be seen as a \emph{monoid} in the monoidal category $\bimod{A}{A}$. Saying that $R$ is an $\Ae$-ring is equivalent to say  that there is  a  morphism of algebras $\Sf{s}: A \to R$ and  an anti-morphism of algebras $\Sf{t}: A \to R$  such that $\Sf{s}(a)\Sf{t}(a')=\Sf{t}(a')\Sf{s}(a)$, for every $a, a' \in A$.

Dually,  an $A$-\emph{coring} \cite{Sweedler:1975} is a \emph{comonoid} object in the category $\bimod{A}{A}$. That is, a triple $(\coring{C}, \varepsilon, \Delta)$, where 
$\coring{C}$ is an $A$-bimodule together with two $A$-bimodule maps, the comultiplication and the counit $\Delta: \coring{C} \longrightarrow
\coring{C}\tensor{A}\coring{C}$, $\counidad:
\coring{C} \longrightarrow A$, which satisfy the usual coassociativity and counitary properties. A \emph{right
$\coring{C}$-comodule} is a right $A$-module $M$ together with a
right $A$-linear map $\varrho_{M}: M \rightarrow
M\tensor{A}\coring{C}$ (called right coaction) such that
$$
(M\tensor{A}\Delta_{\coring{C}}) \circ \varrho_M =
(\varrho_M\tensor{A}\coring{C}) \circ \varrho_M, \;\;  \text{ and  }\;\;
(M\tensor{A}\counidad_{\coring{C}}) \circ \varrho_M=M. 
$$
A \emph{morphism} of right $\coring{C}$-modules $f: M \rightarrow
N$ (right colinear map) is right $A$-linear map such that $(f\tensor{A}\coring{C})
\circ \varrho_M = \varrho_N \circ f$. 
The category of all right
$\coring{C}$-comodules and their morphisms will be denoted by
$\rcomod{\coring{C}}$.

We recall from \cite{Schau:BONCRAASTFHB, Schau:DADOQGHA,  Swe:GOSA, Tak:GOAOAA} the definition and basic properties of (left, right) Hopf algebroids.  A good survey on the subject is the monograph \cite{Boe:HA}.

\subsection{Bialgebroids and Hopf algebroids}\label{ssec:bialgd}
Fix $A$ a ground $K$-algebra, and let $V$ be an $A^{\Sf{e}}$-ring via a ring homomorphism $\eta: A^{\Sf{e}} \to V$ whose  \emph{source} and \emph{target} maps are respectively denoted by  $\Sf{s}: A \to V$ and $\Sf{t}: A^{op} \to V$. The pair 
$(A, V)$ is said to be a \emph{left biagebroid} (or $V$ is a \emph{left} $\times_A$-\emph{bialgebra}) provided that its category of left $V$-modules  is a monoidal category and the restriction of scalars functor $\Scr{O}_l: \lmod{V} \to \lmod{A^{\Sf{e}}}$, $\Scr{O}_l={}_{*}(\Sf{s}\tensor{}\Sf{t})$, is a strict monoidal functor.
In particular, the underlying bimodule ${}_{A^{\Sf{e}}}V$ admits a structure of $A$-coring, and  if we denote by $\diama$ the given tensor product of $\lmod{V}$ and consider two objects $X, Y \in \lmod{V}$, then the underlying left $A^{\Sf{e}}$-module of $X \diama Y$ is the $A$-bimodule $X\tensor{A}Y$ whose  left $V$-action is given by the formula 
\begin{equation}\label{Eq:accion}
v. (x\tensor{A}y)\,\,=\,\, v_1x \tensor{A} v_2y,
\end{equation}
where $\Delta(v)= v_1\tensor{A}v_2$ is the comultiplication of the underlying $A$-coring of $V$ (we are using a simplified version of Sweedler's notation with sum understood).

It is known from \cite[Proposition 3.3]{Schau:DADOQGHA} that the category $\lmod{V}$ is right closed with right inner hom-functors 
\begin{equation}\label{Eq:LIH}
hom_{\lmod{V}}(X,Y)\,:=\, \lhom{V}{V\diama X}{Y},
\end{equation}
where the left $V$-action of the right hand term is given by the $V$-bimodule $V\diama X$ whose left $V$-action is \eqref{Eq:accion} and its right $V$-action is induced by that of the factor $V$. This means that for any object $X \in \lmod{V}$, the functor $-\diama X : \lmod{V} \to \lmod{V}$ is left adjoint  to the functor $\lhom{V}{V\diama X}{-} :\lmod{V} \to \lmod{V}$ called the \emph{right inner hom-functor} and denoted by $hom_{\lmod{V}}(X,-)$. This adjunction is easily seen once observed that $Y\diama X \cong (V\diama X) \tensor{V} Y$ is a left $V$-linear isomorphism via the map $x\tensor{A}y \mapsto (1\tensor{A}x) \tensor{V} y$, and it uses the usual Hom-tensor adjunction isomorphism to get
$$
\lhom{V}{Y\diama X}{Z} \,\cong\, \lhom{V}{(V\diama X) \tensor{V} Y}{Z}\, \cong \, \lhom{V}{Y}{\lhom{V}{V\diama X}{Z}}. 
$$

{
If we want to compute the right inner hom-functors  in the category of modules  $\lmod{V}$ via the forgetful functor $\Scr{O}_l$ to the monoidal category $\lmod{A^{\Sf{e}}}$,  then it is better to resume the previous situation in form of a diagram. So with the previous notations,  we have  a commutative diagram
\begin{footnotesize}
$$
\xymatrix@C=60pt{ \lhom{{V}}{Y\diama X}{Z} \ar@{->}^-{\cong}[rr] \ar@{_{(}->}_-{\Scr{O}_l}[d]& &   \lhom{{V}}{Y}{\lhom{V}{{V}\diama X}{Z}} \ar@{_{(}->}_-{\Scr{O}_l}[d] \\ 
\lhom{A^{\Sf{e}}}{Y\tensor{A} X}{Z} \ar@{-->}^-{}[rr]  \ar@{->}_-{\cong}[d]  & &    \lhom{A^{\Sf{e}}}{Y}{\lhom{{V}}{V\diama X}{Z}} \ar@{->}_-{\lhom{A^{\Sf{e}}}{Y}{\,T_Z}}[d]  
 \\   \lhom{A^{\Sf{e}}}{Y}{\lhom{A^{op}}{X}{Z}} \ar@{->}^-{\cong}[rr] &  &  \lhom{A^{\Sf{e}}}{Y}{\lhom{{V}}{{V}\tensor{A^{op}} X}{Z}},   }
$$
\end{footnotesize}
where the natural transformation $T_Z: \lhom{{V}}{{V}\diama X}{Z} \to \lhom{{V}}{{V}\tensor{A^{op}} X}{Z}$ is defined by sending $f \mapsto \left[ v\tensor{A^{op}}x \mapsto f(v_1\tensor{A}v_2x) \right]$. The dashed arrow is then a natural  isomorphism if and only if $T_{-}$ is a natural isomorphism, if and only if each map $\beta_X: {V}\tensor{A^{op}}X \to {V}\diama X$ sending $v\tensor{A^{op}}x \mapsto v_1\tensor{A}v_2x$ is an isomorphism of left ${V}$-modules, where in the tensor product ${V}\tensor{A^{op}}X$ we have used the bimodule ${}_{{V}}{V}_{1\tensor{}A^{op}}$. Now, one can easily see that  the maps $\beta_X$  are isomorphisms if and only if  $\beta_{{V}}$ is an isomorphism.  

}

Following \cite[Theorem 3.5]{Schau:DADOQGHA}, a left $A$-bialgebroid $V$ is said to be a \emph{left Hopf algebroid} provided that the functor $\Scr{O}_l$ preserves right inner hom-functors. As we have seen this  is equivalent to say that $\beta_{V}$  is an isomorphism. Its inverse induces then a well defined map  $\beta_{V}^{-1}(-\tensor{A}1): V \to V\tensor{A^{op}} V$ sending $v \mapsto v_{+}\tensor{A^{op}} v_{-}$ (the tensor product is defined using the bimodules ${}_{V}V_{1\tensor{}A^{op}}$ and ${}_{1\tensor{}A^{op}}V_{V}$).    This map is known as the anti-multiplication and its  definition for bialgebras over fields extensions goes  back  to W. D. Nichols \cite[Definition 4.]{Nichols:1985}.   Here the inverse map $\beta_{V}^{-1}(-\tensor{A}1)$ will be referred to as \emph{the translation map}.

 Analogously, an $A^{\Sf{e}}$-ring $ U$ with source and target $\Sf{s}: A \to  U$ and $\Sf{t}: A^{\Sscript{o}} \to  U$,   is said to be a  \emph{right} $A$\emph{-bialgebroid}  whenever its category of right modules is a monoidal category and the restriction of scalars functor $\Scr{O}_r: \rmod{ U} \to \rmod{A^{\Sf{e}}}$ ($\Scr{O}_r=(\Sf{s}\tensor{}\Sf{t})_{*}$) is a strict monoidal functor. In this case the category $\rmod{ U}$ is left closed with left inner hom-functors
\begin{equation}\label{Eq:RIH}
hom_{\rmod{ U}}(X,Y)\,:=\, \rhom{ U}{X\diama \, U}{Y}.
\end{equation}

For each object $X \in \rmod{ U}$, we have in this case that  the functor $X\diama -$ is left adjoint to 
$\rhom{ U}{X\diama\,  U}{-}$, and 
the adjunction is given by the natural isomorphism 
\begin{equation}\label{Eq:Radjunction}
\rhom{ U}{X\diama Y}{Z} \,\cong\, \rhom{ U}{Y \tensor{ U} (X\diama\,  U)}{Z}\, \cong \, \rhom{ U}{Y}{\rhom{ U}{X\diama \, U}{Z}}. 
\end{equation}
The corresponding $\beta$-maps in the category $\rmod{ U}$ are defined by $\beta_X: X\tensor{A^{op}} U \to X \diama \,  U$ sending $x \tensor{A^{op}} u \mapsto xu_1 \tensor{A} u_2$, where in the first tensor product we have used in its second factor the bimodule ${}_{1\tensor{}A^{op}}  U_{ U}$. One can check as before that $\Scr{O}_r$ preserves left inner hom-functors if and only if $\beta_{ U}$ is an isomorphism. So the right $A$-bialgebroid $ U$ is said to be a \emph{right Hopf} $A$\emph{-algebroid} provided that $\beta_{ U}$ is an isomorphism. In this case, \emph{the translation map} $\beta^{-1}_{ U}(1\tensor{A}-):  U \to  U\tensor{A^{op}} U$, where the first factor of the tensor product is the $A$-bimodule $ U_{A^{\Sf{e}}}$, will be denoted by $u \mapsto u_{-}\tensor{A^{op}} u_{+}$.  
As in \cite[Proposition 3.7]{Schau:DADOQGHA}, the map $\beta^{-1}_{ U}(1\tensor{A}-)$ enjoy a list of properties. Here we mention few of them  which we will need in the sequel:  First note that $\beta_{ U}^{-1} (u'\tensor{A} u)=u'u_-\tensor{A^{op}}u_+$, so we have
\begin{eqnarray}
u_{1,\, -} \tensor{A^{op}} u_{1,\,+}\tensor{A} u_2 &=& u_-\tensor{A^{op}} u_{+,\, 1}\tensor{A}u_{+,\, 2} \,\, \in \, ({U}_{1\tensor{}A^{op}} \,\tensor{A^{op}} \, {}_{1\tensor{}A^{op}}{U})\, \diama \, {U} \label{Eq:B5}  
 \\
\Sf{s}(b)\tensor{A^{op}} \Sf{s}(a) &=& \eta(a\tensor{}b^o)_{\, -} \tensor{A^{op}} \eta(a\tensor{}b^o)_{\, +} \,\, \in \,  {U}_{1\tensor{}A^{op}} \,\tensor{A^{op}} \, {}_{1\tensor{}A^{op}}{U}. \label{Eq:B6}
\end{eqnarray}

In case $A=K$ which, by our conventions, automatically  implies  that $\Sf{s}=\Sf{t}$ and that $V$ (or $ U$) is an ordinary bialgebra, it is well know that $V$ is Hopf algebra if and only if $\beta_{V}$ is bijective.

\begin{remark}\label{rem:S}
For a general left $A$-bialgebroid $V$, there is no hope of obtaining from the translation map  $\beta_{V}^{-1}(-\tensor{A}1)$ an endomorphism of $V$ which could play the role of the antipode as in the case of Hopf algebras. Nevertheless, if we  assume that $A$ and $V$ are commutative rings with $\Sf{s} \neq \Sf{t}$, which means that $V$ is a commutative $A$-bialgebroid\footnote{We use the terminology: $(A,V)$ is a \emph{commutative Hopf algebroid over $K$}.},  then the map $\Scr{S}=(\varepsilon \tensor{A} V) \circ \beta_{V}^{-1}(-\tensor{A}1)$ is well defined and gives the antipode for $V$. Conversely,   the inverse of $\beta_{V}$ is given by $\beta_{V}^{-1}(v\tensor{A}v')=v_1\tensor{A}\Scr{S}(v_2)v'$, whenever $V$ having $\Scr{S}$ as an antipode. Of course in this case the notions of bialgebroid and Hopf algebroid are obviously independent from the sides.  
\end{remark}

Let $(A,  U)$ be a right Hopf algebroid. Then the adjunction \eqref{Eq:Radjunction} and the analogue natural isomorphism to $T_{-}$, give a  right ${U}$-action on the right $A^{\Sf{e}}$-module $\rhom{A^{op}}{X}{Y}$, for every pair of right $ U$-modules $X$ and $Y$. This action is given by 
\begin{equation}\label{Eq:hom-action}
f.\, u : X \longrightarrow Y, (\, x \longmapsto f(xu_{-}) \, u_{+}\,), \text{ where, as before, } \beta_{ U}^{-1}(1\tensor{A}u)\,=\, u_{-}\tensor{A^{op}} u_+.
\end{equation}
Now, fix a right $ U$-module $X$ and consider its left dual ${}^*X$ also as right $ U$-module with the action \eqref{Eq:hom-action} by taking $Y_{ U}=A_{ U}$, the unit object of $\rmod{ U}$, with  right $ U$-action $a\,.\, u= \varepsilon( \Sf{s}(a)u)=\varepsilon(\Sf{t}(a) u)$. 
\begin{equation}\label{Eq:ev-db-1}
\xymatrix@R=0pt{X\diama\, {}^*X \ar@{->}^-{ev}[rr] & & A, \\  x\tensor{A} \varphi \ar@{|->}[rr] & & \varphi(x) }  \qquad \xymatrix@R=0pt{ A \ar@{->}^-{db'}[rr] & & \rhom{A^{op}}{X}{X} \\
1 \ar@{|->}[rr] & & 1_X,  }
\end{equation}

The following lemma  is the version on the right of  \cite[Proposition 4.41]{Boe:HA}.
\begin{lemma}\label{lema:duales}
Let $(A,  U)$ be a right Hopf algebroid. Then every right $ U$-module $X$ which is finitely generated and projective as a left $A$-module, admits ${}^*X$ as the right dual in the monoidal category $\rmod{ U}$. 
\end{lemma}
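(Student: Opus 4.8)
The plan is to establish the duality by exhibiting explicit evaluation and coevaluation maps in $\rmod{U}$ and checking the triangular identities. The evaluation map $ev\colon X \diama \, {}^*X \to A$ is already displayed in \eqref{Eq:ev-db-1}; I would first verify it is right $U$-linear. Writing the tensor product action \eqref{Eq:accion} (in its right-handed version from $\Scr{O}_r$) on $X\diama\,{}^*X$ and the action \eqref{Eq:hom-action} on ${}^*X$, one computes $ev\big((x\tensor{A}\varphi).u\big) = ev(xu_1 \tensor{A} \varphi.u_2) = (\varphi.u_2)(xu_1) = \varphi(xu_1u_{2,-})u_{2,+}$, and using the right-hand analogue of property \eqref{Eq:B5} together with the counit axioms this collapses to $\varphi(x\Sf{s}(\text{--}))\cdot u$-type expression, i.e. $ev(x\tensor{A}\varphi)\cdot u$; the key identity needed here is the right-Hopf analogue of $u_{1}u_{2,-}\tensor{A^{op}}u_{2,+} = 1\tensor{A^{op}}u$, which follows from $\beta_U \circ \beta_U^{-1} = \mathrm{id}$.

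Next I would construct the coevaluation. Since $X$ is finitely generated projective as a \emph{left} $A$-module, fix a dual basis $\{e_i \in X\} $, $\{e_i^* \in {}^*X\}$ with $x = \sum_i \Sf{s}$-twisted $e_i^*(x)\,e_i$ in the appropriate sense, i.e.\ $x = \sum_i e_i\, e_i^*(x)$ reading ${}^*X = \lhom{A}{X}{A}$. Then define $db\colon A \to {}^*X \diama X$ by $1 \mapsto \sum_i e_i^* \tensor{A} e_i$; this element is independent of the chosen dual basis. The content of Lemma~\ref{lema:duales} is that $db$ is right $U$-colinear — more precisely right $U$-linear for the $U$-module structures on $A$ and on ${}^*X\diama X$ — and here the translation map is essential: one rewrites the requirement $db(1.u) = db(1).u$ as an identity $\sum_i e_i^*.u_1 \tensor{A} e_i u_2 = \sum_i e_i^* \tensor{A} e_i$ (after using $1.u = \varepsilon(\Sf{s}(\text{--})u)$), which one proves by pairing against an arbitrary $x$ and $\psi$ and reducing, via \eqref{Eq:hom-action}, \eqref{Eq:B5}--\eqref{Eq:B6} and the dual-basis property, to the left-module dual-basis identity in $A$.

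Finally I would check the two triangular identities, namely that the composites
\[
X \xrightarrow{\ db'\diama X\ } (X\diama\,{}^*X)\diama X \xrightarrow{\ \sim\ } X\diama({}^*X\diama X) \xrightarrow{\ X\diama db^{-1}\cdots\ } X
\]
reduce to the identity — more precisely the standard zig-zags $(ev\diama X)\circ(X\diama db) = 1_X$ and $({}^*X\diama ev)\circ(db\diama\, {}^*X) = 1_{{}^*X}$, using the associativity and unit constraints of the monoidal category $\rmod{U}$ supplied by the right bialgebroid structure. Both reduce, after forgetting along the strict monoidal functor $\Scr{O}_r$ (under which $\diama$ becomes $\tensor{A}$), to the classical dual-basis zig-zag identities for the left-$A$-module $X$, so they hold automatically once one knows $ev$ and $db$ lie in $\rmod{U}$.

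I expect the main obstacle to be the right $U$-colinearity of the coevaluation $db$: unlike $ev$, which only uses the right-Hopf identities in a mild way, verifying $db$ is a morphism in $\rmod{U}$ genuinely requires the translation map $\beta_U^{-1}(1\tensor{A}-)$ and a careful bookkeeping of which side each $A$-action sits on (the bimodules ${}_{1\tensor{}A^{op}}U_U$ versus ${}_UU_{1\tensor{}A^{op}}$). Once that linearity is in hand, everything else is the formal rigidity argument, and the statement follows by invoking the right-handed mirror of \cite[Proposition 4.41]{Boe:HA} as indicated.
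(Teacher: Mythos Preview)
Your proposal is correct and follows essentially the same route as the paper. The paper's proof is terser: it factors the coevaluation as $j_X^{-1}\circ db'$, where $db'\colon A\to \rhom{A^{op}}{X}{X}$, $1\mapsto 1_X$, and $j_X\colon {}^*X\diama X\to \rhom{A^{op}}{X}{X}$, $\varphi\tensor{A}x\mapsto[x'\mapsto\varphi(x')x]$, is the canonical isomorphism; the main verification is then that $j_X$ is right $U$-linear, which uses \eqref{Eq:B5} and the coring comultiplication, exactly the ingredients you single out. Your direct dual-basis definition of $db$ is the same map, and your anticipated ``hard step'' (the $U$-linearity of the coevaluation) is precisely what the paper isolates via $j_X$. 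One small slip: you write ``$U$-colinear'' in two places where you mean ``$U$-linear.''
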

\begin{proof}
Using the maps of Eq.\eqref{Eq:ev-db-1}, the right dual of $X$ is given by the triple $({}^*X, ev, j_X^{-1} \circ\, db')$, where $$ j_X: {}^*X\diama\, X \longrightarrow  \rhom{A^{op}}{X}{X}, \, (\varphi\tensor{A}x  \longmapsto \left[ x' \mapsto \varphi(x') x\right]),$$ is the canonical isomorphism.  To show that $j_X$ is in  fact a right $ U$-linear map, one use the properties of the comultiplication of the underlying $A$-coring of $ U$  as well as Eq.\eqref{Eq:B5}.
\end{proof}

\subsection{Examples of (right) Hopf algebroids.}\label{ssec:examples}
Any Hopf algebra over a commutative ring is obviously a (left and right) Hopf algebroid. Below, we list some non trivial examples of Hopf algebroids, specially the ones with commutative base ring, which we will deal with in the forthcoming sections.

\begin{example}\label{exam:Hco}\cite{Ravenel:1986} 
Assume that the ground ring $K$ is a field and let $A$ be a commutative $K$-algebra which is a right $H$-comodule $K$-algebra with coation $\varrho_A: A \to A\tensor{K}H$, where $H$ is a commutative Hopf algebra. Consider $A\tensor{K}H$ as an $(A\tensor{K}A)$-ring via the following source and target:
$$ \Sf{s}: A \to A\tensor{K}H, (a \mapsto a\tensor{K}1), \quad \Sf{t}=\varrho_A: A \to  A\tensor{K}H, (a \mapsto a_0\tensor{}a_1).$$ Then $A\tensor{K}H$ is a commutative Hopf algebroid over $K$, with antipode $\Scr{S}: A\tensor{K}H \to A\tensor{K}H$ sending $a\tensor{}h \mapsto a_{0}\tensor{}a_{1}S(h)$, where $S$ is the antipode of $H$. 
\end{example}

\begin{example}\label{exam:LR} {
Let $A$ be a commutative algebra over a field $K$ of characteristic $0$, and denote by $\mathrm{Der}_{\Sscript{K}}(A)$ the Lie algebra of all $K$-linear derivations of $A$.  Consider an $A$--module endowed with a structure of $K$--Lie algebra $L$, and let $\omega: L \to \mathrm{Der}_{\Sscript{K}}(A)$ be a morphism of $K$-Lie algebras. 
The pair $(A,L)$ is called \emph{Lie-Rinehart algebra} with \emph{anchor} map $\omega$, provided
\begin{eqnarray*}
 (aX) (b) &=&  a(X(b)), \qquad  \\
{[ X, aY ]} &=& a{[X,Y]}+X(a)Y,
\end{eqnarray*}
for all $X, Y \in L$ and $a, b \in A$, where $X(a)$ stands for $\omega(X)(a)$. Consider the (left) $A$-module direct sum $A\oplus L$ as a $K$-Lie algebra with the bracket:
$$
\Big[(a,X), (b,Y)\Big]\,=\, \Big( X(b)-Y(a), [X,Y]  \Big),
$$ 
for any $a, b \in A$ and $X, Y \in L$. Denote by $\tau: A \oplus L \to U(A\oplus L)$ the canonical inclusion into  its universal enveloping $K$-algebra.

As it was expounded in \cite{Rin:DFOGCA, Hue:PCAQ, Moerdijk/Mrcun:2010},  associated to any Lie-Rinehart algebra $(A, L)$ there is a universal object denoted by $(A, \cat{U}(L))$. As an algebra,  $\cat{U}(L) = U(A\oplus L)/I$, where $I$ is the two-sided ideal generated by the set $\Big\{\tau(a',0).\tau(a,X) - \tau(a'a,a'X)|\, a, a' \in A,\, X \in L\Big\}$. There are canonical maps
$$
\iota_{\Sscript{A}}: A \to \cat{U}(L), \;\; \Big( a \mapsto a +I  \Big)\quad \text{ and } \quad \iota_{\Sscript{L}}:L \to \cat{U}(L),\;\; \Big(X \mapsto X + I \Big).
$$
The first one is an algebra map (whose image is not necessarily in the center), while the second is a $K$-Lie algebra map. Both maps are compatible in the sense that the following equations are fulfilled: 
$$
\iota_{\Sscript{A}}(a)\iota_{\Sscript{L}}(X) =\iota_{\Sscript{L}}(aX), \qquad [\iota_{\Sscript{L}}(X),\iota_{\Sscript{A}}(a)]=\iota_{\Sscript{A}}(X(a)),$$ 
for any $a \in A$ and $X \in L$. Such equations determine in fact the universality of $\cat{U}(L)$. 

As observed in \cite[\S4.2.1]{Kow:HAATCT},  the usual Hopf algebra structure of $U(A\oplus L)$ can be lifted to a structure of  cocommutative (right) Hopf $A$-algebroid on $\cat{U}(L)$. The source and target are equal: $\Sf{t}=\Sf{s} = \iota_{\Sscript{A}}$.  The $A$-coring structure is an $A$-coalgebra structure whose underlying $A$-bimodule uses the right $A$-module structure derived from $\iota_{\Sscript{A}}$, that is, the $A$-bimodule $\cat{U}(L)_{\Sscript{A}}$ with two-sided action $a.u.a'= u \, \iota_{\Sscript{A}}(aa')$, for every $a, a' \in A$ and $u \in \cat{U}(L)$  (recall that $A$ is commutative).   The comultiplication and the counit of $\cat{U}(L)_{\Sscript{A}}$ are given on generators by 
$$
\Delta(\iota_{\Sscript{L}}(X)) = \iota_{\Sscript{L}}(X)\tensor{A} 1_{ \Sscript{\cat{U}(L)}} + 1_{\Sscript{\cat{U}(L)}} \tensor{A} \iota_{\Sscript{L}}(X),\; \varepsilon(\iota_{\Sscript{L}}(X))=0, \quad \Delta(\Sf{s}(a))= \Sf{s}(a)\tensor{A}1_{ \Sscript{\cat{U}(L)}}=1_{ \Sscript{\cat{U}(L)}}\tensor{A}\Sf{s}(a), \; \varepsilon(\Sf{s}(a))=a.
$$
for any $a \in A$ and $X \in L$. The translation map $\beta_{\Sscript{\cat{U}(L)}}: \cat{U}(L) \to \cat{U}(L)_{\Sscript{A}} \,\tensor{A} \, {}_{\Sscript{A}}\cat{U}(L)$  is given on generators by
\begin{eqnarray*}
\Sf{s}(a)_{-}\tensor{A} \Sf{s}(a)_{+} & := & 1_{\Sscript{ \cat{U}(L)}} \tensor{A}\Sf{s}(a), \\ \iota_{\Sscript{L}}(X)_{-}\tensor{A}\iota_{\Sscript{L}}(X)_{+} &:=& 1_{\Sscript{ \cat{U}(L)}} \tensor{A} \iota_{\Sscript{L}}(X) -  \iota_{\Sscript{L}}(X)\tensor{A} 1_{\Sscript{\cat{U}(L)}}.
\end{eqnarray*} }
\end{example}

\begin{example}\label{Exam: bundles}
Here are some basic examples of Lie-Rinehart algebras.
\begin{enumerate}
\item The pair $(A, \mathrm{Der}_{K}(A))$ obviously admits the structure of a Lie-Rinehart algebra. 
\item\cite[Definition 3.3.1]{Mackenzie:2005} A \emph{Lie algebroid}  is a  vector bundle $\mathcal{E} \to \mathcal{M}$ over a smooth manifold, together with a map $\omega: \mathcal{E} \to T\mathcal{M}$ of vector bundles and a Lie structure $[-,-]$ on the  vector space  $\Gamma(\cat{E})$  of global smooth sections of $\mathcal{E}$, such that the induced map $\Gamma(\omega): \Gamma(\cat{E}) \to \Gamma(T\mathcal{M})$ is a Lie algebra homomorphism, and for all $X, Y \in \Gamma(\cat{E})$ and any $f \in \mathcal{C}^{\infty}(\mathcal{M})$  one has $[X,fY]\,=\, f[X,Y]+ \Gamma(\omega)(X)(f)Y$.  Then the pair $(\mathcal{C}^{\infty}(\mathcal{M}), \Gamma(\cat{E}))$ becomes a Lie-Rinehart algebra.
\item Let $(A, \partial_i)_{i=1,\cdots, n}$ be a partial differential commutative $K$-algebra, that is, $\partial_i$ are $K$-algebra derivations of $A$ such that $\partial_i \circ \partial _j \,=\, \partial_j \circ \partial_i$, for every $i \neq j$. Consider the $A$-module $L:= \oplus_i^n A. \partial_i $ as a Lie $K$--algebra with bracket 
$$
[a.\partial_i, a'.\partial_j] \,=\,    a\partial_i(a'). \partial_j - a'\partial_j(a) . \partial_i, \quad \text{ for every }\; i,j= 1, \cdots, n.
$$
Then $L$ admits canonically a structure of  Lie-Rinehart algebra whose anchor map is given by:  $$
\omega: L \longrightarrow  \mathrm{Der}_{K}(A), \qquad \big[a .\partial_i \longmapsto [a' \mapsto a\partial_i(a')]\big].
$$
\end{enumerate} 
\end{example}

\begin{example}\label{exam:situDual}
Under the assumptions of Example \ref{exam:LR}, it is clear that the anchor map $\omega: L \to {\rm Der}_{K}(A)$ can be extended to an algebra map  $\Omega: U(L) \to {\rm End}_{K}(A)$ which gives a left $U(L)$-action on the base algebra $A$. If we further assume that $A$, with this action, is a locally finite left module (i.e.~each element generates a left $U(L)$-module which is a finite dimensional $K$-vector space),  then $A$ becomes a right comodule algebra over the usual finite dual commutative  Hopf algebra $U(L)'$, see Example \ref{exam:dualcoalg}. In this way, as in Example \ref{exam:Hco} we obtain another (commutative) Hopf algebroid, namely, $A\tensor{K}U(L)'$.
\end{example}

{
\begin{example}[Base extension]\label{exam:Bext}
Let $(A,\cat{H})$ be a commutative Hopf algebroid and $\phi: A\to B$ be a morphism of commutative algebras. Then $(B,B\tensor{A}\cat{H}\tensor{A}B)$ admits a structure of commutative Hopf algebroid called \emph{the base ring extension} of $(A,\cat{H})$ by $\phi$.  The structure maps of this Hopf algebroid are given as follows:
\begin{multline*}
\Sf{s}(b) \,=\, b\tensor{A}1\tensor{A}1, \;\; \Sf{t}(b')\,=\, 1\tensor{A}1\tensor{A}b',\;\; \sS(b\tensor{A}h\tensor{A}b')\,=\, b'\tensor{A}\sS(h)\tensor{A}b, \\
\Delta(b\tensor{A}h\tensor{A}b')\,=\, \sum_{\Sscript{(h)}} (b\tensor{A}h_{1}\tensor{A}1)\, \tensor{B} \, (1 \tensor{A}h_{2}\tensor{A}b'), \qquad \varepsilon(b\tensor{A}h\tensor{A}b')\,=\,bb'\phi(\varepsilon(h)).\hspace{1cm}
\end{multline*}
\end{example}
}

\section{The reconstruction process,   Galois corings and principal bi-bundles.}\label{sec:ICC}
We recall in this section  the construction of the universal coring from a given fibre functor \cite{Bruguieres:1994}. We follow the presentation given in  \cite{ElKaoutit/Gomez:2004b}.
We also recall the notion of Galois coring. Roughly speaking, these are corings which can be reconstructed from their class of right comodules which are finitely generated and projective as right modules over the base algebra. 

\subsection{Infinite comatrix corings}\label{ssec:comatrix}
Let $\omega: \cat{A} \rightarrow add(A_A)$ be an additive faithful functor (referred to as  a \emph{fibre functor}), where $\cat{A}$ is an additive small category and $A$ is an algebra, which is not assumed to be commutative. Here, $add(A_A)$ denotes the category of all finitely generated and projective right $A$--modules. The image of an object $P$ of $\cat{A}$ under $\omega$ will be denoted
by ${}^{\omega}P:=\omega(P)$, or even by $P$ when no confusion may
be expected. Given $P,Q\in \cat{A}$, we denote by
$T_{PQ}=\hom{\cat{A}}{P}{Q}$ the $K$-module of all morphisms in
$\cat{A}$ from $P$ to $Q$. The symbol $T_P$ is reserved to the
ring of endomorphisms of $P$.  Clearly, $S_P = \rend{{}^{\omega}P}{A}$
is a ring extension of $T_P$ via $\omega$. In this way, every
image ${}^{\omega}P$ of an object $P \in \cat{A}$, becomes canonically
a $(T_P,A)$-bimodule, and this bi-action can be extended to the
following $(T_Q,A)$-bimodule map
\begin{equation}\label{omega1}
\xymatrix@R=0pt@C=40pt{ T_{PQ}\tensor{T_P}P \ar@{->}^-{\omega_{PQ}}[r] & Q \\
t\tensor{T_P}p \ar@{|->}[r] & tp=\omega(t)(p), }
\end{equation}
since $T_{PQ}$ is already a $(T_Q,T_P)$-bimodule. The dual bi-action is given by the following
$(A,T_P)$-bimodule map
\begin{equation}\label{omega2}
\xymatrix@R=0pt@C=40pt{ Q^* \tensor{T_Q} T_{PQ} \ar@{->}^-{\omega_{Q^*P^*}}[r] & q^* \\
q^*\tensor{T_Q} t \ar@{|->}[r] & q^* t = q^* \circ \omega(t). }
\end{equation}

For every object $P
\in \cat{A}$, one can define its associated finite comatrix
$A$-coring $P^*\tensor{T_P}P$ with the following comultiplication
and counit
\begin{equation}\label{Eq:Deltapq}
\xymatrix@R=0pt@C=40pt{\rcomatrix{T_P}{P}
\ar@{->}^-{\Delta_{\rcomatrix{T_P}{P}}}[r] & \rcomatrix{T_P}{P}
\tensor{A} \rcomatrix{T_P}{P}, \\ p^*\tensor{A}p \ar@{|->}[r] &
\sum_{\alpha_P} p^*\tensor{T_P}e_{\alpha_P} \tensor{A}
e_{\alpha_P}^*\tensor{T_P}p } \quad
\xymatrix@R=0pt@C=30pt{\rcomatrix{T_P}{P}\ar@{->}^-{\counidad_{\rcomatrix{T_P}{P}}}[r]
& A \\ p^*\tensor{T_P} p \ar@{|->}[r] & p^*(p) }
\end{equation}
where $\{(e_{\alpha_P},e_{\alpha_P}^*)\} \subset P \times
P^*$ is any right dual basis for $P_A$. The map
$\Delta_{\rcomatrix{T_P}{P}}$ does not depend on the choice of the
dual basis \cite[Remark 2.2]{ElKaoutit/Gomez:2003a}. Now consider
the following direct sum of $A$-corings
\begin{eqnarray*}
  \fk{B}\left(\cat{A}\right) &=& \bigoplus_{P \in
  \cat{A}} P^*\tensor{T_P}P
\end{eqnarray*}
and its $K$-submodule $\fk{J}_{\Sscript{\omega}}$ generated by
the set
\begin{equation}\label{Eq:JA}
\left\{\underset{}{} q^* \tensor{T_Q} tp - q^* t \tensor{T_P}p : \, q^* \in Q^*,\, p
\in P,\, t \in T_{PQ},\, P,Q \in \cat{A} \right\},
\end{equation}
where the products are defined by the pairings of Eqs.~\eqref{omega1} and \eqref{omega2}.
By \cite[Lemma 4.2]{ElKaoutit/Gomez:2004b}, $\fk{J}_{\Sscript{\omega}}$ is a
coideal of the $A$-coring $\fk{B}(\cat{A})$. Therefore, we can
consider the quotient $A$-coring
\begin{eqnarray}\label{comatInf}
  \Scr{R}\left( \cat{A}\right) :\,=\, \fk{B}(\cat{A})/\fk{J}_{\Sscript{\omega}} \,=\, \left(\underset{P\,\in\, 
\cat{A}}{\bigoplus} \rcomatrix{T_P}{P}\right)/ \fk{J}_{\Sscript{\omega}}
\end{eqnarray}
and this is the \emph{infinite comatrix $A$-coring} associated to the fibre functor
$\omega: \cat{A} \rightarrow add(A_A)$.  Furthermore, it is clear that any object $P \in \cat{A}$ admits (via the functor $\omega$) the structure of a right $\Scr{R}(\cat{A})$-comodule, which leads to a well defined  functor $\chi: \cat{A} \to \rcomod{\Scr{R}(\cat{A})}$.

\textsc{Notation}: We will denote by $\bara{\varphi \tensor{T_{P}} p} := \varphi \tensor{T_{P}} p + 
\cat{J}_{\Sscript{\omega}}$, for $\varphi \in \omega(P)^{*}, p \in \omega(P)$ and $P \in \cat{A}$,  a generator element  in the infinite comatrix $A$-coring $\rR(\cat{A})$. 

Under this notation the comultiplication of $\rR(\cat{A})$ is given by 
\begin{equation}\label{Eq:delta}
\Delta :  \rR(\cat{A}) \longrightarrow \rR(\cat{A}) \tensor{A} \rR(\cat{A}), \quad \overline{p^* \tensor{T_P} p} \longmapsto \sum_{\alpha_{P}}\overline{p^* \tensor{T_P} e_{\alpha,P} }\tensor{A} \overline{e_{\alpha,P}^* \tensor{T_P} p},
\end{equation}
where $\{ e_{\alpha,P}, e_{\alpha,P}^* \}$ denotes a finite dual basis for $P_A=\omega(P)_{A}$. The counit of $\rR(\cat{A})$ is
\begin{equation}\label{Eq:counit}
\varepsilon: \rR(\cat{A}) \longrightarrow A, \quad \overline{p^* \tensor{T_P}  p} \longmapsto p^*(p). 
\end{equation}

The above construction is in fact functorial, in the sense that if $\cat{F}: \cat{A} \to \cat{A}'$ is a $K$-linear functor  and $\xymatrix{\omega: \cat{A} \ar@{->}[r] &  add(A_A) & \ar@{->}[l] \cat{A}' :\omega'}$ are fibre functors such that $\omega' \circ \cat{F} = \omega$,  then there is a morphism of $A$-corings $\Scr{R}(\cat{F}) : \Scr{R}(\cat{A}) \to \Scr{R}(\cat{A}')$.

There are two typical situations where the above process can be applied. Namely, starting with a ring extension $\eta: A \to R$ and consider the category $\cat{A}_R$ of all right $R$-modules which by restriction of scalars are finitely generated and projective as right  $A$-modules, the fibre functor  is then the restriction of the forgetful functor $\eta_{*}: \rmod{R} \to \rmod{A}$, that is, $\eta_{*}: \cat{A}_R \to add(A_A)$. 
The associated $A$-coring  $\Scr{R}(\cat{A}_R)$ is  simply denoted by $\Scr{R}(R)$ or  by $R^{\bcirc}$, and referred to as \emph{the (right)  finite dual of} the $A$-ring $R$.  This in fact establishes a contravariant functor from the category of $A$-rings to the category of $A$-corings.  The basic properties of the finite dual of a ring extension are presented in Section \ref{sec:FDRE}.

The other situation which is somehow dual to the previous one,  is that of an $A$-coring $\coring{C}$, where the small category $\cat{A}$ is taken to be the category $\cat{A}^{\coring{C}}$ of all right $\coring{C}$-comodules whose underlying  right $A$-modules are finitely generated and projective. The fibre functor here is given by the restriction of the  forgetful functor $\Scr{O}: \rcomod{\coring{C}} \to \rmod{A}$, that is, $\Scr{O}: \cat{A}^{\coring{C}} \to add(A_A)$. This situation was studied in \cite{Bruguieres:1994,ElKaoutit/Gomez:2004b}.

\subsection{Review on Hopf algebroids constructed from fibred functors}\label{ssec:HAldfibre}
Starting with a $K$-linear monoidal (essentially small) category  $\cat{A}$ (the tensor product $\otimes$ of $\cat{A}$ is implicitly assumed to be a $K$-linear functor in both factors) with a monoidal faithful $K$-linear functor $\omega: \cat{A} \longrightarrow \bimod{A}{A} $ with image in $add(A_A)$. Then, one can endow, using multiplication induced form the monoidal structure  (see the formula \eqref{Eq:mo} below),  the associated infinite comatrix $A$-coring $\rR(\cat{A})$  of equation \eqref{comatInf} with a structure of $(A\tensor{T_{\II}}A^{\Sscript{o}})$-ring, where $T_{\II}$ is the (commutative) endomorphism $K$-algebra of the identity object $\II$, and $T_{\II} \to A$ is the injective $K$-algebra homomorphism induced by $\omega$. It turns out that $(A,\rR(\cat{A}))$  with this algebra structure is a (left) bialgebroid. If we further assume that $\cat{A}$ is a symmetric rigid monoidal category and that $A$ is a commutative $K$-algebra, then $(A,\rR(\cat{A}))$ has a structure of commutative Hopf algebroid over $T_{\II}$ (compare with  \cite[Section 7]{Bruguieres:1994}). For sake of completeness and for our needs, as well as for  a non expertise reader convenience,  we review in detail the structure maps of this construction.  This detailed construction is to be used later.

Consider the previous situation: $\omega: \cat{A} \longrightarrow add(A_A)$, where $A$ is a not necessarily commutative algebra, and assume that the resulting functor, after  composing $\omega$ with the embedding $ add(A_A) \hookrightarrow \rmod{A}$, factors throughout a (strict) monoidal faithful $K$-linear functor $\cat{A} \to \bimod{A}{A}$\footnote{This factorization condition is not needed when $A$ is a commutative $K$--algebra.}. To avoid some technical problems, we further assume that $\cat{A}$ is a Penrose category, in the sense that each  of the $K$-modules $T_{PQ}$ is a central $T_{\II}$-bimodule over the commutative $K$-algebra $T_{\II}$ (i.e., the left $T_{\II}$-action coincides with right one). This the case when $\cat{A}$ is for instance a braided or symmetric monoidal category.

Let us   first define an unitary and associative multiplication on $\rR(\cat{A})$.  As we have seen before, the infinite comatrix coring $\rR(\cat{A})$ is the  quotient   $A$-coring $\fk{B}(\cat{A})/\fk{J}_{\Sscript{\omega}}$, where
\begin{equation}\label{monoring}
\fk{B}(\cat{A}) = \bigoplus_{P \in \, \cat{A}} P^* \tensor{T_P} P 
\end{equation}
and $\fk{J}_{\Sscript{\omega}}$ is the $K$-submodule spanned by the set of  elements as in Eq.\eqref{Eq:JA}. Given $P, Q \in \mathcal{A}$, and $\varphi \in P^*, \psi \in Q^*$,  we define $(\varphi \star \psi) \in (P\tensor{A}Q)^*$ (remember that here we are denoting $\omega(P):=P$), by 
\begin{equation}\label{Eq:star}
 (\varphi \star \psi) : P\tensor{A}Q \longrightarrow A, \quad \lr{ x\tensor{\peque{A}}y \longmapsto \psi(\varphi(x)y) }.
\end{equation}

We use \eqref{Eq:star} to define componentwise an associative multiplication on $\fk{B}(\cat{A})$  by 
\begin{equation}\label{multicomponent}
\xymatrix@R=5pt{P^* \tensor{T_P} P \tensor{A} Q^* \tensor{T_Q} Q \ar[r] & (Q \tensor{A} P)^* \tensor{T_{Q \tensor{} P}} Q \tensor{A} P  \\
 p^* \tensor{T_P} p \tensor{A} q^* \tensor{T_Q} q \ar@{|->}[r] & q^* \star p^* \tensor{T_{Q \tensor{} P}} q \tensor{A} p
}
\end{equation}
To see that the map \eqref{multicomponent} is well defined, we used the fact that $\cat{A}$ is monoidal and that $\cat{A} \to {}_A\mathsf{Mod}_A$ is a (strict) monoidal functor.
Since $\fk{J}{\Sscript{\omega}}$ is easily checked to be an ideal of  $\fk{B}(\cat{A})$, we get that the following multiplication is well defined :
\begin{equation}\label{Eq:mo}
(\bara{p^*\tensor{T_P}p})\,.\, (\bara{q^*\tensor{T_Q}q}) \,\, =\,\, \bara{(q^* \star p^*) \tensor{T_{Q\tensor{}P}} (q\tensor{\peque{A}}p)}, 
\end{equation}
for every $\bara{p^*\tensor{T_P}p}$ and $\bara{q^*\tensor{T_Q}q}$ in  $\rR(\cat{A})$. A straightforward computation checks that this multiplication is associative. Let us show that there is a unit in $\rR(\cat{A})$ for this multiplication. To this end, observe that there is an injective $K$-algebra map $T_{\II} \to A$ induced by $\omega$. 
Therefore, one can consider the following well defined map 
\begin{equation}\label{Eq:Etao} 
\etaup: \Ae:=A\tensor{T_{\II}}A^{\Sscript{o}} \longrightarrow  \rR(\cat{A}), \quad \Big[ (a'\tensor{} a^o)\longmapsto   (\bara{\Sf{l}_{a'}  \tensor{T_{\II}} a}) \Big],
\end{equation}
where we denote by $\Sf{l}_{a'} : A \to A$ the left multiplication map sending $r \mapsto a'r$ for any $a' \in A$.
The element $\overline{id_A \tensor{T_{\II}} 1_{A}} \in  \rR(\cat{A})$ is clearly the unit for the multiplication \eqref{Eq:mo}.  As before the source  map $\Sf{s}: A \to \rR(\cat{A})$ sends $a \mapsto \etaup(a\tensor{T_{\II}}1^{\Sscript{o}})$ and the target $\Sf{t}$ sends $a \mapsto \etaup(1\tensor{T_{\II}} \ao)$.

The following result could be deduced from \cite[Exemple, pp. 5849]{Bruguieres:1994} and also from  \cite[Theorem 2.2.4]{P.H. Hai:2008}. However,  a direct application of the former or the latter will not lead to an explicit description of the structure maps of the constructed Hopf algebroid. For our needs  and for the reader convenience, we give here an elementary proof of these statements.

\begin{proposition}\label{prop:TBialgd}
Let $A$, $\cat{A}$ and $\omega$ be as above. Then the pair $(A,\rR(\cat{A}))$ admits a structure of left bialgebroid. Assume furthermore that $A$ is a commutative $K$-algebra, $\cat{A}$ is symmetric rigid monoidal $K$-linear and that $\omega$ is a symmetric monoidal faithful $K$-linear functor. Then $(A, \rR(\cat{A}))$ is a commutative Hopf algebroid over $T_{\II}$.
\end{proposition}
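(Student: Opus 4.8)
The plan is to verify directly that the multiplication \eqref{Eq:mo} together with the coring structure of $\rR(\cat{A})$ and the ring map \eqref{Eq:Etao} satisfies the axioms of a left bialgebroid, and then, under the extra hypotheses, to construct the antipode and check the Hopf algebroid axioms. First I would record that $\rR(\cat{A})$ is an associative unital algebra: associativity of \eqref{Eq:mo} follows from associativity of \eqref{multicomponent} on $\fk{B}(\cat{A})$, which in turn is a routine consequence of the definition \eqref{Eq:star} of $\star$ and the fact that $\omega$ is a strict monoidal functor into $\bimod{A}{A}$; the unit is $\overline{id_A\tensor{T_{\II}}1_A}$ as already observed. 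I would then check that \eqref{Eq:Etao} is a well-defined $K$-algebra homomorphism from $\Ae=A\tensor{T_{\II}}A^o$; well-definedness over $T_{\II}$ uses the Penrose hypothesis (that each $T_{PQ}$ is a central $T_{\II}$-bimodule), and multiplicativity is a direct computation with $\star$ applied to the rank-one modules $\Sf{l}_{a'}\tensor{T_{\II}} a$. This endows $(A,\rR(\cat{A}))$ with an $\Ae$-ring structure with the stated source and target.

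Next I would verify the compatibility axioms between the algebra and coring structures. Concretely: (i) the comultiplication $\Delta$ of \eqref{Eq:delta} is an algebra map from $\rR(\cat{A})$ to the Takeuchi product $\rR(\cat{A})\times_A\rR(\cat{A})$; (ii) $\Delta$ indeed takes values in that Takeuchi product (the ``$\times_A$'' condition); (iii) the counit $\varepsilon$ of \eqref{Eq:counit} satisfies $\varepsilon(xy)=\varepsilon(x\,\Sf{s}(\varepsilon(y)))=\varepsilon(x\,\Sf{t}(\varepsilon(y)))$ and $\varepsilon(1)=1$; and (iv) the $A$-bimodule structure on $\rR(\cat{A})$ underlying the coring agrees with the one coming from $\Sf{s}$ and $\Sf{t}$. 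All of these reduce, after unravelling, to the compatibility of a dual basis for $P\tensor{A}Q$ with dual bases for $P$ and $Q$ — that a dual basis of a tensor product of fgp modules is obtained by tensoring dual bases — together with the definition of $\star$. I would present (i)–(ii) as the main computation: evaluating both $\Delta\big((\bara{p^*\tensor{T_P}p})\,.\,(\bara{q^*\tensor{T_Q}q})\big)$ and $\Delta(\bara{p^*\tensor{T_P}p})\,.\,\Delta(\bara{q^*\tensor{T_Q}q})$ on the product $Q\tensor{A}P$ and matching the inserted dual-basis elements; the bookkeeping is the only delicate part, so I would do it carefully but without belabouring it.

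For the second assertion, assume $A$ commutative, $\cat{A}$ symmetric rigid monoidal, and $\omega$ symmetric monoidal. Commutativity of $\rR(\cat{A})$ follows from the symmetry: the braiding of $\cat{A}$ (mapped by $\omega$ to the flip on $A$-bimodules) identifies $(Q\tensor{A}P)^*\tensor{T} Q\tensor{A}P$ with $(P\tensor{A}Q)^*\tensor{T} P\tensor{A}Q$ and turns $q^*\star p^*$ into $p^*\star q^*$ up to this identification, so \eqref{Eq:mo} is commutative after passing to $\fk{J}_{\Sscript{\omega}}$. Then $\Sf{s}\neq\Sf{t}$ in general and, by Remark \ref{rem:S}, it suffices to produce an antipode $\Scr{S}$, equivalently to show $\beta_{\rR(\cat{A})}$ is invertible. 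Here I would use rigidity of $\cat{A}$: for each $P\in\cat{A}$ with right dual $P^\vee$ and evaluation/coevaluation morphisms in $\cat{A}$, define $\Scr{S}$ on generators by $\Scr{S}(\bara{p^*\tensor{T_P}p}) = \bara{(\text{image of }p\text{ under }\omega(P)\to\omega(P^\vee)^*)\tensor{T_{P^\vee}}(\text{image of }p^*)}$, i.e.\ transport $p$ and $p^*$ across the canonical isomorphisms $\omega(P)\cong{}^*\omega(P^\vee)$ and $\omega(P)^*\cong\omega(P^\vee)$ supplied by the dual object structure. Well-definedness over $\fk{J}_{\Sscript{\omega}}$ uses functoriality of the assignment $P\mapsto P^\vee$; the antipode identities $\Scr{S}(x_1)x_2=\Sf{t}(\varepsilon(x))$, $x_1\Scr{S}(x_2)=\Sf{s}(\varepsilon(x))$, and $\Scr{S}^2=\mathrm{id}$ (the last from $\cat{A}$ being symmetric, so $P^{\vee\vee}\cong P$) follow from the triangle identities for $(ev,db)$ in $\cat{A}$, translated into $A$-modules via $\omega$. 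Finally I would note the base of the Hopf algebroid is $T_{\II}$ because $T_{\II}\to A$ is the scalar extension over which $\Sf{s},\Sf{t}$ are defined.

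The main obstacle I expect is the explicit verification that $\Delta$ is multiplicative and lands in the Takeuchi product, simultaneously: one must juggle dual bases of $P$, of $Q$, and of $P\tensor{A}Q$, keep the tensor factors over the correct rings ($T_P$, $T_Q$, $T_{Q\tensor{}P}$, and $A$) straight, and pass everything to the quotient by $\fk{J}_{\Sscript{\omega}}$ consistently — the relations in $\fk{J}_{\Sscript{\omega}}$ are exactly what is needed to make the $\star$-product and the dual-basis insertions compatible, and getting this precise is where the real work lies. The antipode construction is conceptually cleaner but requires care that the morphisms $ev$, $db$ one uses are genuinely in $\cat{A}$ (not merely in $\bimod{A}{A}$), which is guaranteed by the rigidity hypothesis on $\cat{A}$ itself.
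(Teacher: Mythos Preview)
Your proposal is correct, and for the second assertion (commutative Hopf algebroid via the antipode built from the rigidity data $\gammaup,\phiup$) it matches the paper's argument essentially line for line.

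For the first assertion, however, you take a genuinely different route. You plan to verify the Takeuchi-style axioms directly: that $\Delta$ is multiplicative, lands in the Takeuchi product $\rR(\cat{A})\times_A\rR(\cat{A})$, and that $\varepsilon$ satisfies the weak multiplicativity condition. The paper instead uses the \emph{categorical} characterisation of a left bialgebroid from subsection~\ref{ssec:bialgd}: it equips $\lmod{\rR(\cat{A})}$ with a monoidal structure (defining a left $\rR(\cat{A})$-action on $X\tensor{A}Y$ via the coring comultiplication, Eq.~\eqref{Eq:uxyO}) and checks that the restriction-of-scalars functor ${}_{*}\etaup$ to $\lmod{\Ae}$ is strict monoidal. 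This sidesteps precisely the obstacle you flag as the hardest part---the simultaneous verification that $\Delta$ is multiplicative and Takeuchi-valued---replacing it with the (somewhat lighter) check that the action \eqref{Eq:uxyO} is associative and that the unit constraint $A\tensor{A}X\cong X$ is $\rR(\cat{A})$-linear. Your approach buys a self-contained argument that does not appeal to the equivalence between the two definitions of bialgebroid; the paper's approach buys economy, since the dual-basis bookkeeping you anticipate for the Takeuchi condition is absorbed into the module-action computation.
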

\begin{proof}
We need to endow the category of left $\rR(\cat{A})$-modules with a monoidal  structure such that the restriction of scalars functor ${}_{*}\etaup: \lmod{ \rR(\cat{A})} \to \lmod{A^{\Sf{e}}}$ becomes  strict monoidal. 
Take  two left $\rR(\cat{A})$-modules $X$, $Y$. The tensor product $X \tensor{A} Y$ is then a left module over $\rR(\cat{A})$ with the action given by
\begin{equation}\label{Eq:uxyO}
\xymatrix@R=0pt{ \lambda:   \rR(\cat{A}) \otimes X\tensor{A}Y \ar@{->}[r] & X\tensor{A}Y \\  (\bara{p^*\tensor{T_P}p}) \otimes (x\tensor{A}y) \ar@{|->}[r] &  \sum_{\alpha_{P}}(\bara{p^*\tensor{T_P}e_{\alpha,P}})x \tensor{A} (\bara{e_{\alpha,P}^* \tensor{T_P} p})y, }
\end{equation}
where we have  used the comultiplication displayed in Eq.~\eqref{Eq:delta} and the fact that this map lands in an appropriate Sweedler-Takeuchi's product. The restriction of this action to scalars from $A^e$ via $\etaup$ gives the canonical $A$--bimodule structure of $X \tensor{A} Y$, that is, the action
$$
(a'\tensor{T_{\II}}\ao) \,.\, x\tensor{A}y \,\,=\,\, (a'x)\tensor{A}(ya),\quad \text{ for every } a,a' \in A \text{ and }  x \in X, y \in Y.
$$
This in particular shows that the action is unital. 
The associativity property of the action given by Eq.~\eqref{Eq:uxyO}  is derived as follows. Take two elements of the form  $\bara{p^*\tensor{T_P}p}$ and $\bara{q^*\tensor{T_Q}q}$ in  $\rR(\cat{A})$, then 
\begin{eqnarray*}
\lambda\lr{(\bara{p^*\tensor{T_P}p}) \; (\bara{q^*\tensor{T_Q}q}) \tensor{} (x\tensor{A}y) } &=& \lambda\lr{\bara{(q^* \star p^*)\tensor{T_{Q\tensor{}P}}(q\tensor{A}p)} \tensor{} (x\tensor{A}y) } \\ &=& \sum_{\alpha_{P}, \, \beta_{Q}} \bara{(q^* \star p^*)\tensor{T_{Q\tensor{}P}}(e_{\beta, Q}\tensor{A}e_{\alpha, P})}x \tensor{A} 
\bara{(e_{\beta, Q}^* \star e_{\alpha, P}^*)\tensor{T_{Q\tensor{}P}}(q\tensor{A}p)} y
\\ &=& \sum_{\alpha_{P}, \, \beta_{Q}} \lr{ \bara{(p^* \tensor{T_P}e_{\alpha, P})}\; \bara{(q^*\tensor{T_Q}e_{\beta, Q})}x } \tensor{A} 
\lr{ \bara{(e_{\alpha, P}^* \tensor{T_P}p)}\; \bara{(e_{\beta,Q}^*\tensor{T_Q}q)} y} \\
&=& \lambda\lr{\bara{(p^*\tensor{T_{P}}p)}   \tensor{} \lambda\lr{\bara{(p^*\tensor{T_{P}}p)}  \tensor{A^{\Sf{e}}} (x\tensor{A}y)} }.
\end{eqnarray*}

Given $Z$ another left $\rR(\cat{A})$--module, the natural isomorphism $(X\tensor{A}Y)\tensor{A} Z \cong X\tensor{A}(Y\tensor{A}Z)$ is clearly  an isomorphism of left $\rR(\cat{A})$-modules. On the other hand, for any two arrows $f,g$ in $\lmod{\rR(\cat{A})}$, it is easily seen, using the above action,   that $f\tensor{A}g$ is a morphism in $\lmod{\rR(\cat{A})}$. 

Now,  consider the monoidal unit $_{A^{\Sf{e}}}A$ as a left $\rR(\cat{A})$-module via the action 
$$
\bara{p^*\tensor{T_P}p} \; a \,=\, p^*(ap) \,=\, p^*a(p) \, =\, \varepsilon \lr{\bara{p^*\tensor{T_P}p}\,  \Sf{s}(a)} \,=\,  \varepsilon \lr{\bara{p^*\tensor{T_P}p} \,  \Sf{t}(a)},
$$ 
where  $\varepsilon$ is the counit of $\rR(\cat{A})$. For a left $\rR(\cat{A})$-module $X$, let $\iota: A\tensor{A}X \cong X$ sending $a\tensor{A}x \mapsto \Sf{s}(a) x$ be the canonical isomorphism. Using the action of Eq.\eqref{Eq:uxyO}, we have 
\begin{eqnarray*}
\iota\lr{\bara{p^*\tensor{T_P}p} \, (a\tensor{A}x) }&=& \sum_{\alpha_{P}} \iota\lr{ p^*(ae_{\alpha,P}) \tensor{A} \bara {(e_{\alpha,P}^* \tensor{T_P}p) }x } \\ &=& \sum_{\alpha_{P}} \bara{ p^*(ae_{\alpha,P})e_{\alpha,P}  \tensor{T_P} p} \, x \\ &=& \bara{p^*a \tensor{T_P} p} \, x \\ &=& \bara{p^* \tensor{T_P} p} \, \Sf{s}^{\bcirc}(a)  x \\ &=& \bara{p^* \tensor{T_P} p} \;\; \iota(a\tensor{A}x ),
\end{eqnarray*}
which shows that $\iota$ is left $ \rR(\cat{A})$-linear.  Similarly, one can show that the isomorphism $X\tensor{A}A \cong X$ is also left $ \rR(\cat{A})$--linear. Thus, ${}_{\rR(\cat{A})}A$ is the monoidal unit for the tensor product in $\lmod{\rR(\cat{A})}$.

Summarizing, we obtain that  the category of left $\rR(\cat{A})$-modules is  a monoidal category with unit the left module $_{\rR(\cat{A})}A$, and the restriction of scalars functor ${}_{*}\etaup: \lmod{\rR(\cat{A})} \to \lmod{\Ae}$ is a strict monoidal. Therefore, $(A,  \rR(\cat{A}))$ is left bialgebroid. 

Regarding the commutative case, we need first to introduce some notations: let us denote  by $\tau_{P,Q}: P\tensor{} Q \to Q\tensor{} P$ the symmetry of $\cat{A}$ and by $(-)^{\Sscript{\vee}}: \cat{A} \to \cat{A}$ the $K$-linear contravariant functor which sends any object $P \in \cat{A}$ to its dual $\dP$ and any morphism $f:P \to Q$ to it dual $f^{\Sscript{\vee}}: \dQ \to \dP$.  Under assumption, there is a natural isomorphism $\gammaup_{-}:  (-)^{*} \circ \omega \to \omega \circ   (-)^{\Sscript{\vee}} $, which leads to a natural isomorphism $\phiup_{P}: \omega(P) \to \omega(\dP)^{*}$. Using the action of \eqref{omega1} and \eqref{omega2}, the naturality of bothe $\gammaup$ and $\phiup$, reads
\begin{gather}
\phiup_{Q}(\omega(f)\, p) \,\,=\,\, \phiup_{P}(p) \,  \omega(f^{\Sscript{\vee}})  \label{Eq:GP1}\\ 
\gammaup_{P}(\varphi \,  \omega(f)) \,\,=\,\, \omega(f^{\Sscript{\vee}}) \,  \gammaup_{Q}(\varphi), \label{Eq:GP2}
\end{gather} 
for every morphism $f: P \to Q$ in $\cA$ ,   $p \in \omega(P)$ and $\varphi \in \omega(Q)^*$. 
On the other hand $\gammaup$ and $\phiup$ render commutative the following diagrams
\begin{equation}\label{Eq:PG}
\begin{gathered}
\xymatrix{   (\omega(\dP)^{*})^{*} \ar@{->}^-{\phiup^{*}}[r] & \omega(P)^{*}  \ar@{->}^-{\gammaup}[r] & \omega(\dP)  \\ \omega(\dP ),   \ar@{->}^-{\psiup}[u]  \ar@{=}^-{}[rru] & &  } \qquad \xymatrix{   (\omega(P)^{*})^{*}  & \ar@{->}_-{\gammaup^{*}}[l] \omega(\dP)^{*}   & \ar@{->}_-{\phiup}[l] \omega(P)  \\ \omega(P ),   \ar@{->}^-{\psiup}[u]  \ar@{=}^-{}[rru] & &  }
\end{gathered}
\end{equation}
where the vertical map is the canonical isomorphism of $A$-modules, sending $q \mapsto\left[q^* \mapsto q^*(q)\right]$. The natural isomorphism $\gammaup$ rends also the following diagrams
\begin{equation}\label{Eq:evpluadb}
\begin{gathered}
\xymatrix@R=20pt{   & \omega(P)^* \tensor{A} \omega(P) \ar@{->}^-{ev_{\omega(P)}}[r]  \ar@{->}_-{\gammaup \tensor{A} \omega(P)}[ld]  & A \\ \omega(\dP)\tensor{A} \omega(P) \ar@{->}_-{\cong}[rd]  & &   \\  & \omega\big( \dP \tensor{}P \big)  \ar@{->}^-{\omega(ev_{P})}[r]   &  \omega(\II) \ar@{->}^-{\cong}[uu] } \;   \xymatrix@R=20pt{    A   \ar@{->}^-{db_{\omega(P)}}[r]    &  \omega(P) \tensor{A} \omega(P)^*  \ar@{->}^-{ \omega(P) \tensor{A} \gammaup }[rd] & \\ & & \omega(\dP)\tensor{A} \omega(P)      \\ \omega(\II) \ar@{->}^-{\cong}[uu] \ar@{->}^-{\omega(db_{P})}[r]  & \omega\big( \dP \tensor{}P \big)  \ar@{->}_-{\cong}[ru]    &   }
\end{gathered}
\end{equation}
commutative, where $ev_{-}$ and $db_{-}$ symbolize  the evaluation and the dual basis morphisms attached to a dualizable object  in a rigid monoidal category, respectively. 

Furthermore, $\phiup$ is compatible with the tensor product, that is, for any pair of objects $P, Q \in \cat{A}$, we have a commutative diagram:
\begin{equation}\label{Eq:Phi}
\begin{gathered}
\xymatrix@C=35pt{\omega(P\tensor{}Q) \ar@{->}^-{\cong}[r]   \ar@/_1.5pc/@{->}_-{\phiup_{P\tensor{}Q}}[rrrdd]   &  \omega(P)\tensor{A} \omega(Q)  \ar@{->}^-{\phiup_{P}\tensor{A}\phiup_{Q}}[r] & \omega(\dP)^{*}\tensor{A} \omega(\dQ)^{*}  \ar@{->}^-{\cong}[r] & (\omega(\dQ) \tensor{A} \omega(\dP))^{*} \ar@{->}^-{\cong}[d]    \\  & & & (\omega(\dQ\tensor{} \dP))^{*} \ar@{->}^-{\omega(\iota_{P ,\, Q})^{*}}[d]  \\ & & & \omega\big((P\tensor{}Q)^{\Sscript{\vee}}\big)^{*},   }
\end{gathered}
\end{equation}
where $\iota_{Q,\, P}: (Q\tensor{}P)^{\Sscript{\vee}}  \cong \dP\tensor{}\dQ   $
is the canonical natural isomorphism in the category  $\cat{A}$.  The natural transformation $\gammaup$ is also compatible with the tensor product, that is, we have another commutative diagram:
\begin{equation}\label{Eq:Gamma}
\begin{gathered}
\xymatrix@C=45pt{\omega(P\tensor{}Q) \ar@{->}^-{\gammaup_{P\tensor{}Q}}[rr]  \ar@{->}_-{\cong}[d]    &   & \omega\big((P\tensor{}Q)^{\Sscript{\vee}}\big)     \\  \omega(P)^{*}\tensor{A}\omega(Q)^{*} \ar@{->}^-{\gammaup_{P}\tensor{A}\gammaup_{Q}}[r] & \omega(\dP) \tensor{A} \omega(\dQ) \ar@{->}^-{\cong}[r] & \omega(\dQ\tensor{} \dP)  \ar@{->}_-{\omega(\iota_{P,\, Q}^{-1})}[u]    }
\end{gathered}
\end{equation}

On elements, the commutativity of both diagrams of equations \eqref{Eq:Phi} and \eqref{Eq:Gamma}, are expressed by the following first two equalities:
\begin{eqnarray}
 \iota_{Q,\, P} \,\,  \gammaup_{Q\tensor{}P}\big( q^* \star p^*\big)  & =&  \gammaup_{P}(p^*) \tensor{A} \gammaup_{Q}(q^*)  , \label{Eq:iota} \\
\phiup_{Q\tensor{}P}\big( q \tensor{A} p\big)  & =&     \Big(  \phiup_{P}(p) \star \phiup_{Q}(q) \Big) \,\,  \iota_{Q,\, P}, \label{Eq:phi} \\ 
 (q^*\star p^*) \, \,  \tau_{P,\, Q} &=& p^*\star q^*,  \label{Eq:tau}
\end{eqnarray}
for every  $P, Q \in \cat{A}$, $p \in \omega(P)$, $q \in \omega(Q)$  and $p^* \in \omega(P)^*, q^* \in \omega(Q)^*$, where we have used the actions of equations \eqref{omega1} and \eqref{omega2}.

Take two generic elements $(\bara{p^*\tensor{T_P}p}) , (\bara{q^*\tensor{T_Q} q})  \in \rR(\cat{A})$ and using equality \eqref{Eq:tau}, we compute
\begin{eqnarray*}
(\bara{p^*\tensor{T_P}p}) \, (\bara{q^*\tensor{T_Q} q})  &=& \bara{(q^*\star p^*) \tensor{T_{Q\tensor{}P}} (q\tensor{A}p)} \\ &=& \bara{(q^*\star p^*) \tensor{T_{Q\tensor{}P}} \tau_{P,\, Q} (p\tensor{A}q)}, \,\, \tau_{P,\, Q} \text{ is an arrow in }\cat{A} \\ &=& \bara{(q^*\star p^*)\tau_{P,\, Q} \tensor{T_{P\tensor{}Q}} (p\tensor{A}q) }\\ &=&  \bara{(p^*\star q^*) \tensor{T_{P\tensor{}Q}} (p\tensor{A}q)} \\ &=& (\bara{q^*\tensor{T_Q} q}) \, (\bara{p^*\tensor{T_P}p}),
\end{eqnarray*}
which means that the multiplication \eqref{Eq:mo} is commutative, and so $ \rR(\cat{A})$ is a commutative $A\tensor{T_{\II}}A$-algebra.  The compatibility of the comultiplication  of Eq.\eqref{Eq:delta} and the counit of Eq.\eqref{Eq:counit} with the multiplication of Eq.\eqref{Eq:mo}, are routine computation using the dual basis properties. 

The following map 
\begin{equation}\label{Eq:So}
\Scr{S}:   \rR(\cat{A}) \longrightarrow  \rR(\cat{A}), \qquad \Big(\bara{p^*\tensor{T_P} p} \longmapsto \bara{\phiup(p) \tensor{T_{\dP}} \gammaup(p^*)}\Big), 
\end{equation}
which is well defined thanks to the naturality of $\gammaup_{-}$ and $\phiup_{-}$, is our candidate for the antipode map of  the commutative bialgebroid $(A,\rR(\cat{A}))$.
Let us check that $\Scr{S}$ transforms the source to the target and vice-versa, and that it is an algebra map as well. So, for two elements $a, b \in A$, we have 
$$\Scr{S}(\etaup(a\tensor{T_{\II}}b)) \,=\, \Scr{S}(\bara{\Sf{l}_a\tensor{T_{\II}}b}) \,=\, \bara{\phiup(b) \tensor{T_{\II^{\Sscript{\vee}}}} \gammaup(\Sf{l}_a)} \,\overset{\eqref{Eq:PG}}{=}\, \bara{\Sf{l}_b\tensor{T_{\II}}a}\,=\, \etaup(b\tensor{T_{\II}}a).
$$ 
Hence $\Scr{S} \circ \Sf{s}\,=\, \Sf{t}$ and $\Scr{S} \circ \Sf{t}\,=\, \Sf{s}$. The fact that $\Scr{S}$ is a multiplicative map follows from the following computation: For any two generic elements $(\bara{p^*\tensor{T_P} p}) , (\bara{q^*\tensor{T_Q}q}) \in \rR(\cat{A})$, we have
\begin{eqnarray*}
\Scr{S}\Big((\bara{p^*\tensor{T_P} p}) \, (\bara{q^*\tensor{T_Q}q})\Big) &=& \Scr{S}\Big(\bara{(q^*\star p^*) \tensor{T_{Q\tensor{}P}}(q\tensor{A}p)} \Big) \\ &=& \bara{\phiup(q\tensor{A}p) \tensor{T_{(Q\tensor{}P)^{\Sscript{\vee}}}} \gammaup(q^*\star p^*) }\\ &\overset{\eqref{Eq:iota}}{=}&  \bara{\phiup(q\tensor{A}p) \tensor{T_{(Q\tensor{}P)^{\Sscript{\vee}}}}  \iota_{Q,\, P}^{-1}  \, \gammaup(p^*)\tensor{A} \gammaup(q^*)} \\ &=&   \bara{\phiup(q\tensor{A}p)  \, \iota_{Q,\, P}^{-1} \tensor{T_{\dP\tensor{}\dQ}} \gammaup(p^*)\tensor{A} \gammaup(q^*)} \\ &\overset{\eqref{Eq:phi}}{=}& \bara{ \phiup(p)\star \phiup(q) \tensor{T_{\dP\tensor{}\dQ}} \gammaup(p^*)\tensor{A} \gammaup(q^*)} \\ &\overset{\eqref{Eq:mo}}{=}& (\bara{\phiup(q) \tensor{T_{\dQ}} \gammaup(q^*)}) \, \, (\bara{\phiup(p)\tensor{T_{\dP}} \gammaup(p^*)}) \\ &=&  \Scr{S}(\bara{q^*\tensor{T_Q}q}) \, \, \Scr{S}(\bara{p^*\tensor{T_P}p}),
\end{eqnarray*}
which shows that $\Scr{S}$ is a morphism of algebras.

We still need to check that, for every element $h \in  \rR(\cat{A})$, we  have
$$ h_1 \Scr{S}(h_2)\,\,=\,\, \etaup(\varepsilon(h)\tensor{}1), \quad \Scr{S}(h_1)h_2\,\,=\,\, \etaup(1\tensor{}\varepsilon(h)).$$  Take $h \in \rR(\cat{A})$ of the form $h=\bara{p^*\tensor{T_P}p}$, where $p^* \in \omega(P)^*$, $p \in \omega(P)$, for some object $P \in \cat{A}$, then we have  (summation understood)
\begin{eqnarray*}
h_1\Scr{S}(h_2) &=& (\bara{p^*\tensor{T_P}e_{\alpha, P}}) \Scr{S}(\bara{e_{\alpha,P}^*\tensor{T_P}p}) \\ &=&  (\bara{p^*\tensor{T_P}e_{\alpha, P}}) \, (\bara{\phiup(p)\tensor{T_{\dP}}\gammaup(e_{\alpha,P}^*)}) 
\\ &=&   \bara{(p^*\star \phiup(p))\tensor{T_{P\tensor{}\dP}}(e_{\alpha,P}\tensor{A}\gammaup(e_{\alpha,P}^*))}
\\ &\overset{\eqref{Eq:evpluadb}}{=}& \bara{(p^*\star \phiup(p))\tensor{T_{P\tensor{}\dP}}\omega(db_P)(1_A)}
\\ &=&   \bara{(p^*\star \phiup(p))\omega(db_P)  \tensor{T_{\II}}1_A}
\\ &=&   \bara{\big( \omega(db_P)^* \circ (p^*\tensor{A} \phiup(p)) \big)  \tensor{T_{\II}}1_A}
\\ &\overset{\eqref{Eq:evpluadb}}{=}& \bara{\big( db_{\omega(P)}^* \circ (\omega(P)^* \tensor{A}\gammaup^*)  \circ (p^*\tensor{A} \phiup(p)) \big)  \tensor{T_{\II}}1_A}
\\ &=& \bara{\big( db_{\omega(P)}^*  \circ (p^*\tensor{A} \gammaup^*\phiup(p)) \big)  \tensor{T_{\II}}1_A}
\\ &\overset{\eqref{Eq:PG}}{=}&  \bara{\big( db_{\omega(P)}^*  \circ (p^*\tensor{A} \psiup(p)) \big)  \tensor{T_{\II}}1_A}
\\ &=&  \bara{ (p^*\tensor{A} \psiup(p)) \big( db_{\omega(P)}(1_A) \big)  \tensor{T_{\II}}1_A}
\\  &=& \bara{\Sf{l}_{p^*(p)} \tensor{T_{\II}} 1_A}\,\,=\,\, \etaup(\varepsilon(h)\tensor{}1_A),
\end{eqnarray*}
which shows the first desired equality. The second one is similarly obtained (or can be obtained from the  equality $\Scr{S}^2=id$ which is shown below). Lastly, since the natural isomorphisms $\fk{d}_P: P \to (\dP)^{\Sscript{\vee}}$ in  $\cat{A}$, satisfy the following equalities
\begin{equation}\label{Eq:fkd}
\gammaup_{\dP} \circ \phiup_P \,\,=\,\, \omega(\fk{d}_P)\qquad \omega(\fk{d}_P)^* \circ \phiup_{\dP} \circ \gammaup_P \,\, =\,\, id_{\omega(P)^*}, 
\end{equation}
it is not difficult to show that $\Scr{S}^2=id$, and this finishes the proof.
\end{proof}

\begin{remark}\label{rem:generalcase}
Given two symmetric rigid monoidal $K$-linear categories  with symmetric monoidal faithful $K$-linear functors: $\omega: \cat{A} \to add(A)$ and $\omega':\cat{A}'\to add(A)$, and  $\cat{F}: \cat{A} \to \cat{A}'$ a monoidal $K$-linear functor such that 
$$
\xymatrix@R=15pt@C=40pt{  \cat{A} \ar@{->}^-{\cat{F}}[rr]  \ar@{->}_-{\omega}[rd] & & \cat{A}'  \ar@{->}^-{\omega'}[ld] \\ & add(A) & }
$$
is a commutative diagram, then there exists a morphism $\phiup=:\rR(\cat{F}): (A,\rR(\cat{A})) \to (A,\rR(\cat{A}'))$ of Hopf algebroids making commutative the following diagram of functors:
$$
\xymatrix@R=15pt{   & \cat{A}^{\Sscript{\rR(\cat{A})}} \ar@{->}^-{\phi_*}[rrr] \ar@/_1pc/@{->}^-{\oO}[rddd]  & & & (\cat{A}')^{\Sscript{\rR(\cat{A}')}} \ar@/^1.5pc/@{->}^-{\oO}[llddd]  \\  \cat{A} \ar@{->}^-{\cat{F}}[rrr] \ar@{->}^-{\omegaup}[ru] \ar@/_1pc/@{->}^-{\omega}[rrdd] & & & \cat{A}' \ar@{->}^-{\omegaup'}[ru] \ar@/^1pc/@{->}_-{\omega}[ldd] & \\ & & & &   \\ & & add(A) & & }
$$
where $\oO$ is the forgetful functor and $\phiup_{*}$ is the restriction of the induced functor $\phiup_{*}: \rcomod{\rR(\cat{A})} \to \rcomod{\rR(\cat{A}')}$ sending any right $\rR(\cat{A})$-comodule $(M, \varrho_M)$ to the right $\rR(\cat{A}')$-comodule $$\xymatrix@C=30pt{M \ar@{->}^-{\varrho_M}[r]& M\tensor{A}\rR(\cat{A}) \ar@{->}^-{M\tensor{A}\phiup}[r]  & M \tensor{A}\rR(\cat{A}'), }$$ and acting obviously on morphisms. 
\end{remark}

\subsection{Reconstruction and Galois corings}\label{ssec:Galois}
It is well known that any coalgebra over a field can be reconstructed from its category of finite-dimensional right comodules. That is, it is isomorphic to an infinite comatrix coalgebra. Over a general base ring, one only obtains an $A$-coring morphism which is not always an isomorphism. Precisely, let $\Scr{R}(\cat{A}^{\coring{C}})$ be the $A$-coring associated to the fibre functor $\Scr{O}: \cat{A}^{\coring{C}} \to add(A_A)$ for some $A$-coring $\coring{C}$. Then there is a homomorphism of $A$-corings known as the \emph{canonical map}:
\begin{equation}\label{Eq:can}
\Sf{can}_{\cat{A}^{\coring{C}}} : \Scr{R}(\cat{A}^{\coring{C}})  \longrightarrow \coring{C}, \quad \LR{p^*\tensor{T_P} p + \fk{J}_{\cat{A}} \longmapsto (p^*\tensor{A}\coring{C}) \circ \varrho_P(p)}.
\end{equation}
Obviously any $K$-linear functor $\cat{A} \to \cat{A}^{\coring{C}}$ induces an analogue morphisms of corings $\Sf{can}_{\cat{A}}$.

Following the terminology used in \cite{ElKaoutit/Gomez:2004b,Gomez/Vercruysse:2007}:

\begin{definition}\label{def:Galois}
An $A$-coring $\coring{C}$ is said to be an $\cat{A}^{\coring{C}}$-\emph{Galois coring}   (or $\Sigma=\oplus_{p \in\cat{A}^{\coring{C}} }P$ is a right  \emph{Galois comodule}) provided $\Sf{can}_{\cat{A}^{\coring{C}}}$ is an isomorphism of $A$-corings.  
\end{definition}

This is the case when $\rcomod{\coring{C}}$ is an abelian category having $\cat{A}^{\coring{C}}$ as a set of generators \cite[Theorem 4.8]{ElKaoutit/Gomez:2004b}, which is exactly the situation in the aforementioned coalgebra case (see \cite[Proposition 4.13]{Deligne:1990} for the  case of corings over fields).  The case of corings underlying certain commutative Hopf algebroids is of special interest as the following example shows.

\begin{example}[\emph{Geometrically transitive Hopf algebroids} \cite{Bruguieres:1994, ElKaoutit:2015}]\label{exam:GT}
Roughly speaking, a commutative flat Hopf algebroid $(A,\cH)$ over a base field $K$, is said to be \emph{geometrically transitive} (GT for short), provided that the  algebra map $\etaup=\Sf{s}\tensor{K}\Sf{t}: A\tensor{K}A \to \cH$ is a faithfully flat extension. The fact that $K$ should be a field is a crucial condition here.  It turns out that GT Hopf algebroid $(A,\cH)$ is reconstructed from  its $A$-profinite comodules, that is, from the category $\cA^{\cH}$ of the underlying $A$-coring $\cH$, as (the skeleton of) this  category form a `set' of generators, see \cite[Corollary A (2)]{ElKaoutit:2015}. Indeed, every $\cH$-comodule is an inductive limit of comodules from $\cA^{\cH}$. Besides, any comodule is faithfully flat as an $A$-module  \cite[Lemma 4.3 (b)]{ElKaoutit:2015}. In summary,  by applying  \cite[Theorem 4.8]{ElKaoutit/Gomez:2004b}, the canonical map of Eq.~ \eqref{Eq:can} is bijective and $\cH$ is a Galois $A$-coring.
\end{example}

\subsection{Principal bi-bundles attached to two different fibre functors}\label{ssec:PB}
Apart from its own general interest, the material of this section will be used to clarify the construction of Picard-Vessiot extensions (over the affine line) in Section \ref{sec:Weyl}.  In this subsection all algebras are  commutative $K$-algebras over a base field $K$ and are assumed to have $K$-points. The category of all  commutative algebras over a commutative algebra $A$, is denoted by $\Alg{A}$.

We keep the notation of subsection \ref{ssec:HAldfibre} and fix a symmetric rigid monidal $K$-linear category $\cA$.  For our needs it is convenient to assume further that the endomorphism algebra of the identity object of $\cA$ is isomorphic to the base field, that is, $T_{\II} \cong K$.   For a given (non trivial) monoidal symmetric $K$-linear faithful functor $\omega: \cA \to add(A)$ (i.e., ~ a fibre functor),  instead of denoting by $\rR(\cA)$  the resulting Hopf algebroid from Proposition \ref{prop:TBialgd}, we will use the notation $\RAA{\omega}:=\rR(\cA)$ in order to specify which fibre functor we are using and over which base algebra we are working.  

Given an algebra map $\xi: A \to C$, we denote by $\omega\tensor{A} {}_{\Sscript{\xi}}C: \cA \to add(C)$ the extended fibre functor, which sends any object $P \in \cA$ to the finitely generated and projective $C$-module $\omega(P) \tensor{A}{}_{\Sscript{\xi}}C$.

Assume we are given two fibre functors $\omega_i: \cA \to add(A_i)$, $i=1,2$. 
If $R$ is an $A_1\tensor{}A_2$-algebra, that is, we have an extension $s\tensor{}t: A_1\tensor{}A_2 \to R$, then we have two extended fibre functors, namely, $\omega_1\tensor{A_1} {}_{\Sscript{s}}R$ and $\omega_2\tensor{A_2} {}_{\Sscript{t}}R$.
In this way, we can define the following $(A_1, A_2)$-bimodule:
\begin{equation}\label{Eq:L12}
\Rdos{\omega_1}{\omega_2} : = \frac{\bigoplus_{P \, \in \, \cA}\omega_1(P)^*\tensor{T_P}\omega_2(P)}{\cJ_{\Sscript{\omega_1,\, \omega_2}}}
\end{equation}
where $\cJ_{\Sscript{\omega_1,\, \omega_2}}$ is the subbimodule generated by the set 
\begin{equation}\label{Eq:JAA}
\left\{\underset{}{} q^* \tensor{T_Q} \omega_2(t)(p) - (q^* \circ \omega_1(t)) \tensor{T_P}p | \;\,  q^* \in \omega_1(Q)^*,\, p\in \omega_2(P),\, t \in T_{PQ},\, P,Q \in \cat{A} \right\}.
\end{equation}
Then there are two bimodule  maps 
\begin{equation}\label{Eq:AB}
\bo{\alpha}: A_1 \longrightarrow \Rdos{\omega_1}{\omega_2},   \Big(  a_1 \longmapsto \bara{\Sf{l}_{a_1} \tensor{K} 1_{A_2}} \Big) \text{ and }\,  
\bo{\beta}: A_2 \longrightarrow \Rdos{\omega_1}{\omega_2},   \Big(  a_2 \longmapsto \bara{1_{A_1} \tensor{K} {\Sf{l}_{a_2}} } \Big).
\end{equation}

Our next aim is to show that the triple $(\Rdos{\omega_1}{\omega_2}, \bo{\alpha}, \bo{\beta})$ admits a structure of $(\RAAu{\omega_1}, \RAAd{\omega_2})$-bicomodule algebra which becomes a principal bibundle (see \cite[Section 4]{Kaoutit/Kowalzig:2014} for the pertinent definition) when the involved Hopf algebroids are geometrically transitive (see Example \ref{exam:GT} and \cite[Theorem A]{ElKaoutit:2015} for more details).

The commutative algebra structure of $\Rdos{\omega_1}{\omega_2}$ is defined in similar way as in Eq.~\eqref{Eq:mo}. That is, for a given two object $P, Q \in \cA$ and elements $p_1^* \in \omega_1(P)^*$, $q_1^* \in \omega_1(Q)^*$ and  $p_2 \in \omega_2(P)$, $q_2 \in \omega_2(Q)$, we have that 
\begin{equation}\label{Eq:modos}
(\bara{p_1^*\tensor{T_P}p_2})\,.\, (\bara{q_1^*\tensor{T_Q}q_2}) \,\, :=\,\, \bara{(p_1^* \star q_1^*) \tensor{T_{P\tensor{}Q}} (p_2\tensor{\peque{A}}q_2)}, 
\end{equation}
is a well defined multiplication, such that the map $\bo{\alpha}\tensor{}\bo{\beta}: A_1\tensor{K}A_2 \to \Rdos{\omega_1}{\omega_2}$ is a $K$-algebra  map. Clearly, we have $\Rdos{\omega_1}{\omega_2}=\RAA{\omega}$, whenever $A_1=A_2=A$ and $\omega_1=\omega_2=\omega$. In  case we only assume that $A_1=A_2=A$,  we denote by $\langle \bo{\alpha}- \bo{\beta} \rangle$ the ideal of $\rR_{\Sscript{A\tensor{}A}}(\omega_1,\omega_2)$ generated by the set of elements 
$\big\{  \bo{\alpha}(a)- \bo{\beta}(a)|\,\,  a \in A  \big\}$, and by 
\begin{equation}\label{Eq:Ra}
\rR_{\Sscript{A}}(\omega_1,\omega_2) := \frac{\rR_{\Sscript{A\tensor{}A}}(\omega_1,\omega_2)}{\langle \bo{\alpha}- \bo{\beta} \rangle}
\end{equation}
its quotient $A$-algebra with extension denoted by  $\biota: A \to \rR_{\Sscript{A}}(\omega_1,\omega_2)$.   The equivalence class of a generic element $\bara{p^*\tensor{T_P}p}$ in this quotient algebra $\rR_{\Sscript{A}}(\omega_1,\omega_2)$ is denoted by $[\bara{p^*\tensor{T_P}p}]$.  When $\omega_1=\omega_2$, we denote $\rR_{\Sscript{A}}(\omega,\omega):=\RA{\omega}$ (here $\balpha$ and $\bbeta$ are the source and the target  maps, respectively, of this Hopf algebroid). Specifically,  $\RA{\omega}$  inherits canonically a structure of  commutative Hopf $A$-algebra, that is, a commutative Hopf algebroid with source is equal to the target.

In the general situation we will denote by $\IsomF{2}{1}$ the functor
\begin{equation}\label{Eq:IsomF}
\IsomF{2}{1}: {\rm Alg}_{\Sscript{A_1\tensor{}A_2}} \longrightarrow {\rm Sets}, \qquad \Big(  R \longrightarrow {\rm Isom}^{\Sscript{\otimes}} (\omega_2\tensor{A_2} \tR, \omega_1\tensor{A_1} \sR)  \Big)
\end{equation}
from the category of $(A_1\tensor{}A_2)$-algebras to the category of sets, which sends any $(A_1\tensor{}A_2)$-algebra $R$ to the set of all tensorial natural transformations from the functor $\omega_1\tensor{A_1} \sR$ to $\omega_2\tensor{A_2}\tR$ (recall that the category $\cA$ is assumed to be rigid, so that each of these natural transformations is already a natural isomorphism).  We will employ the same notations for the case $A_1=A_2=A$, that is, we will denote by $\Isomf{2}{1}: \Alg{A} \to {\rm Sets}$ the functor which sends any $A$-algebra $C$ to the set  ${\rm Isom}^{\Sscript{\otimes}} (\omega_2\tensor{A} C, \omega_1\tensor{A} C) $.

\begin{proposition}\label{prop:IsomF}
Keep the above notations. Then the functor $\IsomF{2}{1}$ of equation \eqref{Eq:IsomF} is represented by the $(A_1\tensor{}A_2)$-algebra $\Rdos{\omega_1}{\omega_2}$. Assume that $A_1=A_2=A$, then up to natural isomorphisms,  there is a commutative diagram 
$$
\xymatrix@C=40pt{  \Alg{A} \ar@{->}_{{\underline{\rm Isom}^{\Sscript{\otimes}}}{}_{\Sscript{A}}(\omega_2,\, \omega_1)}[dr] \ar@{->}_{}[rr]  & & \Alg{A\tensor{}A} \ar@{->}^-{{\underline{\rm Isom}^{\Sscript{\otimes}}}{}_{\Sscript{A\tensor{}A}}(\omega_2,\, \omega_1) }[dl]  \\ & {\rm Sets} &  }
$$
where the horizontal functor is the canonical one. Furthermore the functor $\Isomf{2}{1}$ is represented by the quotient $A$-algebra $\rR_{\Sscript{A}}(\omega_1,\omega_2)$ of equation \eqref{Eq:Ra}.
\end{proposition}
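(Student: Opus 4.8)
The plan is to prove representability by exhibiting, for every commutative $(A_1\tensor{}A_2)$-algebra $R$ with structure maps $\Sf{s}:A_1\to R$, $\Sf{t}:A_2\to R$, a bijection
\[
\mathrm{Hom}_{\Alg{A_1\tensor{}A_2}}\big(\Rdos{\omega_1}{\omega_2},\,R\big)\;\cong\;{\rm Isom}^{\Sscript{\otimes}}\big(\omega_2\tensor{A_2}\tR,\;\omega_1\tensor{A_1}\sR\big),
\]
natural in $R$. As already observed in the paper right after \eqref{Eq:IsomF}, since $\cat{A}$ is rigid every tensorial natural transformation between the strong monoidal functors $\omega_2\tensor{A_2}\tR$ and $\omega_1\tensor{A_1}\sR$ is automatically invertible, so the right-hand side is genuinely the value of $\IsomF{2}{1}$ on $R$; thus it suffices to handle tensorial natural transformations.

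Going from a transformation to a homomorphism: given $\theta$ with components $\theta_P:\omega_2(P)\tensor{A_2}\tR\to\omega_1(P)\tensor{A_1}\sR$, I would set
\[
\phi_\theta\big(\bara{p_1^*\tensor{T_P}p_2}\big)\;=\;(p_1^*\tensor{A_1}R)\big(\theta_P(p_2\tensor{A_2}1_R)\big),
\]
where $p_1^*\tensor{A_1}R$ is the induced map $\omega_1(P)\tensor{A_1}R\to A_1\tensor{A_1}R=R$. Balancedness over $T_P$ and vanishing on the generators \eqref{Eq:JAA} of $\cJ_{\Sscript{\omega_1,\,\omega_2}}$ follow directly from naturality of $\theta$ along a morphism $t\in T_{PQ}$; compatibility with $\bo{\alpha}$ and $\bo{\beta}$ uses that monoidality forces $\theta_{\II}$ to be the identity of $R$ after the identifications $\omega_i(\II)\cong A_i$; and multiplicativity of $\phi_\theta$ is the translation of the tensor-compatibility of $\theta$ — via the coherence isomorphisms $\omega_i(P\tensor{}Q)\cong\omega_i(P)\tensor{A_i}\omega_i(Q)$ and the base-change isomorphisms $(\omega_i(P)\tensor{A_i}R)\tensor{R}(\omega_i(Q)\tensor{A_i}R)\cong\omega_i(P\tensor{}Q)\tensor{A_i}R$ — into the star-product \eqref{Eq:star} and the multiplication \eqref{Eq:modos}.

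Conversely, given a morphism $\phi:\Rdos{\omega_1}{\omega_2}\to R$ of $(A_1\tensor{}A_2)$-algebras and a right dual basis $\{e_i,e_i^*\}$ of the $A_1$-module $\omega_1(P)$, I would define
\[
\theta^{\phi}_P(p_2\tensor{A_2}r)\;=\;\sum_i e_i\tensor{A_1}\phi\big(\bara{e_i^*\tensor{T_P}p_2}\big)\,r.
\]
Independence of the dual basis, $R$-linearity, and naturality in $P$ follow from the defining relations of $\cJ_{\Sscript{\omega_1,\,\omega_2}}$ together with the canonical identification $\mathrm{Hom}_{A_1}(\omega_1(P),\omega_1(Q))\cong\omega_1(Q)\tensor{A_1}\omega_1(P)^*$ (under which both $\sum_k f_k\tensor{A_1}(f_k^*\circ\omega_1(t))$ and $\sum_i\omega_1(t)(e_i)\tensor{A_1}e_i^*$ represent $\omega_1(t)$); tensoriality of $\theta^{\phi}$ comes from multiplicativity of $\phi$ and the unit identity $\phi(1)=1_R$. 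A routine dual-basis computation shows $\theta\mapsto\phi_\theta$ and $\phi\mapsto\theta^{\phi}$ are mutually inverse and natural in $R$, which proves the first assertion.

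For the case $A_1=A_2=A$: the canonical functor $\Alg{A}\to\Alg{A\tensor{}A}$ sends an $A$-algebra $(\xi:A\to C)$ to $C$ viewed as an $(A\tensor{}A)$-algebra via $\xi\tensor{}\xi$, and for such $C$ one has $\Sf{s}=\Sf{t}=\xi$, so by definition $\IsomF{2}{1}(C)={\rm Isom}^{\Sscript{\otimes}}(\omega_2\tensor{A}C,\omega_1\tensor{A}C)=\Isomf{2}{1}(C)$, giving the commutative triangle. Finally, every $(A\tensor{}A)$-algebra map $\phi:\Rdos{\omega_1}{\omega_2}\to C$ satisfies $\phi\bo{\alpha}=\xi=\phi\bo{\beta}$, hence annihilates the ideal $\langle\bo{\alpha}-\bo{\beta}\rangle$ and factors uniquely, as a morphism of $A$-algebras with respect to $\biota$, through the quotient $\Raa{1}{2}$ of \eqref{Eq:Ra}; conversely every $A$-algebra map out of $\Raa{1}{2}$ lifts to such a $\phi$. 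Combining this with the first assertion shows that $\Raa{1}{2}$ represents $\Isomf{2}{1}$. The main obstacle throughout is the monoidality bookkeeping: matching ``$\phi_\theta$ multiplicative'' with ``$\theta$ tensorial'' requires keeping careful track of the coherence isomorphisms of $\omega_1,\omega_2$, of the several base-change isomorphisms over $R$, and of the star-product, and the unit-object identities on $\II$ must be handled with equal care.
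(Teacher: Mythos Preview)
Your argument is correct and is precisely the standard construction underlying the reference the paper invokes: the paper's own proof consists solely of citing \cite[Proposition 6.6]{Deligne:1990} for the first assertion and declaring the remaining two ``not difficult and left to the reader'', so you have in effect supplied the omitted details rather than taken a different route. One minor wording point: when you describe the canonical functor $\Alg{A}\to\Alg{A\tensor{}A}$ as sending $(\xi:A\to C)$ to $C$ ``via $\xi\tensor{}\xi$'', you of course mean the algebra map $a\tensor{}b\mapsto\xi(a)\xi(b)$ (equivalently, the composite of the multiplication $A\tensor{}A\to A$ with $\xi$), not the linear map into $C\tensor{}C$; this is clear from context but worth stating unambiguously.
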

\begin{proof}
The first statement of the proposition is \cite[Proposition 6.6]{Deligne:1990}. The rest of the proof is not difficult  and left to the reader. 
\end{proof}

Recall that a pair $(\cP,\alpha)$ is said to be a \emph{left comodule algebra} over a commutative Hopf algebroid $(A, \cH)$, when $\alpha: A \to \cP$ is an extension of algebras and $P$ is endowed  with a structure of left $\cH$-comodule whose underlying left $A$-module is ${}_{\Sscript{\alpha}}\cP$ such that the coaction $\bo{\lambda}: \cP \to \cH_{\Sscript{s}} \tensor{A} {}_{\Sscript{\alpha}}\cP$ is an $A$-algebra map.  Right comodule algebras are similarly defined, and bicomodule algebras are naturally introduced. Thus an $(\cH, \cK)$-bicomodule algebra over two commutative Hopf algebroids $(A,\cH)$ and $(B,\cK)$,  is a triple $(\cP,\alpha, \beta)$ such that $(\cP, \alpha)$ is a left $\cH$-comodule algebra, $(\cP, \beta)$ is a right $\cK$-comodule algebra and  the left $\cH$-coaction is a morphism of right $\cK$-comodules (or equivalently the right $\cK$-coaction is left $\cH$-comodule morphism).

\begin{definition}(\cite[Defintion 4.1]{Kaoutit/Kowalzig:2014})\label{def:PB}
A \emph{left principal $(\cH, \cK)$-bundle} $(\cP, \alpha, \beta)$ 
for two Hopf algebroids $(A, \cH)$ and $(B, \cK)$ 
is an $(\cH,\cK)$-bicomodule algebra, that is,  $\cP$  is equipped with a left $\cH$-comodule algebra and a right $\cK$-comodule algebra structures with respect to the algebra maps $\alpha: A \to \cP$ resp.\ $\beta: B \to \cP$ such that  
\begin{enumerate}
\item $\beta$ is a faithfully flat extension; 
\item the canonical map
\begin{equation}\label{Eq:canl}
\can{}^{l}: \cP \tensor{B} \cP \to \cH\tensor{A}\cP, \quad p\tensor{B}p' \mapsto p_{(-1)} \tensor{A} p_{(0)} p'
\end{equation}
is bijective.
\end{enumerate}
A triple $(P, \alpha, \beta)$ which only satisfies condition (2) is called \emph{left pseudo $(\cH,\cK)$-bundle}.  \emph{Right (pseudo) principal bundles} are similarly defined and we use the notation $\can{}^{r}$ for the corresponding canonical map. For instance, the opposite right principal bundle of a given left principal bundle $(\cP,\alpha, \beta)$, is the bundle $(\cP^{\Sscript{op}}, \beta, \alpha)$ where the algebra $\cP^{\Sscript{op}}=\cP$ and the coactions are switched (interchanging the source and the target maps). A \emph{(pseudo) principal bibundle} is a left and right (pseudo) principal bundle. That is, for pseudo bibundle, both $\can{}^{l}$ and $\can{}^{r}$ are required to be bijective.  If, furthermore, both $\alpha$ and $\beta$ are faithfully flat, then the bibundle is principal.  For a given flat Hopf algebroid $(A,\cH)$ it is easily shown that the triple $(\cH, s, t)$ is a principal $(\cH, \cH)$-bibundle, see \cite[\S 4]{Kaoutit/Kowalzig:2014} for more examples and details.
\end{definition}

Now  we come back to our situation. Consider as above  the algebras $\Romaa{i}{j}$, for $i=1,2$. For an object $P \in \cA$ we fix a dual basis for $\omega_i(P) \in add(A_i)$ using the notation $\{e_{\alpha_{i,\, P}}, e^*_{\alpha_{i,\, P}}\}_{\alpha_{i,\, P}}$, $i=1,2$. We also make use of the notation $p_i \in \omega_i(P)$ and $p^*_i \in \omega_i(P)^*$, for any object $P \in\cA$.   In this way we have the following well defined  maps:
\begin{gather}\label{Eq:LRA}
\xymatrix@R=0pt{  \Romaa{1}{2} \ar@{->}^-{\bo{\lambda}}[r]  & \Romaa{1}{2} \tensor{A_2} \Roma{2} \\   \Big( \bara{p^*_1 \tensor{T_P} p_2} \ar@{|->}[r] &  \sum_{\alpha_{2,\, P}} \bara{p^*_1\tensor{T_p}e_{\alpha_{2,\, P}}} \, \tensor{A_2} \, \bara{e^*_{\alpha_{2,\, P}}\tensor{T_P} p_2} \Big),  }
\end{gather}
where the left $A_2$-action on $\Roma{2}$ is given by the algebra map $\balpha$ of equation \eqref{Eq:AB}, and 
\begin{gather}\label{Eq:RRA}
\xymatrix@R=0pt{  \Romaa{1}{2} \ar@{->}^-{\bo{\varrho}}[r]  & \Roma{1} \tensor{A_1} \Romaa{1}{2}  \\   \Big( \bara{p^*_1 \tensor{T_P} p_2} \ar@{|->}[r] &  \sum_{\alpha_{1,\, P}} \bara{p^*_1\tensor{T_p}e_{\alpha_{1,\, P}}} \, \tensor{A_2} \, \bara{e^*_{\alpha_{1,\, P}}\tensor{T_P} p_2} \Big),  }
\end{gather}
where the right $A_1$-action on $\Roma{1}$ is given by the algebra map $\bbeta$ of equation \eqref{Eq:AB}. 

The following Lemma and its subsequent Corollary are inspired from  Proposition \ref{prop:IsomF}. 

\begin{lemma}\label{lema:Rcomalg}
Consider $(A_i, \Roma{i})$, for $i=1,2$, as commutative Hopf algebroids with structure maps given as in subsection \ref{ssec:HAldfibre}. Then the triple $(\Romaa{1}{2}, \balpha, \bbeta)$ is an $(\Roma{1},\Roma{2})$-bicomodule algebra with left and right coactions given by equations  \eqref{Eq:LRA} and \eqref{Eq:RRA}, respectively. Furthermore,    $\Romaa{1}{2}$ is a pseudo $(\Roma{1},\Roma{2})$-bibundle.
\end{lemma}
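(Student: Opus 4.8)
The plan is to verify directly that the maps $\bo{\lambda}$ and $\bo{\varrho}$ of \eqref{Eq:LRA} and \eqref{Eq:RRA} are well defined $A$-algebra maps endowing $\Romaa{1}{2}$ with the two coactions, that these coactions are compatible (bicomodule condition), and finally that the left and right canonical maps are bijective. The key observation throughout is that all three algebras $\Roma{1}$, $\Roma{2}$ and $\Romaa{1}{2}$ are of the same comatrix type, built from the \emph{same} small category $\cA$ (only the fibre functor on the relevant slot changes), so all the computations are formally identical to those already carried out in the proof of Proposition \ref{prop:TBialgd}, simply read with two bookkeeping indices instead of one.

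First I would check well-definedness of $\bo{\lambda}$ and $\bo{\varrho}$: one shows that the relations \eqref{Eq:JAA} defining $\cJ_{\Sscript{\omega_1,\omega_2}}$ (and the two one-sided analogues defining $\cJ_{\Sscript{\omega_1,\omega_1}}$, $\cJ_{\Sscript{\omega_2,\omega_2}}$) are respected, using that a dual basis $\{e_{\alpha_{2,P}},e^*_{\alpha_{2,P}}\}$ of $\omega_2(P)$ can be pushed along $\omega_2(t)$ for a morphism $t\in T_{PQ}$ exactly as in the choice-independence argument of \cite[Remark 2.2]{ElKaoutit/Gomez:2003a}. Coassociativity and counitality of each coaction, and the fact that the coactions are algebra maps, reduce to the dual-basis manipulations already displayed (the multiplication \eqref{Eq:modos} interacts with the comatrix comultiplication \eqref{Eq:delta} through the same $\star$-product computation used to prove commutativity of \eqref{Eq:mo}). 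That $\bo{\lambda}$ is left $A_1$-linear/right $A_2$-colinear and $\bo{\varrho}$ is right $A_2$-linear/left $A_1$-colinear, and that the two coactions commute, is then the single diagram
\begin{equation*}
(\Roma{1}\tensor{A_1}\bo{\lambda})\circ\bo{\varrho}\;=\;(\bo{\varrho}\tensor{A_2}\Roma{2})\circ\bo{\lambda},
\end{equation*}
which on a generator $\bara{p_1^*\tensor{T_P}p_2}$ amounts to inserting two dual bases $\sum_{\alpha_{1,P}}$ and $\sum_{\beta_{2,P}}$ and observing that the two orders of insertion give the same triple tensor; this is routine.

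For the pseudo-principality, I must exhibit inverses of
\begin{equation*}
\can{}^{l}:\Romaa{1}{2}\tensor{\Roma{2}}\Romaa{1}{2}\to \Roma{1}\tensor{A_1}\Romaa{1}{2},\qquad
\can{}^{r}:\Romaa{1}{2}\tensor{\Roma{1}}\Romaa{1}{2}\to \Romaa{1}{2}\tensor{A_2}\Roma{2}.
\end{equation*}
The inverse of $\can{}^{l}$ should be built, on generators, by using the antipode-type map $\Scr{S}$ of \eqref{Eq:So} adapted to the mixed setting: send $\bara{p_1^*\tensor{T_P}e} \tensor{A_1} \bara{q_1^*\tensor{T_Q}q_2}$ to $\bara{p_1^*\tensor{T_P}e}\tensor{\Roma{2}}\big(\Scr{S}\text{-twist of the rest}\big)$, i.e.\ mimic the formula $\beta_V^{-1}(v\tensor{A}v')=v_1\tensor{A}\Scr{S}(v_2)v'$ from Remark \ref{rem:S} with the understanding that $\Scr{S}$ here uses the natural isomorphisms $\gammaup,\phiup$ which make sense because $\cA$ is rigid and symmetric. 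Checking that this candidate is two-sided inverse to $\can{}^{l}$ is again a dual-basis computation, structurally identical to verifying the antipode axioms in Proposition \ref{prop:TBialgd}; the case of $\can{}^{r}$ is symmetric (indeed it is the opposite bundle in the sense of Definition \ref{def:PB}, obtained by swapping the roles of $\omega_1,\omega_2$).

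The main obstacle I anticipate is purely notational rather than conceptual: keeping the three families of relations $\cJ_{\Sscript{\omega_i,\omega_j}}$ straight, and making sure that every tensor product (over $A_1$, over $A_2$, over the various $T_P$, over $\Roma{1}$ or $\Roma{2}$) lands in the correct Sweedler--Takeuchi product so that the maps descend to the quotients and the compositions defining $\can{}^{l}\circ(\text{inverse})$ are legitimate. Once one fixes, for each $P\in\cA$, dual bases $\{e_{\alpha_{i,P}},e^*_{\alpha_{i,P}}\}$ for $\omega_i(P)$ ($i=1,2$) and writes everything in terms of the $\star$-product \eqref{Eq:star}, the naturality squares \eqref{Eq:GP1}--\eqref{Eq:evpluadb} take care of the rest, and faithful flatness of the two structure maps is \emph{not} needed at this stage (that is precisely why only the pseudo-bundle property, condition (2) of Definition \ref{def:PB}, is claimed here, the faithfully flat hypothesis being deferred to the geometrically transitive case as in Example \ref{exam:GT}).
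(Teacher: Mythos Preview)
Your approach is essentially the same as the paper's: both treat the bicomodule-algebra verification as routine dual-basis bookkeeping and then invert the canonical maps via an antipode-type translation map built from the natural isomorphisms $\gammaup,\phiup$ of subsection~\ref{ssec:HAldfibre}. The paper simply records the explicit translation maps
\[
\tauup^{l}\big(\bara{p^*_1\tensor{T_P} p_1}\big)=\sum_{\alpha_{2,P}} \bara{p^*_1 \tensor{T_P} e_{\alpha_{2,P}}}\tensor{A_2}\bara{\phiup(p_1)\tensor{T_{\dP}}\gammaup(e^*_{\alpha_{2,P}})},
\]
and the symmetric formula for $\tauup^{r}$, which is precisely your ``$\Scr{S}$-twist adapted to the mixed setting''.

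One correction: your domains for the canonical maps are wrong. According to Definition~\ref{def:PB}, for $(\cP,\alpha,\beta)$ with $\alpha:A_1\to\cP$ and $\beta:A_2\to\cP$ one has
\[
\can{}^{l}:\cP\tensor{A_2}\cP\longrightarrow \Roma{1}\tensor{A_1}\cP,\qquad
\can{}^{r}:\cP\tensor{A_1}\cP\longrightarrow \cP\tensor{A_2}\Roma{2},
\]
i.e.\ the tensor on the source side is over the base algebra $A_2$ (resp.\ $A_1$), not over $\Roma{2}$ (resp.\ $\Roma{1}$). Your formulas $\Romaa{1}{2}\tensor{\Roma{2}}\Romaa{1}{2}$ and $\Romaa{1}{2}\tensor{\Roma{1}}\Romaa{1}{2}$ do not even type-check, since $\Romaa{1}{2}$ is a \emph{comodule}, not a module, over the Hopf algebroids. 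Once you fix this, the translation map indeed lands in $\Romaa{1}{2}\tensor{A_2}\Romaa{1}{2}$ exactly as the paper writes, and your verification goes through.
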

\begin{proof}
The first statement is a straightforward verification.  As for the last one, recall the natural transformations $\gammaup$ and $\phiup$ from equations  \eqref{Eq:GP1} and \eqref{Eq:GP2}. We only give the translation map which leads to the inverse of the canonical map $\can{}$, the pertinent verification are routine computations which use the properties of $\gammaup$ and $\phiup$ already mentioned in subsection \ref{ssec:HAldfibre}. For the left one, that is, for $\can{}^{l}$ the translation map is given by:
$$
\xymatrix@R=0pt{ \RA{\omega_1}  \ar@{->}^-{\tauup^{l}}[r] & {}_{\Sscript{\balpha}}\Romaa{1}{2}_{\Sscript{\bbeta}} \, \tensor{A_2} \, \Romaa{1}{2}_{\Sscript{\bbeta}}
\\ \bara{p^*_1\tensor{T_P} p_1} \ar@{|->}[r] & \sum_{\alpha_{2,\, P}} \bara{p^*_1 \tensor{T_p} e_{\alpha_{2,\, P}}} \tensor{A_2}  \bara{\phiup(p_1) \tensor{T_{\dP}} \gammaup(e^*_{\alpha_{2,\, P}}) }  } 
$$ 
While for the right canonical map $\can{}^{r}$ the translation map is given by:
$$
\xymatrix@R=0pt{ \RA{\omega_2}  \ar@{->}^-{\tauup^{r}}[r] & {}_{\Sscript{\balpha}}\Romaa{1}{2} \, \tensor{A_1} \, {}_{\Sscript{\balpha}}\Romaa{1}{2}_{\Sscript{\bbeta}}
\\ \bara{p^*_2\tensor{T_P} p_2} \ar@{|->}[r] & \sum_{\alpha_{1,\, P}}   \bara{ \gammaup(e_{\alpha_{1,\, P}}) \tensor{T_{\dP}}\phiup(p^*_2) } \tensor{A_1} \bara{e^*_{\alpha_{1,\, P}} \tensor{T_p}p_2 }   } 
$$ 
This finishes the proof.
\end{proof}

Over the same algebra, the coaction of equations \eqref{Eq:LRA} and \eqref{Eq:RRA} are canonically lifted to the quotient algebras of Eq. \eqref{Eq:Ra} and the Lemma \ref{lema:Rcomalg} still working in this case as well.

\begin{corollary}\label{coro:Racomalg}
Assume that $A_{1}=A_{2}=A$.  Then the bicomodule  algebra structure of Lemma \ref{lema:Rcomalg}, induces a structure of $(\Ra{1}, \Ra{2})$-bicomdule algebra on $\Raa{1}{2}$.  Furthermore, $\Raa{1}{2}$ is a pseudo $(\Ra{1}, \Ra{2})$-bibundle.
\end{corollary}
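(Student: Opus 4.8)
The plan is to deduce the statement from Lemma~\ref{lema:Rcomalg} by descent along the canonical algebra surjections $\pi\colon\Romaa{1}{2}\twoheadrightarrow\Raa{1}{2}$ and $\pi_{i}\colon\Roma{i}\twoheadrightarrow\Ra{i}$ ($i=1,2$), which by \eqref{Eq:Ra} are the quotients modulo the ideals generated by the differences of source and target maps. Throughout, $\biota=\bar{\balpha}=\bar{\bbeta}$ denotes the structure map of $\Raa{1}{2}$ and $\bar{\Sf{s}}_{i}=\bar{\Sf{t}}_{i}$ the (common) source/target of $\Ra{i}$.

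First I would show that the coactions $\bo{\varrho}$ of \eqref{Eq:RRA} and $\bo{\lambda}$ of \eqref{Eq:LRA} descend. Since they are algebra maps (Lemma~\ref{lema:Rcomalg}), so are $(\pi_{1}\tensor{A}\pi)\circ\bo{\varrho}$ and $(\pi\tensor{A}\pi_{2})\circ\bo{\lambda}$, hence it is enough to check that these kill the generators $\balpha(a)-\bbeta(a)$ of $\langle\balpha-\bbeta\rangle$. Evaluating \eqref{Eq:LRA} on the unit object $\II$, where $\omega_{i}(\II)\cong A$ carries the trivial dual basis, gives $\bo{\lambda}(\balpha(a))=\balpha(a)\tensor{A}1$ and $\bo{\lambda}(\bbeta(a))=1\tensor{A}\Sf{t}_{2}(a)$; applying $\pi\tensor{A}\pi_{2}$ and using that the balancing of $\Raa{1}{2}\tensor{A}\Ra{2}$ runs over $\bar{\bbeta}$ on the left and over the source of $\Ra{2}$ on the right, together with the relations $\bar{\balpha}=\bar{\bbeta}$ and $\bar{\Sf{s}}_{2}=\bar{\Sf{t}}_{2}$, forces the two images to coincide. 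The symmetric argument for $\bo{\varrho}$ uses instead $\bar{\Sf{s}}_{1}=\bar{\Sf{t}}_{1}$ (again with $\bar{\balpha}=\bar{\bbeta}$). Thus $\bo{\lambda}$ and $\bo{\varrho}$ factor through $\pi$ to algebra maps $\bar{\bo{\lambda}}\colon\Raa{1}{2}\to\Raa{1}{2}\tensor{A}\Ra{2}$ and $\bar{\bo{\varrho}}\colon\Raa{1}{2}\to\Ra{1}\tensor{A}\Raa{1}{2}$. This compatibility of the coactions with the ideal $\langle\balpha-\bbeta\rangle$ is the one point that genuinely needs care: one must follow how the two copies of $A$ that become identified in $\Raa{1}{2}$ (resp.\ in $\Ra{i}$) are matched up with the now coinciding source and target of $\Ra{1}$ and $\Ra{2}$, and observe that the balancing of the tensor products over $A$ is exactly what makes the relevant images agree. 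Everything else propagates formally along $\pi,\pi_{1},\pi_{2}$.

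Next I would verify that $(\Raa{1}{2},\biota,\biota)$, equipped with $\bar{\bo{\varrho}}$ and $\bar{\bo{\lambda}}$, is an $(\Ra{1},\Ra{2})$-bicomodule algebra. Coassociativity, counitality and the left--right compatibility are equalities of algebra maps out of $\Raa{1}{2}$, so, as $\pi$ is surjective, they only need to be tested on elements $\pi(\bara{p^{*}\tensor{T_{P}}p})$; there, using that $\pi_{i}$ intertwines the comultiplications and counits of $\Roma{i}$ with those of $\Ra{i}$ (this being built into the quotient, cf.\ the remark preceding the corollary), they reduce verbatim to the identities for $\bo{\varrho},\bo{\lambda}$ obtained in Lemma~\ref{lema:Rcomalg}. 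That $\bar{\bo{\varrho}},\bar{\bo{\lambda}}$ are algebra maps is inherited through $\pi$. This settles the first assertion.

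Finally, for pseudo-principality I would descend the canonical maps together with their inverses. The maps $\can{}^{l},\can{}^{r}$ of Definition~\ref{def:PB} attached to $(\Raa{1}{2},\biota,\biota)$ are well defined because $\bar{\bo{\varrho}},\bar{\bo{\lambda}}$ are, and their bijectivity I would get by descending the explicit translation maps $\tauup^{l},\tauup^{r}$ of Lemma~\ref{lema:Rcomalg}. These are given on generators $\bara{p^{*}\tensor{T_{P}}p}$ purely in terms of dual bases and the natural isomorphisms $\gammaup,\phiup$, and precisely the naturality properties of $\gammaup,\phiup$ used in Lemma~\ref{lema:Rcomalg} show that, after postcomposition with $\pi\tensor{A}\pi$, these formulas respect the relations defining $\Ra{1}$ (resp.\ $\Ra{2}$), hence factor through maps $\bar{\tauup}^{l}\colon\Ra{1}\to\Raa{1}{2}\tensor{A}\Raa{1}{2}$ and $\bar{\tauup}^{r}\colon\Ra{2}\to\Raa{1}{2}\tensor{A}\Raa{1}{2}$. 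The two-sided inverse identities $\can{}^{l}\circ\bar{\tauup}^{l}=\mathrm{id}$, $\bar{\tauup}^{l}\circ\can{}^{l}=\mathrm{id}$ and their right analogues then follow from the corresponding ones in Lemma~\ref{lema:Rcomalg} by applying $\pi$ and $\pi_{i}$. Therefore $\can{}^{l}$ and $\can{}^{r}$ are bijective, and $\Raa{1}{2}$ is a pseudo $(\Ra{1},\Ra{2})$-bibundle.
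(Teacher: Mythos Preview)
Your proposal is correct and follows the same route the paper takes: the paper gives no explicit proof of the corollary, remarking only that ``the coaction of equations \eqref{Eq:LRA} and \eqref{Eq:RRA} are canonically lifted to the quotient algebras of Eq.~\eqref{Eq:Ra} and the Lemma \ref{lema:Rcomalg} still working in this case as well.'' Your argument is precisely the descent-along-$\pi,\pi_1,\pi_2$ verification that makes this sentence rigorous, and the one genuinely nontrivial point you single out---that $(\pi\tensor{A}\pi_2)\bo{\lambda}$ and $(\pi_1\tensor{A}\pi)\bo{\varrho}$ annihilate $\balpha(a)-\bbeta(a)$ thanks to the balancing of the tensor products together with $\bar{\balpha}=\bar{\bbeta}$ and $\bar{\Sf{s}}_i=\bar{\Sf{t}}_i$---is exactly what the paper's ``canonically lifted'' is sweeping under the rug.
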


The following  is the main result of this section, it is the affine groupoid schemes version  of the similar  well known result for affine group schemes (over an affine scheme). As before, the ground ring $K$ is assumed to be a field. For more notions and properties of geometrically transitive Hopf algebroids we refer to \cite{Bruguieres:1994, ElKaoutit:2015}.

\begin{theorem}\label{thm:PB}
Let $\cA$ be a symmetric rigid monoidal $K$-linear category with  the endomorphism algebra of the identity object $T_{\II}=K$. Assume we have two fibre functors $\omega_i: \cA \to  add(A_i)$, $i=1,2$ (i.e., ~faithful symmetric monoidal $K$-linear functors) and consider the associated $(\Roma{1},\Roma{2})$-bicomodule algebra $(\Romaa{1}{2}, \balpha, \bbeta)$. Then,
\begin{enumerate}[(i)]
\item if the Hopf algebroid $\Roma{1}$ (resp.,~$\Roma{2}$) is geometrically transitive, then $\Romaa{1}{2}$ is a left (resp.,~ right) principal $(\Roma{1}, \Roma{2})$-bundle.
\item if $\cA$ is an abelian locally finite $K$-linear category  and both functors $\omega_i$ are exact, then $\Romaa{1}{2}$ is a principal $(\Roma{1}, \Roma{2})$-bibundle with opposite bibundle isomorphic to $\Romaa{2}{1}$. 
\item if, in addition to the  assumptions of item $(ii)$, we also assume that $\omega_{1} =\omega \tensor{K} A$, for some fibre functor $ \omega: \cA \to \vect{K}$, then $\Raa{1}{2}$ is a principal $(\Ra{1}, \Ra{2})$-bibundle.
\end{enumerate}
\end{theorem}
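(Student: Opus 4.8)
The common ground is that Lemma~\ref{lema:Rcomalg} — and, for the quotient algebras, Corollary~\ref{coro:Racomalg} — has already carried out all of the purely algebraic work: it equips $\Romaa{1}{2}$ (resp.\ $\Raa{1}{2}$) with its bicomodule algebra structure over $(\Roma{1},\Roma{2})$ (resp.\ $(\Ra{1},\Ra{2})$) and exhibits explicit translation maps, so that both canonical maps $\can{}^{l}$ and $\can{}^{r}$ of Definition~\ref{def:PB} are bijective; in other words, in every case we start from a pseudo-bibundle. Hence in each of (i)--(iii) the only thing left to prove is the faithful flatness of the relevant structure map onto the base of the bundle. The plan is: to prove (i) directly from the torsor/descent theory of geometrically transitive Hopf algebroids; to reduce (ii) to (i), after checking that the abelian/exact data reconstructs geometrically transitive Hopf algebroids, and to add the opposite-bibundle isomorphism by a rigid-duality computation; and to reduce (iii) to (i) by base change along the constant fibre functor. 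I expect (i) to be the only genuinely non-formal point: the total space $\Romaa{1}{2}$ need not be flat over $A_{2}$ without extra hypotheses, so the upgrade ``pseudo-bundle $\leadsto$ principal bundle'' must use geometric transitivity in an essential way.

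\emph{Item (i).} Assume $\Roma{1}$ is geometrically transitive. By Example~\ref{exam:GT} the maps $\Sf{s}_{1},\Sf{t}_{1}$ are faithfully flat, the underlying $A_{1}$-coring of $\Roma{1}$ is Galois, and every $\Roma{1}$-comodule is faithfully flat over $A_{1}$; in particular $\balpha\colon A_{1}\to\Romaa{1}{2}$, being the structure map of the left $\Roma{1}$-comodule algebra $\Romaa{1}{2}$, is faithfully flat. It remains to see that $\bbeta\colon A_{2}\to\Romaa{1}{2}$ is faithfully flat. Since $\Roma{1}$ is faithfully flat over $A_{1}$, base change along $\balpha$ shows that $\Roma{1}\tensor{A_{1}}\Romaa{1}{2}$ is faithfully flat as a right $\Romaa{1}{2}$-module; by the isomorphism $\can{}^{l}$ of Lemma~\ref{lema:Rcomalg} so is $\Romaa{1}{2}\tensor{A_{2}}\Romaa{1}{2}$, and this, together with the faithful flatness of $\balpha$, is exactly the input of the faithfully flat descent for Hopf algebroid torsors of \cite{Kaoutit/Kowalzig:2014,ElKaoutit:2015} (cf.\ \cite[Theorem~A]{ElKaoutit:2015}), which yields the faithful flatness of $\bbeta$. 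Hence $(\Romaa{1}{2},\balpha,\bbeta)$ is a left principal $(\Roma{1},\Roma{2})$-bundle. The case where $\Roma{2}$ is geometrically transitive is the mirror argument, with $\can{}^{r}$ in place of $\can{}^{l}$.

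\emph{Item (ii).} Now $\cA$ is abelian, locally finite, $K$-linear and both $\omega_{i}$ are exact. First, $\Roma{1}$ and $\Roma{2}$ are geometrically transitive: by the Tannakian reconstruction over a base ring (\cite[\S6]{Deligne:1990}, \cite{Bruguieres:1994}; see also \cite{ElKaoutit:2015}), an abelian locally finite $K$-linear rigid symmetric tensor category equipped with an exact faithful symmetric monoidal functor into $add(A_{i})$ is the category of comodules over a Hopf algebroid over $A_{i}$ which is faithfully flat over $A_{i}\tensor{}A_{i}$, and since then the canonical map $\rR(\cA^{\Roma{i}})\to\Roma{i}$ is an isomorphism (\cite[Theorem~4.8]{ElKaoutit/Gomez:2004b}) this Hopf algebroid is $\Roma{i}$. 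Item (i) then applies in both directions, so $\Romaa{1}{2}$ is simultaneously a left and a right principal $(\Roma{1},\Roma{2})$-bundle, i.e.\ a principal bibundle. For the opposite bibundle — whose underlying algebra is $\Romaa{1}{2}$ with the two coactions interchanged — I would write down the isomorphism onto $\Romaa{2}{1}$ componentwise, exactly following the construction of the antipode $\Scr{S}$ in the proof of Proposition~\ref{prop:TBialgd}: send a generator $\bara{p^{*}\tensor{T_{P}}p}$, with $p^{*}\in\omega_{1}(P)^{*}$ and $p\in\omega_{2}(P)$, to $\bara{\phiup_{P}(p)\tensor{T_{\dP}}\gammaup_{P}(p^{*})}\in\omega_{2}(\dP)^{*}\tensor{T_{\dP}}\omega_{1}(\dP)\subseteq\Romaa{2}{1}$. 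Well-definedness, multiplicativity and compatibility with the two coactions follow from naturality of $\gammaup,\phiup$ and the identities \eqref{Eq:iota}--\eqref{Eq:tau} and \eqref{Eq:fkd}, and the inverse is built from $\gammaup^{-1},\phiup^{-1}$ and the isomorphisms $\fk{d}_{P}$.

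\emph{Item (iii).} Suppose in addition $\omega_{1}=\omega\tensor{K}A$ for a fibre functor $\omega\colon\cA\to\vect{K}$. Then $\omega_{1}(P)^{*}\cong\omega(P)^{*}\tensor{K}A$ with $T_{P}$ acting only on the first factor, so that $\Romaa{1}{2}\cong A\tensor{K}\rR_{K\tensor{}A}(\omega,\omega_{2})$ in such a way that $\balpha(a)=a\tensor{K}1$ and $\bbeta(a)=1\tensor{K}\bbeta'(a)$, where $\bbeta'\colon A\to\rR_{K\tensor{}A}(\omega,\omega_{2})$ is the $A$-side structure map; consequently dividing out $\langle\balpha-\bbeta\rangle$ identifies $\Raa{1}{2}$ with $\rR_{K\tensor{}A}(\omega,\omega_{2})$ and $\biota$ with $\bbeta'$, while similarly $\Ra{1}\cong A\tensor{K}G$ is the base extension to $A$ (Example~\ref{exam:Bext}) of the Hopf $K$-algebra $G:=\rR_{K}(\omega)$. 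Now $(K,G)$ is geometrically transitive for trivial reasons — the unit $K=K\tensor{K}K\to G$ is faithfully flat, every nonzero $K$-vector space being so — so item (i), applied to the two fibre functors $\omega\colon\cA\to\vect{K}$ and $\omega_{2}\colon\cA\to add(A)$, shows that $\rR_{K\tensor{}A}(\omega,\omega_{2})$ is a left principal $(G,\Roma{2})$-bundle, hence its base structure map $\bbeta'=\biota\colon A\to\Raa{1}{2}$ is a faithfully flat extension. Combined with the pseudo-bibundle structure from Corollary~\ref{coro:Racomalg}, this shows $\Raa{1}{2}$ is a principal $(\Ra{1},\Ra{2})$-bibundle, completing the plan.
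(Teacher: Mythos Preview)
Your overall architecture for (i) and (ii) matches the paper's, but you do strictly more work in (i) than the paper does. The paper's entire argument for (i) is: geometric transitivity of $\Roma{1}$ implies, via \cite[Lemma~4.3(b)]{ElKaoutit:2015}, that every left $\Roma{1}$-comodule is faithfully flat over $A_{1}$; in particular $\Romaa{1}{2}$ is, and combining this with the pseudo-bundle structure of Lemma~\ref{lema:Rcomalg} finishes. There is no descent step for $\bbeta$. Your extra paragraph invoking ``faithfully flat descent for Hopf algebroid torsors'' is the weak point of your write-up: the claim that $\cP\tensor{A_{2}}\cP$ faithfully flat over $\cP$ together with $\balpha$ faithfully flat yields $\bbeta$ faithfully flat is not what \cite[Theorem~A]{ElKaoutit:2015} says, and the general implication does not follow from standard descent without further input (you have no faithfully flat map \emph{out of} $A_{2}$ along which to descend). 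So either this step is unnecessary---in which case you should drop it and argue exactly as the paper does---or, if you insist the definition forces $\bbeta$ faithfully flat, you need a different argument than the one you sketched. For (ii) you and the paper agree: Deligne--Brugui\`eres gives both $\Roma{i}$ geometrically transitive, then (i) applies on each side; your explicit formula for the opposite-bibundle isomorphism via $\phiup,\gammaup$ is a nice addition where the paper simply writes ``clearly''.

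Part (iii) is where you genuinely diverge. The paper does \emph{not} reduce to (i); it argues directly that $\Raa{1}{2}$ is faithfully flat over $A$ by realising it as a filtered colimit of finitely generated projective $A$-modules with split transition maps, following \cite[Th\'eor\`eme~4.2.2]{Saavedra:1972}, after first noting $\Ra{1}\cong A\tensor{K}\rR_{K}(\omega)$ is flat over $A$. Your route---identify $\Raa{1}{2}\cong\rR_{K\tensor{}A}(\omega,\omega_{2})$ via the base-change computation and then invoke (i) with $A_{1}=K$---is conceptually cleaner, but it leans precisely on the part of your (i) that is not solid: you need $\bbeta'$ faithfully flat, which is the $A_{2}$-side conclusion, and that is exactly the descent step you did not justify. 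If you want to keep your approach to (iii), you must either supply a correct proof that a left pseudo-torsor under a geometrically transitive Hopf algebroid has faithfully flat $\bbeta$, or else observe that over $A_{1}=K$ the isomorphism $\can{}^{l}$ exhibits $\cP$ as a direct summand of a free $A$-module after tensoring with $\cP$ and argue from there---but at that point you are essentially redoing the Saavedra-style argument the paper uses.
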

\begin{proof}
$(i)$ If $\Roma{1}$ is geometrically transitive, then by \cite[Lemma 4.3(b)]{ElKaoutit:2015} every (left) $\Roma{1}$-comodule is faithfully flat as an $A_1$-module. In particular $\Romaa{1}{2}$ is faithfully flat as an $A_1$-module. Combining this with part $(i)$, we have that  $\Romaa{1}{2}$ is a left principal $(\Roma{1}, \Roma{2})$-bundle. Analogously, if we assume that $\Roma{2}$ is a geometrically transitive Hopf algebroid, then   $\Romaa{1}{2}$ is a right principal $(\Roma{1}, \Roma{2})$-bundle.

$(ii)$ Under these assumptions the reconstructed Hopf algebroids $\Roma{1}$ and $\Roma{2}$ are,  by \cite[Th\'eor\`eme 1.12]{Deligne:1990} (see also \cite[Th\'eor\`emes 5.2 et 7.1]{Bruguieres:1994}), both geometrically transitive. Therefore, part $(ii)$ implies that $\Romaa{1}{2}$ is a left and right principal bundle and so a principal bibundle. The opposite bibundle of $\Romaa{1}{2}$ is clearly isomorphic to $\Romaa{2}{1}$.

$(iii)$ We already know form Corollary \ref{coro:Racomalg} that $\Raa{1}{2}$ is a pseudo bibundle between  the Hopf $A$-algebras $\Ra{1}$ and $\Ra{2}$. The only remaining  condition is the faithfully flatness  of the involved $A$-algebras. Up to the canonical symmetric monoidal equivalence of $K$-linear categories between $\cA$ and  finite-dimensional comodules over the Hopf $K$-algebra $\rR_{\Sscript{K}}(\omega)=\rR_{\Sscript{K\tensor{}K}}(\omega)$, we know that $\Ra{1}$ is a flat $A$-module and so  it is faithfully flat (as the unit splits by the counit). To show that $\Raa{1}{2}$ is a faithfully flat over $A$, we proceed as in the proof of \cite[Th\'eor\`eme 4.2.2, page 155]{Saavedra:1972}, by realising $\Raa{1}{2}$ as an inductive limit of an inductive system of finitely generated and projective  $A$-modules whose transition morphisms are  split monomorphisms.   
\end{proof}

\section{The finite dual of a ring extension.}\label{sec:FDRE}
In this section we specialize the construction of Section \ref{sec:ICC} to the case of a ring homomorphism $A \to R$, which leads to the \emph{(right) finite dual $A$--coring $R^{\bcirc}$}, and of a functor $\chi: \cat{A}_R \longrightarrow \cat{A}^{R^{\circ}}$. We will show that, in the case of coalgebras over a Dedekind domain, our $R^{\bcirc}$ is isomorphic to the usual finite dual defined as the subspace of $R^*$ of all linear forms whose kernel contains a cofinite ideal of $R$. When $R_A$ is finitely generated and projective, we show that $R^{\bcirc}$ is isomorphic to $R^*$. We also study when $R^{\bcirc}$ is Galois and, what is more important, when $\chi$ is an isomorphism of categories. All these results are related to the injectivity of a map $R^{\bcirc} \to R^*$ to be defined below. 

\subsection{The finite dual  coring $R^{\bcirc}$}\label{ssec:Rcero}
Given an $A$--ring $R$, consider the category $\cat{A}_R$ of all right $R$--modules that are finitely generated and projective as right $A$--modules. We define the \emph{right finite dual} of the extension $A \to R$ as the $A$-coring $R^{\bcirc} = \recons{\cat{A}_R}$ described in Section \ref{sec:ICC}. 

We know from equation \eqref{comatInf} that $R^{\bcirc}$ is the factor $A$-coring $\fk{B}(\cat{A}_R)/\fk{J}(\cat{A}_R)$, where
\[
\fk{B}(\cat{A}_R) = \bigoplus_{P \in \cat{A}_R} P^* \tensor{T_P} P 
\]
and $\fk{J}(\cat{A}_R)$ is the $K$-submodule spanned by the set of  elements as in Eq.\eqref{Eq:JA}.
The  elements 
\[
\overline{p^* \tensor{T_P} p} = p^* \tensor{T_P} p + \fk{J}_{\cat{A}_R}, \quad p^* \in P^*, p \in P, P \in \cat{A}_R
\]
form a set of generators of $R^{\bcirc}$ as a $K$-module (and, of course, as an $A$-bimodule). Recall that 
the comultiplication of $R^{\circ}$ is given by 
\begin{equation}\label{Eq:deltao}
\Delta^{\bcirc} : R^{\bcirc} \longrightarrow R^{\bcirc} \tensor{A} R^{\bcirc}, \quad \overline{p^* \tensor{T_P} p} \longmapsto \overline{p^* \tensor{T_P} e_{\alpha,P} }\tensor{A} \overline{e_{\alpha,P}^* \tensor{T_P} p},
\end{equation}
where $\{ e_{\alpha,P}, e_{\alpha,P}^* \}$ denotes a finite dual basis for $P_A$. The counit of $R^{\bcirc}$ is
\begin{equation}\label{Eq:counito}
\varepsilon^{\bcirc} : R^{\bcirc} \longrightarrow A, \quad \overline{p^* \tensor{T_P}  p} \longmapsto p^*(p). 
\end{equation}

\subsection{The functors $\chi$ and $L$}\label{ssec:xiyL}
Consider the homomorphism of $A$-bimodules $\mathbf{b}:R^{\bcirc} \tensor{R} R \to A$ defined as the composite
\[
\xymatrix{\mathbf{b}:R^{\bcirc} \tensor{R} R \cong R^{\bcirc} \ar^-{\varepsilon^{\bcirc}}[r] & A}
\]
and let $\eta_R : R \to {}^*(R^{\bcirc})$ be its image under the adjunction isomorphism 
$$\lhom{A}{R^{\bcirc} \tensor{R} R}{A} \cong \lhom{R}{R}{{}^*(R^{\bcirc})}.$$
Explicitly, 
\begin{equation}\label{Eq:eta}
\eta_R (r) (\overline{p^* \tensor{T_P} p}) = p^*(pr).
\end{equation}
Recall that, since $R^{\bcirc}$ is an $A$-coring, we know that ${}^*(R^{\bcirc})$ is a ring with the convolution product. A straightforward computation shows that $\eta_R$ is an anti-homomorphism of rings. 

From the well-known (see, e.g \cite[19.1]{Brzezinski/Wisbauer:2003}) functor $\fk{l}:\rcomod{R^{\circ}} \to \lmod{{}^*(R^{\bcirc})}$ we get, after composing with the restriction of scalars functor associated to $\eta_R$, a functor  
$$
L : \cat{A}^{R^{\bcirc}} \longrightarrow  \cat{A}_R.
$$ 
This functor is explicitly given on objects as follows: given a right $R^{\bcirc}$-comodule $\varrho_M : M \to M \tensor{A} R^{\bcirc}$,  and using a Sweedler-type notation (summation understood),  we define the following right $R$-action on $M$
\begin{equation}\label{mr}
m \cdot r = m_0 m_1^*(m_1r), \quad r \in R, \; \varrho_M(m) = m_0 \tensor{A} \overline{m_1^* \tensor{T_P} m_1}.
\end{equation}

Conversely, every object $P \in \cat{A}_R$ is a right $R^{\bcirc}$-comodule with the coaction 
\begin{equation}\label{Eq:chip}
\chi_P : P \longrightarrow P \tensor{A} R^{\circ}, \quad p \longmapsto e_{\alpha,P} \tensor{A} \overline{e^*_{\alpha,P} \tensor{T_P} p}. 
\end{equation}
This gives the object map of a functor $\chi: \cat{A}_R \longrightarrow \cat{A}^{R^{\circ}}$. A straightforward computation shows that $L \circ \chi = id_{\cat{A}_R}$.

\subsection{The injectivity of the map $\zeta: R^{\bcirc} \to R^*$}\label{ssec:zeta}
Consider the image $\zeta: R^{\bcirc} \to R^*$ of the $A$-bilinear map $\mathbf{b}: R^{\bcirc} \tensor{R} R \to A$ under the adjunction isomorphism
\[
\rhom{A}{R^{\bcirc} \tensor{R} R}{A} \cong \rhom{R}{R^{\bcirc}}{R^*}
\]
This homomorphism of $(A,R)$-bimodules is explicitly given by
\begin{equation}\label{Eq:zeta}
\zeta : R^{\bcirc} \longrightarrow R^*, \quad \overline{p^* \tensor{T_P} p} \longmapsto (r \mapsto p^*(pr)). 
\end{equation}
Clearly, we have $\zeta(\overline{p^* \tensor{T_P} p})(r) =\eta_R(r)(\overline{p^* \tensor{T_P} p})$, where $\eta_R$ is the map defined in Eq.\eqref {Eq:eta}.

For each module $M_A$, define $\beta_M : M \tensor{A} R^{\bcirc} \to \rhom{A}{R}{M}$ by 
\[
\beta_M(m \tensor{A} \overline{{p^* \tensor{T_P} p}}) (r) = mp^*(pr).
\]
\begin{lemma}\label{betainy}
$\beta_M$ is injective for every $M_A$ finitely generated and projective if and only if $\zeta : R^{\bcirc} \to R^*$ is injective. 
\end{lemma}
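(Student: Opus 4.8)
The plan is to reduce both the injectivity of $\zeta$ and the injectivity of all the $\beta_M$ to a common ``freeness'' trick: the module $R^{\bcirc}$ is built out of the pieces $P^*\tensor{T_P}P$, and a general element of $R^{\bcirc}$ can, by the defining relations \eqref{Eq:JA}, always be represented using a \emph{single} object $P\in\cat{A}_R$ (one takes a finite direct sum of the objects occurring and moves all morphisms onto one side, exactly as in the standard argument for infinite comatrix corings). So fix such a $P$ and write a general element of $M\tensor{A}R^{\bcirc}$ in the form $\sum_i m_i\tensor{A}\overline{p_i^*\tensor{T_P}p}$ for a fixed $p\in P$; using a dual basis $\{e_\alpha,e_\alpha^*\}$ of $P_A$ one may even normalize so that the $p_i^*$ range over the $e_\alpha^*$, i.e. every element of $M\tensor{A}R^{\bcirc}$ has the shape $\sum_\alpha m_\alpha\tensor{A}\overline{e_\alpha^*\tensor{T_P}p}$ with the $m_\alpha\in M$ uniquely determined once $P$ and $p$ are fixed (this uniqueness is precisely the content of the coring structure on $R^{\bcirc}$).

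First I would prove the direction ``$\beta_M$ injective for all fgp $M$'' $\Longrightarrow$ ``$\zeta$ injective'': this is immediate, since $R^{\bcirc}\cong A\tensor{A}R^{\bcirc}$ and under this identification $\beta_A$ is exactly $\zeta$ followed by the canonical iso $R^*\cong\rhom{A}{R}{A}$; taking $M=A$ (which is fgp over itself) gives the claim. The substance is the converse. Assume $\zeta$ is injective and let $x=\sum_\alpha m_\alpha\tensor{A}\overline{e_\alpha^*\tensor{T_P}p}\in M\tensor{A}R^{\bcirc}$ with $\beta_M(x)=0$, i.e. $\sum_\alpha m_\alpha\, e_\alpha^*(pr)=0$ for all $r\in R$. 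Since $M_A$ is finitely generated and projective, fix a dual basis $\{f_j,f_j^*\}$ of $M_A$; apply each $f_j^*$ to the identity above, obtaining $\sum_\alpha f_j^*(m_\alpha)\,e_\alpha^*(pr)=0$ in $A$ for every $r$ and every $j$. For fixed $j$ this says precisely that $\zeta\!\left(\sum_\alpha f_j^*(m_\alpha)\cdot\overline{e_\alpha^*\tensor{T_P}p}\right)=0$ — here I use the $(A,R)$-bimodule formula \eqref{Eq:zeta} and $A$-bilinearity, noting $\sum_\alpha f_j^*(m_\alpha)\cdot\overline{e_\alpha^*\tensor{T_P}p}=\overline{(\sum_\alpha f_j^*(m_\alpha)e_\alpha^*)\tensor{T_P}p}$ lives in $R^{\bcirc}$. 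By injectivity of $\zeta$ this element of $R^{\bcirc}$ vanishes for each $j$. Finally reconstruct $x$ from these data: $x=\sum_\alpha m_\alpha\tensor{A}\overline{e_\alpha^*\tensor{T_P}p}=\sum_\alpha\big(\sum_j f_j f_j^*(m_\alpha)\big)\tensor{A}\overline{e_\alpha^*\tensor{T_P}p}=\sum_j f_j\tensor{A}\overline{\big(\sum_\alpha f_j^*(m_\alpha)e_\alpha^*\big)\tensor{T_P}p}=\sum_j f_j\tensor{A}0=0$, where the middle steps move the scalars $f_j^*(m_\alpha)\in A$ across the $\tensor{A}$. Hence $\beta_M$ is injective.

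The step I expect to be the main obstacle — and the one requiring care rather than the formal manipulations above — is the \emph{normal form} claim: that an arbitrary element of $M\tensor{A}R^{\bcirc}$ can be written with a \emph{single}, fixed object $P$ and a fixed element $p\in P$, with the coefficients $m_\alpha$ uniquely pinned down. This rests on the explicit description of $\fk{J}(\cat{A}_R)$ in \eqref{Eq:JA} together with the standard direct-sum-and-transport argument for comatrix corings (cf. the treatment of $\rR(\cat{A})$ in Section~\ref{sec:ICC}); once that bookkeeping is in place, tensoring with a fixed fgp $M$ over $A$ and splitting off a dual basis of $M_A$ is routine, and the remainder of the proof is just the chain of identities displayed above. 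I would present the normal-form reduction carefully and then run the two implications as above.
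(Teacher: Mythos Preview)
Your proof works, but it is more elaborate than needed, and one passing claim is unjustified.

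The paper dispatches the converse in one line: since $M \mapsto \beta_M$ is natural between the additive functors $(-)\tensor{A} R^{\bcirc}$ and $\rhom{A}{R}{-}$, the class of $M$ for which $\beta_M$ is injective is closed under finite direct sums and direct summands; as $\beta_A = \zeta$ and every finitely generated projective $M_A$ is a summand of some $A^n$, the conclusion follows. Your dual-basis computation with $\{f_j, f_j^*\}$ is exactly this summand argument written out coordinatewise (applying $f_j^*$ is projecting onto the $j$-th factor of $A^n$, and the reconstruction step is reassembling $M$), so the two approaches agree at heart.

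Two simplifications to your version. First, the entire normal-form reduction to a single $P$ and a single $p$, followed by re-expansion against a dual basis of $P_A$, is unnecessary even for your own argument: the computation runs unchanged starting from an arbitrary representation $x = \sum_i m_i \tensor{A} c_i$ with $c_i \in R^{\bcirc}$, since $\beta_M(x)(r) = \sum_i m_i\,\zeta(c_i)(r)$, whence applying $f_j^*$ gives $\zeta\big(\sum_i f_j^*(m_i)\, c_i\big) = 0$, so $\sum_i f_j^*(m_i)\, c_i = 0$ in $R^{\bcirc}$ and $x = \sum_j f_j \tensor{A} \big(\sum_i f_j^*(m_i)\, c_i\big) = 0$. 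Second, your uniqueness claim for the coefficients $m_\alpha$ is neither used nor correct in general: nontrivial endomorphisms of $P$ in $\cat{A}_R$ produce relations in $\fk{J}(\cat{A}_R)$ that can make the elements $\overline{e_\alpha^* \tensor{T_P} p}$ left $A$-linearly dependent in $R^{\bcirc}$, so there is no reason the $m_\alpha$ should be determined. This is not ``the content of the coring structure''; since you never invoke the uniqueness, simply drop the claim.
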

\begin{proof}
Observe that, up to the canonical isomorphism $A \tensor{A} R^{\bcirc} \cong R^{\bcirc}$, we have that $\beta_A = \zeta$, which gives the direct implication. The converse is clearly deduced from the fact that the class of modules $M_A$ for which $\beta_M$ is injective is closed under finite direct sums and direct summands. 
\end{proof}

\begin{proposition}\label{isomor}
If $\zeta : R^{\bcirc} \to R^*$ is injective, then the funtor $\chi : \cat{A}_R \to \cat{A}^{R^{\bcirc}}$ is an isomorphism of categories. 
\end{proposition}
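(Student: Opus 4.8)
The plan is to prove that $\chi$ and the functor $L$ of Subsection~\ref{ssec:xiyL} are a pair of mutually inverse functors. Since it is already noted there that $L\circ\chi=\mathrm{id}_{\cat{A}_R}$, the entire task reduces to showing $\chi\circ L=\mathrm{id}_{\cat{A}^{R^{\bcirc}}}$. I would first observe that both $L$ and $\chi$ act as the identity on underlying $A$-linear maps — $L$ only alters the action by restricting scalars along $\eta_R$, and $\chi$ only re-reads an $R$-linear map as an $R^{\bcirc}$-colinear one via \eqref{Eq:chip} — so that $\chi\circ L$ will automatically be the identity on morphisms as soon as we know it is the identity on objects. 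Hence the heart of the matter is the identity $\chi_{L(M)}=\varrho_M$ for every right $R^{\bcirc}$-comodule $(M,\varrho_M)$ whose underlying module $M_A$ is finitely generated and projective.

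To establish this, fix such an $(M,\varrho_M)$ and let $L(M)$ denote $M$ equipped with the right $R$-action \eqref{mr}. The two maps $\chi_{L(M)}$ and $\varrho_M$ both land in $M\tensor{A}R^{\bcirc}$, and the key device is the homomorphism $\beta_M\colon M\tensor{A}R^{\bcirc}\to\rhom{A}{R}{M}$: by Lemma~\ref{betainy}, the injectivity hypothesis on $\zeta$ makes $\beta_M$ injective (as $M_A$ is fgp). So it suffices to check that $\beta_M(\chi_{L(M)}(m))$ and $\beta_M(\varrho_M(m))$ agree as maps $R\to M$ for every $m\in M$. Here the computation is short: unwinding \eqref{mr} shows $\beta_M(\varrho_M(m))$ is the assignment $r\mapsto m\cdot r$, while unwinding \eqref{Eq:chip} together with the dual basis identity $\sum_\alpha e_\alpha e_\alpha^*(x)=x$ shows that $\beta_M(\chi_{L(M)}(m))$ is likewise $r\mapsto m\cdot r$. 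Injectivity of $\beta_M$ then yields $\chi_{L(M)}(m)=\varrho_M(m)$, hence $\chi(L(M))=M$ as $R^{\bcirc}$-comodules.

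Finally I would record that $\chi\circ L$ is the identity on morphisms: a morphism $f$ in $\cat{A}^{R^{\bcirc}}$ is carried by $L$ to the same underlying ($A$-linear, now also $R$-linear) map and by $\chi$ back to the same map, which is colinear since $\chi_{L(M)}=\varrho_M$ and $\chi_{L(N)}=\varrho_N$. Combining this with $L\circ\chi=\mathrm{id}_{\cat{A}_R}$ shows $\chi$ is an isomorphism of categories with inverse $L$. I do not expect a genuine obstacle: the one essential point is the reduction ``equality in $M\tensor{A}R^{\bcirc}$ may be tested by applying $\beta_M$'', which is exactly the place where the injectivity of $\zeta$ enters through Lemma~\ref{betainy}; the rest (well-definedness of $L(M)$ as an object of $\cat{A}_R$, the identity $L\circ\chi=\mathrm{id}$, and the dual-basis bookkeeping) is routine and already built into the definitions of $\chi$, $L$, $\eta_R$ and $\beta_M$.
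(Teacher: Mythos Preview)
Your proposal is correct and follows essentially the same approach as the paper's own proof: reduce to showing $\chi\circ L=\mathrm{id}$ on objects, then use the injectivity of $\beta_M$ (via Lemma~\ref{betainy}) to test the equality $\chi_{L(M)}=\varrho_M$ by computing that both sides, after applying $\beta_M$, send $m$ to the map $r\mapsto m\cdot r$. Your added remark that $\chi$ and $L$ act as the identity on underlying $A$-linear maps (hence on morphisms) is a small clarification the paper leaves implicit.
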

\begin{proof}
Since we already know that $L \circ \chi = id_{\cat{A}_R}$, we  only  need to prove that the composition $\chi \circ L$ is also the identity functor.  Let $M$ be an object of $\cat{A}^{R^{\bcirc}}$ with coaction $\varrho_M : M \to M \tensor{A} R^{\bcirc}$.  Then $L$ sends $M$ to a right $R$-module with the action \eqref{mr}.  With this structure, $M \in \cat{A}_R$, and, therefore, by applying $\chi$, we obtain a comodule $\chi_M : M \rightarrow M \tensor{A} R^{\bcirc}$. We need to check that $\varrho_M = \chi_M$ and, since $\beta_M$ is injective by Lemma \ref{betainy}, it is enough if we prove that $\beta_M \circ \varrho_M = \beta_M \circ \chi_M$. This follows from the following computation:
\[
(\beta_M \circ \varrho_M) (m)(r) = m_0m^*_1(m_1r) = mr
\]
\[
(\beta_M \circ \chi_M)(m)(r) = e_{\alpha,M}e_{\alpha,M}^*(mr) =mr.
\]
\end{proof}

\begin{proposition}\label{prop:R0Galois}
If $\zeta : R^{\circ} \to R^*$ is injective, then $R^{\bcirc}$ is a Galois $A$-coring. 
\end{proposition}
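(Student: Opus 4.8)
The plan is to derive this directly from Proposition~\ref{isomor} together with the functoriality of the construction $\Scr{R}(-)$ recorded in Subsection~\ref{ssec:comatrix}. Recall that being a Galois $A$-coring means exactly that the canonical map $\Sf{can}_{\cat{A}^{R^{\bcirc}}}:\Scr{R}(\cat{A}^{R^{\bcirc}})\to R^{\bcirc}$ of \eqref{Eq:can} is an isomorphism of $A$-corings. The first thing I would point out is that the isomorphism of categories $\chi:\cat{A}_R\to\cat{A}^{R^{\bcirc}}$ furnished by Proposition~\ref{isomor}, together with its inverse $L$, is compatible with the fibre functors to $add(A_A)$: one has $\Scr{O}\circ\chi=\eta_{*}$ and $\eta_{*}\circ L=\Scr{O}$, since neither $\chi$ nor $L$ changes the underlying $A$-module. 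Hence functoriality of $\Scr{R}(-)$ produces morphisms of $A$-corings $\Scr{R}(\chi):R^{\bcirc}=\Scr{R}(\cat{A}_R)\to\Scr{R}(\cat{A}^{R^{\bcirc}})$ and $\Scr{R}(L):\Scr{R}(\cat{A}^{R^{\bcirc}})\to\Scr{R}(\cat{A}_R)=R^{\bcirc}$; and from $L\circ\chi=id_{\cat{A}_R}$ and $\chi\circ L=id_{\cat{A}^{R^{\bcirc}}}$ these are mutually inverse isomorphisms of $A$-corings.

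Next I would identify $\Sf{can}_{\cat{A}^{R^{\bcirc}}}$ with $\Scr{R}(L)$. As noted in Subsection~\ref{ssec:Galois}, the $K$-linear functor $\chi:\cat{A}_R\to\cat{A}^{R^{\bcirc}}$ induces a canonical map $\Sf{can}_{\chi}:R^{\bcirc}\to R^{\bcirc}$, and by construction $\Sf{can}_{\cat{A}^{R^{\bcirc}}}\circ\Scr{R}(\chi)=\Sf{can}_{\chi}$. Evaluating $\Sf{can}_{\chi}$ on a generator $\overline{p^{*}\tensor{T_P}p}$ by unwinding the coaction \eqref{Eq:chip} and using the dual basis identity $\sum_{\alpha}p^{*}(e_{\alpha,P})\,e^{*}_{\alpha,P}=p^{*}$ gives
\[
\Sf{can}_{\chi}\big(\overline{p^{*}\tensor{T_P}p}\big)=\big(p^{*}\tensor{A}R^{\bcirc}\big)\circ\chi_P(p)=\overline{\big(\textstyle\sum_{\alpha}p^{*}(e_{\alpha,P})\,e^{*}_{\alpha,P}\big)\tensor{T_P}p}=\overline{p^{*}\tensor{T_P}p},
\]
so that $\Sf{can}_{\chi}=id_{R^{\bcirc}}$. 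Consequently $\Sf{can}_{\cat{A}^{R^{\bcirc}}}=\Scr{R}(\chi)^{-1}=\Scr{R}(L)$ is an isomorphism of $A$-corings, which is exactly the statement that $R^{\bcirc}$ is a Galois $A$-coring.

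I expect no real obstacle here once Proposition~\ref{isomor} is in hand. Almost every step is formal: the compatibility of $\chi$ and $L$ with the fibre functors, the functoriality of $\Scr{R}(-)$ and the resulting fact that it carries an isomorphism of categories to an isomorphism of $A$-corings (in particular respecting comultiplication and counit, so that no separate verification of the coring structure on $\Scr{R}(\chi)$ is needed), and the relation $\Sf{can}_{\cat{A}^{R^{\bcirc}}}\circ\Scr{R}(\chi)=\Sf{can}_{\chi}$, all follow immediately from the constructions of Section~\ref{sec:ICC}. The only genuine computation is the last displayed identity $\Sf{can}_{\chi}=id_{R^{\bcirc}}$, and it is a one-line application of the dual basis relations; the subtle hypothesis $\zeta$ injective enters only through Proposition~\ref{isomor}.
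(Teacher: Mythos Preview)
Your proof is correct and follows essentially the same approach as the paper: both verify the identity $\Sf{can}_{\cat{A}^{R^{\bcirc}}}\circ\Scr{R}(\chi)=id_{R^{\bcirc}}$ via the dual basis relation, invoke Proposition~\ref{isomor} to ensure $\chi$ is an isomorphism of categories so that $\Scr{R}(\chi)$ is bijective, and conclude that $\Sf{can}_{\cat{A}^{R^{\bcirc}}}$ is an isomorphism. Your version is slightly more explicit in exhibiting $\Scr{R}(L)$ as the inverse of $\Scr{R}(\chi)$, but the argument is the same.
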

\begin{proof}
Recall that the canonical map $\mathsf{can}_{R^{\bcirc}} : \recons{R^{\bcirc}} \to R^{\bcirc}$ is defined by
\[
\mathsf{can}_{R^{\bcirc}} (\overline{m^* \tensor{T_M} m}) = m^*(m_0)\overline{m_1^* \tensor{T_{M_1}}m_1}.
\]
Therefore
\[
(\mathsf{can}_{R^{\bcirc}} \circ \recons{\chi})(\overline{p^* \tensor{T_P} p}) = \mathsf{can}_{R^{\bcirc}}(\overline{p^* \tensor{T_{\chi (P)}} p}) = p^*(e_{\alpha,P})\overline{e_{\alpha,P}^* \tensor{T_P} p} = \overline{p^* \tensor{T_P} p}
\]
In this way, $\mathsf{can}_{R^{\bcirc}} \circ \recons{\chi} = id_{R^{\bcirc}}$. By Proposition \ref{isomor}, $\chi$ is an isomorphism of categories and, henceforth, $\recons{\chi}$ is bijective. Therefore, $\mathsf{can}_{R^{\bcirc}}$ is bijective and $R^{\bcirc}$ is Galois.  
\end{proof}

\begin{proposition}\label{QP}
Assume that for every homomorphism of right $R$-modules $f : P \to R^*$, where $P \in \cat{A}_R$, there exists $\mathcal{Q} \subseteq \cat{A}_R$ and an exact sequence of right $R$-modules $\xymatrix@C=15pt{\bigoplus_{Q \in \mathcal{Q}} Q \ar^-{g}[r] & P \ar^-{f}[r] & R^*.}$ Then $\zeta : R^{\bcirc} \rightarrow R^*$ is injective. 
\end{proposition}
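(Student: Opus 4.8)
The goal is to show that, under the hypothesis that every $R$-linear map $f\colon P\to R^*$ with $P\in\cat{A}_R$ factors (at the level of $R$-modules) through an exact sequence $\bigoplus_{Q\in\mathcal{Q}}Q\xrightarrow{g}P\xrightarrow{f}R^*$, the map $\zeta\colon R^{\bcirc}\to R^*$ is injective. The first observation is that a general element of $R^{\bcirc}$ is a finite sum $\sum_i\overline{p_i^*\tensor{T_{P_i}}p_i}$, and by taking $P:=\bigoplus_iP_i$ (which is again in $\cat{A}_R$, since that category is closed under finite direct sums) together with the inclusions and projections, one rewrites any such element in the form $\overline{p^*\tensor{T_P}p}$ for a single $P\in\cat{A}_R$. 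So it suffices to show: if $\overline{p^*\tensor{T_P}p}$ lies in the kernel of $\zeta$, i.e. $p^*(pr)=0$ for all $r\in R$, then $\overline{p^*\tensor{T_P}p}=0$ in $R^{\bcirc}$.

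The strategy is to use the factorization hypothesis applied to a well-chosen $f$. Given $p\in P$, consider the $R$-linear map $f\colon R\to R^*$, $f(r)=\zeta(\overline{p^*\tensor{T_P}p})\cdot r$ — but more to the point, I would use the element $p$ to build the $R$-linear map $\widehat{p}\colon R\to P$, $r\mapsto pr$, and then $f:=\zeta\circ\chi_P\circ\widehat{p}$ is not quite what is needed; rather, the natural map to feed into the hypothesis is the composite $P\to R^*$ sending $q\mapsto(r\mapsto p^*(qr))$, which is visibly $R$-linear (the right $R$-action on $R^*$ being $(\varphi r)(r')=\varphi(rr')$). Call this $f_{p^*}$. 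The hypothesis on $\zeta(\overline{p^*\tensor{T_P}p})=0$ says precisely that $f_{p^*}(p)=0$, i.e. $p\in\ker f_{p^*}$. By the factorization hypothesis there is $\mathcal{Q}\subseteq\cat{A}_R$ and an exact sequence $\bigoplus_{Q\in\mathcal{Q}}Q\xrightarrow{g}P\xrightarrow{f_{p^*}}R^*$ of right $R$-modules; exactness at $P$ combined with $f_{p^*}(p)=0$ gives $p\in\img{g}$, so $p=\sum_j g_j(q_j)$ for finitely many $q_j\in Q_j$, $Q_j\in\mathcal{Q}$, where $g_j$ is the restriction of $g$ to the $Q_j$-component and is an $R$-linear map $Q_j\to P$. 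Such $R$-linear maps are in particular morphisms in $\cat{A}_R$, hence lie in $T_{Q_jP}$.

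Now I invoke the defining relations of $R^{\bcirc}$, i.e. the coideal $\fk{J}_{\cat{A}_R}$ spanned by elements $q^*\tensor{T_Q}tp'-q^*t\tensor{T_P}p'$ with $t\in T_{P'Q'}$. Using $p=\sum_j g_j(q_j)$ and $R$-linearity (equivalently, $T$-bilinearity through the functor $\omega=\eta_*$), compute in $R^{\bcirc}$:
\[
\overline{p^*\tensor{T_P}p}=\sum_j\overline{p^*\tensor{T_P}g_j(q_j)}=\sum_j\overline{p^*\circ g_j\tensor{T_{Q_j}}q_j}=\sum_j\overline{(f_{p^*}\circ g_j)(-)(1)\text{-type}\ \tensor{T_{Q_j}}q_j}.
\]
The point is that $p^*\circ g_j$, as an element of $P^*$ composed appropriately, is determined by $f_{p^*}\circ g_j=g_j^*\circ f_{p^*}$, wait — more carefully, the composite $Q_j\xrightarrow{g_j}P\xrightarrow{p^*}A$ is exactly the "$q\mapsto p^*(g_j(q))$" part; but I need the full linear form, and the relation $f_{p^*}\circ g_j=0$ (which holds because $\img{g}\subseteq\ker f_{p^*}$ by exactness) forces $r\mapsto p^*(g_j(q_j)r)=0$ for all $r$, in particular $p^*(g_j(q_j))=0$. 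Feeding this back, each summand $\overline{p^*\circ g_j\tensor{T_{Q_j}}q_j}$ has counit zero, but that alone does not kill it in $R^{\bcirc}$; what kills it is that $p^*\circ g_j$ annihilates $g_j(q_j)R$, so one can slide $g_j$ across the tensor the other way using a dual basis of $Q_j$ and the relation in $\fk{J}_{\cat{A}_R}$ to land at $\overline{(p^*\circ g_j)\tensor{T_{Q_j}}q_j}$ with $(p^*\circ g_j)$ vanishing on the cyclic submodule generated by $q_j$, which (since $q_jR$ is an $R$-submodule, split off as an $A$-module summand) lets one replace $Q_j$ by $q_jR$ and conclude the generator is $0$. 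I expect the main obstacle to be exactly this last bookkeeping step: turning "the linear form vanishes on the relevant $R$-submodule" into "the generator is zero in the quotient coring," which requires carefully exhibiting the needed elements of $\fk{J}_{\cat{A}_R}$ and choosing dual bases compatibly; the rest is formal.
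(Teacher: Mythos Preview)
Your approach is essentially the paper's own, and everything up through the computation
\[
\overline{p^*\tensor{T_P}p}=\sum_j\overline{p^*\tensor{T_P}g_j(q_j)}=\sum_j\overline{(p^*\circ g_j)\tensor{T_{Q_j}}q_j}
\]
is correct. The gap is in your last paragraph: you talk yourself out of the one-line finish. Exactness gives $f_{p^*}\circ g_j=0$ as a map $Q_j\to R^*$, which means $p^*(g_j(q)r)=0$ for \emph{every} $q\in Q_j$ and every $r\in R$. Taking $r=1$ yields $(p^*\circ g_j)(q)=0$ for all $q\in Q_j$, i.e.\ $p^*\circ g_j=0$ in $Q_j^*$. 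Hence each summand $\overline{(p^*\circ g_j)\tensor{T_{Q_j}}q_j}=\overline{0\tensor{T_{Q_j}}q_j}=0$, and you are done. Your detour through ``$p^*\circ g_j$ annihilates $g_j(q_j)R$'' (which mixes domains: $p^*\circ g_j$ is defined on $Q_j$, not on $P$), cyclic submodules, and extra dual bases is unnecessary and does not obviously terminate.

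The paper makes this transparent by first invoking the adjunction isomorphism $\rhom{R}{P}{R^*}\cong P^*$, $f\mapsto(p\mapsto f(p)(1))$, and rewriting $R^{\bcirc}$ with $P^\star:=\rhom{R}{P}{R^*}$ in place of $P^*$. Under this identification your $p^*$ becomes $f_{p^*}$ itself, the map $\zeta$ reads $\overline{f\tensor{T_P}p}\mapsto f(p)$, and the final line collapses to $\overline{fg\tensor{T_Q}q}=0$ because $fg=0$. Same argument, but the adjunction bookkeeping is done once at the start rather than implicitly at the end.
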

\begin{proof}
For every $P \in \cat{A}_R$ we consider the adjunction isomorphism of $(A,T_P)$-bimodules
\[
\rhom{R}{P}{R^*} \cong P^*, \quad f \mapsto (p \mapsto f(p)(1))
\]
with inverse
\[
P^* \longrightarrow \rhom{R}{P}{R^*}, \quad \varphi \mapsto (p \mapsto (r \mapsto \varphi(pr)))
\]
Writing $P^\star = \rhom{R}{P}{R^*}$, we obtain an isomorphism of $A$-bimodules 
\[
R^{\bcirc} \cong \frac{\bigoplus_{P \in \cat{A}_R}P^\star \tensor{T_P} P}{K\left\{ f \tensor{T_Q} tp - f t \tensor{T_P} p : \, f \in Q^\star,\, p
\in P,\, t \in T_{PQ},\, P,Q \in \cat{A}_R \right\}}
\]
Up to this isomorphism, $\zeta : R^{\bcirc} \to R^*$ is given by $\zeta(\overline{f \tensor{T_P} p}) = f(p)$, for $f \in P^\star$ and $p \in P$.  
Assume $\sum_i \overline{f_i \tensor{T_{P_i}} p_i} \in \ker \zeta$, that is, $\sum_i f_i(p_i) = 0$. Write $P = \oplus_i P_i$ and define $f :P \to R^*$ and $p \in P$ uniquely by the conditions $f \iota_i = f_i$ and $\pi_i(p) = p_i$ for every $i$, where $\iota_i : P_i \to P$ is the $i$-th canonical injection. We use the notation $\pi_i : P \to P_i$ for the canonical projections.  Since $p \in \ker f$, there exist a homomorphim of right $R$-modules $g : Q \to P$ and $q \in Q$ such that $p = g(q)$.  The following computation
\[
\sum_i \overline{f_i \tensor{T_{P_i}} p_i} = \sum_i \overline{f_i \tensor{P_i} \pi_i(g(q))} = \sum_i \overline{f_i \pi_i g \tensor{T_Q} q } = \overline{fg \tensor{T_Q} q} = 0.
\]
 finishes the proof.
\end{proof}

When $R_A$ is finitely generated and projective, it is well-known that $R^*$ is an $A$--coring (see e.g. \cite[Proposition 2.11]{Boe:HA}). 

\begin{corollary}\label{Exm:R+}
If $R_A$ is finitely generated and projective, then $\zeta : R^{\bcirc} \to R^*$ is an isomorphism of $A$--corings. 
\end{corollary}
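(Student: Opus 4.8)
The plan is to deduce the corollary from the results already established, rather than to re-examine the construction directly. Since $R_A$ is finitely generated and projective, the dual $R^*$ is an $A$-coring (this is the cited \cite[Proposition 2.11]{Boe:HA}), so the statement makes sense. The strategy has two parts: first show $\zeta$ is injective, which by Proposition \ref{isomor} costs nothing extra, and then show $\zeta$ is surjective; once both are known, it remains only to recall that $\zeta$ is a morphism of $A$-corings.

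For injectivity, the cleanest route is to invoke Proposition \ref{QP}: when $R_A$ is \emph{fgp}, the object $R$ itself belongs to $\cat{A}_R$, hence $R^* \cong \rhom{A}{R}{A}$ is a right $R$-module lying in $\cat{A}_R$ (being a direct summand of a finite power of $R^*$, and $R^*_A$ is \emph{fgp} as the dual of an \emph{fgp} module). Then for any $f : P \to R^*$ with $P \in \cat{A}_R$ one may simply take $\mathcal{Q} = \{P\}$ and $g = \mathrm{id}_P$ to produce the required exact sequence $P \xrightarrow{\mathrm{id}} P \xrightarrow{f} R^*$; the hypothesis of Proposition \ref{QP} is thus trivially met, so $\zeta$ is injective. (Alternatively, one can argue directly: $\beta_R : R^{\bcirc} \cong A \tensor{A} R^{\bcirc} \to \rhom{A}{R}{R}$ has a retraction built from evaluation at $1$, but the appeal to Proposition \ref{QP} is shorter.)

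For surjectivity, I would use that $R \in \cat{A}_R$ directly. Given $\varphi \in R^*$, consider the element $\overline{\varphi \tensor{T_R} 1_R} \in R^{\bcirc}$, where $1_R \in R$ and $\varphi \in R^* = \rhom{A}{R}{A}$ is viewed as an element of the $A$-dual of the object $R_R \in \cat{A}_R$ (here $T_R = \rend{R}{R} = R$ acting by left multiplication). By the formula \eqref{Eq:zeta}, $\zeta(\overline{\varphi \tensor{T_R} 1_R})(r) = \varphi(1_R \cdot r) = \varphi(r)$, so $\zeta(\overline{\varphi \tensor{T_R} 1_R}) = \varphi$. Hence $\zeta$ is surjective.

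Finally, being a bijective $A$-bimodule map, $\zeta$ is an isomorphism of $A$-bimodules; that it is moreover a morphism of $A$-corings follows because $\zeta$ is, by construction in Subsection \ref{ssec:zeta}, compatible with counit ($\varepsilon^{\bcirc}$ corresponds to evaluation at $1_R$, which is the counit of $R^*$) and with comultiplication (both comultiplications are built from a finite dual basis of $R_A$ in the same way), so the inverse is automatically a coring morphism as well. The only point requiring a line of care is the identification of the coring structure on $R^{\bcirc}$ restricted to the summand indexed by $R$ with that of $R^*$ under \eqref{Eq:deltao} and \eqref{Eq:deltao}'s counit \eqref{Eq:counito}; since $\zeta$ is already known to be a coring map from the general theory of Section \ref{sec:FDRE}, no separate verification is needed and the corollary follows. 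I do not expect any genuine obstacle here; the statement is essentially a bookkeeping consequence of Proposition \ref{isomor} together with the explicit surjectivity witness above.
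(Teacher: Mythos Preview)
Your surjectivity argument is correct and matches the paper's ``obviously surjective'': since $R_A$ is fgp, $R$ itself lies in $\cat{A}_R$, and $\zeta(\overline{\varphi \tensor{T_R} 1_R}) = \varphi$ for any $\varphi \in R^*$.

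However, your injectivity argument via Proposition~\ref{QP} contains a genuine error. You propose taking $\mathcal{Q} = \{P\}$ and $g = \mathrm{id}_P$, claiming that $P \xrightarrow{\mathrm{id}} P \xrightarrow{f} R^*$ is exact. But exactness at the middle $P$ means $\mathrm{im}(\mathrm{id}_P) = \ker f$, i.e.\ $P = \ker f$, which forces $f = 0$. For a nonzero $f$ this sequence is simply not exact, so the hypothesis of Proposition~\ref{QP} is not verified by your choice.

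The paper's fix is immediate once you see what Proposition~\ref{QP} actually needs: in its proof, one only has to produce, for a given $p \in \ker f$, some $Q \in \cat{A}_R$ and a right $R$-linear $g : Q \to P$ with $p \in \mathrm{im}(g)$. Since $R_A$ is fgp, we have $R \in \cat{A}_R$, and the map $R \to P$, $r \mapsto pr$, does the job. In the paper's phrasing, ``$R$ is a generator of $\cat{A}_R$''. So replace your identity map on $P$ by this map from $R$, and the argument goes through.

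Your parenthetical alternative (a retraction of $\beta_R$) is also off target: by Lemma~\ref{betainy} it is $\beta_A$, not $\beta_R$, that equals $\zeta$, and a retraction of $\beta_R$ does not by itself yield injectivity of $\beta_A$.
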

\begin{proof}
Our map $\zeta : R^{\bcirc} \to R^*$ is injective by Proposition \ref{QP}, since $R$ is a generator of $\cat{A}_R$. In this case, $\zeta$ is obviously surjective. 
\end{proof}

Recall (see, e.g.~\cite[page 22]{Stenstrom:1975}) that a ring is said to be right hereditary if every right ideal is projective as a right module. 

\begin{corollary}\label{hereditary}
If $A$ is a right hereditary right noetherian ring, then $\zeta : R^\circ \to R^*$ is injective. 
\end{corollary}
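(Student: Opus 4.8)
The plan is to derive the statement from Proposition \ref{QP}, so the task reduces to verifying its hypothesis: given an arbitrary homomorphism of right $R$-modules $f : P \to R^*$ with $P \in \cat{A}_R$, I must exhibit a family $\mathcal{Q} \subseteq \cat{A}_R$ together with a homomorphism of right $R$-modules $g : \bigoplus_{Q \in \mathcal{Q}} Q \to P$ whose image is exactly $\ker f$.

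First I would record the relevant module-theoretic consequence of the hypotheses on $A$. Since $A$ is right noetherian and $P_A$ is finitely generated, $P_A$ is a noetherian right $A$-module, so every right $A$-submodule of $P$ is finitely generated over $A$; and since $A$ is right hereditary, every submodule of the projective right $A$-module $P_A$ is again projective over $A$. Hence, under both hypotheses together, every right $A$-submodule of $P$ is finitely generated and projective as a right $A$-module. Then, for each $x \in \ker f$, I would consider the cyclic right $R$-submodule $xR \subseteq P$: its underlying right $A$-module is an $A$-submodule of $P_A$, hence lies in $\cat{A}_R$ by the previous observation, and $xR \subseteq \ker f$ because $\ker f$ is an $R$-submodule of $P$ containing $x$. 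Taking $\mathcal{Q} = \{\, xR : x \in \ker f \,\}$ (this is a set, as these are all submodules of the fixed module $P$) and letting $g$ be the homomorphism induced by the inclusions $xR \hookrightarrow P$, the image of $g$ is $\sum_{x \in \ker f} xR = \ker f$, so that $\bigoplus_{Q \in \mathcal{Q}} Q \xrightarrow{g} P \xrightarrow{f} R^*$ is exact at $P$. Proposition \ref{QP} then yields the injectivity of $\zeta$; this covers in particular the cases in which $A$ is a Dedekind domain or a semisimple Artinian ring, both being right hereditary and right noetherian.

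There is no real obstacle here; the only point worth emphasizing is that one genuinely uses the conjunction of the two hypotheses — right hereditariness alone secures projectivity of submodules of $P_A$ but not their finite generation, while right noetherianity alone secures finite generation but not projectivity — and it is exactly their combination that keeps the cyclic submodules $xR$ inside $\cat{A}_R$, which is what makes the argument go through.
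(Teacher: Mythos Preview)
Your proof is correct and follows the same strategy as the paper: verify the hypothesis of Proposition \ref{QP} by exploiting that, over a right hereditary right noetherian ring, every right $A$-submodule of $P_A$ is finitely generated and projective. The only difference is that the paper makes the simpler choice $\mathcal{Q} = \{\ker f\}$ --- since $\ker f$ is itself an $R$-submodule of $P$ and hence lies in $\cat{A}_R$ by the same reasoning you gave, a single $Q$ with $g$ the inclusion already suffices, so there is no need to pass to cyclic submodules.
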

\begin{proof}
Recall that, over a right hereditary ring, submodules of projective right modules are projective modules (see, e.g., \cite[Proposition I.9.5]{Stenstrom:1975}). Thus, given a homomorphism of right $R$-modules $f : P \to R^*$, where $P \in \cat{A}_R$, then $Q : = \ker f$ is projective as a right $A$--module, and, of course, it is finitely generated over $A$, since we are assuming that $A$ is right noetherian. Now, apply Proposition \ref{QP}.
\end{proof}

\begin{example}\label{exam:Dedekind}
Of course, every commutative Dedekind domain $A$ fulfills the hypotheses of Corollary \ref{hereditary}. 
\end{example}

\begin{example}\label{exam:ssA}
Obviously, if $A$ is semi-simple Artinian, then $A$ fulfills the hypotheses of Corollary \ref{hereditary}. 
\end{example}

\begin{problem}\label{problemauno}
Corollaries \ref{Exm:R+} and \ref{hereditary}  require ``extreme'' conditions to guarantee, by virtue of Proposition \ref{QP}, that $\zeta$ is injective. More precisely, Corollary \ref{Exm:R+} imposes a strong condition on the ring extension $A \to R$, while Corollary \ref{hereditary} restricts the kind of ground ring $A$ we are allowed to work with. It would be interesting, in view of the consequences of the injectivity of $\zeta$ (see Theorem \ref{thm:lduality} and Corollary \ref{LRgrupoid} below), to investigate more general hypotheses (presumably, module-theoretical conditions) that would imply it. 
\end{problem}

\subsection{Dual coalgebras}
Let $R$ be an algebra over a  commutative noetherian hereditary ring $A$. As a generalization to the case of algebras over fields, it is possible to define a structure of  $A$--coalgebra over the $A$--submodule $R'$ of $R^*$ consisting of those $\varphi \in R^*$ such that $\ker \varphi$ contains an $A$--cofinite ideal of $R$ (see \cite[Theorem 2.8, Proposition 2.11]{Abuhlail/Gomez-Torrecillas/Wisbauer:2000}).  Here, an ideal $I$ of $R$ is \emph{$A$--cofinite} if $R/I$ is a finitely generated $A$--module.  

The map $\zeta : R^{\bcirc} \to R^*$ factorizes through $R'$. Indeed, for any generator $\overline{p^* \otimes_{T_P} p} \in R^{\bcirc}$,  we need to prove that the cyclic right $R$--module $\varphi R$, where $\varphi = \zeta (\overline{p^* \otimes_{T_P} p})$, is finitely generated as an $A$--module (see \cite[2.3]{Abuhlail/Gomez-Torrecillas/Wisbauer:2000}). Let $r, s \in R$. Since $\zeta$ is right $R$--linear, $A$ is central in $R$, and by using a suitable dual basis, we get
$$(\varphi r)(s) = p^*(prs) = \sum_{\alpha_P}p^*(e_{\alpha_P}e_{\alpha_P}^*(pr)s) = \sum_{\alpha_P}p^*(e_{\alpha_P}se_{\alpha_P}^*(pr)) = \sum_{\alpha_P}p^*(e_{\alpha_P}s)e_{\alpha_P}^*(pr).$$  This implies that $\varphi r$ belongs to the $A$--submodule of $R^*$ generated by the finite set of all $p^*(e_{\alpha_P}-)$'s and, hence, $\varphi R$ is finitely generated as an $A$--module. 

It follows from Example \ref{exam:Dedekind} 
that $\zeta: R^{\bcirc} \to R^*$ is injective. Let $\tau : R^{\bcirc} \otimes R^{\bcirc} \to R^{\bcirc} \otimes R^{\bcirc}$ denote the flip map. A straightforward computation shows that the following diagram is commutative.
\[
\xymatrix@R=20pt{R^{\bcirc} \ar^-{\tau \Delta^{\bcirc}}[rr] \ar_-{\zeta}[d] & & R^{\bcirc} \tensor{} R^{\bcirc} \ar^{\zeta \tensor{} \zeta}[d]\\
R^* \ar^-{m^*}[dr] & & R^* \tensor{} R^* \ar[dl] \\
 & (R \tensor{} R)^* & }
\]
In resume, we have that, if $R$ is an  algebra over a commutative noetherian hereditary ring $A$, then $\zeta : R^{\bcirc} \to R'$ is an injective anti-homomorphism of $A$--coalgebras.

\begin{proposition}\label{exam:dualcoalg}
Let $R$ be an algebra over a commutative Dedekind domain $A$. Then $\zeta : R^{\bcirc} \to R'$ is an anti-isomorphism of $A$--coalgebras. 
\end{proposition}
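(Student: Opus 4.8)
The plan is to prove that $\zeta: R^{\bcirc} \to R'$ is a bijective anti-homomorphism of $A$--coalgebras. Since the preceding discussion already establishes that $\zeta$ is an injective anti-homomorphism of $A$--coalgebras whose image lies in $R'$, the only remaining task is \emph{surjectivity} of $\zeta: R^{\bcirc} \to R'$. So I would devote the whole argument to showing that every $\varphi \in R'$ lies in the image of $\zeta$.

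First I would fix $\varphi \in R'$ and let $I \subseteq R$ be an $A$--cofinite ideal contained in $\ker\varphi$; thus $P := R/I$ is a finitely generated right $R$--module which, as an $A$--module, is finitely generated. The crucial point — and this is where the hypothesis that $A$ is a \emph{Dedekind domain} enters — is that $P_A$ is moreover \emph{projective}: over a Dedekind domain a finitely generated module is projective if and only if it is torsion-free, so I would first reduce to that case. Concretely, let $t(P)$ be the $A$--torsion submodule of $P$; since $\varphi$ kills $I$ it factors through $P$, and I claim it also kills $t(P)$ viewed inside $R/I$. Indeed $\varphi(P) \subseteq A$ is torsion-free (as $A$ is a domain), so $\varphi$ annihilates every torsion element; hence replacing $I$ by its preimage of $t(P)$ — still an $A$--cofinite ideal inside $\ker\varphi$, and still an ideal because $t(P)$ is an $R$--submodule ($A$ being central in $R$) — we may assume $P = R/I$ is torsion-free, hence finitely generated projective over $A$, i.e.\ $P \in \cat{A}_R$.

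Now $\varphi$ induces a right $A$--linear map $\bar\varphi: P \to A$, i.e.\ an element $\bar\varphi \in P^* = \rhom{A}{P}{A}$ in the sense of the coring construction; here $P^* = \rhom{A}{P}{A}$ is the right dual used to build $R^{\bcirc}$. Writing $\bar{1} = 1 + I \in P$ for the canonical generator, I would consider the element $\overline{\bar\varphi \tensor{T_P} \bar{1}} \in R^{\bcirc}$ and compute
\begin{equation*}
\zeta\bigl(\overline{\bar\varphi \tensor{T_P} \bar 1}\bigr)(r) \;=\; \bar\varphi(\bar 1 \cdot r) \;=\; \bar\varphi(\overline{r}) \;=\; \varphi(r), \qquad r \in R,
\end{equation*}
using that the right $R$--action on $P = R/I$ sends $\bar 1$ to $\bar r = r + I$ and that $\varphi$ factors through $P$ as $\bar\varphi$. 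Hence $\zeta(\overline{\bar\varphi \tensor{T_P} \bar 1}) = \varphi$, which proves surjectivity. Combined with the injectivity and the coalgebra anti-morphism property already recorded before the statement, this shows $\zeta: R^{\bcirc} \to R'$ is an anti-isomorphism of $A$--coalgebras.

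I expect the main obstacle to be the torsion-reduction step: one must be careful that the enlarged kernel $J := \{ r \in R : r + I \in t(P)\}$ is genuinely a two-sided ideal of $R$ (it is a right ideal since $t(P)$ is a right $R$--submodule, and it is a left ideal because $A$ central in $R$ forces $t(P)$ to be an $(R,R)$--subbimodule), that $R/J$ is still finitely generated over $A$ (it is a quotient of $P$), and that $J \subseteq \ker\varphi$ (because $\varphi(J/I) \subseteq \varphi(t(P)) = 0$, the latter since $\varphi(P) \subseteq A$ is torsion-free). Once these three verifications are in place the rest is the routine computation displayed above, and the Dedekind hypothesis does the real work by upgrading ``finitely generated torsion-free'' to ``finitely generated projective'', placing $R/J$ in $\cat{A}_R$.
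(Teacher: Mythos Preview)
Your argument is correct and follows the same overall strategy as the paper: exhibit a preimage of $\varphi \in R'$ of the form $\overline{\bar\varphi \tensor{T_P} \bar 1}$ with $P = R/I$ for a suitable right ideal $I \subseteq \ker\varphi$, and use the Dedekind hypothesis to get $P_A$ finitely generated projective. The only difference is in the choice of $I$. You start from an arbitrary $A$--cofinite two-sided ideal contained in $\ker\varphi$ and then perform a torsion-reduction step, enlarging $I$ to $J$ so that $R/J$ becomes torsion-free. The paper instead takes directly $I := \{ r \in R : \varphi r = 0\}$, the annihilator of $\varphi$ in the right $R$--module $R^*$; this is a right ideal with $R/I \cong \varphi R \subseteq R^*$, and since $R^*$ is torsion-free over the domain $A$, the quotient $R/I$ is automatically torsion-free and no separate reduction is needed. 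Your route is slightly longer but equally valid; note incidentally that you only need $J$ to be a \emph{right} ideal (so that $R/J \in \cat{A}_R$), so the verification that $J$ is two-sided, while correct, is not required.
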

\begin{proof}
We need just to show that $\zeta : R^{\bcirc} \to R'$ is surjective. Let $\varphi \in R'$. We know that the cyclic right $R$--module $\varphi R$ is a finitely generated $A$--module. On the other hand, $\varphi R \subseteq R^*$ and, therefore, $\varphi R$ is a torsion-free $A$--module. Since $A$ is a Dedekind domain, we get that $\varphi R$ is a fgp $A$--module. Now, $I : = \{ r \in R : \varphi r = 0 \}$ is a right ideal of $R$ and, since $\varphi R \cong R/I$, we get that $P : = R/I$ is fgp as an $A$--module. On the other hand, $I \subseteq \ker \varphi$, which implies that there is $\overline{\varphi} \in P^*$ such that $\overline{\varphi}(r + I) = \varphi (r)$ for all $r \in R$. Hence, $\varphi = \zeta ( \overline{\overline{\varphi} \otimes_{T_P} (1 + I)})$. 
  \end{proof}
  
\begin{remark}
The proof of Proposition \ref{exam:dualcoalg} works to prove that if $R$ is an algebra over a semisimple commutative ring $A$, then $\zeta : R^{\bcirc} \to R'$ is an anti-isomorphism of coalgebras. 
\end{remark}

\section{The finite dual of a right bialgebroid, and of a cocommutative Hopf algebroid.}\label{sec:coHalgd}

We show that the right finite dual $U^{\bcirc}$ of a right bialgebroid $(A,U)$ is a left bialgebroid. When $A$ is commutative, this fact can be deduced from \cite[Exemple, pp. 5849]{Bruguieres:1994}, although that construction cannot be directly extended to the setting of a non-commutive basis $A$ because $add(A_A)$ is not a monoidal category.  We also prove that the map $\zeta : U^{\bcirc} \to U^*$ is a homomorphism of $A^e$--rings, when $U^*$ is endowed with the convolution product.
Theorem \ref{thm:lduality} states that the finite dual of a  cocommutative Hopf algebroid with commutative base ring  is a commutative Hopf algebroid. This could also have been deduced from \cite[Example, pp. 5849]{Bruguieres:1994}, we include an elementary proof. Our approach could be useful to treat specific examples. This will be illustrated in the last section.  Theorem \ref{thm:lduality} also contains our main contribution in this section, namely, a sufficient condition to get a monoidal equivalence between the category of the $A$--profinite modules over a cocommutative Hopf algebroid over $A$ and the $A$--profinite representations of its associated affine groupoid via the finite dual construction. This equivalence works in a non necessarily tannakian context.

\subsection{Duality for bialgebroids}\label{ssec:Dbialgd}

Let $ U$ be a right bialgebroid over a  (possibly non commutative) $K$--algebra $A$, and consider $ U$ as an $A$--ring via its source map $\Sf{s}: A \to  U$.  
Consider the fibre functor $\Sf{s}_* : \cat{A}_{ U} \to add(A_A)$ as in Section \ref{sec:ICC}, and the corresponding right finite dual $A$--coring $ U^{\bcirc}$ as in Subsection \ref{ssec:Rcero}. Our aim is to endow $ U^{\bcirc}$ with the structure of a left bialgebroid over $A$ in such a way that the map $\zeta:  U^{\bcirc} \to  U^*$ defined in Eq.\eqref{Eq:zeta}  becomes a morphism of $\Ae$-rings. Here, the right convolution ring $ U^*$ is an $\Ae$--ring via the homomorphism of rings 
\begin{equation}\label{Eq:xi}
\xi: \Ae \longrightarrow   U^*, \quad \Big(a\tensor{}b^o\longmapsto \big[ u \mapsto a\varepsilon(\Sf{t}(b)u)\big]\Big).
\end{equation}

\begin{lemma}\label{lema:multiplication} 
Let $(A,  U)$ be a right bialgebroid. Then $ U^{\bcirc}$ admits a structure of $\Ae$-ring such that $\zeta:  U^{\bcirc} \to  U^*$ is a homomorphism of $\Ae$-rings.
\end{lemma}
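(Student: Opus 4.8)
The plan is to construct an explicit $A^{\Sf{e}}$-ring structure on $U^{\bcirc}$ by dualizing the monoidal structure of $\cat{A}_U$, exactly as in the proof of Proposition \ref{prop:TBialgd}, but being careful about the non-commutativity of $A$ so that only the relevant one-sided closedness of $\rmod{U}$ is used. Recall that since $U$ is a right bialgebroid, $\rmod{U}$ is monoidal with the tensor product $\diama$ computed over $A$ via the comultiplication, and the restriction of scalars $\Scr{O}_r : \rmod{U} \to \rmod{A^{\Sf{e}}}$ is strict monoidal. Hence $\cat{A}_U$ is monoidal (the tensor product of two modules that are \emph{fgp} as right $A$-modules is again \emph{fgp} over $A$), and the dual-basis machinery of Subsection \ref{ssec:comatrix} applies. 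First I would define, for $P,Q \in \cat{A}_U$ and $p^* \in P^*$, $q^* \in Q^*$, the convolution-type product $q^* \star p^* \in (Q \diama P)^*$ by $(q^*\star p^*)(x \tensor{A} y) = q^*(x p^*(y))$ — using the right $A$-module structure on $Q$ — and then set
\begin{equation*}
(\overline{p^* \tensor{T_P} p})\cdot(\overline{q^* \tensor{T_Q} q}) = \overline{(q^*\star p^*) \tensor{T_{Q\diama P}} (q \tensor{A} p)},
\end{equation*}
checking that this descends to the quotient $U^{\bcirc} = \fk{B}(\cat{A}_U)/\fk{J}_{\Sscript{\Sf{s}_*}}$ because $\fk{J}_{\Sscript{\Sf{s}_*}}$ is a two-sided ideal for it — this uses functoriality of $\diama$ on morphisms, just as in \eqref{multicomponent}.

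Next I would exhibit the unit and the $A^{\Sf{e}}$-ring homomorphism. The unit object of $\rmod{U}$ is $A_U$ with action $a.u = \varepsilon(\Sf{s}(a)u) = \varepsilon(\Sf{t}(a)u)$; this is \emph{fgp} as a right $A$-module, so it is an object of $\cat{A}_U$, and $\overline{\mathrm{id}_A \tensor{T_A} 1_A}$ (with $\mathrm{id}_A \in A^* = \rhom{A}{A}{A}$) will be the unit for $\cdot$. I would then define $\eta^{\bcirc} : A^{\Sf{e}} \to U^{\bcirc}$ by $a \tensor{} b^o \mapsto \overline{\Sf{l}_a \tensor{T_A} b}$, where $\Sf{l}_a$ is left multiplication by $a$ on $A_U$; the corresponding source and target are $\Sf{s}^{\bcirc}(a) = \eta^{\bcirc}(a\tensor{}1^o)$ and $\Sf{t}^{\bcirc}(b) = \eta^{\bcirc}(1 \tensor{} b^o)$. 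A direct computation with dual bases — entirely parallel to the associativity and unitality verifications in Proposition \ref{prop:TBialgd} — shows that $(U^{\bcirc}, \eta^{\bcirc}, \cdot)$ is an $A^{\Sf{e}}$-ring; the coring structure \eqref{Eq:deltao}--\eqref{Eq:counito} together with this multiplication is then the candidate left bialgebroid structure, but for the present Lemma I only need the $A^{\Sf{e}}$-ring part.

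Finally I would verify that $\zeta : U^{\bcirc} \to U^*$ is a homomorphism of $A^{\Sf{e}}$-rings, where $U^*$ carries the right convolution product $f*g = \big(u \mapsto g(u_1 f(u_2)\text{-type})\big)$ dual to the comultiplication of the $A$-coring underlying $U$, and the $A^{\Sf{e}}$-structure on $U^*$ is $\xi$ of \eqref{Eq:xi}. Recall $\zeta(\overline{p^*\tensor{T_P}p})(r) = p^*(pr)$. Unitality is immediate: $\zeta$ of the unit is $r \mapsto \mathrm{id}_A(1_A \cdot r) = \varepsilon(\Sf{s}(\cdot))$, the unit of $U^*$. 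For multiplicativity I would compute, for $u \in U$ with $\Delta(u) = u_1 \tensor{A} u_2$,
\begin{equation*}
\big(\zeta(\overline{p^*\tensor{T_P}p}) * \zeta(\overline{q^*\tensor{T_Q}q})\big)(u) = p^*\big(p\, u_1\big)\cdot\text{(acting through)}\cdots = q^*\big(q\, u_1\, p^*(p\, u_2)\big) = (q^*\star p^*)\big((q\tensor{A}p)\cdot u\big),
\end{equation*}
where the middle step uses precisely that the $U$-action on $Q \diama P$ is \eqref{Eq:accion}, i.e.\ $(q\tensor{A}p).u = q u_1 \tensor{A} p u_2$; the last expression is exactly $\zeta$ of the product $(\overline{p^*\tensor{T_P}p})\cdot(\overline{q^*\tensor{T_Q}q})$ evaluated at $u$. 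Compatibility with $\eta^{\bcirc}$ and $\xi$ is a short check: $\zeta(\Sf{s}^{\bcirc}(a))(u) = \Sf{l}_a(1_A \cdot u) = a\varepsilon(\Sf{s}(1)u)$ matches $\xi(a\tensor{}1^o)$, and similarly for the target using the identity $a.u = \varepsilon(\Sf{t}(a)u)$ on the unit object.

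The main obstacle I anticipate is purely bookkeeping rather than conceptual: one must keep scrupulous track of which side of $A$ every action lives on — since $A$ is not commutative, the factors in $q^*\star p^*$, in the Sweedler legs of $\Delta$, and in the $A^{\Sf{e}}$-actions on $U^*$ via $\Sf{s}$ versus $\Sf{t}$ do not commute — and verify that all the relevant tensor products land in the correct Sweedler--Takeuchi subspaces so that the formulas are well defined on the quotient $U^{\bcirc}$. Once the sides are pinned down, every step reduces to an application of the dual-basis identities and the right-bialgebroid axioms, as in Proposition \ref{prop:TBialgd}.
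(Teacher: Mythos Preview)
Your proposal is correct and follows essentially the same route as the paper: define the multiplication on $U^{\bcirc}$ via the comatrix formula \eqref{Eq:mo}, exhibit the unit and the $A^{\Sf{e}}$-ring map $\eta^{\bcirc}(a\tensor{}b^o)=\overline{\Sf{l}_a\tensor{T_A}b}$ using the unit object $A_U$ of $\cat{A}_U$, and then verify $\zeta\circ\eta^{\bcirc}=\xi$ and multiplicativity of $\zeta$ by a direct Sweedler-leg computation using the diagonal action $(q\tensor{A}p)u=qu_1\tensor{A}pu_2$. One small caveat: your definition of $q^*\star p^*$ as $(x\tensor{A}y)\mapsto q^*(x\,p^*(y))$ is the mirror of the paper's convention \eqref{Eq:star}, $(x\tensor{A}y)\mapsto p^*(q^*(x)\,y)$; both are fine provided you match it consistently with the convolution on $U^*$ (the paper uses $(f*g)(u)=f(g(u_1)\,u_2)$), so when you pin down the sides in your final write-up make sure the two conventions cohere.
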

\begin{proof}

Observe first that $A \in \cat{A}_{ U}$, where the right $ U$--module structure is given by the action 
\begin{equation}\label{Eq:AU}
a.u \,=\, \varepsilon(\Sf{t}(a) u)\,=\varepsilon(\Sf{s}(a)u).
\end{equation} 
With this right $U$-module structure, it is clear that $A$ is the identity object of the monoidal category $\cat{A}_{ U}$. Now, as in Eq. \eqref{Eq:Etao} of subsection \ref{ssec:HAldfibre}, the element $\overline{id_A \tensor{T_A} 1} \in  U^{\bcirc}$ is  the unit for the multiplication \eqref{Eq:mo}, and we have that 
\begin{equation}\label{Eq:etao} 
\etaup^{\bcirc}: \Ae \longrightarrow  U^{\bcirc}, \qquad \Big( (a\tensor{} \bop)\longmapsto   (\bara{\Sf{l}_a  \tensor{T_A} b})\Big),
\end{equation}
is a homomorphism of rings (the associated source and target are denoted by $\Sf{s}^{\circ}$ and $\Sf{t}^{\circ}$, respectively) where the endomorphism ring $T_A$ of $A_U$ is the commutative subalgebra of $A$ defined by 
\begin{equation}\label{Eq:TA}
T_A=\Big\{ a \in Z(A)|\;\,  a \varepsilon(u) = \varepsilon\big(  \Sf{t}(a) u\big), \text{ for every } u \, \in U \Big\}.
\end{equation}

Recall that the map $\zeta$  sends   $\bara{p^*\tensor{T_P} p} \mapsto \left[ u \mapsto p^*(pu)\right]$. In order to see that it is a homomorphism of $A^e$--rings, let us check first that it is unital. So,  
$$\zeta \circ \eta(a\tensor{}b^o)(u)\,=\,\zeta(\bara{\Sf{l}_a\tensor{T_A} b})(u)\,=\, \Sf{l}_a(bu)\,=\, a\varepsilon(\Sf{t}(b)u)\,=\,\xi(a\tensor{}b^o)(u),\; \forall u \in  U.$$ Hence  $\zeta \circ \eta\,=\, \xi$, where $\xi$ is the map defined in Eq.\eqref{Eq:xi}.  Now, using the above multiplication Eq.\eqref{Eq:mo}, we have 
\begin{eqnarray*}
\zeta\left( (\bara{p^*\tensor{T_P}p})\,.\, (\bara{q^*\tensor{T_Q}q}) \right)(u) &=& \zeta\Big(\bara{(q^* \star p^*) \tensor{T_{Q\tensor{\peque{A}}P}} (q\tensor{\peque{A}}p)}\Big) (u) \\
&=& (q^* \star p^*) ((q\tensor{\peque{A}}p)\, u) \\ &=& (q^* \star p^*) (qu_1\tensor{\peque{A}}pu_2) \,\,=\,\,p^*(q^*(qu_1) (pu_2))  \\ &=&  p^*((pu_2)(\Sf{t}(q^*(qu_1)))).
\end{eqnarray*} 
On the other hand, using the convolution multiplication defined by the coring $ U_{\Ae}$, we have
\begin{eqnarray*}
\zeta(\bara{p^*\tensor{T_P}p}) * \zeta(\bara{q^*\tensor{T_Q}q}) \, (u) &=& \zeta(\bara{p^* \tensor{T_P} p})\lr{ \zeta(\bara{q^*\tensor{T_P}q})(u_1) u_2} \\
&=& \zeta(\bara{p^* \tensor{T_P} p})( q^*(qu_1) u_2)  \\ &=& p^*( p( q^*(qu_1) u_2))  \,\,=\,\, p^*( p( u_2 \Sf{t}(q^*(qu_1)) )) \\ &=& p^*((pu_2)\Sf{t}(q^*(qu_1))).
\end{eqnarray*} 
Henceforth, $\zeta$ is multiplicative, and this finishes the proof.
\end{proof}

\begin{proposition}\label{prop:Uo}
If $(A,  U)$ is a right bialgebroid, then $(A,  U^{\bcirc})$ is a left bialgebroid.
\end{proposition}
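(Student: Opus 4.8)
The plan is to verify, following closely the template set by the proof of Proposition~\ref{prop:TBialgd}, that the $A$-coring $U^{\bcirc}$ of Section~\ref{sec:FDRE}, together with the $A^{\Sf{e}}$-ring structure $(\etaup^{\bcirc},\Sf{s}^{\bcirc},\Sf{t}^{\bcirc})$ produced in Lemma~\ref{lema:multiplication}, is a left bialgebroid over $A$. By the definition recalled in Subsection~\ref{ssec:bialgd}, this amounts to exhibiting a monoidal structure on $\lmod{U^{\bcirc}}$ for which the restriction of scalars functor ${}_{*}\etaup^{\bcirc}:\lmod{U^{\bcirc}}\to\lmod{A^{\Sf{e}}}$ is strict monoidal. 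Since the ring structure of $U^{\bcirc}$ — the multiplication \eqref{Eq:mo}, the unit $\overline{\mathrm{id}_A\tensor{T_A}1_A}$, and the map $\etaup^{\bcirc}$ of \eqref{Eq:etao} — is already in hand, the only thing left is the module-theoretic bookkeeping, which I would carry out by repeating the first half of the proof of Proposition~\ref{prop:TBialgd}, reading $\cat{A}=\cat{A}_{U}$, $\omega=\Sf{s}_{*}$, and $T_{\II}=T_A$ as in \eqref{Eq:TA}.

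Concretely: for left $U^{\bcirc}$-modules $X,Y$ one puts on $X\tensor{A}Y$ the $U^{\bcirc}$-action given on generators by the formula \eqref{Eq:uxyO}, built from the comultiplication \eqref{Eq:deltao}, and one checks in turn that this action is well defined (equivalently, that $\Delta^{\bcirc}(h)$ lies in the appropriate Sweedler--Takeuchi product $U^{\bcirc}\times_A U^{\bcirc}$), unital and associative; that the associativity and unit constraints of $\tensor{A}$, as well as $f\tensor{A}g$ for $U^{\bcirc}$-linear $f,g$, are $U^{\bcirc}$-linear; that $A$, with the $U^{\bcirc}$-action $h\cdot a=\varepsilon^{\bcirc}(h\,\Sf{s}^{\bcirc}(a))=\varepsilon^{\bcirc}(h\,\Sf{t}^{\bcirc}(a))$ coming from \eqref{Eq:counito}, is a unit object with $U^{\bcirc}$-linear unitors; and that restricting these actions along $\etaup^{\bcirc}$ recovers the canonical $A$-bimodule structure on $X\tensor{A}Y$. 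Each of these is a dual-basis identity of exactly the type handled in Proposition~\ref{prop:TBialgd}: the dual basis of the right $A$-module underlying the tensor object $Q\otimes P$ is obtained by ``multiplying'' dual bases of the right $A$-modules underlying $P$ and $Q$, after which both sides collapse, via the coideal relations and \eqref{Eq:mo}, to one and the same generator-level expression. Along the way one reads off $\Delta^{\bcirc}(1)=1\tensor{A}1$, $\varepsilon^{\bcirc}(1)=1_A$, and the counit identities $\varepsilon^{\bcirc}(gh)=\varepsilon^{\bcirc}(g\,\Sf{s}^{\bcirc}(\varepsilon^{\bcirc}(h)))=\varepsilon^{\bcirc}(g\,\Sf{t}^{\bcirc}(\varepsilon^{\bcirc}(h)))$, using $\varepsilon^{\bcirc}(gh)=(q^{*}\star p^{*})(q\tensor{A}p)=p^{*}(q^{*}(q)\cdot p)$ from \eqref{Eq:star} together with \eqref{Eq:etao}.

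The point requiring care — and the reason this is not a mere citation of Proposition~\ref{prop:TBialgd} or of \cite{Bruguieres:1994} — is that $A$ is not assumed commutative, so $add(A_A)$ is not a monoidal category, and the $A$-bimodule structures one must attach to the generators $\overline{p^{*}\tensor{T_P}p}$ in order to form tensor products over $A$ are imported from the right bialgebroid $U$ through $\Scr{O}_r:\rmod{U}\to\rmod{A^{\Sf{e}}}\cong\bimod{A}{A}$, rather than read off from a (nonexistent) monoidal structure on $add(A_A)$. Keeping these bimodule structures consistent — in particular, checking that the intrinsically defined coring operations \eqref{Eq:deltao}--\eqref{Eq:counito} are compatible with the multiplication \eqref{Eq:mo}, which is phrased through $\Scr{O}_r$ — is the delicate bookkeeping; the role played by the ``Penrose'' hypothesis in Proposition~\ref{prop:TBialgd} (ensuring $\etaup$ is well defined) is here taken over by the inclusion $T_A\subseteq Z(A)$ established in Lemma~\ref{lema:multiplication}. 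Note that, in contrast with Proposition~\ref{prop:TBialgd}, neither symmetry of $\cat{A}_{U}$ nor commutativity of $A$ is invoked, so that $U^{\bcirc}$ is obtained only as a left bialgebroid — which is precisely the assertion.
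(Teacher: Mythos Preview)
Your approach is essentially the same as the paper's: both reduce the claim to (the first half of) Proposition~\ref{prop:TBialgd}, applied to $\cat{A}=\cat{A}_{U}$ with the fibre functor $\omega=\Sf{s}_{*}$. The paper's proof, however, is a two-line citation rather than a repetition of the argument: it simply observes that $U^{\bcirc}=\rR(\cat{A}_{U})$ and invokes the first statement of Proposition~\ref{prop:TBialgd} directly.

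Your meta-comment that ``this is not a mere citation of Proposition~\ref{prop:TBialgd}'' on account of $A$ being possibly noncommutative is misplaced. The setup preceding Proposition~\ref{prop:TBialgd} explicitly allows $A$ noncommutative, provided the functor $\omega:\cat{A}\to add(A_A)$ factors through a strict monoidal faithful $K$-linear functor $\cat{A}\to\bimod{A}{A}$; and this factorization is precisely what the right bialgebroid structure on $U$ supplies, via the restriction of $\Scr{O}_r:\rmod{U}\to\rmod{A^{\Sf{e}}}\cong\bimod{A}{A}$ to $\cat{A}_{U}$. So the noncommutative bookkeeping you flag as ``requiring care'' is already absorbed into the hypotheses of Proposition~\ref{prop:TBialgd}, and the paper is entitled to cite it without further ado. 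Your unpacking is not wrong, just unnecessary.
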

\begin{proof}
We know that $U^{\bcirc}$ is constructed from the monoidal category $\cat{A}_U$ and the forgetful functor $\omega=\Sf{s}_{*}: \cat{A}_U \to \bimod{A}{A}$, that is, we have that $U^{\bcirc}=\rR(\cat{A}_U)$ as in the notation of subsection \ref{ssec:HAldfibre}.  Therefore, we can apply the first statement of Proposition \ref{prop:TBialgd} to obtain the claim.  
\end{proof}

\begin{remark}\label{rem:leftRight}
There is a kind of symmetry in Proposition  \ref{prop:Uo}. This means that given a left bialgebroid $(A,V)$, using it target map $\Sf{t}: A \to V$ and it associated category ${}^{V }\cat{A}$ of left $V$-modules which are finitely generated and projective as $A$-modules via the fibre functor ${}_{*}\Sf{t}: \lmod{V} \to add({}_AA)$. The reconstruction process of Section \ref{sec:ICC} leads to a right bialgebroid $\big(A,\rR({}^{V}\cat{A})\big):=(A,{}^{\bcirc}V)$.
\end{remark}

\subsection{Duality for Hopf algebroids}\label{ssec:DHalgd} 
In this  subsection we assume that $(A,  U)$ is a left Hopf algebroid over a commutative $K$-algebra $A$, and the underlying $A$-coring of $U$ is co-commutative.  Bialgebras over field extensions studied in \cite{Nichols:1985, Chase:1976} and those over a commutative algebra studied in \cite{Rumynin:2000}, as well as the universal algebras of  Lie algebroids or, in general, of Lie-Rinehart algebras (Examples  \ref{Exam: bundles} and \ref{exam:LR}), are all examples of this class of left  bialgebroids. 

It is easily checked that, under the current assumptions,  the source of $U$ is equal to its target (i.e.,~$\Sf{s}=\Sf{t}$). This fact will be implicitly used  in the sequel.  Recall that for such a bialgebroid $(A,U)$,  the category $\cat{A}_{U}$ consists  of right $U$-modules whose underlying $A$-modules are finitely generated and projective, using either the functor $\Sf{s}_{*}$ or $\Sf{t}_{*}$. 

\begin{proposition}\label{prop:rigida}
Let $(A, U)$ be a co-commutative right Hopf algebroid over a commutative algebra $A$. Consider the category $\cat{A}_{ U}$ with the fibre functor $\Sf{s}_*: \cat{A}_{ U} \to add(A)$. Then $\cat{A}_{ U}$ is a monoidal symmetric and rigid category with $\Sf{s}_*$ a strict monoidal fibre functor. 
\end{proposition}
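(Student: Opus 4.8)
The plan is to verify directly that the monoidal structure on $\rmod{U}$ coming from the right bialgebroid structure restricts to $\cat{A}_{U}$, and that it is symmetric and rigid. First I would recall that since $(A,U)$ is a right $A$--bialgebroid, $\rmod{U}$ is a monoidal category and $\Scr{O}_r=(\Sf{s}\tensor{}\Sf{t})_{*}: \rmod{U}\to \rmod{A^{\Sf{e}}}$ is strict monoidal, with tensor product $X\diama Y$ carried by $X\tensor{A}Y$ (using that $A$ is commutative, so $\Sf{s}=\Sf{t}$ and $A^{\Sf{e}}$ acts through $A$) and $U$--action $(x\tensor{A}y).u = xu_1\tensor{A}yu_2$ via the cocommutative comultiplication. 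The unit object is $A$ with action $a.u=\varepsilon(\Sf{s}(a)u)$. The first routine check is that if $X_A$ and $Y_A$ are finitely generated projective, then so is $X\tensor{A}Y$; hence $\diama$ restricts to $\cat{A}_{U}$, and $A$ itself lies in $\cat{A}_{U}$, so $(\cat{A}_{U},\diama,A)$ is monoidal with $\Sf{s}_*$ (equivalently $\Scr{O}_r$ followed by the obvious forgetful functor to $\rmod{A}$) a strict monoidal fibre functor.

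Next I would address symmetry. Because the underlying $A$--coring of $U$ is cocommutative, the ordinary flip $\sigma_{X,Y}: X\tensor{A}Y \to Y\tensor{A}X$, $x\tensor{A}y\mapsto y\tensor{A}x$, is $U$--linear: $\sigma(( x\tensor{A}y).u) = \sigma(xu_1\tensor{A}yu_2) = yu_2\tensor{A}xu_1 = yu_1\tensor{A}xu_2 = (y\tensor{A}x).u$, using $u_1\tensor{A}u_2 = u_2\tensor{A}u_1$. The coherence (hexagon and $\sigma^2=\mathrm{id}$) axioms are inherited from those of $\rmod{A}$ since $\Sf{s}_*$ reflects them, so $\cat{A}_{U}$ is symmetric monoidal and $\Sf{s}_*$ is a symmetric monoidal functor.

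Finally, rigidity: I would invoke Lemma \ref{lema:duales} directly. Every object $X\in\cat{A}_{U}$ is finitely generated and projective as an $A$--module; since $A$ is commutative, ${}^*X = X^* = \rhom{A}{X}{A}$ with the right $U$--action \eqref{Eq:hom-action} coming from the translation map $\beta_{U}^{-1}(1\tensor{A}-)$ of the right Hopf algebroid structure, and Lemma \ref{lema:duales} says precisely that $({}^*X, ev, j_X^{-1}\circ db')$ exhibits ${}^*X$ as the (right) dual of $X$ in the monoidal category $\rmod{U}$; since ${}^*X$ is again finitely generated projective over $A$, this dual lives in $\cat{A}_{U}$. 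In a symmetric monoidal category a right dual is automatically a left dual, so $\cat{A}_{U}$ is rigid.

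The only genuinely non-formal point, and hence the main thing to be careful about, is the interaction between the two a priori different fibre functors $\Sf{s}_*$ and $\Sf{t}_*$: one should note that $\Sf{s}=\Sf{t}$ here, so the forgetful functors agree, and that the dual ${}^*X=X^*$ computed with the left $A$--structure coincides (as an $A$--module, $A$ being commutative) with the one computed with the right $A$--structure, so that Lemma \ref{lema:duales} applies verbatim. Everything else — preservation of finitely generated projective modules under $\tensor{A}$, $U$--linearity of the associator, unitors, and flip, and the coherence axioms — is a routine transfer along the faithful strict monoidal functor $\Sf{s}_*$ from the well-known symmetric monoidal (indeed rigid) structure on $\mathrm{add}(A_A)$, and I would only indicate these verifications rather than carry them out in full.
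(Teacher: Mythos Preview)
Your proof is correct and follows essentially the same route as the paper's: both note that $\Sf{s}=\Sf{t}$ so objects are central $A$--bimodules, use cocommutativity of the $A$--coring of $U$ to show the flip is right $U$--linear, and invoke Lemma~\ref{lema:duales} for rigidity (with ${}^*X=X^*$ since $A$ is commutative). One small slip: the equality $\Sf{s}=\Sf{t}$ does not follow from commutativity of $A$ alone (commutative Hopf algebroids typically have $\Sf{s}\neq\Sf{t}$); it is the cocommutativity of the underlying $A$--coring that forces it, as the paper remarks just before the proposition.
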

\begin{proof}
First observe that, since $\Sf{t}=\Sf{s}$, each object in $\cat{A}_{ U}$ is a central $A$-bimodule. On the other hand, for every pair of objects $P, Q \in \cat{A}_{ U}$ the flip map $\tau_{P,\, Q}: P\tensor{A}Q \to Q\tensor{A}P $, sending $p\tensor{\peque{A}}q \mapsto q\tensor{\peque{A}}p$ is actually an arrow in $\cat{A}_{ U}$, since $ U$ is co-commutative.  The rigidity of $\cat{A}_{ U}$ is immediate from Lemma \ref{lema:duales}, and the duals are described as follows: For every arrow $f : P \to Q$ in $\cat{A}_{ U}$, its $A$-linear dual map $f^*: Q^* \to P^*$ is clearly an arrow in the same category $\cat{A}_{ U}$, where $P^* , Q^*$ are objects of $\cat{A}_{ U}$ by the action of Eq.\eqref{Eq:hom-action}.
Thus, $P^*$ is a dual object in $\cat{A}_{ U}$ for an object $P \in \cat{A}_{ U}$,  and the duality is given by  the following   arrows in $\cat{A}_{ U}$
\begin{equation}\label{Eq:ev-db}
ev: P^*\diama \, P=P^{*}\tensor{A}P \to A, \big(p^*\tensor{A}p \mapsto p^*(p)\big),\quad db: A \to P\diama \, P^*=P\tensor{A} P^{*}, \big(1 \mapsto e_{\alpha,\, P}\tensor{A}e_{\alpha,\, P}^*\big),
\end{equation}
(respectively called \emph{evaluation} and \emph{dual-basis})
where $\{e_{\alpha, P}, e_{\alpha,P}^*\}$ is a dual basis for $P$.
The rest of the proof  is now clear.
\end{proof}

The main results of this section are the last two parts of the following theorem. 

\begin{theorem}\label{thm:lduality}
Let $(A, U)$ be a co-commutative right Hopf algebroid over a commutative algebra $A$. Then
\begin{enumerate}
\item  $(A, U^{\bcirc})$ is a commutative Hopf algebroid over $T_A$.
\item The functor $\chi : \mathcal{A}_U \to \mathcal{A}^{U^{\bcirc}}$ is strict monoidal and preserves the symmetry. 
\item If $\zeta : U^{\bcirc} \to U^*$ is injective, then $\chi$ is an isomorphism of symmetric monoidal categories. 
\end{enumerate}
\end{theorem}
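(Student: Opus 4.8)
The plan is to establish the three statements in order, leaning heavily on the constructions already assembled in Sections \ref{sec:ICC} and \ref{sec:FDRE}. For part (1), I would first invoke Proposition \ref{prop:rigida}, which tells us that $\cat{A}_{U}$ is a symmetric rigid monoidal $K$-linear category and that $\Sf{s}_*: \cat{A}_{U} \to add(A)$ is a strict monoidal faithful functor; since $A$ is commutative, the factorization hypothesis of Subsection \ref{ssec:HAldfibre} is automatic. Next I would identify $T_{\II}$, the endomorphism algebra of the monoidal unit $A$ of $\cat{A}_U$: a $U$-linear endomorphism of $A$ (with the action \eqref{Eq:AU}) is multiplication by a central element $a$ satisfying $a\varepsilon(u)=\varepsilon(\Sf{t}(a)u)$ for all $u$, i.e.\ $T_{\II}\cong T_A$ as defined in \eqref{Eq:TA}. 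Then part (1) is exactly the second statement of Proposition \ref{prop:TBialgd} applied to $\cat{A}=\cat{A}_U$, $\omega=\Sf{s}_*$, $A$ commutative: we get that $(A,U^{\bcirc})=(A,\rR(\cat{A}_U))$ is a commutative Hopf algebroid over $T_A$, with the explicit antipode $\Scr{S}$ of \eqref{Eq:So} built from the natural isomorphisms $\gammaup,\phiup$ supplied by rigidity.

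For part (2), the functor $\chi:\cat{A}_U\to\cat{A}^{U^{\bcirc}}$ from Subsection \ref{ssec:xiyL} sends an object $P$ to the comodule with coaction \eqref{Eq:chip}. To see it is strict monoidal, I would check directly that for $P,Q\in\cat{A}_U$ the coaction on $\omega(P)\tensor{A}\omega(Q)$ induced by the product \eqref{Eq:mo} of $U^{\bcirc}$ coincides with the tensor-product coaction $\chi_P\tensor{}\chi_Q$ followed by the multiplication map; concretely, using a dual basis for $P\tensor{A}Q$ of the form $\{e_{\alpha,P}\tensor{A}e_{\beta,Q}\}$ with dual functionals $e^*_{\beta,Q}\star e^*_{\alpha,P}$, the definition \eqref{Eq:chip} unwinds to exactly $\sum (\chi_P\tensor{A}\chi_Q)$ paired with \eqref{Eq:star}. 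That the unit object $A$ goes to the trivial comodule $A$ is immediate from $\varepsilon^{\bcirc}$, and compatibility of $\chi$ with the symmetry $\tau_{P,Q}$ follows from the identity \eqref{Eq:tau}, $(q^*\star p^*)\tau_{P,Q}=p^*\star q^*$, exactly as in the commutativity computation inside the proof of Proposition \ref{prop:TBialgd}. Alternatively, one can argue more abstractly via Remark \ref{rem:generalcase}: $\chi$ is the comparison functor intrinsic to the reconstruction, and monoidality is built into the construction of the product on $\rR(\cat{A}_U)$.

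For part (3), assume $\zeta:U^{\bcirc}\to U^*$ is injective. Proposition \ref{isomor} (specialized to $R=U$, noting $U^{\bcirc}$ here agrees with the finite dual coring of Subsection \ref{ssec:Rcero}) already gives that $\chi:\cat{A}_U\to\cat{A}^{U^{\bcirc}}$ is an isomorphism of categories, since $L\circ\chi=\mathrm{id}$ and, using injectivity of $\beta_M$ from Lemma \ref{betainy}, also $\chi\circ L=\mathrm{id}$. Combining this with part (2), $\chi$ is an isomorphism of categories that is strict monoidal and symmetry-preserving, hence an isomorphism of symmetric monoidal categories; one should also note that its inverse $L$ is then automatically strict monoidal and symmetric, which is formal once $\chi$ is a strict monoidal isomorphism.

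The main obstacle I anticipate is part (2): verifying strict monoidality of $\chi$ at the level of coactions requires choosing dual bases on tensor products compatibly and carefully tracking the twist in \eqref{Eq:mo} (the product on $\rR(\cat{A}_U)$ reverses the order of the factors, $q^*\star p^*$ against $q\tensor{A}p$), so that the bookkeeping in \eqref{Eq:chip} matches \eqref{Eq:star} on the nose; the symmetry compatibility then hinges on using \eqref{Eq:tau} at precisely the right place. Parts (1) and (3), by contrast, are essentially invocations of Proposition \ref{prop:TBialgd}, Proposition \ref{prop:rigida}, and Proposition \ref{isomor}, once the identification $T_{\II}\cong T_A$ is made explicit.
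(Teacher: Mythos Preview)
Your proposal is correct and follows essentially the same route as the paper: part (1) is Proposition \ref{prop:rigida} plus Proposition \ref{prop:TBialgd} with the identification $T_{\II}\cong T_A$; part (3) is Proposition \ref{isomor} combined with part (2).

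One point worth sharpening in part (2): the ``obstacle'' you anticipate---tracking the order reversal in \eqref{Eq:mo}---is dissolved by the paper more cleanly than by bookkeeping. Writing out $\varrho_{\chi(P)\tensor{A}\chi(Q)}$ via \eqref{Eq:chip} and \eqref{Eq:mo} gives elements of the form $\bara{(e_{\beta,Q}^*\star e_{\alpha,P}^*)\tensor{T_{Q\tensor{}P}}(q\tensor{A}p)}$, whereas $\varrho_{\chi(P\tensor{A}Q)}$ gives $\bara{(e_{\alpha,P}^*\star e_{\beta,Q}^*)\tensor{T_{P\tensor{}Q}}(p\tensor{A}q)}$; the paper simply invokes the \emph{commutativity} of $U^{\bcirc}$ already established in part (1) (i.e.\ the computation in the proof of Proposition \ref{prop:TBialgd} using \eqref{Eq:tau}) to identify these. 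Similarly, symmetry preservation is immediate in the paper because both $\cat{A}_U$ and $\cat{A}^{U^{\bcirc}}$ have the flip $\tau$ as their symmetry and $\chi$ acts as the identity on underlying $A$-modules---there is no need to invoke \eqref{Eq:tau} separately at this stage.
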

\begin{proof}
(1) This part follows from Propositions \ref{prop:rigida} and \ref{prop:TBialgd}, and the Hopf algebroid structure maps are explicitly given as follows. The algebra structure is given by the multiplication of equation \eqref{Eq:mo} and unit the algebra map $\etaup$ of equation \eqref{Eq:etao}. The  the comultiplication is the algebra map
\begin{equation}\label{Eq:deltao}
\Delta^{\circ} :   U^{\bcirc}  \longrightarrow  U^{\bcirc} \tensor{A}  U^{\bcirc}, \quad \overline{p^* \tensor{T_P} p} \longmapsto \sum_{\alpha_{P}}\overline{p^* \tensor{T_P} e_{\alpha,P} }\tensor{A} \overline{e_{\alpha,P}^* \tensor{T_P} p},
\end{equation}
where $\{ e_{\alpha,P}, e_{\alpha,P}^* \}$ denotes a finite dual basis for $P_A$. The counit of $ U^{\bcirc}$ is
\begin{equation}\label{Eq:counito}
\varepsilon^{\circ}:  U^{\bcirc} \longrightarrow A, \quad \overline{p^* \tensor{T_P}  p} \longmapsto p^*(p),
\end{equation}
and the antipode is the algebra map 
\begin{equation}\label{Eq:So}
\Scr{S}^{\circ}:  U^{\bcirc} \longrightarrow  U^{\bcirc}, \quad (\bara{p^*\tensor{T_P} p} \longmapsto \bara{\psiup(p) \tensor{T_{P^*}} p^*}), 
\end{equation}
where $\psiup: P \to (P^*)^*$ is the canonical  isomorphism of $A$-modules, as in subsection\ref{ssec:HAldfibre}.

(2)   Let $P, Q$ be two objects  in $\mathcal{A}_{U}$. Then the right $U^{\bcirc}$-comodule structure of $\chi\big(P\tensor{A}Q\big)$, is given as in \eqref{Eq:chip} by the coaction:
$$
\varrho_{\chi(P\tensor{A}Q)}: P\tensor{A}Q \longrightarrow P\tensor{A}Q \tensor{A}U^{\bcirc}, \quad \Big(\, p\tensor{A}q \longmapsto \sum (e_{\alpha, P}\tensor{A}e_{\beta, Q}) \tensor{A} \bara{(e_{\alpha, P}^*\star e_{\beta, Q}^*) \tensor{T_{P\tensor{A}Q}} (p\tensor{A}q)} \,\Big)
$$
where $\{ e_{\alpha,P}, e_{\alpha,P}^{*} \}$ and $\{ e_{\beta,Q}, e_{\beta,Q}^{*} \}$ denote as above the dual basis of $P_{A}$ and $Q_{A}$. On the other hand, the tensor product  $\chi(P) \tensor{A} \chi(Q)$, in the monoidal subcategory of right $U^{\bcirc}$-comodules $\mathcal{A}^{U^{\bcirc}}$, has the following comodule structure:
$$
\varrho_{\chi(P)\tensor{A}\chi(Q)}: P\tensor{A}Q \longrightarrow P\tensor{A}Q \tensor{A}U^{\bcirc}, \quad \Big(\, p\tensor{A}q \longmapsto \sum p_{(1)}\tensor{A}q_{(1)} \tensor{A} p_{(2)}q_{(2)} \, \Big)
$$
Now, using equation \eqref{Eq:chip} and the commutativity of the multiplication of $U^{\bcirc}$, as given in \eqref{Eq:mo}, we get that $\varrho_{\chi(P)\tensor{A}\chi(Q)} = \varrho_{\chi(P\tensor{A}Q)}$. Therefore, $\chi(P)\tensor{A}\chi(Q) = \chi(P\tensor{A}Q)$, for any two objects $P,Q$ in $\mathcal{A}_{U}$.

The identity object of $\mathcal{A}_{U}$ is the right $U$-module $A$ with action $a.u = \varepsilon(au)$, for any $a \in A$ and $u \in U$. The image by $\chi$ of this object has the right coaction $\varrho_{\chi(A)}: A \to A\tensor{A}U^{\bcirc}$ sending $a \mapsto 1\tensor{A}\bara{1\tensor{T_{A}} a}$. Thus, by equation \eqref{Eq:etao},  we have $\varrho_{A}(a) = \sf{t}^{\circ}(a)$, for every $a \in A$. Hence, $\chi(A)=(A,\sf{t}^{\circ})$ the identity object of the monoidal category $\mathcal{A}^{U^{\bcirc}}$. We have then shown that $\chi$ is a strict monoidal functor. Lastly, since both monoidal categories $\mathcal{A}_{U}$ and $\mathcal{A}^{U^{\bcirc}}$ have the flip as symmetry, one trivially obtains that $\chi$ is a symmetric monoidal functor.  

(3) Follows from Proposition \ref{isomor}.
\end{proof}

\begin{corollary}\label{DedekindDiff}
Let $(A, U)$ be a cocommutative right Hopf algebroid over a Dedekind domain $A$. Then the category $\mathcal{A}_U$ is isomorphic, as a symmetric monoidal category, to $\mathcal{A}^{U^{\bcirc}}$.
\end{corollary}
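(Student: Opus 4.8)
The plan is to deduce the Corollary as a direct specialization of Theorem \ref{thm:lduality}. First I would observe that every commutative Dedekind domain $A$ is in particular a commutative noetherian ring in which every ideal is projective, hence $A$ satisfies the hypotheses of Corollary \ref{hereditary} (as already noted in Example \ref{exam:Dedekind}). Consequently, for the $A$--ring $U$ underlying the given cocommutative right Hopf algebroid, Corollary \ref{hereditary} applies and yields that the canonical map $\zeta : U^{\bcirc} \to U^*$ is injective.

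Once injectivity of $\zeta$ is in hand, the rest is immediate: by part (3) of Theorem \ref{thm:lduality}, the functor $\chi : \mathcal{A}_U \to \mathcal{A}^{U^{\bcirc}}$ is an isomorphism of categories, and by part (2) of the same theorem it is strict monoidal and preserves the symmetry. Hence $\chi$ is an isomorphism of symmetric monoidal categories, which is exactly the assertion of the Corollary. I would also recall that Theorem \ref{thm:lduality} requires $A$ to be commutative, which holds here, and that it takes $U$ cocommutative right Hopf over a commutative algebra, again matching the hypotheses; no extra checking of the bialgebroid structure maps is needed since part (1) of the theorem already provides that $(A, U^{\bcirc})$ is a commutative Hopf algebroid over $T_A$.

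There is essentially no obstacle here: the proof is a one-line invocation. If anything, the only point deserving a word is the verification that the module-theoretic hypothesis of Corollary \ref{hereditary} (right hereditary, right noetherian) is met by a commutative Dedekind domain, but this is classical and is precisely the content of Example \ref{exam:Dedekind}. So the proof reduces to: ``By Example \ref{exam:Dedekind} and Corollary \ref{hereditary}, $\zeta : U^{\bcirc} \to U^*$ is injective; now apply Theorem \ref{thm:lduality}(2)--(3).'' I would write it in exactly that compressed form, perhaps spelling out that the isomorphism $\chi$ being simultaneously a strict monoidal functor preserving symmetry and a categorical isomorphism makes it an isomorphism of symmetric monoidal categories.
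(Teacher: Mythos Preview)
Your proposal is correct and matches the paper's own proof essentially verbatim: the paper simply writes ``This is deduced from Corollary \ref{hereditary} and Theorem \ref{thm:lduality}.'' Your additional remark that Example \ref{exam:Dedekind} is what makes Corollary \ref{hereditary} applicable to a Dedekind domain is exactly the implicit step, so there is nothing to add.
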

\begin{proof}
This is deduced from Corollary \ref{hereditary} and Theorem \ref{thm:lduality}.
\end{proof}

We also get the following consequence of Theorem \ref{thm:lduality}, whose geometrical interpretation is that, under suitable conditions, given a Lie-Rinehart algebra there is a groupoid with the ``same'' representation theory (see Example \ref{exam:LR} for details on the Hopf algebroid attached to a Lie-Rinehart algebra).

\begin{corollary}\label{LRgrupoid}
Let $L$ be a Lie-Rinehart algebra over a commutative ring $A$, and let $\mathcal{U}(L)$ denote its universal enveloping Hopf algebroid. If $\zeta$ is an injective map (e.~g.~if $A$ is a Dedekind domain), then the category of $A$-profinte right $\mathcal{U}(L)$-modules is isomorphic, as a symmetric rigid monoidal category, to the category of  $A$-profinite right comodules over the commutative Hopf algebroid $\mathcal{U}(L)^{\bcirc}$. 
\end{corollary}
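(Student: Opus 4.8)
The plan is to obtain this statement as a direct specialization of Theorem \ref{thm:lduality} to the case $U=\mathcal{U}(L)$, once it has been checked that $\mathcal{U}(L)$ fits the framework of that theorem.

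First I would recall from Example \ref{exam:LR} (following \cite{Kow:HAATCT}) that the universal enveloping algebroid $\mathcal{U}(L)$ of a Lie-Rinehart algebra $(A,L)$ over a commutative ring $A$ carries a canonical structure of cocommutative right Hopf $A$-algebroid: its source and target coincide, $\Sf{s}=\Sf{t}=\iota_{\Sscript{A}}$, the underlying $A$-coring is the cocommutative $A$-coalgebra with $\Delta$ and $\varepsilon$ determined on the generators $\iota_{\Sscript{L}}(X)$ and $\Sf{s}(a)$ as displayed there, and the translation map $\beta_{\Sscript{\mathcal{U}(L)}}^{-1}$ is the one given explicitly on generators. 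Hence $\mathcal{U}(L)$ is precisely a cocommutative right Hopf algebroid over the commutative algebra $A$, so all the hypotheses of Theorem \ref{thm:lduality} are fulfilled with $U=\mathcal{U}(L)$.

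Next I would simply invoke Theorem \ref{thm:lduality}. Part (1) gives that $(A,\mathcal{U}(L)^{\bcirc})$ is a commutative Hopf algebroid (over $T_A$); part (2) gives that $\chi:\mathcal{A}_{\mathcal{U}(L)}\to\mathcal{A}^{\mathcal{U}(L)^{\bcirc}}$ is strict monoidal and preserves the symmetry; and part (3) gives that, under the injectivity of $\zeta$, this $\chi$ is an isomorphism of symmetric monoidal categories. It then remains only to upgrade ``monoidal'' to ``rigid monoidal'': by Proposition \ref{prop:rigida} the category $\mathcal{A}_{\mathcal{U}(L)}$ is symmetric rigid monoidal (with duals as in Lemma \ref{lema:duales}), and an isomorphism of symmetric monoidal categories transports dualizable objects to dualizable objects together with their evaluation and coevaluation maps; therefore $\mathcal{A}^{\mathcal{U}(L)^{\bcirc}}$ is symmetric rigid monoidal as well and $\chi$ is an isomorphism of such categories. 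This establishes the assertion whenever $\zeta$ is injective.

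Finally, for the parenthetical case, I would note that a commutative Dedekind domain is in particular a commutative noetherian hereditary ring (Example \ref{exam:Dedekind}), so Corollary \ref{hereditary} applies and $\zeta:\mathcal{U}(L)^{\bcirc}\to\mathcal{U}(L)^{*}$ is injective; the conclusion then follows from the preceding step. Since the whole argument is an assembly of results already proved, there is no real obstacle; the only point that genuinely requires attention is the verification in the first step that the Hopf algebroid built from $L$ is cocommutative with commutative base ring, which is exactly what is recorded in Example \ref{exam:LR}.
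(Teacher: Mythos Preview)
Your proposal is correct and follows essentially the same approach as the paper, which in fact gives no explicit proof for this corollary, treating it as an immediate consequence of Theorem \ref{thm:lduality} (together with Example \ref{exam:LR} for the Hopf algebroid structure on $\mathcal{U}(L)$ and Corollary \ref{hereditary}/Example \ref{exam:Dedekind} for the Dedekind case). Your extra remark justifying the word ``rigid'' via Proposition \ref{prop:rigida} is a helpful clarification that the paper leaves implicit.
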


\begin{remark}\label{rem:rduality}
Starting with a commutative Hopf algebroid  $(A, V)$, then, by Proposition \ref{prop:Uo}, we know that $(A,V^{\bcirc})$ is a right bialgebroid.   The fact that $(A, U^{\bcirc})$ is actually a right Hopf algebroid can be shown using the tecniques developed in \cite{P.H. Hai:2008}.  In analogy with the classical situation of Hopf algebra over fields, it stills then to check that $ U^{\bcirc}$ is a co-commutative $A$-coring. It seems that in general, there  is no direct way of proving this property. However, it can be derived under some   assumptions, which are always fulfilled in the finite case (i.e., when  $V_A$ or ${}_AV$ are finitely generated and projective). Precisely, if we assume that  the map $\zeta: V^{\bcirc} \to V^*$ of subsection \ref{ssec:zeta}  and the canonical map 
\begin{equation}\label{Eq:Psi}
\Psi_{V}: V^{*} \times_A V^{*} \longrightarrow (V\tensor{\peque{A\tensor{}A}}V)^*,\quad \lr{ (\bara{p^*\tensor{T_P}p}) \times _A (\bara{q^*\tensor{T_Q}q}) \longmapsto \left[\underset{}{} u\tensor{\peque{A\tensor{}A}}v \mapsto p^*(pu)\, q^*(qv) \right] }
\end{equation}
(here $\times_A$ is the Sweedler-Takeuchi's product \cite{Swe:GOSA, Tak:GOAOAA}), are injective and that $V_A$ is flat,  then one can deduce, from the following commutative diagram,  the co-commutativity of the comultiplication $\Delta^{\bcirc}$ of the $A$-coring $V^{\bcirc}$
$$
\xymatrix@C=50pt@R=18pt{
V^{\bcirc} \ar@{->}_-{\Delta^{\bcirc}}[dd]  \ar@{->}^-{\zeta}[r] & V^* \ar@{->}^-{\mu^{*}}[dr]  & \\ & &  (V\tensor{\peque{A\tensor{}A}}V)^* \\
V^{\bcirc} \times_A V^{\bcirc} \ar@{->}^-{\zeta \times_A \zeta}[r] &  V^{*} \times_A V^{*}  \ar@{->}_-{\Psi_{V}}[ur]  &}
$$
Following the observations of Example \ref{Exm:R+},  the duality stated in \cite[Propositions 3, 4]{Rumynin:2000} (see also \cite{KS:2003, CGK:2016}), is now a particular instance of the one established hereby.
\end{remark}

\section{Application:  The finite dual of the first Weyl algebra, differential Galois groupoid and PV extensions}\label{sec:Weyl}
In this section, we illustrate our methods by treating, in an exhaustive way,   the universal Hopf algebroid of the transitive Lie algebroid of vector fields over the complex affine line $\mathbb{A}^1_{\mathbb{C}}$. In other words the first Weyl $\mathbb{C}$-algebra. We first show that the category of differential modules is a Tannakian category (in the sense of \cite{Deligne:1990}) which is identified with the  category of comodules  over the finite dual with underlying finitely generated free $\mathbb{C}[X]$--modules. Second we show that for a fixed differential module $(M, \partial)$,  this finite dual contains a commutative Hopf algebroid denoted by $\Um$ whose category of comodules $\frcomod{\Um}$ with finitely generated underlying modules, is equivalent, as a symmetric monoidal $\mathbb{C}$-linear  category, to the full subcategory  $\langle M \rangle_{\Sscript{\otimes}}$ of differential modules  which are sub-quotients objects of $(M,\partial)$. In analogy with the classical differential Galois theory over $\mathbb{C}(X)$, the associated affine algebraic groupoid of $\Um$, is then termed the \emph{differential Galois  groupoid} attached to $(M,\partial)$ (or to the linear differential matrix equation defined by $(M,\partial)$). We also combine our result with those of \cite{Andre:2001}, in order to give an explicit description of a Picard-Vessiot exetension of $(\Cc[X], \frac{\partial}{\partial X})$ for $(M, \partial)$. In the last subsection, we compare our approach with that of Malgrange \cite{Malgrange:2001} and Umemura \cite{Umemura:2009}, and give several illustrating examples.

\subsection{The Hopf algebroid structure of the first Weyl algebra}  
Let $A=\Cc[X]$ be polynomial ring in the variable $X$ over the field of complex numbers, and consider its (noncommutative) ring of differential operators $ U:= \Cc[X][Y, \frac{\partial}{\partial X}]$, that is, the first  Weyl algebra. We shall  consider $ U$ as  free right $A$-module with basis $\{Y^n\}_{ n \in \mathbb{N}}$, and  with left action given by the  rule $a Y = Ya + \frac{\partial a}{\partial X}$, for every $a \in A$.  This algebra is clearly isomorphic to the universal right Hopf algebroid of the transitive Lie algebroid $(A, \Der{\Cc}{A})$, see Example \ref{exam:LR}. The structure maps 
are $$ \Delta(Y)\,=\, 1\tensor{A}Y + Y\tensor{A}1, \quad \varepsilon(Y)\,=\,0, \text{ and } Y_{-}\tensor{A}Y_{+}\,=\, 1\tensor{A}Y - Y \tensor{A}1.$$

We are interested in describing the relationship between the category of comodules over the finite dual $U^{\bcirc}$ and the category of differential modules over the differential ring $A$. To this end,  we will apply Theorem \ref{thm:lduality} to the pair $(A,U)$. But,  first let us make the following general remark.

\begin{remark}\label{rem:XPartial}
Theorem \ref{thm:lduality} applies for any ring of differential operators $A[Y,\delta]$, for $\delta$ any $K$--linear derivative of $A$. In particular, Corollary \ref{DedekindDiff} leads to a monoidal equivalence of categories between the category $\Diff{A}$ of $A$--profinite differential modules and the $A$--profinite representations of the affine groupoid represented by the finite dual of $A[Y,\delta]$, whenever $A$ is a Dedekind domain  (e.g.~the coordinate ring of an irreducible smooth curve over $\mathbb{C}$).
If $A$ is not a field, $\Diff{A}$ will probably fail to be abelian, as the example of $ \mathbb{C}[X][Y, \delta]$ shows by taking $ \delta (f(X)) = X\frac{\partial f(X)}{\partial X}$ for all $f(X) \in \mathbb{C}[X]$. What makes the Weyl algebra so special is that, in this case, $\Diff{A}$ is abelian. 
\end{remark}

\subsection{Differential modules $\Diff{A}$ as a Tannakian category}\label{ssec:Diff} 
Recall that a \emph{differential right module} over the differential ring $A$ is a finitely generated right $A$-module equipped with a  $\Cc$-linear map $\partial : M \to M$ such that $\partial(xa)\,=\, \partial x.a + \partial a. x$, for every $a \in A$ and $x \in M$ (here $\partial a$ stand for $\partial a(X)/\partial X$). The linear map $\partial$ is called \emph{the differential of $M$}. In all what follows the underlying $A$-module of a right differential module, will be considered as central (or symmetric) $A$-bimodule, that is, we have $a x = xa $, for every $x \in M$ and $a \in A$. 

Every differential module is in fact free of finite rank as an $A$-module: if $x \in M$ is a torsion element, then $ax =0$ for some nonzero $a \in A$. It follows that $ (\partial x)a + x \partial a = 0$, whence $ (\partial x)a^2 =0$.  This shows that the ($\mathbb{C}$-finite dimensional) torsion submodule $t(M)$ of $M$ is indeed a differential module. This is only possible when $t(M) = 0$  (see, e.g.~\cite[Lemma 4.2]{Bjork:1979}). 

Using the notation of Section \ref{sec:ICC}, the category $\cat{A}_{ U}$ is in this case the category of all differential modules over $A$, equivalently \emph{linear differential matrix equations}. 
If we denote by $\{e_1,\dots,e_m\}$ any basis of $M$ over $A$, the differential $\partial$ is then given by a matrix $\mat{M}=(a_{ij}) \in M_{m}(A)$ such that  
\begin{equation}\label{Eq:aij}
\partial e_i\,=\, -\sum_{j=1}^{m}  e_ja_{ji}.
\end{equation}
The  minus sign in introduced, not only for historical reasons, but also for  computational ones\footnote{As we will notice in subsection \ref{ssec:PV}, this depends on considering  the principal bibundle structure of a Picard-Vessiot extension or its opposite bibundle, see Remark \ref{rem:PV}.}. So if we identify  an element $ y \in M$ with  its coordinate  column in $A^n$, we have 
$$
 \partial y \,=\, \columna{\partial y_1}{\partial y_m}  -  \mat{M} \columna{y_1}{y_m} . 
$$
Thus $\ker(\partial)$ is the solution space  of the following linear differential matrix equation 
\begin{equation}\label{Eq:ldm}
\columna{y_i^{\prime}}{y_m^{\prime}}  \,\,=\,\, \mat{M} \columna{y_1}{y_n}. 
\end{equation}

In analogy to \cite[\S 2.2]{VanderPut/Singer} we denote by $\Diff{A}$ the category $\cat{A}_{ U}$. A \emph{morphism of differential modules} $f: (M,\partial) \to (N,\partial) $ is an $A$-linear map $f : M \to N$ which commutes with differentials, that is,  $\partial \circ f = f \circ \partial$.  

Next we list the properties of $\Diff{A}$. First, we know from Section \ref{sec:coHalgd} that 
the category $\Diff{A}$ inherits a monoidal symmetric structure from the category of right $U$-modules. In this case,  the tensor product of two objects $(M,\partial), (N,\partial)$ in $\Diff{A}$ is again a differential module with differential map
\begin{equation}\label{Eq:ptensor}
\partial: M\tensor{A}N \longrightarrow M\tensor{A}N, \quad \Big( \partial (x\tensor{A} y) \,=\,  \partial (x)\tensor{A} y + x\tensor{A} \partial (y)  \Big)
\end{equation}
By Lemma \ref{lema:duales}, $\Diff{A}$ is then a monoidal symmetric and rigid $\mathbb{C}$-linear category with identity object $(A,\partial)$. The forgetful functor (which going to be a fibre functor in a Tannakian sense) is given as in subsection \ref{ssec:comatrix} by  the restriction of scalars functor  $\omega=\eta_{*}: \Diff{A}:=\cat{A}_{ U} \to add(A)$, where $\eta: A \to U$ is the canonical ring extension. Observe that $\omega$ is a non trivial functor in the sense of \cite{Deligne:1990} since we know that $\omega(A,\partial)=A$ and that ${\rm Spec}(A) \neq \emptyset$. 

Moreover,  $\Diff{A}$ is a full subcategory of the category $\rmod{U}$ stable under finite limits and colimits. Hence, it is abelian. We summarize the properties of $\Diff{A}$ in the following lemma.

\begin{lemma}\label{lema:abelian}
The category $\Diff{A}$ is $\mathbb{C}$-linear abelian and the functor $\omega: \Diff{A} \to add(A)$ is strict monoidal $\mathbb{C}$-linear faithful exact functor. Moreover, we have  an isomorphism of rings 
$
{\rm End}_{\Sscript{\Diff{A}}}\Big((A,\partial)\Big)\, \cong \, \mathbb{C}.
$
\end{lemma}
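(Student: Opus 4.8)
The statement collects several facts about $\Diff{A}$, which I will establish one at a time. First, $\mathbb{C}$-linearity of $\Diff{A}$ is immediate: the hom-sets $\mathrm{Hom}_{\Diff{A}}((M,\partial),(N,\partial))$ are $\mathbb{C}$-submodules of $\mathrm{Hom}_A(M,N)$ (namely those $A$-linear maps commuting with the differentials), and composition is $\mathbb{C}$-bilinear. For abelianness, I would exhibit $\Diff{A}$ as a full subcategory of $\rmod{U}$ closed under the relevant (co)limits. The key observation is that a right $U$-module structure on a finitely generated $A$-module $M$ is exactly the datum of a differential $\partial$ via $x\cdot Y = -\partial x$ (using the defining relation $aY = Ya + \partial a$ in $U$ to check the module axiom translates into the Leibniz rule); moreover any $U$-submodule or $U$-quotient of a differential module is again finitely generated and $A$-torsion-free (the torsion argument given just before the lemma shows $t(M)$ is a differential submodule, forcing $t(M)=0$), hence free of finite rank over $A=\mathbb{C}[X]$, so it lies in $\Diff{A}$. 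Therefore $\Diff{A}$ is closed under kernels, cokernels, images and finite (co)products inside the abelian category $\rmod{U}$, and a full subcategory of an abelian category closed under these operations is abelian with the inherited structure.

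Next, the functor $\omega=\eta_*:\Diff{A}\to add(A)$ is the restriction of the forgetful (restriction-of-scalars) functor $\rmod{U}\to\rmod{A}$, which is plainly $\mathbb{C}$-linear, faithful, and exact; it lands in $add(A)$ precisely because every differential module is free of finite rank over $A$, as just argued. It is strict monoidal because the monoidal structure on $\Diff{A}$ was defined in Section \ref{sec:coHalgd} (equivalently via \eqref{Eq:ptensor}) so as to make the underlying $A$-module of a tensor product be the $A$-module tensor product, with unit the $A$-module $A$ carrying the trivial differential $\partial=\partial/\partial X$; hence $\omega$ strictly preserves $\otimes$ and the unit. (Faithfulness is also recorded as "non-trivial in the sense of \cite{Deligne:1990}" via $\mathrm{Spec}(A)\neq\emptyset$.)

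Finally, for the endomorphism ring of the unit object $(A,\partial)$ with $\partial=\partial/\partial X$: an endomorphism is an $A$-linear map $f:A\to A$ commuting with $\partial/\partial X$. Writing $f(1)=a(X)\in A$, $A$-linearity gives $f(b)=a(X)b$ for all $b\in A$, so $f$ is multiplication by $a(X)$; the condition $f\circ\partial = \partial\circ f$ applied to the element $X$ reads $a(X)\cdot 1 = \partial(a(X)X)/\partial X \cdot$ — more carefully, $f(\partial X) = f(1) = a(X)$ while $\partial(f(X)) = \partial(a(X)X) = a'(X)X + a(X)$, so $a'(X)X = 0$, forcing $a'(X)=0$, i.e. $a(X)\in\mathbb{C}$. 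Conversely every scalar gives such an endomorphism. Hence $\mathrm{End}_{\Diff{A}}((A,\partial))\cong\mathbb{C}$ as rings, the isomorphism sending $c\in\mathbb{C}$ to multiplication by $c$.

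\textbf{Main obstacle.} The only genuinely delicate point is the claim that $\Diff{A}$ is closed under sub- and quotient-objects inside $\rmod{U}$, i.e. that a finitely generated $A$-module carrying a differential is automatically free. This rests on $A=\mathbb{C}[X]$ being a PID together with the torsion argument (a torsion element $x$ with $ax=0$ yields $(\partial x)a^2=0$, propagating torsion so that $t(M)$ would be a nonzero finite-dimensional differential $\mathbb{C}$-vector space, which is impossible); everything else is a routine unwinding of definitions. Since this torsion lemma is already supplied in the paragraph preceding the statement, no real difficulty remains.
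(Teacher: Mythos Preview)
Your proof is correct and follows essentially the same approach as the paper: the paper does not give a separate proof of this lemma but rather presents it as a summary of the properties established in the preceding paragraphs (freeness via the torsion argument, abelianness via closure under finite (co)limits in $\rmod{U}$, and the monoidal structure from Section~\ref{sec:coHalgd}). You have simply filled in the details faithfully, including an explicit computation of $\mathrm{End}_{\Diff{A}}((A,\partial))$ that the paper leaves implicit.
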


\begin{remark}\label{rem:separable}
Observe furthermore, that Lemma \ref{lema:abelian} and \cite[Proposition 2.5]{Bruguieres:1994} imply that $\Diff{A}$ is a locally finite category over $\mathbb{C}$, in the sense that each object is of finite length and the Hom-vector spaces are finite dimensional $\mathbb{C}$-vector spaces.  This in particular implies that the endomorphism ring of any differential module is a finite dimensional $\mathbb{C}$-algebra.  Therefore,  the centre of the endomorphism ring of any simple object in $\Diff{A}$, coincides with the base field $\mathbb{C}$. Thus, the category $\Diff{A}$ of differential modules  is a \emph{separable category} in the sense of \cite[D\'efinitions page 5847]{Bruguieres:1994}.
\end{remark}

\begin{remark}\label{rem:V}
Let $(M,\partial)$ be a differential module with ${\rm rank}(M)=m$. Denote by $M^{\partial}:=\ker(\partial)$ the $\mathbb{C}$-vector space of solutions in $A$ of the equation \eqref{Eq:ldm}. 
Then, there is an isomorphism of $\mathbb{C}$-vector spaces
$$
\hom{\Diff{A}}{(A,\partial)}{(M,\partial)} \,\cong\, M^{\partial}.
$$
Therefore, by Remark \ref{rem:separable}, we know that $dim_{\mathbb{C}}(M^{\partial}) < \infty$. In fact we have that $dim_{\mathbb{C}}(M^{\partial}) \leq m$.
\end{remark}

Apart from the above structure, the usual linear algebra operations are also permitted in the category of differential modules.  For instance, the exterior powers $\bigwedge^d M$  of a differential module $M$, are again  differential modules with  differential   given by 
$$ \partial \,( x_1 \wedge \dots \wedge x_d) \,=\, \sum_{i=1}^d  x_1 \wedge \dots \wedge \partial x_i \wedge \dots \wedge x_d,$$ 
such that the canonical linear map $ M^{\otimes^{d}}:=M\tensor{A} \cdots \tensor{A} M \longrightarrow \bigwedge^d M $ is a differential map, i.e.~a morphism in $\Diff{A}$.  
Similarly one can endows the symmetric $d^{\Sscript{th}}$-powers  ${\rm Sym}^{\Sscript{d}}(M)$ $A$-module with a differential, for a given differential module $(M,\partial)$. 

On the other hand the internal hom-functors of Eq.\eqref{Eq:RIH}, are explicitly given in this case by the hom-functors $\hom{A}{M}{N}$ whose differential is defined by the formula
\begin{equation}\label{Eq:partial-hom}
\partial l: M \longrightarrow N, \quad \Big(x \longmapsto (\partial l )(x) \,=\, \partial l(x) - l(\partial x) \Big) . 
\end{equation}
Thus, up to isomorphisms, we have 
$$
hom_{\rmod{U}}(M,N) = \hom{A}{M}{N}.
$$
In particular, the vector space $\hom{ U}{M}{N}$ is  identified with the linear maps whose differential is zero.  The differential of the dual module is given then by 
\begin{equation}\label{Eq:dual}
\partial: M^* \longrightarrow M^*,  \quad \Big( (\partial\varphi)(x) \,=\, \partial\varphi(x) - \varphi(\partial(x)) \Big),
\end{equation}
for every $\varphi \in M^{*}$ and $x \in M$.

\subsection{The commutative Hopf algebroid attached to the first Weyl algebra}\label{ssec:aag} 
Next we state some properties of the commutative Hopf algebroid $(A, U^{\bcirc})$ constructed from $ U$ by applying Theorem \ref{thm:lduality}. 
First, we know from Example \ref{exam:Dedekind}, that the canonical algebra map $\zeta:  U^{\bcirc} \to  U^*$ defined by equation \eqref{Eq:zeta} is injective, where $ U^*$ is the right linear dual of $ U$ endowed with the convolution product.  This fact will be implicitly used in the sequel.

\begin{proposition}\label{cor:UoGalois}
Let $A=\Cc[X]$ and $ U=A[Y,\partial /\partial X]$  its differential operator algebra. Then the commutative Hopf algebroid  $(A, U^{\bcirc})$ is a Galois $A$-coring. In particular the category $\Diff{A}$ is isomorphic to the full subcategory of right $U^{\bcirc}$-comodules $\cat{A}^{ U^{\bcirc}}$ and so ${\rm End}_{\Sscript{\rcomod{U^{\bcirc}}}}\Big( (A,\Sf{t})\Big) \cong  \mathbb{C}$ an isomorphism of rings.
\end{proposition}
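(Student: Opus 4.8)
The plan is to invoke the machinery already developed in the paper rather than compute anything new. First I would recall that $A=\Cc[X]$ is a (commutative) Dedekind domain, so Example~\ref{exam:Dedekind} together with Corollary~\ref{hereditary} guarantees that the canonical map $\zeta: U^{\bcirc}\to U^*$ is injective; this is exactly the hypothesis needed for the results of Section~\ref{sec:FDRE}. Applying Proposition~\ref{prop:R0Galois} to the $A$-ring $U=A[Y,\partial/\partial X]$ then immediately yields that $U^{\bcirc}$ is a Galois $A$-coring, i.e.\ the canonical map $\mathsf{can}_{\cat{A}^{U^{\bcirc}}}:\rR(\cat{A}^{U^{\bcirc}})\to U^{\bcirc}$ is an isomorphism of $A$-corings. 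This disposes of the first assertion.

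For the second assertion, I would observe that, since $\zeta$ is injective, Proposition~\ref{isomor} (or equivalently part~(3) of Theorem~\ref{thm:lduality}, whose hypothesis is precisely the injectivity of $\zeta$) gives that the functor $\chi:\cat{A}_U\to\cat{A}^{U^{\bcirc}}$ is an isomorphism of categories; and by Theorem~\ref{thm:lduality}(2) it is in fact an isomorphism of symmetric monoidal categories. Now $\cat{A}_U$ is by definition (see Subsection~\ref{ssec:Diff}) the category $\Diff{A}$ of differential modules over $A$, because a right $U$-module that is finitely generated projective (hence free, by the torsion-freeness argument of Subsection~\ref{ssec:Diff}) over $A$ is exactly a differential module. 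Hence $\Diff{A}\cong\cat{A}^{U^{\bcirc}}$ as symmetric monoidal $\Cc$-linear categories.

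Finally, for the statement about endomorphism rings, I would transport the computation of Lemma~\ref{lema:abelian} across this isomorphism. The identity object of $\cat{A}_U=\Diff{A}$ is $(A,\partial)$ with $\partial=\partial/\partial X$, and under $\chi$ this goes to the identity object of $\cat{A}^{U^{\bcirc}}$, namely $(A,\Sf{t})$ where $\Sf{t}=\Sf{t}^{\bcirc}$ is the target map of the Hopf algebroid $U^{\bcirc}$ (this identification of identity objects was established in the proof of Theorem~\ref{thm:lduality}(2)). Since $\chi$ is an isomorphism of categories it induces a ring isomorphism on endomorphism rings of corresponding objects, so
$$
{\rm End}_{\Sscript{\rcomod{U^{\bcirc}}}}\big((A,\Sf{t})\big)\;\cong\;{\rm End}_{\Sscript{\Diff{A}}}\big((A,\partial)\big)\;\cong\;\Cc,
$$
the last isomorphism being Lemma~\ref{lema:abelian}.

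I do not anticipate a serious obstacle here: the proof is a bookkeeping exercise threading together Example~\ref{exam:Dedekind}, Corollary~\ref{hereditary}, Proposition~\ref{prop:R0Galois}, Proposition~\ref{isomor}, Theorem~\ref{thm:lduality} and Lemma~\ref{lema:abelian}. The only point requiring a word of care is the identification $\cat{A}_U=\Diff{A}$ — one must note that finitely generated projective over the Dedekind (in fact PID) ring $A=\Cc[X]$ means finitely generated free, matching the definition of differential module — and the correct identification of the monoidal unit of $\cat{A}^{U^{\bcirc}}$ as $(A,\Sf{t})$, which is already recorded in the proof of Theorem~\ref{thm:lduality}.
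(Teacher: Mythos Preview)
Your proposal is correct and follows essentially the same route as the paper's own proof: the injectivity of $\zeta$ (established just before the proposition via Example~\ref{exam:Dedekind}) feeds into Proposition~\ref{prop:R0Galois} for the Galois claim, and then Proposition~\ref{isomor} together with Lemma~\ref{lema:abelian} handle the particular consequences. You are simply more explicit about the identification $\cat{A}_U=\Diff{A}$ and the transport of the identity object, but the logical skeleton is identical.
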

\begin{proof}
The first statement is a  direct consequence of Proposition \ref{prop:R0Galois}. The particular cases are immediate from Proposition \ref {isomor} and  Lemma \ref{lema:abelian}.
\end{proof}

The subsequent contains further properties of the Hopf algebroid $(A,U^{\bcirc})$.

\begin{corollary}\label{cor:Transitive}
Let $A=\Cc[X]$ and $ U=A[Y,\partial /\partial X]$  its differential operator algebra. Then the commutative Hopf algebroid  $(A, U^{\bcirc})$ enjoys the following properties:
\begin{enumerate}[(i)]
\item The algebra map $\etaup^{\Sscript{\bcirc}}: A\tensor{\mathbb{C}}A \to U^{\bcirc}$    induces on $U^{\bcirc}$ a projective $(A\tensor{\mathbb{C}}A)$-module structure.  
\item There is an isomorphism of symmetric monoidal $\mathbb{C}$-linear categories $\Diff{A}\cong \frcomod{U^{\bcirc}}$,
where  $\frcomod{U^{\bcirc}}$ denotes the full subcategory  of right $U^{\bcirc}$-comodules with finitely generated underlying $A$-modules.
\item  Every right $U^{\bcirc}$-comodule is projective and faithfully flat as an $A$-module. In particular, the modules $U^{\bcirc}_A$ and ${}_{A}U^{\bcirc}$ are projective.
\item Any right $U^{\bcirc}$-comodule is a filtered limit of subcomodules in $\frcomod{U^{\bcirc}}$.
\end{enumerate}
\end{corollary}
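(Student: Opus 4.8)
The plan is to establish the four assertions of Corollary \ref{cor:Transitive} by combining the Galois-coring property of $(A,U^{\bcirc})$ proved in Proposition \ref{cor:UoGalois} with the general structure theory of geometrically transitive Hopf algebroids reviewed in Example \ref{exam:GT} and Theorem \ref{thm:PB}. First I would observe that, by Theorem \ref{thm:lduality} and Lemma \ref{lema:abelian}, the category $\Diff{A}$ is abelian, locally finite over $\mathbb{C}$, with endomorphism ring of the unit object equal to $\mathbb{C}$, and the forgetful functor $\omega$ is exact, faithful and strict monoidal; moreover $\omega = \omega_{0}\tensor{\mathbb{C}}A$ is not literally of that shape, but the key point is that $T_{\II}=\mathbb{C}$ is a field. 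Hence $\Diff{A}$ together with $\omega$ satisfies the hypotheses of \cite[Th\'eor\`eme 1.12]{Deligne:1990} (equivalently \cite[Th\'eor\`emes 5.2 et 7.1]{Bruguieres:1994}), so the reconstructed Hopf algebroid $\rR(\Diff{A})$, which by Theorem \ref{thm:lduality}(1) and the canonical-map discussion in Proposition \ref{cor:UoGalois} is exactly $U^{\bcirc}$, is \emph{geometrically transitive}.

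Once geometric transitivity is in hand, assertion $(i)$ follows since, for a GT Hopf algebroid over a field, the structure map $\etaup^{\bcirc}: A\tensor{\mathbb{C}}A \to U^{\bcirc}$ is faithfully flat, and in fact one can do better: realizing $U^{\bcirc}$ as the filtered union of its finitely generated sub-comatrix corings $\omega(P)^{*}\tensor{T_P}\omega(P)$ (each of which is $A\tensor{\mathbb{C}}A$-projective because $\omega(P)$ is a finitely generated projective $A$-module) and invoking the argument of \cite[Th\'eor\`eme 4.2.2, page 155]{Saavedra:1972} used already in the proof of Theorem \ref{thm:PB}(iii), one sees that $U^{\bcirc}$ is a directed colimit of finitely generated projective $(A\tensor{\mathbb{C}}A)$-modules with split-mono transition maps, hence projective. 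Assertion $(ii)$ is immediate: Proposition \ref{cor:UoGalois} already gives an isomorphism $\Diff{A}\cong \cat{A}^{U^{\bcirc}}$ of the $A$-profinite categories, and by Theorem \ref{thm:lduality}(2)(3) this is a symmetric monoidal isomorphism; since in the Weyl case every differential module is free of finite rank over $A$ (noted in Subsection \ref{ssec:Diff}), the categories $\cat{A}^{U^{\bcirc}}$ and $\frcomod{U^{\bcirc}}$ coincide, which identifies $\Diff{A}$ with $\frcomod{U^{\bcirc}}$ as symmetric monoidal $\mathbb{C}$-linear categories.

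For assertions $(iii)$ and $(iv)$ I would appeal directly to the general facts about GT Hopf algebroids collected in Example \ref{exam:GT}, namely \cite[Corollary A(2)]{ElKaoutit:2015} and \cite[Lemma 4.3(b)]{ElKaoutit:2015}: every $U^{\bcirc}$-comodule is a filtered (inductive) limit of subcomodules lying in $\cat{A}^{U^{\bcirc}}=\frcomod{U^{\bcirc}}$, which is $(iv)$, and every $U^{\bcirc}$-comodule is faithfully flat as an $A$-module; projectivity of each comodule then follows because a comodule in $\frcomod{U^{\bcirc}}$ corresponds under $(ii)$ to a differential module, which is free of finite rank over $A$, and a filtered colimit of such with the split-mono transition maps produced from the $A\tensor{}A$-projectivity argument is projective over $A$. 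Taking $M=U^{\bcirc}$ with either of its regular one-sided coactions yields the projectivity of $U^{\bcirc}_{A}$ and ${}_{A}U^{\bcirc}$, completing the proof.

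The main obstacle I anticipate is verifying carefully that the hypotheses of Deligne's and Brugui\`eres' reconstruction theorems are met here — in particular that $\omega$ is a bona fide fibre functor in their sense (non-trivial, exact, $T_{\II}=\mathbb{C}$), and that the reconstructed object is genuinely $U^{\bcirc}$ rather than merely receiving a canonical map to it; this last identification is where Proposition \ref{cor:UoGalois} (the Galois property) does the essential work. After that, everything reduces to quoting the structure of geometrically transitive Hopf algebroids and the Saavedra-type colimit argument, both already used earlier in the paper.
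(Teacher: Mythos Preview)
Your overall strategy coincides with the paper's: show that $(A,U^{\bcirc})$ is geometrically transitive and then read off (i)--(iv) from the structure theory of GT Hopf algebroids. The paper does exactly this, but its execution is cleaner and avoids the one real gap in your argument.

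For (i), the paper does not run a Saavedra-style colimit argument. Instead it observes that, by Deligne's theorem (\cite[Th\'eor\`emes 5.2, 7.1]{Bruguieres:1994}), the Hopf algebroid reconstructed from $(\Diff{A},\omega)$ is \emph{transitive}, and that it is \emph{separable} by Remark~\ref{rem:separable}; then \cite[Corollaire 6.7]{Bruguieres:1994} gives geometric transitivity, which \emph{by definition} in \cite{Bruguieres:1994} means precisely that $U^{\bcirc}$ is projective over $A\tensor{\mathbb{C}}A$. So (i) is a definitional consequence of GT, not something to be proved separately. For (ii) the paper likewise invokes Deligne's theorem directly to obtain the equivalence $\Diff{A}\simeq \frcomod{U^{\bcirc}}$, rather than going through $\cat{A}^{U^{\bcirc}}$; your route via Proposition~\ref{cor:UoGalois} works too, but note that your identification $\cat{A}^{U^{\bcirc}}=\frcomod{U^{\bcirc}}$ quietly uses that every finitely generated $U^{\bcirc}$-comodule is projective over $A$, i.e.\ part of (iii), so you should establish (iii) first.

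The genuine weak spot is your argument for projectivity in (iii). Citing \cite[Lemma 4.3(b)]{ElKaoutit:2015} gives faithful flatness, but your claim that an arbitrary comodule is a filtered colimit of free modules along \emph{split} monomorphisms is not justified: the filtration by subcomodules from (iv) gives $A$-linear inclusions, not $A$-split ones in general, and the Saavedra argument you allude to applies to a very specific inductive system, not to arbitrary subcomodule filtrations. The paper sidesteps this entirely by citing \cite[Proposition 7.2]{Bruguieres:1994}, which for transitive Hopf algebroids gives directly that every comodule is projective (and faithfully flat) over $A$. Similarly, (iv) is obtained from \cite[Proposition 6.2]{Bruguieres:1994} rather than from \cite{ElKaoutit:2015}. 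If you replace your ad hoc projectivity argument by these citations (or by the corresponding statements in \cite{ElKaoutit:2015} that actually assert projectivity, not just flatness), your proof becomes complete and essentially identical to the paper's.
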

\begin{proof}
$(i)$, $(ii)$. We know from subsection \ref{ssec:Rcero}, that $(A,U^{\bcirc})$ was constructed from the pair $(\cat{A}_{U}, \Sf{s}_{*})$, or up to the isomorphism of Proposition \ref{cor:UoGalois}, from the pair $(\Diff{A},\omega)$ of Lemma \ref{lema:abelian}. By applying Deligne's Theorem \cite[Th\'eor\`emes 5.2, 7.1]{Bruguieres:1994}, we have from one hand that the functor $\omega$ induces the equivalence sated in $(ii)$ which proves this item. 
From another hand, we have that the Hopf algebroid $(A,U^{\bcirc})$ is transitive 
and also separable by Remark \ref{rem:separable}. Therefore,  it is geometrically transitive  by \cite[Corollaire 6.7]{Bruguieres:1994}, which by definition \cite[D\'efinition page 5845]{Bruguieres:1994} means that $U^{\bcirc}$ is a projective $(A\tensor{\mathbb{C}}A)$-module, and this shows part $(i)$. 

$(iii)$. It follows directly from \cite[Proposition 7.2]{Bruguieres:1994}, while item $(iv)$ follows from \cite[Proposition 6.2]{Bruguieres:1994}.
\end{proof}

\begin{remark}\label{rem:Uo} 
 By \cite[Theorem 5.7]{ElKaoutit/Gomez:2004b} and Corollary \ref{cor:Transitive}\emph{(iii)}, the category of right comodules over the Hopf algebroid  $(A,  U^{\bcirc})$ admits $\Diff{A}$ as a generating set of small projectives if and only if the direct sum $\oplus_{M \in \, \Diff{A}} M$ is a faithfully flat right module over the ring $\oplus_{M,\,N\, \in \, \Diff{A}}\hom{}{M}{N}$ with enough orthogonal idempotents (i.e.~Gabriel's ring of $\Diff{A}$). Both equivalent conditions are fulfilled by 
Corollary \ref{cor:Transitive}\emph{(iv)}.
\end{remark}

\subsection{The differential Galois groupoid of a differential module.}\label{ssec:PV}  
Fix a differential module $M \in\Diff{A}$ with a dual basis $\{e_i, e_i^{*}\}_{1 \leq i \leq m}$.  We set
\begin{equation}\label{Eq:dete}
det(e_1,\dots,e_m) \,:=\, \sum_{\sigma \,\in\, S_m} (-1)^{sg(\sigma)} e_{\sigma(m)} \tensor{A} \dots \tensor{A} e_{\sigma(1)} \,\,  \in \, M^{\tensfun{}^m},
\end{equation}
where $S_m$ is the permutation group of $m$ elements, $sg(\sigma)$ is the signature of $\sigma$ and  $M^{\otimes^m}:=M\tensor{A}\cdots\tensor{A}M$ denotes the $m$-fold tensor product  of the $A$-module $M$.
We also denote 
\begin{equation}\label{Eq:detM}
det_M \, : =\, \bara{ e_{m}^* \star \dots \star e_1^* \tensor{T_{M^{\otimes^m}}} det(e_1,\dots, e_m) } \,\, \in\,\,  U{}^{\bcirc},
\end{equation}
where, as in equation \eqref{Eq:star}, the $A$-linear map $e_{m}^* \star \dots \star e_1^*$ is defined by 
$$
e_{m}^* \star \dots \star e_1^*: M^{\otimes^{m}} \longrightarrow A,\quad \Big( x_{1}\tensor{A}\cdots \tensor{A}x_{m} \longmapsto e_{m}^*(x_{m}) \cdots e_1^*(x_{1}) \Big).
$$

\begin{lemma}\label{lema:det}
Keeping the previous notations, we then  have 
\begin{enumerate}[(i)]
\item As an element in $M^{\tensfun{}^m}$, the differential of $det(e_1,\dots,e_m)$ is
$$\partial \,det(e_1,\dots,e_m) \,=\,tr\big(mat(M)\big) \,det(e_1,\dots,e_m),
$$ where $tr\big(mat(M)\big)$ is the trace of $mat(M)$;
\item $(e_{m}^* \star \dots \star e_1^*) \Big( det(e_1,\dots,e_m)\Big)\,\,=\,\, 1$.
\end{enumerate}
\end{lemma}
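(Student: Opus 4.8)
The plan is to verify both identities by a direct computation from the explicit formulas, the only delicate point being the cancellation of the ``off-diagonal'' terms in part $(i)$.

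For $(i)$, first I would apply the Leibniz rule \eqref{Eq:ptensor} for the differential of the $m$-fold tensor power $M^{\otimes m}$, which on a pure tensor reads $\partial(x_1\tensor{A}\cdots\tensor{A}x_m)=\sum_{k=1}^{m}x_1\tensor{A}\cdots\tensor{A}\partial x_k\tensor{A}\cdots\tensor{A}x_m$, to every summand $e_{\sigma(m)}\tensor{A}\cdots\tensor{A}e_{\sigma(1)}$ of $det(e_1,\dots,e_m)$. Substituting $\partial e_i=-\sum_{j}e_ja_{ji}$ from \eqref{Eq:aij}, and using that the underlying $A$-bimodule of a differential module is central so that each scalar $a_{ji}$ may be pulled out in front of the tensor, yields a triple sum indexed by a permutation $\sigma\in S_m$, a position $k\in\{1,\dots,m\}$, and the summation index $j$ coming from \eqref{Eq:aij}. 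I would then split this sum according to whether $j=\sigma(k)$ or $j\neq\sigma(k)$. In the first (``diagonal'') case, the $(\sigma,k)$-summand is the scalar $a_{\sigma(k)\sigma(k)}$ (carrying the sign of \eqref{Eq:aij}) times the original tensor $e_{\sigma(m)}\tensor{A}\cdots\tensor{A}e_{\sigma(1)}$; summing over $k$ turns the scalar into $\sum_{k}a_{\sigma(k)\sigma(k)}=tr(mat(M))$, and summing over $\sigma$ then reproduces $tr(mat(M))\,det(e_1,\dots,e_m)$. In the second (``off-diagonal'') case $j=\sigma(k')$ for the unique index $k'\neq k$ with $\sigma(k')=j$, so the resulting pure tensor carries the vector $e_{\sigma(k')}$ in both of the factors originally occupied by $e_{\sigma(k)}$ and $e_{\sigma(k')}$; here I would observe that the assignment $(\sigma,k,k')\mapsto(\sigma\circ(k\,k'),\,k',\,k)$ is a fixed-point-free involution of the indexing set of these terms which leaves the resulting pure tensor and the coefficient $a_{\sigma(k'),\sigma(k)}$ unchanged but reverses the sign $(-1)^{sg(\sigma)}$; hence the off-diagonal terms cancel in pairs and contribute $0$. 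Combining the two cases gives the stated formula.

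For $(ii)$, I would simply unwind the definition \eqref{Eq:star} of the convolution product: the iterated functional $e_m^*\star\cdots\star e_1^*$ evaluates each tensor factor of $M^{\otimes m}$ against the appropriate dual basis functional $e_i^*$ and multiplies the resulting scalars (the order in which they are multiplied being irrelevant since $A$ is commutative). Applied to $det(e_1,\dots,e_m)=\sum_{\sigma\in S_m}(-1)^{sg(\sigma)}e_{\sigma(m)}\tensor{A}\cdots\tensor{A}e_{\sigma(1)}$, the $\sigma$-summand contributes a product of Kronecker deltas pairing the basis vectors $e_{\sigma(\cdot)}$ with the corresponding functionals $e_{(\cdot)}^*$; this product equals $1$ when $\sigma$ is the identity permutation and $0$ otherwise. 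Hence only the identity term survives, with coefficient $(-1)^{sg(\mathrm{id})}=1$, which is the assertion. The only real obstacle is the combinatorial bookkeeping of the off-diagonal cancellation in part $(i)$; once the indexing is set up carefully, the transposition involution makes the cancellation transparent, and everything else is a routine unwinding of the definitions.
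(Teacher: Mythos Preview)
Your argument is exactly the routine computation the paper has in mind; its own proof simply says both parts follow from the Leibniz rule \eqref{Eq:ptensor} and the definitions, with details left to the reader. One remark on the sign bookkeeping in part~(i): with the convention \eqref{Eq:aij} the diagonal terms actually sum to $-\,tr\big(\mat{M}\big)\,\det(e_1,\dots,e_m)$ rather than $+\,tr\big(\mat{M}\big)\,\det(e_1,\dots,e_m)$ --- you acknowledge the minus sign parenthetically but then drop it to match the stated formula, which appears to be a sign slip in the lemma itself (there is a compensating sign in the differential of $e_1^*\wedge\cdots\wedge e_m^*$ quoted in the proof of Lemma~\ref{lema:det-1}, so nothing downstream is affected).
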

\begin{proof}
Both part $(i)$ and $(ii)$ are routine computations by using the formulae \eqref{Eq:ptensor} and definitions. The details are left to the reader. 
\end{proof}

A crucial consequence of this lemma is the subsequent.
\begin{lemma}\label{lema:det-1}
The element $det_M $ of Eq.\eqref{Eq:detM} is invertible  in the algebra $U{}^{\bcirc}$.
\end{lemma}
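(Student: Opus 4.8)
The plan is to show that $det_M$ is invertible by exhibiting an explicit inverse built from the dual comodule structure. The natural candidate comes from the fact that the top exterior power $\bigwedge^m M$ of a rank-$m$ differential module is a rank-one differential module, so both $\bigwedge^m M$ and its dual $(\bigwedge^m M)^*$ are objects of $\Diff{A}$. Writing $e := det(e_1,\dots,e_m) \in M^{\otimes^m}$ and $\varphi := e_m^* \star \cdots \star e_1^* \in (M^{\otimes^m})^*$, Lemma \ref{lema:det}(ii) gives $\varphi(e) = 1$. The element $det_M = \bara{\varphi \tensor{T_{M^{\otimes^m}}} e}$ is then, up to the colinear map $M^{\otimes^m} \to \bigwedge^m M$, the generator of $U^{\bcirc}$ associated to a free generator of the rank-one module $\bigwedge^m M$ together with its dual coordinate. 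So the first step is to pass from $M^{\otimes^m}$ to the line $\Lambda := \bigwedge^m M$, noting that the surjection $\pi: M^{\otimes^m} \to \Lambda$ is a morphism in $\Diff{A}$ and that $\bar e := \pi(e) = e_1 \wedge \cdots \wedge e_m$ is an $A$-basis of $\Lambda$; let $\bar e^* \in \Lambda^*$ be the dual basis element, so $\bar\varphi := $ (the factorization of $\varphi$ through $\pi$) satisfies $\bar\varphi \circ \pi = \varphi$ as $A$-linear maps and $\bar\varphi(\bar e) = 1$. Using the defining relation \eqref{Eq:JA} of $U^{\bcirc}$ with the arrow $\pi$, one gets $det_M = \bara{\varphi \tensor{} e} = \bara{\bar\varphi \circ \pi \tensor{} e} = \bara{\bar\varphi \tensor{T_\Lambda} \pi(e)} = \bara{\bar e^* \tensor{T_\Lambda} \bar e}$, since $\bar\varphi = \bar e^*$ (both are the coordinate functional of $\bar e$ on the line $\Lambda$).

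The second step is to produce the inverse using the dual line $\Lambda^*$. Set $z := \bara{\bar e^* \tensor{T_\Lambda} \bar e}$ and $z' := \bara{\psiup(\bar e) \tensor{T_{\Lambda^*}} \bar e^*}$, where $\psiup: \Lambda \to (\Lambda^*)^*$ is the canonical isomorphism (this is exactly $\Scr{S}^{\bcirc}(z)$ by the formula \eqref{Eq:So} for the antipode). Using the multiplication \eqref{Eq:mo} on $U^{\bcirc}$,
\[
z \cdot z' = \bara{(\psiup(\bar e) \star \bar e^*) \tensor{T_{\Lambda^* \tensor{}\Lambda}} (\bar e^* \tensor{A} \bar e)}.
\]
Now $(\psiup(\bar e) \star \bar e^*): \Lambda^* \tensor{A} \Lambda \to A$ sends $\phi \tensor{A} y \mapsto \bar e^*(\psiup(\bar e)(\phi)\, y) = \bar e^*(\phi(\bar e)\, y) = \phi(\bar e)\,\bar e^*(y)$, which is precisely the evaluation map $ev_\Lambda$ up to the canonical identifications. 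Evaluating on $\bar e^* \tensor{A} \bar e$ gives $\bar e^*(\bar e)\,\bar e^*(\bar e) = 1$, so that $(\psiup(\bar e) \star \bar e^*)(\bar e^* \tensor{A} \bar e) = 1$; hence the $A$-linear map in the numerator of $z \cdot z'$ applied to its argument yields $1$, and thus $z \cdot z' = \bara{id_A \tensor{T_A} 1_A} = 1_{U^{\bcirc}}$, the unit from \eqref{Eq:etao}. A symmetric computation (or the relation $\Scr{S}^{\bcirc 2} = id$ together with commutativity of $U^{\bcirc}$) gives $z' \cdot z = 1_{U^{\bcirc}}$ as well.

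Finally, combining the two steps: $det_M = z$ is invertible with inverse $z' = \Scr{S}^{\bcirc}(det_M)$. The main obstacle — really the only delicate point — is the bookkeeping in the first step, namely verifying carefully that $\varphi = e_m^* \star \cdots \star e_1^*$ genuinely factors through $\pi: M^{\otimes^m} \to \bigwedge^m M$ and that the factorization equals the coordinate functional $\bar e^*$ of the free generator $\bar e = e_1 \wedge \cdots \wedge e_m$; this uses that $\varphi$ is alternating on the $e_i$ (so kills the kernel of $\pi$) and that $\varphi(e) = 1$ by Lemma \ref{lema:det}(ii). Once this identification of $det_M$ with the ``grouplike-type'' generator $\bara{\bar e^* \tensor{T_\Lambda} \bar e}$ attached to a rank-one object is in place, invertibility is forced by rigidity of $\Diff{A}$ (Lemma \ref{lema:duales}) via the explicit antipode formula, with no further computation of substance. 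I would present the argument in this order: reduce to the line $\Lambda = \bigwedge^m M$; identify $det_M$ with $\bara{\bar e^* \tensor{T_\Lambda} \bar e}$; compute $det_M \cdot \Scr{S}^{\bcirc}(det_M)$ using \eqref{Eq:mo} and \eqref{Eq:So} and the evaluation–coevaluation relations; conclude.
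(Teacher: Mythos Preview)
Your overall strategy is sound and, once corrected, gives a cleaner argument than the paper's, but Step~1 contains a genuine error. You claim that $\varphi = e_m^* \star \cdots \star e_1^*$ is alternating and hence factors through $\pi: M^{\otimes^m} \to \bigwedge^m M$. It is not: by the formula after \eqref{Eq:detM}, $\varphi(x_1 \tensor{A} \cdots \tensor{A} x_m) = \prod_i e_i^*(x_i)$, which is neither symmetric nor alternating. For $m=2$ one has $\varphi(e_2 \tensor{A} e_1) = e_1^*(e_2)\,e_2^*(e_1) = 0$, while $-\varphi(e_1 \tensor{A} e_2) = -1$; in particular $\varphi$ does not vanish on $\ker\pi$. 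So the equation $\varphi = \bar\varphi \circ \pi$ fails, and with it your use of the relation \eqref{Eq:JA} along~$\pi$.

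The fix is to go the other way: instead of pushing $\varphi$ forward through $\pi$, pull $e$ back through the antisymmetrization section. Since each flip $\tau_{P,Q}$ is a morphism in $\Diff{A}$, so is the (scaled) antisymmetrizer, and hence there is a differential map $\sigma: \Lambda \to M^{\otimes^m}$ with $\sigma(\bar e) = det(e_1,\dots,e_m) = e$. Applying \eqref{Eq:JA} to $\sigma$ gives
\[
det_M \,=\, \bara{\varphi \tensor{T_{M^{\otimes^m}}} \sigma(\bar e)} \,=\, \bara{\varphi \circ \sigma \tensor{T_\Lambda} \bar e},
\]
and now $\varphi \circ \sigma \in \Lambda^*$ with $(\varphi \circ \sigma)(\bar e) = \varphi(e) = 1$ by Lemma~\ref{lema:det}(ii), so $\varphi\circ\sigma = \bar e^*$ and $det_M = \bara{\bar e^* \tensor{T_\Lambda} \bar e}$ as you wanted. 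Your Step~2 then goes through verbatim: the key identity $(\psiup(\bar e)\star\bar e^*) = ev_\Lambda$ holds because $\Lambda$ has rank one, and $ev_\Lambda$ is a morphism in $\Diff{A}$ by \eqref{Eq:ev-db}, so the relation \eqref{Eq:JA} along $ev_\Lambda$ yields $z\cdot z' = 1_{U^{\bcirc}}$.

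With this correction, your route genuinely differs from the paper's. The paper never reduces $det_M$ to a rank-one generator; it works directly in $M^{\otimes^m}$ and exhibits the inverse as $\bara{(e_1^*\wedge\cdots\wedge e_m^*)^* \tensor{T_{\bigwedge^m M^*}} (e_1^*\wedge\cdots\wedge e_m^*)}$, built from $\bigwedge^m M^*$ rather than $(\bigwedge^m M)^*$, and checks the product via an auxiliary morphism $f: A \to (\wedge^m M^*)\tensor{A} M^{\otimes^m}$. Your approach isolates the conceptual point---a generator attached to any rank-one object is grouplike, hence inverted by the antipode---and avoids the bespoke map~$f$. The paper's computation, on the other hand, stays closer to the explicit cofactor formulae that are reused in Proposition~\ref{prop:det}.
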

\begin{proof}
Let $\bigwedge^m M^*$ be the $m$-exterior power of the dual module $M^*$ of $M$. This is a free $A$-module of rank one and basis $e_1^* \wedge \dots \wedge e_m^*$ with differential $\partial(e_1^* \wedge \dots \wedge e_m^*)\,=\,- tr(\mat{M}) e_1^* \wedge \dots \wedge e_m^*$, where as before $tr(\mat{M})$ denotes the trace of the matrix $\mat{M}$.  Thus, the module $\bigwedge^m M^*$ is an object in the category $\Diff{A}$. By Lemma \ref{lema:det}(i), we define the following differential map 
$$ f : A \longrightarrow  ({\wedge} ^m M^*)\tensor{A} M^{\otimes_m},\quad \Big(\,1 \longmapsto (e_1^* \wedge \dots \wedge e_m^*) \tensor{A} det(e_1,\dots,e_m) \, \Big),$$ which we consider as a morphism in the category $\Diff{A}$.

Now let us check that  
\begin{equation}\label{Eq:det-1}
det_M^{-1} \,=\, \left(\bara{e_m^* \star \dots \star e_1^* \tensor{T_{M^{\otimes_m}}}  det(e_1,\dots, e_m) }\right) ^{-1} \,=\,  \bara{(e^*_1 \wedge \dots \wedge  e_m^*)^* \tensor{T_{\bigwedge^m M^*}} (e^*_1 \wedge \dots \wedge  e_m^*)}. 
\end{equation}
So we compute their multiplication:
\begin{eqnarray*}
& &  \bara{e_m^* \star \dots \star e_1^* \tensor{T_{M^{\otimes_m}}}  det(e_1,\dots, e_m) } \; . \; \bara{(e^*_1 \wedge \dots \wedge  e_m^*)^* \tensor{T_{\bigwedge^m M^*}} (e^*_1 \wedge \dots \wedge  e_m^*)}  \\ \;\; &=& \bara{ \Big( (e_1^* \wedge \dots \wedge e_m^*)^{*} \star (e_m^* \star \dots \star e_1^*) \Big) \tensor{T_{({\wedge} ^m M^*)\tensor{A} M^{\otimes_m}}} \Big(  (e_1^* \wedge \dots \wedge e_m^*) \tensor{A} det(e_1,\dots,e_m) \Big) } \\ \;\; &=& \bara{ \Big( (e_1^* \wedge \dots \wedge e_m^*)^{*} \star (e_m^* \star \dots \star e_1^*) \Big) \tensor{T_{({\wedge} ^m M^*)\tensor{A} M^{\otimes_m}}} \Big(  f(1) \Big) } \\ \;\; &=& \bara{ \Big( (e_1^* \wedge \dots \wedge e_m^*)^{*} \star (e_m^* \star \dots \star e_1^*) \Big) f \tensor{T_A}  1}.
\end{eqnarray*}
The $A$-linear map $\Big( (e_1^* \wedge \dots \wedge e_m^*)^{*} \star (e_m^* \star \dots \star e_1^*) \Big) f  \in A^{*}$ is defined by 
\begin{eqnarray*} 
& & \; \Big( (e_1^* \wedge \dots \wedge e_m^*)^{*} \star (e_m^* \star \dots \star e_1^*) \Big) \big( f (1) \big) \\ &=& \Big( (e_1^* \wedge \dots \wedge e_m^*)^{*} \star (e_m^* \star \dots \star e_1^*) \Big)  \Big( (e_1^* \wedge \dots \wedge e_m^*) \tensor{A} det(e_1,\dots,e_m)\Big) \\ &=&  (e_1^* \wedge \dots \wedge e_m^*)^{*} \Big(e_1^* \wedge \dots \wedge e_m^* \Big)\; (e_m^* \star \dots \star e_1^*) \Big(det(e_1,\dots,e_m) \Big) \\ &\overset{\ref{lema:det}(ii)}{=}& 1, 
\end{eqnarray*}
which shows that 
$$ \bara{e_m^* \star \dots \star e_1^* \tensor{T_{M^{\otimes_m}}}  det(e_1,\dots, e_m) } \; .\; \bara{(e^*_1 \wedge \dots \wedge  e_m^*)^* \tensor{T_{\bigwedge^m M^*}} (e^*_1 \wedge \dots \wedge  e_m^*)} \,=\,1$$  and this finishes the proof.
\end{proof}

\begin{remark}\label{rem:gl}
Similar to the context of commutative Hopf algebras, a grouplike element $g$ in a commutative Hopf algebroid is always an invertible element with inverse $\Scr{S}(g)$, its image by the antipode. In the particular situation of Lemma  \ref{lema:det-1}, 
it is easily seen from equation \eqref{Eq:det-1} that 
\begin{eqnarray*}
\Delta\big(det_M^{-1}\big) &=& \bara{(e^*_1 \wedge \dots \wedge  e_m^*)^* \tensor{T_{\bigwedge^m M^*}} (e^*_1 \wedge \dots \wedge  e_m^*)}\, \tensor{A} \, \bara{(e^*_1 \wedge \dots \wedge  e_m^*)^* \tensor{T_{\bigwedge^m M^*}} (e^*_1 \wedge \dots \wedge  e_m^*)} \\ &=& det_M^{-1} \tensor{A} det_M^{-1}.
\end{eqnarray*} 
which shows that $det_M^{-1}$ is a groulike element, and so is $det_M$. Therefore, we have that 
\begin{equation}\label{Eq:Sdet}
\Scr{S}(det_{M})=det_{M}^{-1}.
\end{equation}

On the other hand, the structure of right $U^{\bcirc}$-comodule over $A_{A}$ which corresponds to this grouplike element  and, up to an isomorphism, is the right $U^{\bcirc}$-comodule structure of $\bigwedge^m M^*$ deduced by applying Proposition \ref{cor:UoGalois}, is given by the coaction $A \to U^{\bcirc}$, $a \mapsto det_{M}^{-1}a$.  This corresponds to the differential module $(A_{A}, \partial^{\Sscript{M}})$ with differential 
\begin{equation}\label{Eq:glike}
\partial^{\Sscript{M}}: A \longrightarrow A, \quad \Big( a \longmapsto -tr\big(mat(M)\big)\, a + \partial a\Big).
\end{equation}
\end{remark}

For a given $M$, let $ \overset{\bcirc}{U}_{M}$ be the $(A \tensor{\mathbb{C}} A)$--subalgebra of $U^{\bcirc}$ generated by the image of $M^* \otimes_{T_M} M$ via the canonical map $M^* \otimes_{T_M} M \to U^{\bcirc}$. It is not hard to see that   $\overset{\bcirc}{U}_{M}$ is generated as an $(A \tensor{\mathbb{C}} A)$--algebra  by the set $ \Big\{ \bara{e_{j}^{*}\tensor{T_{M}}e_{i}}\Big\}_{1\leq i,j \leq m }$ for any  given a dual basis $\{e_i, e_i^*\}_{1\leq i \leq m}$ as above. 

\begin{proposition}\label{prop:det}
The subalgebra $\overset{\bcirc}{U}_{M}$ is in fact a sub-bialgebroid of $U^{\bcirc}$, which contains the  element $det_{M}$ defined in Eq.~\eqref{Eq:detM}. Furthermore, its localization algebra $\overset{\bcirc}{U}_{M}[det_{M}^{-1}]$ is a Hopf subalgebroid of $U^{\bcirc}$. 
\end{proposition}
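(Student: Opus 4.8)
The plan is to verify the three claims of the proposition in turn. First I would show that $\overset{\bcirc}{U}_{M}$ is a sub-bialgebroid of $U^{\bcirc}$. Since $\overset{\bcirc}{U}_{M}$ is already an $(A\tensor{\mathbb{C}}A)$-subalgebra containing the image of $\etaup^{\bcirc}$, it suffices to check that the comultiplication $\Delta^{\bcirc}$ and the counit $\varepsilon^{\bcirc}$ of equations \eqref{Eq:deltao} and \eqref{Eq:counito} restrict to it. For the counit this is trivial. For the comultiplication, the key observation is that, by Eq.\eqref{Eq:deltao}, the generator $\bara{e_j^*\tensor{T_M}e_i}$ is sent to $\sum_{k}\bara{e_j^*\tensor{T_M}e_k}\tensor{A}\bara{e_k^*\tensor{T_M}e_i}$ — the sum running over a fixed dual basis $\{e_k,e_k^*\}$ of $M$ — so that $\Delta^{\bcirc}$ maps the generating set of $\overset{\bcirc}{U}_{M}$ into $\overset{\bcirc}{U}_{M}\tensor{A}\overset{\bcirc}{U}_{M}$; since $\Delta^{\bcirc}$ is an algebra map (Theorem \ref{thm:lduality}(1)), it maps all of $\overset{\bcirc}{U}_{M}$ there. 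Thus $\overset{\bcirc}{U}_{M}$ is a subcoring stable under the algebra structure, hence a sub-bialgebroid.

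Next I would show $det_M\in\overset{\bcirc}{U}_{M}$. Here the point is that $det_M$, as defined in \eqref{Eq:detM}, involves the object $M^{\otimes^m}$ rather than $M$, so it is not manifestly in the subalgebra generated by $M^*\tensor{T_M}M$. The idea is to use the multiplication \eqref{Eq:mo} of $U^{\bcirc}$: expanding the product $(\bara{e_m^*\tensor{T_M}e_{i_m}})\cdots(\bara{e_1^*\tensor{T_M}e_{i_1}})$ over all multi-indices and using that $e_m^*\star\cdots\star e_1^*$ is the functional in \eqref{Eq:detM}, one recognizes, after summing against the signature, that $det_M$ equals a signed sum of products of the generators $\bara{e_j^*\tensor{T_M}e_i}$. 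More precisely one checks that $det_M = \sum_{\sigma\in S_m}(-1)^{sg(\sigma)}\,(\bara{e_m^*\tensor{T_M}e_{\sigma(m)}})\cdots(\bara{e_1^*\tensor{T_M}e_{\sigma(1)}})$, which is the ``quantum determinant''-type formula; this is a routine but slightly delicate bookkeeping computation using \eqref{Eq:mo} and \eqref{Eq:star}, so I would state the formula and indicate the verification rather than grind it out. This exhibits $det_M$ as an element of $\overset{\bcirc}{U}_{M}$.

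For the third claim, that $\overset{\bcirc}{U}_{M}[det_M^{-1}]$ is a Hopf subalgebroid, I would argue as follows. By Remark \ref{rem:gl}, $det_M$ is a grouplike element of the commutative Hopf algebroid $(A,U^{\bcirc})$, so it is invertible in $U^{\bcirc}$ with $\Scr{S}(det_M)=det_M^{-1}$, whence the localization $\overset{\bcirc}{U}_{M}[det_M^{-1}]$ is a well-defined subalgebra of $U^{\bcirc}$. Since $\Delta^{\bcirc}(det_M)=det_M\tensor{A}det_M$ and $\varepsilon^{\bcirc}(det_M)=1$, the universal property of localization shows $\Delta^{\bcirc}$ and $\varepsilon^{\bcirc}$ restrict to $\overset{\bcirc}{U}_{M}[det_M^{-1}]$, so it is a sub-bialgebroid. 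Finally, to see it is stable under the antipode $\Scr{S}^{\bcirc}$ of \eqref{Eq:So}: one has $\Scr{S}^{\bcirc}(\bara{e_j^*\tensor{T_M}e_i})=\bara{\psiup(e_i)\tensor{T_{M^*}}e_j^*}$, which lies in the subalgebra generated by the image of $M^{**}\tensor{T_{M^*}}M^*$; using the natural isomorphism $\psiup:M\to M^{**}$ and the fact that $\{e_i^*,\psiup(e_i)\}$ is a dual basis of $M^*$, this image coincides with $\overset{\bcirc}{U}_{M}$ itself (the roles of $M$ and $M^*$ being symmetric through the identification $\psiup$). Hence $\Scr{S}^{\bcirc}(\overset{\bcirc}{U}_{M})\subseteq\overset{\bcirc}{U}_{M}$, and since $\Scr{S}^{\bcirc}(det_M)=det_M^{-1}$ by \eqref{Eq:Sdet}, the antipode preserves the localization. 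Therefore $\overset{\bcirc}{U}_{M}[det_M^{-1}]$ is a commutative Hopf subalgebroid of $U^{\bcirc}$.

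I expect the main obstacle to be the second step: writing $det_M$ explicitly as a polynomial in the generators $\bara{e_j^*\tensor{T_M}e_i}$ via the multiplication \eqref{Eq:mo}. The subtlety is keeping track of the order of the tensor factors (the multiplication \eqref{Eq:mo} reverses them) and of which dual basis of which tensor power $M^{\otimes^k}$ one is using at each stage; the sign computations in $S_m$ must be matched carefully against the definition of $det(e_1,\dots,e_m)$ in \eqref{Eq:dete}. Everything else — the sub-bialgebroid property, the behaviour of the antipode on generators, and the localization argument — is comparatively mechanical once the structure maps of Theorem \ref{thm:lduality}(1) are in hand.
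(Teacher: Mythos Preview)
Your arguments for the first two claims are essentially the paper's own (the paper dispatches the membership of $det_M$ in one line, and treats the sub-bialgebroid property exactly by checking $\Delta^{\bcirc}$ on generators; it also remarks that one needs $A$ a PID and $U^{\bcirc}$ torsion-free so that $\overset{\bcirc}{U}_{M}\tensor{A}\overset{\bcirc}{U}_{M}$ sits inside $U^{\bcirc}\tensor{A}U^{\bcirc}$, a point you should add).

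The third part has a genuine gap. You compute $\Scr{S}^{\bcirc}(\bara{e_j^*\tensor{T_M}e_i})=\bara{\psiup(e_i)\tensor{T_{M^*}}e_j^*}$ and then assert that the subalgebra generated by the image of $M^{**}\tensor{T_{M^*}}M^*$ ``coincides with $\overset{\bcirc}{U}_{M}$ itself (the roles of $M$ and $M^*$ being symmetric through the identification $\psiup$)''. But $\psiup$ is an isomorphism $M\to M^{**}$, not $M\to M^*$; what it gives you is $\overset{\bcirc}{U}_{M^{**}}=\overset{\bcirc}{U}_{M}$, whereas what you need is that the elements coming from the object $M^*$ lie in $\overset{\bcirc}{U}_{M}$. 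There is no morphism $M\to M^*$ in $\Diff{A}$ available to move such classes across the defining relations of $U^{\bcirc}$. In fact your conclusion $\Scr{S}^{\bcirc}(\overset{\bcirc}{U}_{M})\subseteq\overset{\bcirc}{U}_{M}$ cannot be right in general: if it held, $\overset{\bcirc}{U}_{M}$ would already be a Hopf subalgebroid and the localization would be superfluous, contrary to the matrix-coalgebra intuition (and to the case $m=1$, where $\Scr{S}^{\bcirc}(u_{11})=det_M^{-1}\notin\overset{\bcirc}{U}_{M}$ unless $det_M$ is already a unit).

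What the paper does instead is prove a Cramer-type identity inside $U^{\bcirc}$: writing $u_{ij}=\bara{e_i^*\tensor{T_M}e_j}$ and letting $v_{ji}$ denote the $(j,i)$-cofactor of the matrix $(u_{ij})$ (which is a polynomial in the $u_{ij}$, hence lies in $\overset{\bcirc}{U}_{M}$), one shows
\[
\Scr{S}^{\bcirc}(u_{ij})\;=\;det_M^{-1}\cdot v_{ji}.
\]
The proof of this equality is the substantive content: it uses the explicit formula for $det_M^{-1}$ from Lemma \ref{lema:det-1} and a carefully constructed morphism $h^m:M^*\to M^{\otimes_{m-1}}\tensor{A}\bigwedge^m M^*$ in $\Diff{A}$ (built from iterated dual-basis maps) to rewrite $\bara{\psiup(e_i)\tensor{T_{M^*}}e_j^*}$ as a class supported on objects of the form $M^{\otimes_{m-1}}\tensor{A}\bigwedge^m M^*$, which then matches $det_M^{-1}\,v_{ji}$. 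This is where the real work is, not in the determinant formula you flagged as the main obstacle.
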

\begin{proof}
The first statement is immediate, since we already know that 
$$
\Delta\big( \bara{e_{j}^{*}\tensor{T_{M}}e_{i}} \big) = \sum_{k} \bara{e_{j}^{*}\tensor{T_{M}}e_{k}} \tensor{A} \bara{e_{k}^{*}\tensor{T_{M}}e_{i}},
$$
where $\Delta$ denotes the comultiplication of $U^{\bcirc}$. Since $\Delta$ is already multiplicative, we get a coassociative and multiplicative $A$--bimodule map $\Delta : \overset{\bcirc}{U}_{M} \to \overset{\bcirc}{U}_{M} \tensor{A} \overset{\bcirc}{U}_{M}$. Observe that we are using that $A$ is a principal ideal domain and that $U^{\bcirc}$ is torsionfree over $A$ on both sides.    
The fact that $det_{M} \in \overset{\bcirc}{U}_{M}$ clearly follows from the formula of the multiplication in $U^{\bcirc}$ displayed in \eqref{Eq:mo}.

Next we show that the antipode of $U^{\bcirc}$ restricts to $\overset{\bcirc}{U}_{M}[det_{M}^{-1}]$, since we already have by Lemma \ref{lema:det-1} the inclusion $\overset{\bcirc}{U}_{M}[det_{M}^{-1}] \subseteq U^{\bcirc}$. 
Denote by $u_{ij}$ the generating elements $\bara{e_i^*\tensor{T_M}e_j}$, for  $i,j=1,\dots,m$, and by $det_{u}$ the determinant $det_{M}$ of Eq.\eqref{Eq:detM}.  

Recall that the  $(j,i)^{th}$ cofactor of the matrix $(u_{ij})_{1 \leq i, j \leq m}$ is then defined to be 
$$ v_{ji}\,=\, (-1)^{i+j} \sum_{\sigma \,\in\, \cat{P}(j,i)} (-1)^{sg(\sigma)} u_{j_1\sigma(j_1)} \dots u_{j_{m-1}\sigma(j_{m-1})},$$ where $\cat{P}(j,i)$ is the set of all bijections from $\{1,\dots,m\} \setminus\{j\}=\{j_1< \dots < j_{m-1}\}$ to  $\{1,\dots,m\} \setminus\{i\}$, and $sg(\sigma)$ is the signature of $\sigma$ viewed as a permutation in $S_{m-1}$.  Clearly, each of the $v_{ji}$ is an element in the $(A\tensor{}A)$-subalgebra $\overset{\bcirc}{U}_{M}$.
Writing down this element using the notation of Eq.\eqref{Eq:dete}, we get
$$ v_{ji} \,=\, (-1)^{i+j} \,\bara{ e_{j_{m-1}}^*\star \dots \star e_{j_1}^* \tensor{T_{M^{\otimes_{m-1}}}} det(e_1,\dots,\overset{\curlyvee}{e_i},\dots, e_m)},$$
where $det(e_1,\dots,\overset{\curlyvee}{e_i},\dots, e_m)$ is the resulting determinant of the vector $(e_1,\dots, e_m)$ after removing the component $e_i$, see Eq.\eqref{Eq:dete}.

Define now the following morphism in the category $\Diff{A}$, by the  composition 
$$\xymatrix@C=45pt{h^{m}: M^* \ar@{->}^-{db^{\otimes_{m-1}}\tensor{}M^*}[r] & (M\tensor{}M^*)^{\otimes_{m-1}} \tensor{}M^* \ar@{->}[r] & M^{\otimes_{m-1}} \tensor{} (M^*)^{\otimes_{m}} \ar@{->>}[r] &  M^{\otimes_{m-1}} \tensor{} \Big(\bigwedge^m M^*\Big),}$$ where $db: A \to M\tensor{A}M^*$ is the coevaluation map of Eq.\eqref{Eq:ev-db} up to the differential isomorphism $M\tensor{A}M^* \cong \hom{A}{M}{M}$, the second map is the flip, and the third one is obvious.    On elements, $h^m$ acts by  
$$
 h^m(\varphi)\,=\, \sum_{k_1,\dots,\, k_{m-1}} (e_{k_{m-1}}\tensor{A}\dots\tensor{A}e_{k_1}) \tensor{A} (e_{k_1}^*\wedge \dots \wedge e_{k_{m-1}}^* \wedge \varphi), 
 $$
 for every $\varphi \in M^*$,  where each of the index $k_l$ runs the set $\{1,\dots,m\}$, for each of the $l=1, \cdots, m-1$. Reordering the indices, we obtain 
\begin{equation}\label{Eq:hm1}
h^m(e_i^*)\,=\, (-1)^i det(e_1,\dots,\overset{\curlyvee}{e_i},\dots, e_m) \tensor{A}(e_1^*\wedge \dots \wedge e_m^*). 
\end{equation}
Using this equality and the above notation, we show that 
\begin{equation}\label{Eq:hm2}
\gamma(e_j) \,= \, \left[(-1)^j  e_{j_{m-1}}^*\star \dots \star e_{j_1}^*  \star (e_1^*\wedge \dots \wedge e_m^*)^*\right] \, \circ \, h^m,
\end{equation}
where as above $\gamma: M \to (M^*)^*$ is the canonical isomorphism. Now,
we compute
\begin{eqnarray*}
det_u^{-1} \, .\, v_{ji} &\overset{\eqref{Eq:det-1}}{=}& \bara{(e^*_1 \wedge \dots \wedge  e_m^*)^* \tensor{T_{\bigwedge^m M^*}} (e^*_1 \wedge \dots \wedge  e_m^*)} \;
(-1)^{i+j} \, \bara{ e_{j_{m-1}}^*\star \dots \star e_{j_1}^* \tensor{T_{M^{\otimes_{m-1}}}} det(e_1,\dots,\overset{\curlyvee}{e_i},\dots, e_m)}  \\
&\overset{\eqref{Eq:hm1}}{=}& (-1)^j \, \bara{ e_{j_{m-1}}^*\star \dots \star e_{j_1}^*\star (e^*_1 \wedge \dots \wedge  e_m^*)^* \tensor{T_{M^{\otimes_{m-1}}\tensor{}\wedge^m M^*}} h^m(e_i^*) } 
\\
&=& (-1)^j \, \bara{ e_{j_{m-1}}^*\star \dots \star e_{j_1}^*\star (e^*_1 \wedge \dots \wedge  e_m^*)^* h^m \tensor{T_{M^*}} e_i^* }) \\ &\overset{\eqref{Eq:hm2}}{=} & \bara{\gamma(e_j) \tensor{T_{M^*}}e_i^* } \,\, =\,\, \Scr{S}_M(u_{ij}).
\end{eqnarray*} Thus, we have 
\begin{equation}\label{Eq:S}
det_u^{-1} \, .\, v_{ji} \, \, =\,\, \Scr{S}_M(u_{ij}).
\end{equation}
This with Eq.\eqref{Eq:Sdet} show that $\Scr{S}$ is restricted to the localization $\overset{\bcirc}{U}_{M}[det_{M}^{-1}]$, and this completes the proof. 
\end{proof}

From now on, we  denote by $U^{\bcirc}_{\Sscript{(M)}}:=\overset{\bcirc}{U}_{M}[det_{M}^{-1}]$ the Hopf sub-algebroid of $U^{\bcirc}$ stated in Proposition \ref{prop:det}.

Fix a differential module $(M,\partial) \in\Diff{A}$ with a dual basis $\{e_i,e_i^*\}_{1 \leq i \leq m}$, and keep the above notations. Given two positive integers $k,l$, we denote by $T^{\Sscript{(k,\, l)}}(M):= M^{\scriptstyle{\tensor{}l}} \tensor{A} (M^*)^{\scriptstyle{\tensor{}k}}$ which we consider as a differential module using the tensor product of Eq.\eqref{Eq:ptensor}. 
We denote by $\langle M \rangle_{\Sscript{\otimes}}$ the full sub-category of $\Diff{A}$ of finite sub-quotients differential modules of $M$. Thus, an object $(X,\partial)$ of $\Diff{A}$ belongs to $\Mo$ if it is a quotient of the form $X=X_2/X_1$, where $X_1 \subseteq X_2 \subseteq \oplus_{k,\, l} T^{\Sscript{(k,\, l)}}(M)$ (finite direct sum). Since $\Diff{A}$ is by Lemma \ref{lema:abelian} an abelian category, a differential module $(X,\partial)$ belongs to $\Mo$ if and only if it is a sub-object of an object finitely generated by those $T^{\Sscript{(k,\, l)}}(M)$'s.

Let us denote by $\omegam: \Mo \longrightarrow add(A)$ the restriction of the fibre functor $\omega: \Diff{A} \longrightarrow add(A)$, and by $(A,\rR(\Mo))$ its associated commutative Hopf algebroid defined in Proposition \ref{prop:TBialgd}, see also Eq. \eqref{comatInf} and Remark \ref{rem:generalcase}\footnote{In the notations of subsection \ref{ssec:PB}, this is  $\rR(\Mo)=\RAA{\omega}$.}. Since $\Mo$ is a symmetric rigid monoidal $\mathbb{C}$-linear abelian, locally finite category and $\omegam$ is an exact faithful functor, we have that $(A,\rR(\Mo))$ enjoys similar properties $(i)\text{-}(iv)$ of Corollary \ref{cor:Transitive}. Thus, $(A,\rR(\Mo))$ is a geometrically transitive Hopf algebroid, see Example \ref{exam:GT} and \cite{Bruguieres:1994, ElKaoutit:2015}.   In particular we have that the fibre functor $\omegam$ induces a symmetric monoidal equivalence of categories $\Mo \simeq \frcomod{\rR(\Mo)}$ to the category of $\rR(\Mo)$-comodules with finitely generated underlying $A$-modules.
On the other hand, the embedding $\Mo \hookrightarrow \Diff{A} = \cat{A}_{U}$ which commutes with the forgetful functors, leads to the canonical map
\begin{equation}\label{Eq:Phi}
\phiup: (A,\rR(\Mo)) \longrightarrow (A,U^{\bcirc}), \quad\lr{p^*\tensor{T_P}p +\cat{J}_{\Mo} \longmapsto  p^*\tensor{T_P}p +\cat{J}_{\cat{A}_{ U}}}.
\end{equation}

The following is our main result of this subsection.

\begin{theorem}\label{theo:unpo}
The morphism displayed in equation \eqref{Eq:Phi}, induces an isomorphism $\rR(\Mo) \cong \Um$ of Hopf algebroids, where $\Um$ is as in Proposition \ref{prop:det}. Consequently, we get that the equivalence of categories stated in Corollary \ref{cor:Transitive}(ii), rectricts to a symmetric monoidal $\mathbb{C}$-linear  equivalence of categories:
$$
\chi: \Mo \longrightarrow \frcomod{\Um},
$$
where $\frcomod{\Um}$ is the full subcategory of $\Um$-comodules with  finitely generated underlying $A$-modules.
\end{theorem}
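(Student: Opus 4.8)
The plan is to show that the canonical map $\phiup$ of equation \eqref{Eq:Phi} has image exactly $\Um$ and is injective onto that image. First I would observe that $\phiup$ is a morphism of commutative Hopf algebroids over $A$ by the functoriality recalled in Remark \ref{rem:generalcase}, applied to the inclusion functor $\Mo \hookrightarrow \Diff{A}=\cat{A}_U$ which strictly commutes with the forgetful functors to $add(A)$. So the only two things to verify are that $\img{\phiup}=\Um$ and that $\phiup$ is injective.

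For the surjectivity onto $\Um$, I would argue as follows. The algebra $\rR(\Mo)$ is generated, as an $(A\tensor{\Cc}A)$-algebra, by the classes $\bara{p^*\tensor{T_P}p}$ with $P$ running over the sub-quotients of the $T^{\Sscript{(k,l)}}(M)$'s. Using the description of the multiplication \eqref{Eq:mo}, the comultiplication \eqref{Eq:delta}, and the fact that every object of $\Mo$ is obtained from $M$ and $M^*$ by tensor products, direct sums, and sub-quotients, one reduces the generating set to the images of the matrix entries $\bara{e_j^*\tensor{T_M}e_i}$ together with $det_M^{-1}$: indeed duals contribute $M^*$ (hence, via Proposition \ref{prop:det}'s cofactor computation \eqref{Eq:S}, elements already in $\Um$), while sub-quotients do not enlarge the subalgebra generated by these entries because a retract or sub-object of $T^{\Sscript{(k,l)}}(M)$ contributes comatrix elements that are $(A\tensor{\Cc}A)$-linear combinations of products of the $\bara{e_j^*\tensor{T_M}e_i}$'s and $det_M^{-1}$ (this is the standard fact that the comatrix coring of a sub-quotient maps into that of the ambient module). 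Hence $\img{\phiup}$ is exactly the $(A\tensor{\Cc}A)$-subalgebra of $U^{\bcirc}$ generated by $\{\bara{e_j^*\tensor{T_M}e_i}\}\cup\{det_M^{-1}\}$, which is precisely $\Um=\overset{\bcirc}{U}_M[det_M^{-1}]$ by Proposition \ref{prop:det} (recall $det_M\in\overset{\bcirc}{U}_M$).

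For injectivity I would exploit that both $\rR(\Mo)$ and $U^{\bcirc}$ are reconstructed from geometrically transitive data, so the fibre functors $\omegam$ and $\omega$ are fully faithful onto categories of comodules (Corollary \ref{cor:Transitive}(ii) and the analogous statement for $\Mo$ recorded just before the theorem). Concretely, $\phiup$ is injective if and only if the induced corestriction functor $\phiup_*:\frcomod{U^{\bcirc}}\to\frcomod{\rR(\Mo)}$ is "essentially surjective on the subcategory $\Mo$" in the appropriate sense; more precisely, one uses that $\rR(\Mo)$ is $\Mo$-Galois (being geometrically transitive, by Example \ref{exam:GT}), so $\rR(\Mo)\cong\Sf{can}$-image of the comatrix coring of $\oplus_{P\in\Mo}P$, and the map $\phiup$ is identified with $\Sf{can}_{\cat{A}_U}$ restricted along $\Mo\hookrightarrow\cat{A}_U$; since distinct generators $\bara{p^*\tensor{T_P}p}$ that are equal in $U^{\bcirc}$ are already equal in $\rR(\Mo)$ whenever the relation identifying them comes from a morphism living inside $\Mo$, and $\Mo$ is closed under the operations needed to produce all such relations (it is closed under sub-quotients and tensor, hence under the morphisms $\omega(f)$ appearing in the coideal $\fk{J}$), no collapsing occurs. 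I would make this precise by checking that the kernel of $\phiup$, which is a coideal-and-ideal, must be zero because its quotient would still be a commutative Hopf algebroid reconstructing $\Mo$, forcing it to coincide with $\rR(\Mo)$ by the universal property of the reconstruction (Proposition \ref{prop:TBialgd} together with the equivalence $\Mo\simeq\frcomod{\rR(\Mo)}$).

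The main obstacle I expect is the injectivity step: one must rule out that a comatrix element $\bara{p^*\tensor{T_P}p}$ with $P\notin\Mo$ could, inside $U^{\bcirc}$, coincide with (or enter a nontrivial relation among) elements of $\img{\phiup}$ in a way invisible inside $\rR(\Mo)$. This is controlled precisely because the defining relations \eqref{Eq:JA} of $U^{\bcirc}$ only involve morphisms $t\in T_{PQ}$ of $\cat{A}_U$, and the subtlety is that such a morphism between two objects of $\Mo$ need not a priori be "internal" to $\Mo$ as a full subcategory — but here $\Mo$ is by definition a full subcategory, so $T_{PQ}$ is the same whether computed in $\Mo$ or in $\cat{A}_U$, which closes the gap. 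Once this is noted, the equality $\ker\phiup=0$ follows; I would then conclude by transporting the equivalence of Corollary \ref{cor:Transitive}(ii) along the isomorphism $\rR(\Mo)\cong\Um$, obtaining the claimed symmetric monoidal $\Cc$-linear equivalence $\chi:\Mo\to\frcomod{\Um}$, with $\chi$ being the restriction of the functor already constructed in Theorem \ref{thm:lduality}.
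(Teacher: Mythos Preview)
Your overall plan is right, and your handling of the inclusion $\Um\subseteq\img{\phiup}$ matches the paper's. For the reverse inclusion $\img{\phiup}\subseteq\Um$ you take a different route than the paper: you reduce comatrix elements of an arbitrary $P\in\Mo$ to products of the $\bara{e_j^*\tensor{T_M}e_i}$ and $det_M^{-1}$ by hand, handling tensor powers via \eqref{Eq:mo}, duals via the cofactor identity \eqref{Eq:S}, and sub-quotients via the ``standard fact'' that their comatrix elements land in those of the ambient module. This is correct, but the sub-quotient step should be made explicit (for $\iota:N\hookrightarrow P$, extend $n^*$ to $\tilde n^*\in P^*$ using that $N_A$ is a direct summand of $P_A$, then use the defining relation of $\fk{J}$ to get $\bara{n^*\tensor{T_N}n}=\bara{\tilde n^*\tensor{T_P}\iota(n)}$; quotients are dual). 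The paper instead shows that every $N\in\Mo$ is already a $\Um$-comodule by proving that its coefficient subbimodule $\cat C(N)$ sits inside $\Um$, invoking \cite[Proposition~2.12]{ElKaoutit/Gomez/Lobillo:2004}, and then computes $\bara{p^*\tensor{T_P}p}=p^*(p_{(0)})p_{(1)}\in\cat C(P)\subseteq\Um$. Your argument is more elementary; the paper's is more uniform and avoids the case analysis.

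The genuine gap is in your injectivity argument. Fullness of $\Mo$ in $\cat A_U$ only tells you that the \emph{generators} of $\fk J_{\Mo}$ coincide with those generators of $\fk J_{\cat A_U}$ that happen to involve only objects of $\Mo$; it does \emph{not} show that an element of $\fk B(\Mo)$ lying in $\fk J_{\cat A_U}$ already lies in $\fk J_{\Mo}$, because the $K$-linear combination witnessing membership in $\fk J_{\cat A_U}$ may route through objects outside $\Mo$, with the foreign components cancelling. The paper just asserts injectivity as ``clear''; a clean way to justify it is to compose with the injective $\zeta:U^{\bcirc}\to U^*$ of Corollary~\ref{hereditary} and show directly that $\zeta\circ\phiup:\rR(\Mo)\to U^*$ is injective by rerunning Proposition~\ref{QP}: given $\sum_i\bara{f_i\tensor{T_{P_i}}p_i}$ in the kernel, set $P=\bigoplus_iP_i$ and form the $U$-linear $f:P\to U^*$; then $Q=\ker f$ is a differential submodule of $P$, and since $\Mo$ is closed under \emph{subobjects} (not merely full) one has $Q\in\Mo$, so the argument of Proposition~\ref{QP} goes through inside $\Mo$. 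Once injectivity is secured this way, the rest of your conclusion (transporting the equivalence along $\rR(\Mo)\cong\Um$) is fine.
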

\begin{proof} 
The map $\phiup$ is clearly an injective morphism of Hopf algebroids. We then identify $\rR(\Mo)$ with its image.
Consider now  the $(A\tensor{\mathbb{C}}A)$--subalgebra $\overset{\bcirc}{U}_{M}$  of $U^{\bcirc} $ described in Proposition \ref{prop:det}. Any generic element in $\overset{\bcirc}{U}_{M}$ of the form $\bara{e_{j}^{*}\tensor{T_{M}}e_{i}}$ obviously belongs to the subalgebra $\rR(\Mo)$, whence  $\overset{\bcirc}{U}_{M} \subseteq \rR(\Mo)$, as $M$ is a differential module in the subcategory $\Mo$. 
 The determinant $det_{M}$ of equation \eqref{Eq:detM} and its inverse given by equation \eqref{Eq:det-1}, are both elements in $\rR(\Mo)$. Therefore, $\Um =\overset{\bcirc}{U}_{M}[det_{M}^{-1}] \subseteq \rR(\Mo)$. 

Conversely, take an object $N$ in $\Mo$ of the form $N=M_{\Sscript{(k,\, l)}}$, for some positive integers, $k,l$. Consider $(N,\varrho_{\Sscript{N}})$ as a right $U^{\bcirc}$-comodule, via the isomorphism $\chi$ (see subsection \ref{ssec:xiyL}).  Denote by 
$$\cat{C}(N) \,:=\, \sum_{f \, \in \, \hom{U^{\bcirc}}{N}{U^{\bcirc}}} {\rm Im}(f)$$
the $A$-subbimodule of coefficients of $N$. By Corollary \ref{cor:Transitive}$(iii)$, $U^{\bcirc}_A$ and ${}_{A}U^{\bcirc}$ are projective modules,  and so by  \cite[Example 2.5]{ElKaoutit/Gomez/Lobillo:2004}, we can apply \cite[Proposition 2.12]{ElKaoutit/Gomez/Lobillo:2004}. Therefore,  $\cat{C}(N) \subseteq \Um$, since we have $\varrho_N(N) \subseteq (N\tensor{A}\tau)(N\tensor{A}\Um)$, where $\tau: \Um \hookrightarrow U^{\bcirc}$ is the canonical injection.  The same inclusion holds true: $\cat{C}(N) \subseteq \Um$,  if we take $N =\oplus_{\Sscript{(k,\, l)}} T^{\Sscript{(k,\, l)}}(M)$ a finite direct sum. Taking a sequence $X_1 \subseteq X_2 \subseteq N$ of differential modules, with $N$ as before, we get by \cite[Proposition 2.12]{ElKaoutit/Gomez/Lobillo:2004} that 
$$
\cat{C}(X_1) \subseteq \cat{C}(X_2) \subseteq \cat{C}(N) \subseteq \Um,\quad \cat{C}(X_2/X_1) \subseteq \cat{C}(X_2) \subseteq \cat{C}(N) \subseteq \Um.
$$
Henceforth, any object in $\Mo$ is a right $\Um$-comodule (under the isomorphism $\chi$). Now, take an element of the form $\bara{p^{*}\tensor{T_{P}}p} \in \rR(\Mo)$, thus $P$ is an object in $\Mo$. We know that $P$ is a right $\Um$-comodule, and so $p^{*}(p_{(0)})p_{(1)} \in \Um$.  Up to the canonical isomorphism $\can{U^{\bcirc}}$ of equation \eqref{Eq:can},  we have the equality $\bara{p^{*}\tensor{T_{P}}p} = p^{*}(p_{(0)})p_{(1)}$, which shows that $\bara{p^{*}\tensor{T_{P}}p} \in \Um$. Therefore, $\rR(\Mo) \subseteq \Um$. 

In summary, we have an isomorphism $(A,\rR(\Mo)) \cong (A,\Um)$ of Hopf algebroids. This leads to  the following commutative diagram of symmetric monoidal categories:
$$
\xymatrix@R=15pt{  & \Mo \ar@{^{(}->}[rr]    \ar@{->}^-{\simeq }[ld] \ar@{-->}^-{\chi}_-{\cong}[dd] & &  \Diff{A}=\cat{A}_U \ar@{->}^-{\chi}_-{\cong}[dd] \\ \frcomod{\rR(\Mo)} \ar@{->}^-{\simeq }[rd] & & & \\ & \frcomod{\Um}  \ar@{^{(}->}[rr] & & \frcomod{U^{\bcirc}}  }
$$
which shows the stated equivalence of monoidal categories. 
\end{proof}

Let $(M,\partial)$ be a differential module over $A$ and consider the attached Hopf algebroid $(A,\Um)$ of Theorem \ref{theo:unpo}. Denote by $\hHm: \Algc{} \to \Grpds$ its associated presheaf  of groupoids and let $\hHm(\mathbb{C})$ be its  fibre at $\text{Spec}(\mathbb{C})$, that is,  the  \emph{character groupoid} of $(A,\Um)$.  We know that $(A,\Um)$ is a finitely generated Hopf algebroid (i.e., both $A$ and $\Um$ are finitely generated $\mathbb{C}$-algebras), thus, $\hHm$ is an affine algebraic $\mathbb{C}$-groupoid scheme in the fpqc (fid\`element plate quasi-compacte) topology. Moreover,  the category of comodules $\frcomod{\Um}$ is identified with the category of $\mathbb{C}$-representations of the groupoid $\hHm$, see \cite{Deligne:1990}. 

In comparison with the classical case of differential Galois theory of fields (see \cite[Theorem 2.33]{VanderPut/Singer}), we have  by Theorem \ref{theo:unpo} the following definition:
\begin{definition}\label{def:GDG}
Let $(A, \Um)$ and $\hHm$ be as above. Then the character groupoid $\hHm(\mathbb{C})$ is referred to  as a \emph{differential Galois algebraic groupoid} of $(A,\partial)$  for  the differential module $(M,\partial)$ (see Remark \ref{rema:final} below for a brief discussion on the uniqueness of this groupoid). 
\end{definition}

Next, we observe that, always for $A=\Cc[X]$,  any differential Galois groupoid is transitive as an abstract groupoid. First we observe that, for any differential $A$-module $(M,\partial)$,  $\Um$ as a  Hopf algebroid over $\Cc$,  satisfies similar conditions $(i)$-$(iv)$ of Corollary \ref{cor:Transitive}. Thus, $\Um$  is a geometrically transitive Hopf algebroid over $\mathbb{C}$, so that $\hHm$ is  a \emph{transitive groupoid scheme} in the sense of \cite{Deligne:1990}, see also \cite{Bruguieres:1994, ElKaoutit:2015}. Since $\hHm(\mathbb{C}) \neq \emptyset$, we have by \cite[Corollary B]{ElKaoutit:2015} that $\hHm(\mathbb{C})$ is  a transitive  groupoid in the set-theoretical sense (i.e., a groupoid in sets with only one connected component, or equivalently, the Cartesian product of the source and the target maps leads to a surjective map).  In other words,  $\hHm(\mathbb{C})$ posses only one type of isotropy group.  In the sequel we will show that the isotropy type group of  $\hHm(\mathbb{C})$ coincides with the so called  \emph{differential Galois algebraic group} of the differential module $(M,\partial)$  following the terminology of Y. Andr\'e \cite[\S 3.2.1.2]{Andre:2001}, (see Example \ref{Exm:dimOne} for more terminologies).  

Now, we proceed to show  that $\hHm(\mathbb{C})$ is in fact a sub-groupoid of the induced algebraic groupoid of the general linear group $GL_m(\mathbb{C})$ along the map $\mathbb{A}_{\Sscript{\mathbb{C}}}^1 \to \{*\}$ (the algebraic set with one point),   where $m$ is the rank of $M$.  So let  $\{e_i,e_i^*\}_{1 \, \leq i \leq \, m}$ be a  dual basis for $M$. Consider the coordinate ring $\mathbb{C}[X_{ij}, det_X^{-1}]$  of the complex general linear group of order $m$  with indices $i,j \in \{1,\cdots,m\}$. Recall that this a Hopf $\mathbb{C}$-algebra with structure maps:
\begin{equation}\label{Eq:Xij}
\Delta(X_{ij})\,=\, \sum_{k=1}^m X_{ik}\tensor{A}X_{kj}, \quad \varepsilon(X_{ij})\,=\, \delta_{ij}, \quad S(X_{ij})\,=\, det_X^{-1} Y_{ji},
\end{equation}
where $\delta_{ij}$ is Kronecker's symbol, and  where $Y_{ji}$ is the $(j,i)^{th}$ cofactor of the matrix $(X_{ij})$.  This is also a differential $\mathbb{C}$-algebra, where the differential is given by 
\begin{equation}\label{Eq:partialX}
\partial (X_{ij})_{1 \leq,i, j \leq m}\,:=\, (X_{ij}^{\prime})_{1 \leq,i, j \leq m}\,=\, \mat{M} \, (X_{ij})_{ 1 \leq,i, j \leq m},
\end{equation}
and the differential of $\det_X^{-1}$ is by extension  $\partial(det_X^{-1})=-\partial(det_X)det_X^{-2}$.  Extending this differential to  the tensor product algebra $A\tensor{\mathbb{C}}\mathbb{C}[X_{ij}, det_X^{-1}]$ via the map:
$$
\partial\big( a\tensor{\mathbb{C}} X_{ij}\big)\,=\, \partial a \tensor{\mathbb{C}} X_{ij} \,+\, a \tensor{\mathbb{C}} \partial X_{ij} 
$$
we will  consider this tensor algebra  as a differential algebra and obviously an extension of  $(A,\partial)$.

The base extension of the Hopf algebra $\Cc[X_{ij}, det_X^{-1}]$ via the algebra map $\mathbb{C} \to A$ (see Example \ref{exam:Bext}),  leads to the commutative Hopf algebroid $A\tensor{\mathbb{C}}\mathbb{C}[X_{ij}, det_X^{-1}]\tensor{\mathbb{C}}A$ which is isomorphic to the  polynomial $(A\tensor{\mathbb{C}}A)$-algebra $(A\tensor{\mathbb{C}}A)[X_{ij}, det_X^{-1}]$. Furthermore, there is a surjective map:
\begin{equation}\label{Eq:Phi1}
\xymatrix@R=0pt{  \phiup_{\Sscript{M}}: (A\tensor{}A)[X_{ij}, det_X^{-1}]  \ar@{->}[rr] & & \Um \\ 
X_{ij} \ar@{|->}[rr] & & \bara{e_j^*\tensor{T_M}e_i}, }
\end{equation}
of commutative Hopf algebroids. The affine algebraic groupoid attached to the  Hopf algebroid $(A\tensor{}A)[X_{ij}, det_X^{-1}]$ is also transitive. Thus, $(A, A\tensor{}A)[X_{ij}, det_X^{-1}])$ is geometrically transitive Hopf algebroid over $\Cc$, see \cite[Theorem A]{ElKaoutit:2015}.  Its character groupoid is easily computed and it is given by:
$$
\xymatrix@C=45pt{ \gG^{\Sscript{m}}: \;\; \mathbb{A}_{\Sscript{\mathbb{C}}}^1 \,\times \, GL_m(\mathbb{C}) \,\times  \mathbb{A}_{\Sscript{\mathbb{C}}}^1 \ar@<1ex>@{->}|-{\scriptscriptstyle{pr_3}}[r] \ar@<-1ex>@{->}|-{\scriptscriptstyle{pr_1}}[r] & \ar@{->}|-{ \scriptscriptstyle{\iota}}[l] \mathbb{A}_{\Sscript{\mathbb{C}}}^1, }
$$ 
where the source and target are, respectively, $pr_3$ and $pr_1$ the third and first projections, the identity map sends $x \to \iota(x)=(x,I_m,x)$, where $I_m$ is the identity of $GL_m(\mathbb{C})$. The multiplication and the inverse maps  of this groupoid are given by:
$$
(x,\fk{a},y)\, . \, (y,\fk{b}, z) \,=\,  (x,\fk{a}\fk{b},z),\quad (x,\fk{a},y)^{-1}\,=\, (y,\fk{a}^{-1},x).
$$
This is clearly a transitive groupoid, and so there is only one type of isotropy group, namely,  each of them  is isomorphic to the general linear group $GL_m(\mathbb{C})$. The groupoid $\gG^{\Sscript{m}}$ is in fact the induced groupoid of the groupoid  (with only one object) $GL_m(\mathbb{C})$ along the map $\mathbb{A}_{\Sscript{\mathbb{C}}}^1 \to \{*\} $, see \cite[Example 2.4]{ElKaoutit:2015}.

Now, using the morphism of equation \eqref{Eq:Phi1} we claim the following corollary, where  the particular claim can be compared  with  \cite[Th\'eor\`eme 3.2.1.1(ii)]{Andre:2001}.

\begin{corollary}\label{coro:injection}
There is a monomorphism of affine algebraic groupoids
$\xymatrix{\hHm(\mathbb{C})\, \ar@{^{(}->}[r] &  \gG^{\Sscript{m}} }$. 
In particular, any  isotropy group of $\hHm(\mathbb{C})$ is identified with a closed sub-group of the algebraic group $GL_m(\mathbb{C})$.
\end{corollary}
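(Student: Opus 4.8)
The plan is to deduce Corollary~\ref{coro:injection} from the surjectivity of the Hopf algebroid map $\phiup_{\Sscript{M}}$ in equation~\eqref{Eq:Phi1}, together with the standard antiequivalence between affine groupoid schemes and commutative Hopf algebroids. First I would recall that the functor $\hHm$ is, by definition, the presheaf of groupoids represented by $(A,\Um)$, and that $\gG^{\Sscript{m}}$ is the presheaf of groupoids represented by $(A\tensor{\mathbb{C}}A)[X_{ij}, det_X^{-1}]$. Since the contravariant functor sending a commutative Hopf algebroid $(A,\cH)$ to its presheaf of groupoids $\Algc\to\Grpds$ is faithful, and sends surjective morphisms of Hopf algebroids to natural transformations that are objectwise injective on arrows (a $\mathbb{C}$-point of $\hHm$ is a pair of $\mathbb{C}$-algebra maps $A\to\mathbb{C}$ and $\Um\to\mathbb{C}$ intertwining the structure maps, and precomposition with the surjection $\phiup_{\Sscript{M}}$ embeds this set of arrows into the corresponding set for $\gG^{\Sscript{m}}$), the map $\phiup_{\Sscript{M}}$ induces a morphism of affine algebraic groupoids $\jmath\colon \hHm(\mathbb{C})\hookrightarrow\gG^{\Sscript{m}}$ which is injective on objects (it is the identity on $\mathbb{A}^1_{\mathbb{C}}$, since $\phiup_{\Sscript{M}}$ is the identity on the base $A$) and injective on arrows.

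The key steps, in order, would be: (1) observe that $\phiup_{\Sscript{M}}$ is a morphism of commutative Hopf algebroids over $\mathbb{C}$ which restricts to the identity on the base algebra $A$; this is already contained in the construction preceding equation~\eqref{Eq:Phi1} and in Proposition~\ref{prop:det}. (2) Recall that $(A,\Um)$ is finitely generated over $\mathbb{C}$ (both $A$ and $\Um$ are finitely generated $\mathbb{C}$-algebras, $\Um$ because it is generated over $A\tensor{\mathbb{C}}A$ by the finitely many $\bara{e_j^*\tensor{T_M}e_i}$ and $det_M^{-1}$), so $\hHm$ is an affine algebraic $\mathbb{C}$-groupoid scheme, and likewise $\gG^{\Sscript{m}}$ is one with $\mathbb{C}$-points the explicitly described groupoid $\mathbb{A}^1_{\mathbb{C}}\times GL_m(\mathbb{C})\times\mathbb{A}^1_{\mathbb{C}}\rightrightarrows\mathbb{A}^1_{\mathbb{C}}$. (3) Apply the functor ``$\mathbb{C}$-points'' to $\phiup_{\Sscript{M}}$: since $\phiup_{\Sscript{M}}$ is surjective as an algebra map, the induced map on $\mathbb{C}$-algebra homomorphisms is injective, hence $\jmath$ is a closed immersion of functors; compatibility with the source, target, identity, multiplication and inverse is automatic because $\phiup_{\Sscript{M}}$ commutes with all the Hopf algebroid structure maps. (4) For the ``in particular'' statement, restrict $\jmath$ to an isotropy group: the isotropy group of $\hHm(\mathbb{C})$ at a point $x\in\mathbb{A}^1_{\mathbb{C}}$ is represented by the total isotropy Hopf algebra $\Um/\langle \Sf{s}(a)-\Sf{t}(a)\rangle$ base-changed along $x$, and $\phiup_{\Sscript{M}}$ induces a surjection from the corresponding isotropy Hopf algebra of $\gG^{\Sscript{m}}$ (which is the coordinate Hopf algebra of $GL_m(\mathbb{C})$) onto it; surjectivity of Hopf algebra maps corresponds to closed subgroup inclusions, so each isotropy group of $\hHm(\mathbb{C})$ is a closed subgroup of $GL_m(\mathbb{C})$.

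I expect the main obstacle to be mostly bookkeeping rather than a genuine difficulty: one must be careful that ``monomorphism of affine algebraic groupoids'' is interpreted correctly, i.e.\ as a morphism which is a closed immersion on objects and on arrows, and verify that surjectivity of $\phiup_{\Sscript{M}}$ (already established in equation~\eqref{Eq:Phi1}) genuinely gives this. One subtlety worth spelling out is why $\phiup_{\Sscript{M}}$ is surjective at all: it sends the generators $X_{ij}$ of $(A\tensor{\mathbb{C}}A)[X_{ij},det_X^{-1}]$ to the generators $\bara{e_j^*\tensor{T_M}e_i}$ of $\overset{\bcirc}{U}_{M}$ and $det_X^{-1}$ to $det_M^{-1}$ (using Eq.~\eqref{Eq:S} and Eq.~\eqref{Eq:Sdet}, which identify the images of the cofactors), so the image is all of $\overset{\bcirc}{U}_{M}[det_M^{-1}]=\Um$; this should be recalled explicitly. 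A second point requiring a line of justification is that $\jmath$ is injective on objects: this holds because $\phiup_{\Sscript{M}}$ restricts to the identity map $A\to A$ on the base rings, so on $\mathbb{C}$-points it is the identity on $\mathbb{A}^1_{\mathbb{C}}$. Everything else is a formal consequence of the (anti)equivalence between commutative Hopf algebroids and affine groupoid schemes, so the proof can be kept short, essentially a one-paragraph invocation of functoriality plus the surjectivity of $\phiup_{\Sscript{M}}$.
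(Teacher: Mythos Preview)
Your proposal is correct and follows essentially the same approach as the paper: the paper simply states that the corollary follows from the surjective morphism of Hopf algebroids $\phiup_{\Sscript{M}}$ in equation~\eqref{Eq:Phi1} (together with the comparison to \cite[Th\'eor\`eme 3.2.1.1(ii)]{Andre:2001} for the particular claim), without spelling out the details you have carefully supplied. Your expansion of why surjectivity of $\phiup_{\Sscript{M}}$ yields a closed immersion of groupoid schemes, and hence closed subgroups on isotropy, is exactly the content the paper leaves implicit.
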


Next, we discuss the uniqueness of the  differential Galois groupoid and give a motivating example. 

\begin{remark}\label{rema:final} 
Keep the above notations and assume that there is a flat Hopf algebroid $(A,\cat{H})$ over $\Cc$ with $\cH$ finitely generated as an algebra,  such that there is a symmetric monoidal $\mathbb{C}$-linear equivalence of categories 
$\Mo \simeq \frcomod{\cat{H}}$ to the category of right $\cat{H}$-comodules with free of finite rank underlying $A$-modules. Assume further that $(A,\cat{H})$ is geometrically transitive (see Example \ref{exam:GT} and \cite{ElKaoutit:2015}, or equivalently  that the associated affine $\mathbb{C}$-groupoid scheme $\hH$ is transitive). Then by Theorem \ref{theo:unpo} in conjunction with Deligne's Theorem \cite[Theorem 5.2]{Bruguieres:1994}, there is a symmetric monoidal $\mathbb{C}$-linear equivalence of categories $\rcomod{\cat{H}} \simeq \rcomod{\Um}$ of all comodules (i.e., the extension of the previous equivalence to the ind-objects categories). Therefore by \cite[Theorem A]{Kaoutit/Kowalzig:2014}, there is a two-stage zig-zag of weak equivalences connecting $(A,\cat{H})$ and $(A,\Um)$, that is, they are weakly equivalent Hopf algebroids. Therefore their character groupoids should be also weakly equivalent, meaning that  $\hHm(\mathbb{C})$ and $\hH(\mathbb{C})$ are weakly equivalent affine algebraic groupoids. This means that, in contrast with the classical case of differential Galois theory where the Galois group is unique up to isomorphisms,  in this framework the differential Galois  groupoid is unique up to weak equivalences. 
\end{remark}

\begin{example}\label{Exm:dimOne}
Let $(M,\partial)$ be a differential module whose underlying module $M=A.\fk{m}$ is a free $A$-module of rank one, endowed with the differential matrix $\mat{M}=a \in A$, that is, $\partial(\fk{m})=a(X)\fk{m}$. Then the Hopf algebroid $\Um$ is  generated as an $(A\tensor{\mathbb{C}}A)$-algebra by the invertible element $det_{M}=\bara{\fk{m}^*\tensor{T_M}\fk{m}}$. Thus  $\Um$ is isomorphic to the Hopf algebroid   $(A\tensor{\mathbb{C}}A)[T,T^{-1}] \cong A\tensor{\mathbb{C}}\mathbb{C}[T,T^{-1}]\tensor{\mathbb{C}}A$, which is  induced by the  Hopf $\mathbb{C}$-algebra $\mathbb{C}[T,T^{-1}]$ (the coordinate algebra of the multiplicative group).  
\end{example}

\begin{remark}\label{rem:PV}
As we can realize in Example \ref{Exm:dimOne}, the differential $\partial(\fk{m})=a(X)\fk{m}$ does not influence the Hopf structure of $\Um$. In other words, taking a different differential (e.g., $\partial(\fk{m})= b(X) \fk{m}$, for some $b \neq a \in A$) will leads, up to canonical isomorphism,  to the same  Hopf algebroid $(A\tensor{\mathbb{C}}A)[T,T^{-1}]$. Thus, the Hopf structure of $\Um$ does not take into account  the differential of $(M, \partial)$. This is perhaps  why  in the more general context of \cite[3.2.2.2]{Andre:2001} the algebraic group attached to the (isotropy) Hopf $A$-algebra $\Um/\langle \Sf{s} -\Sf{t}\rangle$, is referred to as the \emph{intrinsic differential Galois group} of $(A, \partial)$ for $(M,\partial)$.  

As we will see in the next subsection, it turns out that in the case we are interested in, that is, the case of linear differential matrix equations over the complex affine line, the differential $\partial$ of the module $M$ endows this Hopf algebra with a structure of simple differential algebra and convert it into a Picard-Vessiot extension of $(A, \partial)$ for $(M,\partial)$.
\end{remark}

\subsection{Picard-Vessiot extensions for linear differential matrix equation, after Andr\'e}\label{ssec:PV}
In this subsection we will perform the Picard-Vessiot  theory for the particular case of  polynomial algebra $A=\mathbb{C}[X]$. In order to do so,  we will use our results in combination with  the general  theory established  in \cite{Andre:2001} for differential noetherian commutative rings with semisimple total ring of fractions.  Precisely, we give a complete description, using results from subsection \ref{ssec:PB}, of the Picard-Vessiot algebra  attached to a differential module $(M,\partial)$ (or to a linear differential matrix equation) and observe that the outcome, in this particular situation, is  not far from the classical situation of differential vector spaces over  differential fields. 

We refer to \cite[D\'efinition 3.4.1.1]{Andre:2001} for the definition of the Picard-Vessiot extension of a commutative differential  algebra which we are going to use in the sequel.  Recall that our differential algebra  $A=\mathbb{C}[X]$ is endowed with the usual differentiation $d=\partial/\partial X$ and the $A$-bimodule of differential forms $\Omega^{\Sscript{1}}$ (the notation is that of \cite{Andre:2001}) is a central one which is a  free $A$-module of rank one (i.e., $\Omega^{\Sscript{1}}=\Der{\mathbb{C}}{A}^*$ is the $A$-linear dual of the module of derivations  $\Der{\mathbb{C}}{A}$). Thus, we are in the situation which is referred to in \cite{Andre:2001} as \emph{situation classique}. Besides, we will implicitly use the symmetric monoidal equivalence  between the category of free $A$-modules of finite rank with connections (for the previous $\Omega^{\Sscript{1}}$)  and the category of representations of  the Lie-Rinehart algebra $(A,\Der{\mathbb{C}}{A})$, that is, the category $\cA_{U}$ in the notation of sub-section \ref{ssec:Diff}, where $U$ is the first Weyl $\Cc$-algebra of $A$.  

Let us fix a differential module $(M,\partial)$ over  $A$ of rank $m$, with  dual basis $\{e_i, e_i^*\}_{1\leq i \leq m}$ and a differential matrix $\mat{M}=(a_{ij})_{1\leq i,\, j\leq m}$. Consider as before the attached $\mathbb{C}$-linear abelian category $\Mo$ and  its associated Hopf algebroid  $(A,\Um)$ over $\mathbb{C}$, constructed  as in subsection \ref{ssec:PV}. It is well known that this  category  admits a tensor generator (e.g., the differential module $M\oplus M^*$) and have a fibre functor over ${\rm Spec}(A) \neq \emptyset$, namely, the forgetful functor $\omega:=\omega_{|\Mo}: \Mo \to add(A)$. Then by applying  \cite[Corollaire 6.20]{Deligne:1990}, the category $\Mo$ admits a fibre functor over the base field $\mathbb{C}$, which we denote by $\omega': \Mo \to \vect{\mathbb{C}}$. 
On the other hand, since $\Mo$ is a neutral tannakian category over $\mathbb{C}$, we have that  the dimension of the $\mathbb{C}$-vector space $\omega'(M)$ is the rank of  the underlying $A$-module of the differential module $(M,\partial)$, that is, $dim_{\Sscript{\mathbb{C}}}\big(\omega'(M)\big)=m$, see \cite[Th\'eor\`eme 7.1]{Deligne:1990}. Let us then denote by $\{v_i, v^*_i\}_{1\leq i \leq m}$ a dual basis for the vector space  $\omega'(M)$.

Similar to the classical situation of linear differential matrix equations over a differential field, and as it was shown in \cite[Th\'eor\`eme 3.4.3.1]{Andre:2001}, the existence of $\omega'$ is a fundamental step in building up  the Picard-Vessiot extension of $(A,\partial)$ for $(M,\partial)$. Next, we will show, however, that the existence of  $\omega'$ comes for free with the information encoded in the Hopf algebroid structure of the pair $(A,\Um)$. This will provide us with a  more conceptual way in overcoming  that fundamental step in the Picard-Vissiot theory for our situation.

As we already mentioned, the Hopf algebroid  $(A,\Um)$ is  geometrically transitive  over $\mathbb{C}$ and we know  that $\hH(\mathbb{C}) \neq \emptyset$, so we are in position to apply \cite[Theorem {\bf A}]{ElKaoutit:2015}. Therefore,  for any point 
$x \in \mathbb{A}_{\Sscript{\mathbb{C}}}^1$, we can consider the associated isotropy Hopf $\mathbb{C}$-algebra $\Umx$ which is by definition (see \cite[Definition 5.1 and Lemma 5.2]{ElKaoutit:2015})  the base extension Hopf algebra $(\mathbb{C}, \Umx:=\bCx\tensor{A} \Um \tensor{A} \bCx)$, where $\bCx$ is $\mathbb{C}$ viewed as an $A$-algebra via the $\mathbb{C}$-algebra map $x : A \to \mathbb{C}$. It turns out that \cite[Theorem {\bf A}]{ElKaoutit:2015} implies that the canonical Hopf algebroid extension $\Sf{x}: (A,\Um) \to (\mathbb{C},\Umx) $ is a weak equivalence,  which means that the induced functor $\Sf{x}_{*}: \rcomod{\Um} \to \rcomod{\Umx}$ establishes an equivalence of symmetric monoidal categories.
Henceforth,  the full sub-category of finite-dimensional comodules $\frcomod{\Umx}$ (i.e.,  the full sub-category of rigid objects)  is equivalent, as symmetric monoidal category,  to the category $\frcomod{\Um}$.  Thus, by Theorem \ref{theo:unpo}, we  conclude that  $\Mo$ is equivalent,  as symmetric monoidal category,  to  $\frcomod{\Umx}$. In this way we have a chain of symmetric monoidal $\mathbb{C}$-linear faithful  and exact functors: 
$$
\xymatrix@C=40pt{ \omegax: \Mo  \ar@{->}_-{\otimes\text{-}\simeq}^-{\chi}[r] & \frcomod{\Um} \ar@{->}_-{\otimes\text{-}\simeq}^-{\sf{x}_*}[r]  & \frcomod{\Umx} \ar@{->}^-{\cO}[r] & \vect{\mathbb{C}},  }
$$
where $\cO$ is the forgetful functor.  In summary,  the previous observations assert the following, see also the proof of \cite[Corollaire 6.20, pages 163-164]{Deligne:1990}.
\begin{corollary}\label{coro:isotropy}
There is a point $x \in \mathbb{A}_{\Sscript{\mathbb{C}}}^1$ such that $\omega'=\omega_{\Sscript{x}}$, up to a canonical natural isomorphism.  In particular, the extended fibre functor $\omega' \tensor{\mathbb{C}} A: \Mo \to add(A)$  over $A$  is naturally isomorphic to $\omega$. 
\end{corollary}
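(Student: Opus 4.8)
The plan is to extract the point $x$ from the structure established above and then verify the two claimed naturality statements by unwinding the chain of monoidal functors. First I would invoke the geometric transitivity of $(A,\Um)$ together with $\hHm(\mathbb{C}) \neq \emptyset$, so that \cite[Theorem {\bf A}]{ElKaoutit:2015} applies and produces, for every $\mathbb{C}$-point $x : A \to \mathbb{C}$ (equivalently every $x \in \mathbb{A}_{\Sscript{\mathbb{C}}}^1$), a weak equivalence of Hopf algebroids $\Sf{x} : (A,\Um) \to (\mathbb{C},\Umx)$ where $\Umx = \bCx \tensor{A} \Um \tensor{A} \bCx$. Fix any such $x$. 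This weak equivalence induces a symmetric monoidal $\mathbb{C}$-linear equivalence $\Sf{x}_* : \rcomod{\Um} \to \rcomod{\Umx}$ which restricts to the rigid (finite-dimensional, resp.\ $A$-profinite) objects. Composing with the equivalence $\chi : \Mo \to \frcomod{\Um}$ of Theorem \ref{theo:unpo} and the forgetful functor $\cO : \frcomod{\Umx} \to \vect{\mathbb{C}}$ yields the fibre functor $\omegax : \Mo \to \vect{\mathbb{C}}$ displayed above, which is symmetric monoidal, $\mathbb{C}$-linear, faithful and exact — hence a fibre functor over $\mathbb{C}$ in the sense of \cite{Deligne:1990}.

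The core of the argument is then the identification $\omega' \cong \omegax$. Here I would recall the construction of $\omega'$ itself: by \cite[Corollaire 6.20]{Deligne:1990}, starting from the fibre functor $\omega : \Mo \to add(A)$ over $\mathrm{Spec}(A)$, one obtains a fibre functor $\omega'$ over $\mathbb{C}$, and the very proof of that statement (see \cite[pages 163--164]{Deligne:1990}) produces $\omega'$ precisely as the fibre of $\omega$ at a suitable $\mathbb{C}$-point of the base, i.e.\ as $\omega' = \bCx \tensor{A} \omega(-)$ for some $x$. Since $\omega$ corresponds, under the equivalence $\chi$, to the forgetful functor on $\frcomod{\Um}$ followed by restriction of scalars along $\Sf{s}$, base-changing along $x$ is exactly the composite $\cO \circ \Sf{x}_* \circ \chi$. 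Thus, choosing the point $x$ furnished by Deligne's construction, we get $\omega' = \omegax$ up to the canonical natural isomorphism coming from $\bCx \tensor{A} \omega(P) \cong \cO(\Sf{x}_*(\chi(P)))$, which is natural in $P$ because all three functors in the composite are functorial. For the \emph{in particular} clause, I would observe that extending $\omega'$ back along $\mathbb{C} \to A$ gives $\omega' \tensor{\mathbb{C}} A \cong (\bCx \tensor{A} \omega(-)) \tensor{\mathbb{C}} A$; since $\omega(P)$ is finitely generated projective (in fact free, for $A = \mathbb{C}[X]$ a PID) over $A$, and since $\Um$ is geometrically transitive so that every comodule — in particular $\chi(P)$ — is faithfully flat over $A$, the natural transformation $\omega' \tensor{\mathbb{C}} A \to \omega$ obtained by evaluation is an isomorphism after base change along the faithfully flat family of points, hence an isomorphism. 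Alternatively, and more cleanly, one appeals directly to \cite[Theorem {\bf A}]{ElKaoutit:2015}: the weak equivalence $\Sf{x}$ means the base-change and co-induction functors are mutually inverse equivalences, so $\omega' \tensor{\mathbb{C}} A$ and $\omega$ represent the same object of $\rcomod{\Um}$-valued functors, giving the natural isomorphism.

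The main obstacle I anticipate is keeping the bookkeeping of the several monoidal equivalences coherent enough to see that the natural isomorphism $\omega' \cong \omegax$ is the \emph{canonical} one and respects the tensor structure — i.e.\ checking that the isomorphism $\bCx \tensor{A} \omega(P) \cong \cO(\Sf{x}_*(\chi(P)))$ assembled from the definitions is genuinely natural and monoidal, rather than merely an abstract existence of \emph{some} isomorphism of fibre functors. This is essentially a diagram-chase through the definition of $\chi$ (Eq.\ \eqref{Eq:chip}), the base-ring-extension structure maps of $\Umx$ (Example \ref{exam:Bext}), and the isotropy Hopf algebra construction of \cite[Definition 5.1, Lemma 5.2]{ElKaoutit:2015}; none of the individual steps is deep, but assembling them without error is where the work lies. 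Since the statement only asserts the existence of such an $x$ together with the canonical natural isomorphism, and since the uniqueness of fibre functors over $\mathbb{C}$ up to (non-canonical) tensor isomorphism is already guaranteed by Deligne's theory, I would, in a full write-up, simply exhibit the composite $\omegax = \cO \circ \Sf{x}_* \circ \chi$, note it is a fibre functor, and invoke the construction of $\omega'$ in \cite[Corollaire 6.20]{Deligne:1990} to match the two, leaving the routine verification of naturality to the reader — which is consistent with the expository tone of this subsection.
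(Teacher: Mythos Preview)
Your proposal is correct and follows essentially the same approach as the paper, which in fact offers no separate proof for this corollary beyond the phrase ``In summary, the previous observations assert the following'' together with the pointer to the proof of \cite[Corollaire 6.20, pages 163--164]{Deligne:1990}; you have simply unpacked that pointer. One small remark: your first argument for the ``in particular'' clause (the ``faithfully flat family of points'' line) is vague and does not quite work as stated---the isomorphism $\omega(P)\otimes_A\mathbb{C}_x\otimes_{\mathbb{C}}A\cong\omega(P)$ obtained by counting ranks is not natural in $P$---so you should drop it and keep only your second argument via the weak equivalence $\Sf{x}$, where $(-)\otimes_{\mathbb{C}}A$ is the quasi-inverse to $\Sf{x}_*$ and the natural isomorphism $\omega'\otimes_{\mathbb{C}}A\cong\omega$ is the unit/counit of that equivalence composed with $\chi$.
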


So far, we have  two fibre functors the extension $\wp:=\omega' \tensor{\mathbb{C}} A$ and the restriction of the forgetful functor $\omega:=\omega_{|\Mo}: \Mo \to add(A)$, we are then in the situation of subsection \ref{ssec:PB}.  Therefore, by  Theorem \ref{thm:PB}, we can consider  the principal $(\rR_{\Sscript{A\tensor{}A}}(\omega), \rR_{\Sscript{A\tensor{}A}}(\wp))$-bibundle $(\rR_{A\tensor{}A}(\omega,\wp), \balpha,\bbeta)$, which is lifted  to  a principal $(\RA{\omega}, \RA{\wp})$-bibundle $(\rR_{A}(\omega,\wp), \biota)$,  over the quotient  Hopf $A$-algebras, see Eq. \eqref{Eq:Ra}. Combining this with  Theorem \ref{theo:unpo} and Corollary \ref{coro:isotropy}, we have that the first principal bibundle is isomorphic to a  trivial bibundle  $(\Um, \Um)$-bibundle $\uU(\Um)$, namely, to the unit principal bibundle attached to  $\Um$, see \cite{Kaoutit/Kowalzig:2014}.

Now we consider the quotient algebra $ \cP:=  \rR_{A}(\omega,\wp)= \rR_{A\tensor{}A}(\omega,\wp)/ \langle \balpha - \bbeta \rangle$ with the canonical algebra extension $\biota: A \to  \cP$ and denote by $[f]$ the equivalence class of an element $f \in \rR_{A\tensor{}A}(\omega,\wp)$. Define the map 
\begin{equation}\label{eq:difPV}
\bpartial: \cP  \longrightarrow \cP,\quad \Big( \class{p^*\tensor{T_P} (p\tensorC a)}  \longmapsto \class{\partial p^*\tensor{T_P}(p\tensorC a)} \,+\,  \class{ p^*\tensor{T_P}(p\tensorC \partial a)}  \Big),
\end{equation}
where $P \in \Mo$, $p^* \in P^*$, $p \in \omega'(X)$ and $a \in A$, and where we have used the differential of the duals as in given by Eq.~\eqref{Eq:dual}. 

\begin{proposition}\label{prop:difPV}
Keep the above notations. Then the pair $(\cP,\bpartial)$ is a differential algebra which enjoys  the following properties:
\begin{enumerate}[(i)]
\item  the map $\biota: (A,\partial) \to  (\cP, \bpartial)$ is a morphism of differential algebras;
\item $(\cP, \bpartial)$ is a simple differential algebra which is a Picard-Vessiot extension of $(A,\partial)$ for  $(M,\partial)$;
\item $\cP$ is isomorphic to the quotient Hopf $A$-algebra $\Um/\langle \Sf{s} - \Sf{t} \rangle$;
\item there is a surjective  map 
\begin{equation}\label{Eq:Psi}
\xymatrix@R=0pt{  \psiup_{\Sscript{M}}: A[X_{ij}, det_X^{-1}]  \ar@{->}[rr] & & \cP \\ 
X_{ij} \ar@{|->}[rr] & & \class{e_i^*\tensor{T_M}(v_j\tensor{\mathbb{C}}1_{\Sscript{A}})}:=f_{i j}, }
\end{equation}
of differential  algebras over $(A,\partial)$. In particular, $\cP \cong A[X_{ij}, det_X^{-1}]/\fk{I}$ as a differential algebra over $A$, where $\fk{I}$ is a maximal differential ideal. Furthermore, $\cP$ is generated as an $A$-algebra by the entries of the  matrix $F=(f_{ij})_{1 \leq i,\, j \leq m}$ and $det_F^{-1}$ with differential $\bpartial F = \mat{M} \, F $ (i.e., $F$ is a \emph{fundamental matrix of solutions} of the linear differential matrix equation \eqref{Eq:ldm}). 
\end{enumerate}
\end{proposition}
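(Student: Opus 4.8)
The strategy is to dismantle the statement into its four numbered parts, proving them essentially in the order $(i)\to(iv)\to(iii)\to(ii)$, since the surjection in $(iv)$ supplies the concrete presentation that makes $(iii)$ and $(ii)$ tractable. Throughout, I would lean on Theorem \ref{theo:unpo} (to identify $\rR(\Mo)$ with $\Um$), on Corollary \ref{coro:isotropy} (so that $\wp=\omega'\tensor{\mathbb{C}}A$ really is a fibre functor naturally isomorphic to $\omega$), on Lemma \ref{lema:Rcomalg}/Corollary \ref{coro:Racomalg} and Theorem \ref{thm:PB}(iii) (so that $\cP=\rR_A(\omega,\wp)$ is a principal $(\RA{\omega},\RA{\wp})$-bibundle), and on Andr\'e's \cite[Th\'eor\`eme 3.4.3.1, D\'efinition 3.4.1.1]{Andre:2001} for the Picard--Vessiot part.

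\emph{Part (i) and well-definedness of $\bpartial$.} First I would check that the formula \eqref{eq:difPV} is independent of the representative, i.e.\ that $\bpartial$ descends from $\rR_{A\tensor{}A}(\omega,\wp)$ through the ideal $\langle\balpha-\bbeta\rangle$ and is compatible with the relations \eqref{Eq:JAA} defining $\rR_{A\tensor{}A}(\omega,\wp)$; this uses the Leibniz rule \eqref{Eq:dual} for the differential on duals together with the fact that every $t\in T_{PQ}$ is a morphism of differential modules, hence commutes with $\partial$. The Leibniz identity $\bpartial(hh')=\bpartial(h)h'+h\bpartial(h')$ on generators follows from the same computation applied to the multiplication \eqref{Eq:modos}, and then $\biota$ intertwines $\partial$ and $\bpartial$ by inspection of \eqref{Eq:difPV} restricted to the image of $\bo{\alpha}\tensor{}\bo{\beta}$ (where the $P$-factor is trivial). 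This part is routine.

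\emph{Part (iv).} I would define $\psiup_{\Sscript{M}}$ on the generators $X_{ij}$ of the polynomial algebra $A[X_{ij},det_X^{-1}]$ by \eqref{Eq:Psi}, using a chosen dual basis $\{e_i,e_i^*\}$ of $M$ and the dual basis $\{v_j,v_j^*\}$ of $\omega'(M)$ from the paragraph preceding the Proposition. Surjectivity: the classes $f_{ij}=[\overline{e_i^*\tensor{T_M}(v_j\tensor{\mathbb{C}}1)}]$ generate $\cP$ as an $A$-algebra because $M\oplus M^*$ is a tensor generator of $\Mo$, so by the (reconstruction-)description of $\rR_A(\omega,\wp)$ every class $[\overline{p^*\tensor{T_P}(p\tensor{}a)}]$ is a polynomial in the $f_{ij}$ and in $det_F^{-1}$ (the latter being the image of $det_X^{-1}$, as follows from Lemma \ref{lema:det} and \eqref{Eq:det-1} transported along $\psiup_{\Sscript{M}}$). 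That $\psiup_{\Sscript{M}}$ is differential amounts to the identity $\bpartial(f_{ij})=\sum_k \mat{M}_{ik}f_{kj}$ on generators: compute $\bpartial[\overline{e_i^*\tensor{T_M}(v_j\tensor{}1)}]$ from \eqref{eq:difPV}, which contributes only the $\partial e_i^*$ term, and use \eqref{Eq:dual} together with \eqref{Eq:aij} to rewrite $\partial e_i^*=\sum_k a_{ik}e_k^*$ (with the appropriate sign convention, which is exactly why the minus sign was inserted in \eqref{Eq:aij}). Hence $\cP\cong A[X_{ij},det_X^{-1}]/\fk{I}$ with $\fk{I}=\ker\psiup_{\Sscript{M}}$ a differential ideal, and $F=(f_{ij})$ is a fundamental solution matrix with $\bpartial F=\mat{M}F$.

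\emph{Parts (iii) and (ii).} For $(iii)$: Theorem \ref{theo:unpo} gives $\rR(\Mo)\cong\Um$, and under this identification the principal bibundle $\cP=\rR_A(\omega,\wp)$ with $\omega\cong\wp$ is, by the discussion after Corollary \ref{coro:isotropy}, isomorphic to the unit principal bibundle $\uU(\Um)$; evaluating the construction \eqref{Eq:Ra} at $\omega_1=\omega_2$ collapses $\rR_{A\tensor{}A}(\omega,\wp)/\langle\balpha-\bbeta\rangle$ to the total isotropy Hopf $A$-algebra $\Um/\langle\Sf{s}-\Sf{t}\rangle$ (the quotient of $\Um$ by the Hopf ideal generated by $\{\Sf{s}(a)-\Sf{t}(a)\}$), which is precisely the algebraic content of ``unit bibundle'' in \cite{Kaoutit/Kowalzig:2014}. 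For $(ii)$: simplicity of $(\cP,\bpartial)$ and the Picard--Vessiot property will be obtained by checking Andr\'e's axioms \cite[D\'efinition 3.4.1.1]{Andre:2001} --- that $\cP$ has no nontrivial differential ideals, that $\cP$ is generated over $A$ by the entries of a fundamental matrix and the inverse of its determinant (which is $(iv)$), and that the ring of constants of $\cP$ is $\mathbb{C}$ --- the last point following because $\omega'$ takes values in $\vect{\mathbb{C}}$ and $T_{\II}\cong\mathbb{C}$, so the constants of $\rR_A(\omega,\wp)$ coincide with $\rR_{\mathbb{C}}(\omega')=\rR_{\mathbb{C}\tensor{}\mathbb{C}}(\omega')$, a Hopf $\mathbb{C}$-algebra whose spectrum is the differential Galois group; simplicity then follows from \cite[Th\'eor\`eme 3.4.3.1]{Andre:2001} applied in the \emph{situation classique} recalled above, once one knows a fibre functor over $\mathbb{C}$ exists, which is Corollary \ref{coro:isotropy}. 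The main obstacle I anticipate is $(ii)$: matching our bibundle-theoretic description of $\cP$ with Andr\'e's axiomatics for the Picard--Vessiot extension --- in particular verifying that the maximal differential ideal $\fk{I}$ in $(iv)$ is genuinely maximal (equivalently that $\cP$ is differentially simple) rather than merely that $\fk{I}$ is prime --- and reconciling the two sign conventions (Andr\'e's connection versus our $\mat{M}$ in \eqref{Eq:aij}) so that $F$ solves \eqref{Eq:ldm} on the nose. The other three parts are essentially bookkeeping with the reconstruction formulas already established.
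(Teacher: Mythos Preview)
Your proposal is essentially correct and parallels the paper's argument, but with two organisational differences worth noting. First, the paper proves the parts in the order $(i)\to(ii)\to(iii)\to(iv)$ rather than your $(i)\to(iv)\to(iii)\to(ii)$; in particular it derives the surjectivity in $(iv)$ \emph{from} $(iii)$: once $\cP\cong\Um/\langle\Sf{s}-\Sf{t}\rangle$ is established (via Theorem~\ref{theo:unpo} and Corollary~\ref{coro:isotropy}, exactly as you propose), the already-constructed surjection $\phiup_{\Sscript{M}}$ of equation~\eqref{Eq:Phi1} from $(A\tensor{}A)[X_{ij},det_X^{-1}]$ onto $\Um$ descends immediately to the desired $\psiup_{\Sscript{M}}$. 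This is shorter than your direct ``tensor generator'' argument, though yours is self-contained. Second, for $(ii)$ the paper does not verify Andr\'e's axioms one by one but invokes \cite[Lemme 3.4.2.1(ii)]{Andre:2001} (not Th\'eor\`eme 3.4.3.1), combined with Proposition~\ref{prop:IsomF} and Theorem~\ref{thm:PB}$(iii)$: that lemma is precisely the dictionary translating the principal-bibundle description of $\rR_A(\omega,\wp)$ into the Picard--Vessiot property, so the ``main obstacle'' you anticipate is absorbed into that citation. Your sketch of the constants argument (identifying them with $\rR_{\mathbb{C}}(\omega')$) is not quite right as stated --- $\rR_{\mathbb{C}}(\omega')$ is a Hopf algebra, not a priori the constant subring --- but this becomes moot once one appeals to Andr\'e's lemma. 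The computation of $\bpartial F=\mat{M}F$ you outline is carried out verbatim in the paper.
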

\begin{proof}
The first claim and item $(i)$ are easy verifications.  Part $(ii)$ follows from \cite[Lemme 3.4.2.1(ii)]{Andre:2001} by combining Proposition \ref{prop:IsomF} and Theorem \ref{thm:PB}$(iii)$  with  \cite[\S 3.2.2.1]{Andre:2001}. As we have seen before,   using Theorem \ref{theo:unpo} in conjunction with Corollary \ref{coro:isotropy}, we  get an isomorphism $\Um \cong \rR_{A\tensor{}A}(\omega,\wp)$ of Hopf algebroids. So part $(iii)$ is obtained by going to the quotient Hopf $A$-algebras, that is,  by considering the extended  isomorphism of Hopf $A$-algebras $\Um/\langle \Sf{s} - \Sf{t} \rangle \cong  \rR_{A\tensor{}A}(\omega,\wp)/\langle \Sf{s} - \Sf{t} \rangle=\cP$. 

The first claim of part $(iv)$ is a direct consequence of part $(iii)$ once taking into account  the surjective morphism of Hopf algebroids given in Eq. \eqref{Eq:Phi1}. The particular statement is a direct implication of the first claim in this part  and  item $(ii)$.  The last claim of $(iv)$ follows from the following computation:
\begin{eqnarray*}
 \bpartial \Big(  \class{e_i^*\tensor{T_M}( v_j\tensor{\mathbb{C}}1_A)}\Big) &=& \class{\partial e_i^*\tensor{T_M}( v_j\tensor{\mathbb{C}}1_A)} \\ & =& \sum_{k=1}^{m} \class{\partial e_i^*(e_k) e_k^*\tensor{T_M}( v_j\tensor{\mathbb{C}}1_A)}
\\ & \overset{\eqref{Eq:dual}}{=}& \sum_{k=1}^{m} \class{\big(  \partial( e_i^*(e_k)) - e_i^*(\partial e_k) \big)e_k^*\tensor{T_M}( v_j\tensor{\mathbb{C}}1_A)}
\\ & =& \sum_{k=1}^{m} \class{ \partial( e_i^*(e_k)) e_k^*\tensor{T_M}( v_j\tensor{\mathbb{C}}1_A)} -   \sum_{k=1}^{m} \class{ e_i^*(\partial e_k) e_k^* \tensor{T_M}( v_j\tensor{\mathbb{C}}1_A)}
\\ & =& 0 -   \sum_{k=1}^{m} \class{ e_i^*(\partial e_k) e_k^* \tensor{T_M}( v_j\tensor{\mathbb{C}}1_A)}
\\ &\overset{\eqref{Eq:aij}}{ =}&     \sum_{k, \, l}^{m} \class{ a_{lk}e_i^*(e_l) e_k^* \tensor{T_M}( v_j\tensor{\mathbb{C}}1_A)} \,\, =\,\,     \sum_{k=1}^{m} \biota(a_{ik})\class{  e_k^* \tensor{T_M}( v_j\tensor{\mathbb{C}}1_A)} \\ &=& \sum_{k=1}^{m} \biota(a_{ik})f_{kj},
\end{eqnarray*}
which shows that $\bpartial F =  \mat{M}\, F$ and this finishes the proof.
\end{proof}

Now we focus on  the structure of the group of differential algebra automorphisms  of a Picard-Vessiot extension. As before we consider $(\cP, \bpartial)$ a Picard-Vessiot extension of $(A,\partial)$ for $(M,\partial)$ and denote by $\Autd{A}{\cP}$ the \emph{group of differential $A$-algebra automorphisms}, that is, an element in this group is an  algebra automorphism $\bsigma: \cP \to \cP$ such that $\bsigma \circ \biota = \biota$ and $\bpartial \circ \bsigma = \bsigma \circ \bpartial$.  In this way, we obtain a functor valued in  groups: 
$$
\AutdF{A}{\cP}: \Alg{\mathbb{C}} \longrightarrow {\rm Grps}, \quad \Big( C \longrightarrow  \Autdt{C} \Big),
$$
whose fibre at ${\rm Spec}(\Cc)$ is the starting group $\Autd{A}{\cP}$.

Our next task is to prove that this in fact is an affine algebraic group. Namely, we show that is  isomorphic to each of the isotropy groups of the differential Galois groupoid $\hHm(\mathbb{C})$.  The proof will be done in several steps. First, we know from the definition of the Picard-Vessiot extension that the fibre functor $\omega': \Mo \to \vect{\mathbb{C}}$ is naturally isomorphic to the fibre functor $\kappaup: \Mo \to \vect{\mathbb{C}}$, which sends any differential module $N \in \Mo$ to the finite-dimensional $\mathbb{C}$-vector space $\ker(\Partial{N\tensor{A}\cP})$ the kernel of the differentiation of the differential module $N\tensor{A}\cP$,  see \cite[Lemme 3.4.2.1]{Andre:2001} for the proof of this fact. Let us denote by $\bV=\kappaup(M)$ and by $\{\bv_i,\bv_i^*\}_{1 \leq i \leq m}$ a dual basis of this vector space. Notice that, as we have seen before, $dim_{\Sscript{\mathbb{C}}}(\bV) = rank(M)=m$.  We will make the following choice for this basis $\bv_i= \sum_j e_j  \tensor{A} f_{ji}$, for every $i=1,\cdots,n$.
The following lemma will be implicitly used in the subsequent one.
\begin{lemma}\label{lem:oplusT}
Let $k,l $ be a positive integer. Then, for any objet $X \in \Mo$, we have a monomorphism of $\Cc$-vector spaces $\oplus_{k,l}T^{\Sscript{(k,\,l)}}\big(\kappaup(X)\big) \hookrightarrow  \oplus_{k,l}T^{\Sscript{(k,\,l)}}(X\tensor{A}\cP)$ (finite direct sums), which is extended to a monomorphism 
$$
\xymatrix{\bigoplus_{k,l}T^{\Sscript{(k,\,l)}}\Big(\kappaup(X)\tensor{}C\Big)  \ar@{^{(}->}^-{}[r] &  \bigoplus_{k,l}T^{\Sscript{(k,\,l)}}\Big(X\tensor{A}\cP\tensor{}C\Big) }
$$
of $C$-modules, for any $\Cc$-algebra $C$.
\end{lemma}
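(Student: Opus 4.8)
The statement to be proved is Lemma~\ref{lem:oplusT}: for an object $X \in \Mo$ and positive integers $k,l$, there is a monomorphism of $\Cc$-vector spaces $\bigoplus_{k,l} T^{\Sscript{(k,\,l)}}(\kappaup(X)) \hookrightarrow \bigoplus_{k,l} T^{\Sscript{(k,\,l)}}(X\tensor{A}\cP)$, which extends $C$-linearly after tensoring with any $\Cc$-algebra $C$. The plan is to reduce everything to a single basic fact: the canonical $\Cc$-linear map $\kappaup(N) \to N\tensor{A}\cP$, sending an element of $\ker(\Partial{N\tensor{A}\cP})$ to itself inside $N\tensor{A}\cP$, is injective for every differential module $N \in \Mo$, and more precisely that the induced $\cP$-linear map $\kappaup(N)\tensor{\Cc}\cP \to N\tensor{A}\cP$ is an isomorphism. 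This last isomorphism is exactly part of what it means for $(\cP,\bpartial)$ to be a Picard-Vessiot extension: by \cite[Lemme 3.4.2.1]{Andre:2001} the functor $\kappaup$ is a fibre functor naturally isomorphic to $\omega'$, and the defining property of a Picard-Vessiot ring is that $\cP$ trivialises every object of $\Mo$, i.e.\ $N\tensor{A}\cP \cong \kappaup(N)\tensor{\Cc}\cP$ as differential $\cP$-modules (with the trivial connection on the right-hand factor). In particular the natural map $\kappaup(N)\tensor{\Cc}\cP \to N\tensor{A}\cP$ is bijective, hence its restriction $\kappaup(N) = \kappaup(N)\tensor{\Cc}\Cc \hookrightarrow \kappaup(N)\tensor{\Cc}\cP \to N\tensor{A}\cP$ is injective (the first arrow splits off via the counit/augmentation $\cP \to A \to \Cc$, or simply because $\cP$ is faithfully flat over $\Cc = $ a field).

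First I would record that $\kappaup$ is a symmetric monoidal functor: it is naturally isomorphic to $\omega'$, which is a fibre functor, hence strict monoidal up to the canonical coherence isomorphisms and compatible with duals. Consequently, for each fixed pair $(k,l)$ one has a natural isomorphism $T^{\Sscript{(k,\,l)}}(\kappaup(X)) \cong \kappaup\big(T^{\Sscript{(k,\,l)}}(X)\big)$, where on the left $T^{\Sscript{(k,\,l)}}(\kappaup(X)) = \kappaup(X)^{\tensor{}l}\tensor{\Cc}(\kappaup(X)^*)^{\tensor{}k}$ is formed in $\vect{\Cc}$, and on the right $T^{\Sscript{(k,\,l)}}(X) = X^{\tensor{}l}\tensor{A}(X^*)^{\tensor{}k} \in \Mo$ is formed in $\Diff{A}$ using the tensor product of Eq.~\eqref{Eq:ptensor} and the dual of Eq.~\eqref{Eq:dual}. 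Taking the finite direct sum over the relevant $(k,l)$ and applying the injection $\kappaup(N) \hookrightarrow N\tensor{A}\cP$ with $N = \bigoplus_{k,l} T^{\Sscript{(k,\,l)}}(X)$ gives
$$
\bigoplus_{k,l} T^{\Sscript{(k,\,l)}}(\kappaup(X)) \;\cong\; \kappaup\Big(\bigoplus_{k,l} T^{\Sscript{(k,\,l)}}(X)\Big) \;\hookrightarrow\; \Big(\bigoplus_{k,l} T^{\Sscript{(k,\,l)}}(X)\Big)\tensor{A}\cP \;\cong\; \bigoplus_{k,l} T^{\Sscript{(k,\,l)}}(X\tensor{A}\cP),
$$
where the last isomorphism again uses that the forgetful/base-change functors commute with finite direct sums, tensor powers and duals (the duals of $A$-profinite modules being well-behaved). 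This produces the asserted monomorphism of $\Cc$-vector spaces.

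For the $C$-linear extension, I would simply tensor the whole diagram with $C$ over $\Cc$: since $\Cc$ is a field, $-\tensor{\Cc}C$ is exact, so the monomorphism remains a monomorphism, and all the natural isomorphisms above are preserved because tensoring with $C$ commutes with finite direct sums and with the (finite) tensor powers and $A$-linear duals involved. One should note that $T^{\Sscript{(k,\,l)}}(\kappaup(X)\tensor{\Cc}C) \cong T^{\Sscript{(k,\,l)}}(\kappaup(X))\tensor{\Cc}C$ and $T^{\Sscript{(k,\,l)}}(X\tensor{A}\cP\tensor{\Cc}C) \cong T^{\Sscript{(k,\,l)}}(X\tensor{A}\cP)\tensor{\Cc}C$ as $C$-modules; here one uses that $\kappaup(X)$ and the $X\tensor{A}\cP$ are well-behaved (the former finite-dimensional over $\Cc$, the latter finitely generated projective over $\cP$), so that forming the tensor-hom construction $T^{\Sscript{(k,\,l)}}$ commutes with the base change $-\tensor{\Cc}C$.

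The main obstacle, and the only genuinely non-formal point, is the injectivity of $\kappaup(N) \hookrightarrow N\tensor{A}\cP$, equivalently the trivialisation isomorphism $\kappaup(N)\tensor{\Cc}\cP \xrightarrow{\ \sim\ } N\tensor{A}\cP$. This is where the Picard-Vessiot hypothesis is essential and is not a purely categorical matter: it rests on the simplicity of the differential algebra $\cP$ (Proposition~\ref{prop:difPV}(ii)) together with André's analysis \cite[Lemme 3.4.2.1]{Andre:2001} identifying $\kappaup$ with the canonical fibre functor $\omega'$. Once this one input is in hand, everything else is bookkeeping with monoidal functors and exact base change, and I would present it as such rather than expanding the routine verifications.
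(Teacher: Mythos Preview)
Your proof is correct and follows essentially the same approach as the paper, which dispatches the lemma in one line: ``It is immediate by using the fact that $\kappaup$ is a fibre functor.'' You have simply unpacked what that sentence means---monoidality of $\kappaup$ gives $T^{\Sscript{(k,\,l)}}(\kappaup(X)) \cong \kappaup(T^{\Sscript{(k,\,l)}}(X))$, and the inclusion $\kappaup(N)=\ker(\Partial{N\tensor{A}\cP})\hookrightarrow N\tensor{A}\cP$ is tautological---so your appeal to the full trivialisation isomorphism $\kappaup(N)\tensor{\Cc}\cP \cong N\tensor{A}\cP$ is a bit more than is strictly needed, but harmless.
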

\begin{proof}
It is immediate by using  the fact that $\kappaup$ is a fibre functor. 
\end{proof}
We refer to \cite[Chapitre II. \S 1. n$^{\Sscript{\text{o}}}$ 3]{DemGab:GATIGAGGC} for the definition of the stabilizers sub-functors  occurring in the following claim which can be compared with \cite[Th\'eor\`eme 3.5.1.1]{Andre:2001}. 
\begin{lemma}\label{lema:Stab}
There is a monomorphism of functors $\AutdF{A}{\cP} \hookrightarrow {\rm GL}_{\Sscript{\mathbb{C}}}(\bV)\cong {\rm GL}_{\Sscript{\mathbb{C}}}(\omega'(M)) $. Moreover, the image of $\AutdF{A}{\cP}$ is contained in  $\underline{{\rm Stab}}\{\kappaup(N)\}$, the stabilizer  of every sub-object $N$ of a finite direct sum $\oplus_{k,\,l} T^{\Sscript{(k,\,l)}}(M)$ in the category $\Mo$.  In particular,  the image of $\AutF{\omega'}$ containts that of $\AutdF{A}{\cP}$. 
\end{lemma}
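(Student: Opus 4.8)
The plan is to produce the natural transformation directly from the differential-algebra structure of $\cP$, to prove injectivity using the explicit set of generators of $\cP$ from Proposition \ref{prop:difPV}(iv), and to obtain the stabilizer assertion from the functoriality and monoidality of the solution functor $\kappaup$ together with Lemma \ref{lem:oplusT}. First I would construct the map. Fix a commutative $\mathbb{C}$-algebra $C$ and an element $\bsigma \in \AutdF{A}{\cP}(C) = \Autdt{C}$. For every $X \in \Mo$ the endomorphism $\mathrm{id}_X \tensor{A} \bsigma$ of $X \tensor{A} \cP \tensor{\mathbb{C}} C = (X \tensor{A}\cP)\tensor{\mathbb{C}}C$ is an automorphism of $(A\tensor{\mathbb{C}}C)$-modules commuting with the differential $\partial_{X\tensor{A}\cP} \tensor{\mathbb{C}} C$; since $\mathbb{C}$ is a field, taking kernels commutes with $-\tensor{\mathbb{C}}C$, so $\mathrm{id}_X \tensor{A}\bsigma$ restricts to a $C$-linear automorphism $\kappaup(\bsigma)_X$ of $\kappaup(X)\tensor{\mathbb{C}}C$. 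Taking $X = M$ gives, naturally in $C$, a homomorphism of group-valued functors $\AutdF{A}{\cP} \to {\rm GL}_{\Sscript{\mathbb{C}}}(\bV)$, and the natural isomorphism $\omega' \cong \kappaup$ recalled just before the statement (see \cite[Lemme 3.4.2.1]{Andre:2001}) identifies ${\rm GL}_{\Sscript{\mathbb{C}}}(\bV)$ with ${\rm GL}_{\Sscript{\mathbb{C}}}(\omega'(M))$.

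Next I would check injectivity. By Proposition \ref{prop:difPV}(iv), $\cP$ is generated as an $A$-algebra by the entries $f_{ij}$ of the fundamental matrix $F$ together with $det_F^{-1}$, and the chosen basis of $\bV$ satisfies $\bv_i = \sum_j e_j \tensor{A} f_{ji}$. If $\kappaup(\bsigma)_M = \mathrm{id}$, then $\sum_j e_j \tensor{A}\bsigma(f_{ji}) = \sum_j e_j \tensor{A} f_{ji}$ for all $i$; since $\{e_j\}$ is an $(A\tensor{\mathbb{C}}C)$-basis of $M\tensor{\mathbb{C}}C$, this forces $\bsigma(f_{ji}) = f_{ji}$ for all $i,j$, hence $\bsigma(det_F) = det_F$ and $\bsigma(det_F^{-1}) = det_F^{-1}$, and as $\bsigma$ fixes $A\tensor{\mathbb{C}}C$ pointwise it is the identity on $\cP\tensor{\mathbb{C}}C$. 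Thus the transformation is a monomorphism of functors.

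For the stabilizer part I would argue as follows. The assignment $X \mapsto \kappaup(\bsigma)_X$ is functorial: for an arrow $g : X \to Y$ in $\Mo$ the square relating $g\tensor{A}\mathrm{id}_{\cP\tensor{}C}$ and the two copies of $\bsigma$ already commutes before passing to kernels, so $\kappaup(g)\tensor{\mathbb{C}}C$ intertwines $\kappaup(\bsigma)_X$ and $\kappaup(\bsigma)_Y$. Moreover $\kappaup$ is symmetric monoidal (being naturally isomorphic to the fibre functor $\omega'$), so $\kappaup\big(T^{\Sscript{(k,\,l)}}(M)\big) = T^{\Sscript{(k,\,l)}}(\bV)$ compatibly with the $\bsigma$-actions; here Lemma \ref{lem:oplusT} supplies the compatible embeddings of $\bigoplus_{k,\,l} T^{\Sscript{(k,\,l)}}(\kappaup(X))\tensor{\mathbb{C}}C$ into $\bigoplus_{k,\,l} T^{\Sscript{(k,\,l)}}(X\tensor{A}\cP)\tensor{\mathbb{C}}C$. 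Consequently, for a subobject $\iota : N \hookrightarrow \bigoplus_{k,\,l} T^{\Sscript{(k,\,l)}}(M)$ in $\Mo$, the monomorphism $\kappaup(\iota)\tensor{\mathbb{C}}C$ is $\bsigma$-equivariant, so $\kappaup(\bsigma)_M$, acting on $\bigoplus_{k,\,l} T^{\Sscript{(k,\,l)}}(\bV)\tensor{\mathbb{C}}C$ through its tensorial extension, carries the subspace $\kappaup(N)\tensor{\mathbb{C}}C$ into itself. Hence the image of $\AutdF{A}{\cP}$ lies in $\underline{{\rm Stab}}\{\kappaup(N)\}$ for every such $N$.

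Finally I would conclude with the comparison to $\AutF{\omega'}$. Since $M \oplus M^*$ is a tensor generator of $\Mo$, every object of $\Mo$ is a subquotient of a finite direct sum of the $T^{\Sscript{(k,\,l)}}(M)$, and by Tannakian reconstruction (\cite{Deligne:1990}, \cite{Saavedra:1972}) the subgroup $\AutF{\omega'}$ of ${\rm GL}_{\Sscript{\mathbb{C}}}(\omega'(M))$ coincides with the intersection of the stabilizers $\underline{{\rm Stab}}\{\omega'(N)\}$ over all such subobjects $N$. Transporting along $\omega' \cong \kappaup$, the image of $\AutdF{A}{\cP}$, being contained in all these stabilizers, is contained in the image of $\AutF{\omega'}$. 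I expect the main obstacle to be the bookkeeping of the third step: keeping the identifications of $\kappaup$ with the linear-algebra constructions strictly compatible with the $\bsigma$-equivariance by means of Lemma \ref{lem:oplusT}, and invoking the exact form of Tannakian reconstruction that realizes $\AutF{\omega'}$ as the intersection of the stabilizers of subobjects of the $T^{\Sscript{(k,\,l)}}(M)$.
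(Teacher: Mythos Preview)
Your proof is correct and follows essentially the same route as the paper: you define the morphism by restricting $M\tensor{A}\bsigma$ to $\bV\tensor{\mathbb{C}}C$, prove injectivity via the generators $f_{ij}$ of $\cP$ and the chosen basis $\bv_i=\sum_j e_j\tensor{A}f_{ji}$, and obtain the stabilizer statement from functoriality plus Lemma~\ref{lem:oplusT}, exactly as the paper does with its commutative cube. The only cosmetic difference is in the final clause: the paper cites \cite[Th\'eor\`eme 3.2.1.1(iii)]{Andre:2001} for the description of $\AutF{\omega'}$ as the intersection of the stabilizers, whereas you appeal to the same fact via general Tannakian reconstruction \cite{Deligne:1990,Saavedra:1972}.
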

\begin{proof}
Let $C$ be an object  in $\Alg{\Cc}$ and $\bgamma \in \Autdt{C}$. We set 
$$
\brho_{\Sscript{\bgamma}}:= (M\tensor{A}\bgamma)_{|\bV}: \bV \tensor{}C \to \bV\tensor{}C
$$
which by definition is a well defined $C$-linear automorphism. This gives  the stated  morphism of functors. The fact that this is a monomorphism can be obtained as follows: Take two automorphisms $\bgamma$ and $\bgamma'$ and assume that $\brho_{\Sscript{\bgamma}} = \brho_{\Sscript{\bgamma'}}$. Using the basis $\{\bv_i\tensor{}1_{\Sscript{C}}\}_{1 \leq i \leq m}$ of the free $C$-module $\bV\tensor{}C$, we get that $\bgamma(f_{ji}\tensor{}1_{\Sscript{C}}) = \bgamma'(f_{ji}\tensor{}1_{\Sscript{C}})$, for every pair of indices $i,j=1,\cdots, m$. Thus $\bgamma=\bgamma'$,  as they are $(A\tensor{}C)$-algebra  maps. As for the second claim, take  $C$ and $\bgamma$ as before and consider  a monomorphism  $N \hookrightarrow \oplus_{k,l}T^{\Sscript{(k,l)}}(M)$ to a finite direct sum  in the category $\Mo$.  Denote by $\bW=\kappaup(N)$, so up to a canonical natural isomorphism, we have the following commutative diagram of $C$-modules:
$$
\begin{small}
\xymatrix@C=15pt@R=20pt{ \bigoplus_{k,l}T^{\Sscript{(k,l)}}(M\tensor{A}\cP\tensor{}C) \ar@{->}^-{\oplus_{k,l}T^{\Sscript{(k,l)}}(M\tensor{A}\bgamma)}[rrrr] &  & &   & \bigoplus_{k,l}T^{\Sscript{(k,l)}}(M\tensor{A}\cP\tensor{}C) &  \\
& \bigoplus_{k,l}T^{\Sscript{(k,l)}}(\bV\tensor{}C)  \ar@{->}^-{\oplus_{k,l}T^{\Sscript{(k,l)}}(\brho_{\Sscript{\bgamma}})}[rrrr] \ar@{_{(}->}^-{}[lu]  & &   & & \bigoplus_{k,l}T^{\Sscript{(k,l)}}(\bV\tensor{}C) \ar@{_{(}->}^-{}[lu] \\ 
&  & &   & & \\ 
N\tensor{A}\cP \tensor{}C \ar@{->}^-{N\tensor{A}\bgamma}[rrrr] \ar@{^{(}->}^-{}[uuu] &  & &   & \ar@{^{(}->}^-{}[uuu] N\tensor{A}\cP \tensor{}C & \\   &  \bW\tensor{}C \ar@{-->}^-{}[rrrr] \ar@{_{(}->}^-{}[lu] \ar@{^{(}->}[uuu]   & &   & & \ar@{_{(}->}[ul] \bW\tensor{}C \ar@{^{(}->}[uuu]  }
\end{small}
$$
This implies that  the front rectangle is  commutative as well. Therefore, the action given $\brho_{\Sscript{\bgamma}}$ stabilizes $\kappaup(N)\tensor{}C$ and shows the claim.     
Lastly, the particular consequence is a direct application of \cite[Th\'eor\`eme 3.2.1.1(iii)]{Andre:2001} combined with the previous statement. 
\end{proof}

\begin{proposition}\label{prop:AutAsAlgG}
Let $(\cP,\bpartial)$ be the above Picard-Vessiot extension of $(A,\partial)$ for $(M,\partial)$. Then, we have an isomorphism  $\AutdF{A}{\cP} \cong \AutF{\omega'}$ of affine group schemes. Furthermore, the attached affine algebraic  group $\Autd{A}{\cP}$ is isomorphic to each of the isotropy groups of the  algebraic groupoid $\hHm(\Cc)$ of Definition \ref{def:GDG}.  
\end{proposition}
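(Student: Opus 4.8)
The plan is to establish the two assertions of Proposition~\ref{prop:AutAsAlgG} in order: first the functorial isomorphism $\AutdF{A}{\cP} \cong \AutF{\omega'}$, and then the identification of the resulting affine algebraic group with any isotropy group of $\hHm(\Cc)$. For the first part, I would combine Lemma~\ref{lema:Stab} with the Tannakian description of $\AutF{\omega'}$ as the automorphism group scheme of the fibre functor $\omega': \Mo \to \vect{\Cc}$. Lemma~\ref{lema:Stab} already gives a monomorphism of functors $\AutdF{A}{\cP} \hookrightarrow \mathrm{GL}_{\Sscript{\Cc}}(\bV) \cong \mathrm{GL}_{\Sscript{\Cc}}(\omega'(M))$ whose image lies inside the stabilizer of every subobject of every finite direct sum $\bigoplus_{k,l} T^{\Sscript{(k,l)}}(M)$. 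By \cite[Th\'eor\`eme 3.2.1.1]{Andre:2001}, the intersection of all such stabilizers is precisely the image of $\AutF{\omega'}$ inside $\mathrm{GL}_{\Sscript{\Cc}}(\omega'(M))$; since the last claim of Lemma~\ref{lema:Stab} also provides the reverse containment of images, I would conclude that $\AutdF{A}{\cP}$ and $\AutF{\omega'}$ have the same image in $\mathrm{GL}_{\Sscript{\Cc}}(\omega'(M))$, and hence the monomorphism of Lemma~\ref{lema:Stab} is an isomorphism of functors, giving the first assertion. Taking fibres at $\mathrm{Spec}(\Cc)$ yields that $\Autd{A}{\cP}$ is an affine algebraic group.

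For the second part, I would use the Hopf-algebroid-theoretic identifications already in place. By Proposition~\ref{prop:difPV}(iii) we have $\cP \cong \Um/\langle \Sf{s} - \Sf{t}\rangle$, the total isotropy Hopf $A$-algebra of $\Um$; equivalently, using Corollary~\ref{coro:isotropy} and Theorem~\ref{theo:unpo}, $\Um \cong \rR_{\Sscript{A\tensor{}A}}(\omega,\wp)$ and $\cP = \rR_{\Sscript{A}}(\omega,\wp)$. Now $\AutF{\omega'}$ is, by Proposition~\ref{prop:IsomF}, represented by the Hopf $\Cc$-algebra $\rR_{\Sscript{\Cc}}(\omega')$; and the isotropy group of $\hHm(\Cc)$ at a point $x \in \mathbb{A}_{\Sscript{\Cc}}^1$ is by construction the group of characters of the isotropy Hopf algebra $\Umx = \bCx \tensor{A} \Um \tensor{A} \bCx$ (cf.~\cite[Definition 5.1, Lemma 5.2]{ElKaoutit:2015}). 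Since $\Um$ is geometrically transitive over $\Cc$ and $\hHm(\Cc) \neq \emptyset$, the base-change map $\Sf{x}: (A,\Um) \to (\Cc, \Umx)$ is a weak equivalence by \cite[Theorem A]{ElKaoutit:2015}, so $\Umx$ represents the isotropy group of $\hHm(\Cc)$ up to isomorphism, and this isotropy group is independent of $x$ by transitivity. It remains to compare $\Umx$ with $\rR_{\Sscript{\Cc}}(\omega')$: applying Corollary~\ref{coro:isotropy} (so that $\omega' = \omega_{\Sscript{x}}$ up to natural isomorphism, and $\wp = \omega' \tensor{\Cc} A \cong \omega$) and then base-changing the isomorphism $\Um \cong \rR_{\Sscript{A\tensor{}A}}(\omega,\wp)$ along $x \tensor{} x : A \tensor{} A \to \Cc$, one obtains $\Umx \cong \rR_{\Sscript{\Cc\tensor{}\Cc}}(\omega_{\Sscript{x}},\omega_{\Sscript{x}}) = \rR_{\Sscript{\Cc}}(\omega')$ as Hopf $\Cc$-algebras. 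Dualizing to character groups (or functors of points), this gives the isotropy group of $\hHm(\Cc)$ isomorphic to $\AutF{\omega'}(\Cc) \cong \Autd{A}{\cP}$, which is the desired statement.

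The main obstacle I anticipate is bookkeeping the two a~priori different fibre functors $\omega'$ and $\kappaup$ and the various base-change identifications without circularity: $\kappaup(N) = \ker(\Partial{N\tensor{A}\cP})$ is the functor that appears naturally from the Picard-Vessiot algebra, whereas $\omega'$ is the abstract $\Cc$-point fibre functor, and the equality (up to natural isomorphism) $\omega' \cong \kappaup$ is exactly \cite[Lemme 3.4.2.1]{Andre:2001}. I would state this identification up front and then work throughout with $\omega'$, invoking Lemma~\ref{lema:Stab} and Lemma~\ref{lem:oplusT} for the stabilizer computations. A second, more technical point is making precise the claim that base-changing $\rR_{\Sscript{A\tensor{}A}}(\omega,\wp)$ along $x\tensor{}x$ recovers $\rR_{\Sscript{\Cc}}(\omega')$; this is essentially the compatibility of the infinite comatrix construction of Subsection~\ref{ssec:HAldfibre} with base change (Example~\ref{exam:Bext} and Remark~\ref{rem:generalcase}), combined with the fact that $\wp \tensor{A} \bCx \cong \omega'$ by construction. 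I expect the verification that the group-scheme monomorphism of Lemma~\ref{lema:Stab} is in fact an isomorphism — i.e.\ that its image is closed and coincides with $\AutF{\omega'}$ — to reduce cleanly to the cited results of Andr\'e once the fibre functors have been matched, so the genuine content is the $\Um$-versus-$\Umx$-versus-$\rR_{\Sscript{\Cc}}(\omega')$ triangle of isomorphisms rather than any new hard argument.
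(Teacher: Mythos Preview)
There is a genuine gap in your argument for the first assertion. You claim to obtain both containments between the images of $\AutdF{A}{\cP}$ and $\AutF{\omega'}$ inside $\mathrm{GL}_{\Sscript{\Cc}}(\omega'(M))$, but in fact you have only established one direction, twice. Lemma~\ref{lema:Stab} gives that the image of $\AutdF{A}{\cP}$ lies in $\bigcap_N \underline{\mathrm{Stab}}\{\kappaup(N)\}$; Andr\'e's Th\'eor\`eme~3.2.1.1 identifies that intersection with the image of $\AutF{\omega'}$; together these yield $\mathrm{Im}\,\AutdF{A}{\cP} \subseteq \mathrm{Im}\,\AutF{\omega'}$. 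The ``last claim'' of Lemma~\ref{lema:Stab} that you invoke as the reverse containment says precisely the same thing --- indeed, in the paper it is \emph{deduced} from Andr\'e's theorem and the stabilizer statement, so invoking it again is circular. What is missing is the inclusion $\mathrm{Im}\,\AutF{\omega'} \subseteq \mathrm{Im}\,\AutdF{A}{\cP}$: given a tensor automorphism $\xi$ of $\omega'\tensor{}C$, you must produce a differential $(A\tensor{}C)$-algebra automorphism $\bgamma$ of $\cP\tensor{}C$ with $\brho_{\bgamma}=\xi_M$.

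The paper supplies exactly this missing step by an explicit construction: writing the matrix of $\xi_M$ as $(c_{ij})$, one defines $\bgamma_\xi$ on the generators by $f_{ij}\tensor{}c \mapsto \sum_l f_{il}\tensor{}c_{jl}c$, checks that $\bgamma_\xi$ is a well-defined differential $(A\tensor{}C)$-algebra automorphism (using that $F=(f_{ij})$ is a fundamental matrix of solutions, Proposition~\ref{prop:difPV}(iv)), and verifies $\brho_{\bgamma_\xi}=\xi_M$ by a direct computation on the basis $\bv_i=\sum_j e_j\tensor{A}f_{ji}$ of $\bV$. This construction is the actual content of the proof; the stabilizer description alone cannot replace it. Your treatment of the second assertion via base change of the comatrix construction is considerably more elaborate than needed --- the paper dispatches it in one line from the first assertion and Corollary~\ref{coro:isotropy} --- but that part is not where the gap lies.
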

\begin{proof}
The last claim is a direct consequence of the first one in conjunction with Corollary \ref{coro:isotropy}. In order to prove the first statement, we only need to check that (the image of) $\AutF{\omega'}$ is contained in  the image of $\AutdF{A}{\cP}$ by the monomorphism of Lemma \ref{lema:Stab}. So take an object $C$ in $\Alg{\Cc}$ and a natural  isomorphism $\xi \in \AutF{\omega'}(C)$.  Consider then the $C$-linear automorphism $\xi_{M}$ and  its associated invertible $m\times m$-matrix  $(c_{ij})_{\Sscript{i,\, j}}$ with coefficients in $C$. Now, define the following $(A\tensor{}C)$-algebra map:
$$
\bgamma_{\xi}: P\tensor{}C \longrightarrow P\tensor{}C, \quad \Big( af_{ij}\tensor{}c \longmapsto  \sum_{l} af_{il}\tensor{}c_{jl}c\Big).
$$
It is not difficult to check that $\bgamma_{\xi} \in \Autdt{C}$. We still have to check that $\brho_{\bgamma_{\xi}}= \xi_{M}$ and this follows form the following computations: For each $i=1,\cdots, m$, we compute
\begin{eqnarray*}
\brho_{\bgamma_{\xi}}\Big( \bv_i\tensor{}1_{\Sscript{C}} \Big)  &=&  \brho_{\bgamma_{\xi}}\Big( \sum_je_j\tensor{A}f_{ji}\tensor{}1_{\Sscript{C}} \Big) \,\, =\,\,  (M\tensor{A}\bgamma_{\xi})  \Big( \sum_je_j\tensor{A}f_{ji}\tensor{}1_{\Sscript{C}} \Big) \\ &=&   \sum_{j,\, l}e_j\tensor{A}f_{jl}\tensor{}c_{jl}  \,\, =\,\,  \sum_{ l} \bv_{l}\tensor{}c_{jl}  \,\, =\,\,  \sum_{ l} (\bv_{l}\tensor{}1_{\Sscript{C}}) \,  c_{jl} \\ &=& \xi_{M}\Big( \bv_i\tensor{}1_{\Sscript{C}} \Big),
\end{eqnarray*}
which shows the desired equality and finishes the proof. 
\end{proof}

Proposition \ref{prop:AutAsAlgG} suggests a terminology for the algebraic group $\Autd{A}{\cP}$, thus, we can refer to this group as \emph{the differential Galois isotropy group} of $(A,\partial)$ for the differential module $(M,\partial)$.

\begin{example}\label{exam:AZ}
Let $(M,\partial)$ be a differential $A$-module of rank $2$  with differential matrix 
$\mat{M}=\dosmatrix{0}{a}{0}{0}$, for some non-zero  polynomial $ a \in A$. The associated Hopf algebroid $\Um$ is generated as an $(A\tensor{}A)$-algebra by the set  $\Big\{\bara{e_i^*\tensor{T_M}e_j}\Big\}_{1 \leq i,\, j \leq 2}$. Now, by considering  the following   differential morphisms 
$$
A \longrightarrow M, \quad \Big( 1 \longmapsto e_1 \Big); \qquad M \longrightarrow A,\quad \Big( \begin{pmatrix} a_1 \\ a_2 \end{pmatrix} \longmapsto a_2 \Big)
$$
(these are morphisms in the category $\Mo$), we show  that $f_{11}=f_{22}=1_{\Sscript{\cP}}$ and $f_{21}=0$. Therefore $\cP$ is generated as an $A$-algebra by  the element  $f_{12}$. Since, we already know that there is a non constant polynomial $b \in A$ such that $\partial b= a$, we have another morphism in the category $\Mo$, namely, the one given by  
$$
\fk{t}: A \longrightarrow M, \quad \Big( 1 \longmapsto (e_2- e_1b)\Big).
$$
Using, this morphism, we have the following equalities in the algebra $\Um$
\begin{eqnarray*}
\bara{e_1^*\tensor{T_M}e_2} & =& \bara{e_1^*\tensor{T_M}(e_2-e_1b)} + \bara{e_1^*\tensor{T_M}e_1b} \\ &=&  \bara{e_1^*\tensor{T_M}\fk{t}(1_A)} + \bara{e_1^*\tensor{T_M}e_1b}  \\ &=& \bara{e_1^*\fk{t} \tensor{T_A}1_A} + \bara{e_1^*\tensor{T_M}e_1b}  \\ &=& -\bara{b {\sf{l}}_{1_A} \tensor{T_A}1_A} + \bara{e_1^*\tensor{T_M}e_1b} 
\end{eqnarray*}
Passing to the quotient $A$-algebra $\cP=\Um/\langle \Sf{s} - \Sf{t}\rangle$, this  implies  that $f_{12}=0$. Thus $\cP=A$ and the differential Galois isotropy group is a trivial group in this case. 
\end{example}

\begin{remark}\label{rem:PV}
Observe that the opposite principal bibundle $(\rR_{A\tensor{}A}(\wp, \omega), \bbeta, \balpha)$ of the principal bibundle $(\rR_{A\tensor{}A}(\omega,\wp), \balpha,\bbeta)$ (see Theorem \ref{thm:PB} and  \cite[Section 4]{Kaoutit/Kowalzig:2014}) provides us with another Picard-Vessiot extension of $(A,\partial)$, although, as an algebra it will be generated by a fundamental matrix of solutions $F$  satisfying $\bpartial F = -F \mat{M}$, which is obviously not a matrix of solutions for the system \eqref{Eq:ldm}, but it is for the differential module $(M^*,\partial)$. This, explain the why of the minus sign in the differentiation of equation \eqref{Eq:aij} and of the use of columns instead of rows in the formulation  of the system \eqref{Eq:ldm}. 
On the other hand, since $\cP$ is a simple differential $A$-algebra and $A$ its self is so, we can show by elementary arguments that $\cP$ has no non zero divisor (see also \cite[Proposition 3.4.4.4]{Andre:2001}), and so  consider its total field of fractions $\qQ(\cP)$. In this way, we will end up with a differential  field extension $\Cc(X) \to \qQ(\cP)$. If the field of constants of $\qQ(\cP)$  coincides with $\Cc$, then  the extension $\qQ(\cP)/\Cc(X)$ can be  referred to as the \emph{Picard-Vessiot field extension} for $(M,\partial)$.
  
Lastly, as we have seen along the previous subsections, the results as well as the examples described therein,  present a strong resemblance with the classical case of the differential field $\mathbb{C}(X)$. 
\end{remark}

\subsection{Comparison with Malgrange's and Umemura's differential Galois groupoids}\label{ssec:MalUme}
In this section we compute the Hopf algebroid structure of the coordinate ring of what is know in the literature as Malgrange's  groupoid (or $D$-groupoid) for some special cases, and illustrate the differences between this groupoid and  our approach.

Before going on, the following notations are needed. For any positive integer $n \in \mathbb{N} \setminus \{0\}$, we denote by $\epsilon_0,\epsilon_1, \cdots, \epsilon_n$ the elements $(1,0, \dots,0), (0,1,0,\cdots,0), \cdots, ( 0,\cdots,0,1) \in \mathbb{N}^{n+1}$, respectively. Given such an integer $n$, we define the following function 
$$
\xymatrix@R=0pt{ \mathbb{N}^n \ar@{->}^-{\bo{k}^n}[rr] & & \mathbb{N} \\  (k_1,\cdots,k_n) \ar@{|->}[rr] & & k_1+2k_2+3k_3+\cdots+ nk_n:=\kn.  }
$$
We denote by $\KK{n}:=(\bo{k}^n)^{-1}(\{n\})$ the inverse image of $\{n\}$ by  the function $\bo{k}^n$. Thus elements of $\KK{n}$ are $n$-tuples $(k_1, \cdots,k_n) \, \in \mathbb{N}^n$ of integers 
such that  $n=k_1+2k_2+\cdots+n k_n$. For instance, we have 
\begin{multline*}
\KK{1}=\big\{ 1 \big\},\quad  \KK{2}\,=\, \big\{ (0,1); (2,0) \big\}, \quad \KK{3}\,=\, \big\{  (0,0,1); (1,1,0); (3,0,0)\big\}, \\ 
\KK{4}\,=\, \big\{ (0,0,0,1); (1,0,1,0); (2,1,0,0); (4,0,0,0); (0,2,0,0) \big\}, \; \cdots.
\end{multline*}

Let $A=\Cc[X]$ be as before the one variable polynomial complex algebra and  $\{x_0, y_{n}\}_{n \in \, \mathbb{N}}$ be a set of independent variables over $\Cc$. Consider the following commutative polynomial $\Cc$-algebra
$$
\cH : = \Cc[x_0,y_0,y_1,\cdots,y_n,\cdots,\frac{1}{y_1}],
$$
which we also denote by $\cH_{\Sscript{\Cc}}$.
There are two algebra maps 
\begin{equation}\label{Eq:stH}
\Sf{s}: A \to \cH, \;\Big( X \mapsto x_0:=x  \Big) \quad \text{ and } \quad \Sf{t}: A \to \cH, \; \Big( X \mapsto y_0:=y  \Big)
\end{equation}
so that we can consider the $A$-bimodule ${}_{\Sscript{\Sf{s}}}\cH_{\Sscript{\Sf{t}}}$. 
Moreover, we have the following algebra maps: 
\begin{equation}\label{Eq:DH}
\begin{gathered}
\xymatrix@R=0pt{  {}_{\Sscript{\Sf{s}}}\cH_{\Sscript{\Sf{t}}} \ar@{->}^-{\Delta}[rr]  & & {}_{\Sscript{\Sf{s}}}\cH_{\Sscript{\Sf{t}}} \tensor{A} {}_{\Sscript{\Sf{s}}}\cH_{\Sscript{\Sf{t}}}  } \\  \Delta(x)\,\,=\,\, x\tensor{A}1, \quad \Delta(y)\,\,=\,\, 1\tensor{A} y,  \\ 
\Delta(y_n)\,\,=\,\,  \sum_{\kn\, \in \, \KK{n}} \frac{n!}{k_1!\, \cdots\, k_n!} \Big( \lrfrac{y_1}{1!}^{k_1}\, \lrfrac{y_2}{2!}^{k_2}\, \cdots\, \lrfrac{y_n}{n!}^{k_n} \Big) \tensor{A} y_{k_1+k_2+\cdots+k_n}, \; \text{ for  } n \geq 1. 
\end{gathered}
\end{equation}
Thus, for $n=1, 2, 3, 4$, the image by $\Delta$ of the variables $y_n$'s reads  as follows:
\begin{multline*}
\Delta(y_1)=y_1\tensor{A}y_1, \quad \Delta(y_2)=y_2\tensor{A}y_1+y_1^2\tensor{A}y_2, \quad \Delta(y_3)=y_3\tensor{A}y_1+ 3y_1y_2\tensor{A}y_2+y_1^3\tensor{A}y_3, \\ 
\Delta(y_{4})= y_{4}\tensor{A}y_{1} + 4 y_{3}y_{1}\tensor{A}y_{2} + 6 y_{2}y_{1}^{2}\tensor{A}y_{3} + 3y_{2}\tensor{A}y_{2} + y_{1}^{4}\tensor{A}y_{4},   \quad \cdots . \hspace{3cm}
\end{multline*}
It is by construction that $\Delta$ is actually a morphism of $A$-bimodules. 
There are  other $A$-bimodule morphisms  which are given as follows: 
\begin{equation}\label{Eq:SH}
\begin{gathered}
\xymatrix@R=0pt{  {}_{\Sscript{\Sf{s}}}\cH_{\Sscript{\Sf{t}}} \ar@{->}^-{\sS}[rr]  & & {}_{\Sscript{\Sf{t}}}\cH_{\Sscript{\Sf{s}}}   } \\
\sS(x) \,=\, y,\quad \sS(y)\,=\, x, \quad  \sS(y_1)\,=\, y_1^{-1}, \quad \text{ and }
\\  \sS(y_n) \,=\, \sum_{(n,0,\cdots,0)\,\neq \,  \kn \, \in \, \KK{n}} -\frac{n!}{k_1!\, \cdots\, k_n!} \, \sS\big(  y_{k_1+k_2+\cdots+k_n}\big)\,  \Big( \lrfrac{y_1}{1!}^{k_1-n}\, \lrfrac{y_2}{2!}^{k_2}\, \cdots\, \lrfrac{y_n}{n!}^{k_n} \Big), \text{ for  } n \geq 2,
\end{gathered}
\end{equation}
for instance, 
\begin{multline*}
\sS(y_1)=y_1^{-1} , \quad \sS(y_2)=-y_2y_1^{-3}, \quad  \sS(y_3)=-y_3y_{1}^{-4} + 3y_{2}^{2}y_{1}^{-5}, \\ \sS(y_{4}) = -y_{4}y_{1}^{-5} + 10 y_{3}y_{2}y_{1}^{-6}- 15 y_{2}^{3}y_{1}^{-7},  \quad \cdots . \hspace{1cm}
\end{multline*}
And
\begin{equation}\label{Eq:EH}
\begin{gathered}
\xymatrix@R=0pt{  {}_{\Sscript{\Sf{s}}}\cH_{\Sscript{\Sf{t}}} \ar@{->}^-{\varepsilon}[rr]  & &  A  } \\ \varepsilon(x)\,=\, X,\quad \varepsilon(y)\,=\, X, \quad \varepsilon(y_1)\,=\,1, \; \text{ and } \\
\varepsilon(y_n)\,\,=\,\, 0, \quad \text{ for every } \; n \geq 2. 
\end{gathered}
\end{equation}
The $\Cc$-algebra $\cH$ is a differential algebra with differential given by:
\begin{equation}\label{Eq:DiffH}
\begin{gathered}
\xymatrix@R=0pt{  \cH \ar@{->}^-{\bdelta}[rr]  & &  \cH  } 
\\ \bdelta(x)\,=\, 1,\quad \bdelta(y)\,=\, y_1, \quad \bdelta(y_n)\,=\,y_{n+1}, \; \text{ for } n \geq 1.
\end{gathered}
\end{equation}
Thus, we have  
$$
\bdelta\,\,=\,\, \frac{\partial}{\partial x}  \,+\, \sum_{i=0}^{\infty} y_{i+1} \frac{\partial}{\partial y_i}.
$$

We will consider $\cH$ as an $(A\tensor{}A)$-algebra using the $\Cc$-algebra map $\etaup:= \Sf{s}\tensor{}\Sf{t}: A\tensor{}A \to \cH$ given by equation \eqref{Eq:stH}.  It is by construction that $\cH$  is via $\etaup$ a faithfully flat $(A\tensor{}A)$-bimodule. The following proposition can be seen as the algebraic counterpart of the ``universal" geometric groupoid constructed in \cite{Malgrange:2001} and \cite{Umemura:2009}. 

\begin{proposition}\label{prop:MU}
The pair $(A,\cH)$ admits a structure of geometrically transitive commutative   Hopf  algebroid over $\Cc$, whose comultiplication, counit and antipode, are given by equations \eqref{Eq:DH}, \eqref{Eq:EH} and \eqref{Eq:SH}, respectively. Furthermore, $(\cH,\bdelta)$ is a differential extension of $(A,\partial)$ via the source map.
\end{proposition}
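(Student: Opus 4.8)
The plan is to verify directly that the data $(\Sf{s},\Sf{t},\Delta,\varepsilon,\sS)$ given by equations \eqref{Eq:stH}, \eqref{Eq:DH}, \eqref{Eq:EH}, \eqref{Eq:SH} satisfy the axioms of a commutative Hopf algebroid over $\mathbb{C}$, then to identify the geometric transitivity from the flatness statement already recorded, and finally to check compatibility of $\bdelta$ with the source. The conceptual starting point, which organizes all the computations, is that $\cH$ should be recognized as the coordinate ring of the groupoid of $\infty$-jets of local diffeomorphisms of the line: a point of the groupoid with source $x$ and target $y$ is a formal change of variable, and the $y_n$ record the successive derivatives $\frac{d^n y}{d x^n}$ with $y_1$ invertible (so that the jet is invertible). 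Under this reading, $\Delta$ is nothing but the Fa\`a di Bruno formula for the chain rule applied to a composition of two such jets, $\varepsilon$ sends a jet to the identity jet at a point, and $\sS$ is the formula for the derivatives of the inverse function (the reversion of a formal power series). I would state this heuristic explicitly, because it both motivates and essentially dictates every formula.

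First I would check that $\Delta$ and $\varepsilon$ are well defined $\mathbb{C}$-algebra maps: one needs $\Delta(y_1)=y_1\tensor{A}y_1$ to be invertible in $\cH\tensor{A}\cH$, which is clear, and one needs the formulas to be $A$-bilinear in the appropriate sense, i.e. that $\Delta\circ\Sf{s}$ lands in $\Sf{s}(A)\tensor{A}1$ and $\Delta\circ\Sf{t}$ in $1\tensor{A}\Sf{t}(A)$ — both immediate from $\Delta(x)=x\tensor{A}1$, $\Delta(y)=1\tensor{A}y$. Coassociativity $(\Delta\tensor{A}\cH)\Delta=(\cH\tensor{A}\Delta)\Delta$ is the associativity of composition of jets; rather than expand the multinomial sums brutally, I would invoke the combinatorial identity underlying Fa\`a di Bruno (associativity of the substitution/composition operation on the group of formal diffeomorphisms fixing no point), which is classical, and remark that it suffices to verify it on the algebra generators $x,y,y_n$. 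The counit axioms $(\varepsilon\tensor{A}\cH)\Delta=\mathrm{id}=(\cH\tensor{A}\varepsilon)\Delta$ reduce to: setting all $y_i=0$ for $i\ge 2$ and $y_1=1$ in the first (resp.\ second) tensor factor collapses the Fa\`a di Bruno sum to the single term $1\tensor{A}y_n$ (resp.\ $y_n\tensor{A}1$), which one reads off from the summands indexed by $(0,\dots,0,1)$ and $(n,0,\dots,0)$. For the antipode, the defining identities $\sS(h_1)h_2=\Sf{t}(\varepsilon(h))$ and $h_1\sS(h_2)=\Sf{s}(\varepsilon(h))$ on generators are exactly the statement that the reversion formula \eqref{Eq:SH} inverts the composition law: this is most cleanly obtained by checking that \eqref{Eq:SH} is forced, recursively in $n$, by the equation $\sS(h_1)h_2=\Sf{t}(\varepsilon(h))$ itself, so that existence and the antipode axioms are established simultaneously. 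Commutativity of $\cH$ is obvious since it is a polynomial ring.

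The geometric transitivity is then immediate: the paper already notes that $\etaup=\Sf{s}\tensor{}\Sf{t}:A\tensor{\mathbb{C}}A\to\cH$ makes $\cH$ a faithfully flat $(A\tensor{\mathbb{C}}A)$-module (indeed $\cH$ is a polynomial extension of $A\tensor{\mathbb{C}}A=\mathbb{C}[x,y]$ after adjoining $y_1^{-1}$, hence faithfully flat), and by the definition recalled in Example \ref{exam:GT} this is precisely the condition for $(A,\cH)$ to be a geometrically transitive commutative Hopf algebroid over the base field $\mathbb{C}$. Finally, that $\bdelta$ of \eqref{Eq:DiffH} is a well-defined derivation on $\cH$ extending $\partial=\frac{\partial}{\partial X}$ of $A$ through $\Sf{s}$ is a one-line check: $\bdelta$ is a $\mathbb{C}$-linear derivation on the polynomial ring (extended to the localization by the quotient rule, legitimate since $\bdelta(y_1)=y_2$), and $\bdelta(\Sf{s}(X))=\bdelta(x)=1=\Sf{s}(\partial X)$.

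I expect the main obstacle to be the verification of \textbf{coassociativity} of $\Delta$ on the generators $y_n$, i.e.\ pinning down the precise multinomial identity that expresses associativity of Fa\`a di Bruno composition. It is entirely classical, but getting the index bookkeeping in the partition sets $\KK{n}$ to match on both sides is the one genuinely intricate point; I would handle it by passing to the generating-function formulation (composition of formal power series $\sum y_n t^n/n!$) where associativity is manifest, and then translating back, rather than by a direct manipulation of the sums written in \eqref{Eq:DH}. The antipode identities are a close second in difficulty, but as indicated they can be folded into the same generating-function computation (series reversion), so the whole combinatorial core is really a single lemma about the group of formal diffeomorphisms of the line.
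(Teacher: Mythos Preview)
Your proposal is correct and aligns with the paper's own (very brief) proof, which leaves the details to the reader and merely sketches two routes: a direct inductive verification of the Hopf algebroid axioms, or the interpretation of $\cH$ as the coordinate ring of the groupoid of invertible jets with composition given by the chain rule and inversion by series reversion. You have chosen and fleshed out the second route, which is exactly the one the paper points to (citing \cite{Morikawa/Umemura:2009, Umemura:2009}); your identification of coassociativity with associativity of Fa\`a di Bruno composition and your suggestion to pass to generating series for the bookkeeping are the natural way to make that sketch rigorous. For geometric transitivity the paper appeals to \cite[Theorem A]{ElKaoutit:2015} rather than directly to Example \ref{exam:GT}, but since both reduce to the already-noted faithful flatness of $\etaup$, there is no substantive difference.
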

\begin{proof}
The last claim is clear.  As for the proof of the first one, there are different ways to achieve it and the details are left to the reader. One way could be the use of direct computations, using a certain kind of induction, to show  that these maps are compatible (i.e., they satisfy the pertinent commutative diagrams for the definition of a commutative Hopf algebroid, see for instance  \cite[3.1]{Kaoutit/Kowalzig:2014} or \cite[3.2]{ElKaoutit:2015}), since we already know that the stated maps  are obviously  $\Cc$-algebra maps.  Another way, is to show that the associated presheaf $\hH$ of the pair $(A,\cH)$ of $\Cc$-algebras lands  in fact in groupoids. To do so, one need to think of the variable $y_0=y$ as if it was a function on $x$ and the monomials $y_i^{\alpha_i}$ as if they were formal derivative $\frac{\partial^{\alpha_i} y}{ \partial x^{\alpha_i}}$. In this way the compositions in the fibre groupoids are given by the chain rule of derivatives, and the inverse is by thinking this time that $x$ is a function on $y$ and using its derivative, see \cite{Morikawa/Umemura:2009, Umemura:2009}.   Lastly, the fact that $(A, \cH)$ is geometrically transitive follows from the fact that $\etaup$ is a faithfully flat extension, as we have  mentioned above, and from \cite[Theorem A]{ElKaoutit:2015}.
\end{proof}

\begin{remark}\label{rem:MJet}
Let $(A,\cH)$ be the Hopf algebroid of Proposition \ref{prop:MU} and  consider the sub $\Cc$-algebra $\cH_{\Sscript{r}}:=\Cc[x,y_{0}, y_{1},\cdots, y_{r}, \frac{1}{y_{1}}]$ of $\cH$, for each $r \in \mathbb{N} \setminus \{0\}$ and set $\cH_{\Sscript{0}}=\Cc[x,y_{0}] \cong A\tensor{}A$. It is clear from the definitions that the structure maps given in equations  \eqref{Eq:DH}, \eqref{Eq:EH} and \eqref{Eq:SH}, when restricted to $\cH_{\Sscript{r}}$ (denoted by $\Sf{s}_{\Sscript{r}}$, $\Sf{t}_{\Sscript{r}}$, $\varepsilon_{\Sscript{r}}, \Delta_{\Sscript{r}}$ and $\sS_{\Sscript{r}}$),  leads to a Hopf algebroid $(A, \cH_{\Sscript{r}})$. Thus, the family $\{(A,\cH_{\Sscript{r}})\}_{r \, \in \mathbb{N}}$ is a family of Hopf sub-algebroids of $(A,\cH)$ and the canonical inclusions $\cH_{\Sscript{r}} \to \cH_{\Sscript{r+1}}$ give an inductive  system of Hopf $A$-algebroids such that 
$$
\cH \, =\,  \underset{ r \, \in \, \mathbb{N}}{\varinjlim}\big(\cH_{\Sscript{r}}\big).
$$

In order to clarify the connection with our approach, it is convenient to  explain the idea which relates these Hopf algebroids with the Lie algebroids of jet vector bundles of the tangent bundle of $\affLine{\Cc}$.  

Notice first that each of the algebras $\cH_{\Sscript{r}}$ is clearly the coordinate ring of a complex variety which we denote by $J_{\Sscript{r}}^{*}$ and $\cH$ is the coordinate ring of a variety denoted  by $J_{\infty}^{*}$. This means that we have a family of groupoids (Lie groupoids indeed)  $\left\{\big(J^{*}_{\Sscript{r}}, \affLine{\Cc}\big)\right\}_{ \Sscript{r \, \in \, \mathbb{N}}}$ with ``prolongation''  the groupoid $J_{\infty}^{*}= \varprojlim \big(J^{*}_{\Sscript{r}}\big)$.  The later is the Zariski open set of the infinite dimensional analytic space $\Cc \times \Cc^{\Sscript{\mathbb{N}}}$ defined by the sequences $(x,y_{n})_{\Sscript{n \geq 0}}$ with $y_1 \neq 0$ (referred to in the literature as the \emph{Lie groupoid of invertible jets}). 

Following the general idea of \cite[3.2]{Malgrange:2001}, if we consider the Lie algebroid $Lie(J_{\Sscript{r}}^{*})$ of the groupoid $J_{\Sscript{r}}^{*}$ (i.e., the relative tangent via the source (or target) map), then $Lie(J_{\Sscript{r}}^{*})$ is identified with $J_{\Sscript{r}}T$, the jet vector bundle of order $r$ of the tangent bundle $T$ of $\affLine{\Cc}$. In this way, the  module of global sections of the associated coherent sheaf is isomorphic as (right) $A$-module to the differential operators bimodule ${\rm Diff}_{\Sscript{r}}(A)$ of order $r$ (see \cite[Definition 9.67]{nestruev} for the definition of this $A$-bimodule). The general construction of the Lie algebroid structure of the jet bundles for a given Lie algebroid can be found in \cite[4.1]{Crainic/Fernandes:2010}, the explicit formulae for the bracket on the global sections of these jet bundles is detailed in \cite[page 478]{Malgrange:2001}. It is noteworthy to mention also that these constructions can be performed in a purely algebraic way by using Lie-Rinehart algebras and their jets (or differential operators), in the sense of \cite{nestruev}.

In relation with the constructions we have seen so far, we can easily check that he Hopf algebroid $\cH_{\Sscript{1}} =\Cc[x, y_{0}, y_{1}^{\pm 1}]$ is isomorphic  to $\Um$ for a given differential module $(M,\partial)$ of rank one, as was mentioned in Example \ref{Exm:dimOne}. For a higher order, that is, for $r \geq 2$, it is possible that the Hopf algebroid $\cH_{\Sscript{r}}$ can be related to the finite dual of the co-commutative Hopf algebroid $\cV_{\Sscript{A}}(L_{\Sscript{r}})$ the  universal enveloping algebroid of  the Lie-Rinehart algebra $L_{\Sscript{r}}:=\Gamma(J_{\Sscript{r}}T)$ the global sections of the Lie algebroid of jets bundle of order $r$ of the bundle $T$ (see Example \ref{Exam: bundles}(2)).  

In summary it seems that with our approach (and also that of \cite{Andre:2001}) we only can treat the study of a system of linear differential equation associated to a representation of a given Lie algebroid (or Lie-Rinehart algebra, which in the situation of the present section is $\Gamma(T\affLine{\Cc})$ the global sections of the tangent bundle), and we are not able to analyse a system of partial differential equations attached to the jets of order higher  than $2$.  This perhaps can be seen as a disadvantage of our approach, however,  with this approach we are able also to  study a system of linear differential equation attached to differential module with rank higher than $2$ without being forced to change the base algebra (i.e., without being forced to increase the number of the coordinates system), which is the situation adopted in \cite[page 493]{Malgrange:2001}. To be more precise, 
let us consider a differential module $(M,\partial)$ over $(A, \partial)$ of rank $m$.  In comparison with \cite{Malgrange:2001}, the underlying module  $M$ can be considered as the global sections module of a trivial vector bundle of the rank $m$ over $\affLine{\Cc}$, that is, we have $M =\Gamma(E)$ (or $\underline{E}$ in the notation of \cite{Malgrange:2001}), where $E=\Cc^{m} \times \Cc$. The differential structure map $\partial$ on $M$, is then interpreted as a flat connection $\nabla$ on the vector bundle $E$ (recall here, as in \cite{Malgrange:2001}, were are in the case of smooth connected spaces).  Following \cite[5.3, page 493]{Malgrange:2001},  (here we take the discrete subset $Z=\emptyset$ of $\affLine{\Cc}$), the corresponding Hopf algebroid whose quotient leads to the description of the differential Galois group  of $\nabla$ (or to the associated one dimensional foliation without singularity, as $Z=\emptyset$), is the one with the base algebra $\cO_{\Cc^m \times \Cc}$, since the Lie groupoid of invertible jets has the space $\Cc^{m}\times \Cc$ as the space of objects. Such a  Hopf algebroid is given by the following differential polynomial algebra 
\begin{equation}\label{Eq:HCm}
\cH_{\Sscript{\Cc^{m+1}}} := \Cc\Big[x_0,x_1,x_2,\cdots,x_m, y_0, y_1,\cdots, y_m, y_{i\alpha}, det(y_{i\epsilon_j})^{-1}\Big]_{ i, j=1,\cdots, \,m; \,\,  \alpha\,  \in\, \mathbb{N}^{m+1}\setminus\{0, \epsilon_0,\cdots, \epsilon_{m}\}}, 
\end{equation}
viewed as an Hopf algebroid with base algebra  $\Cc[X_0,X_1,\cdots,X_m]$, see
Example  \ref{exam:Rank2} below where we illustrated the construction of this Hopf algebroid for $m=1$. Therefore, if we want to employ the method of \cite{Malgrange:2001} for studying a differential module of rank $m \geq 2$ over $A=\Cc[X_0]$, then we are forced to use   the Hopf algebroid of equation \eqref{Eq:HCm} as a Hopf algebroid over $\Cc[X_0,X_1,\cdots,X_m]$. This shows that our approach is somehow different from that adopted in \cite{Malgrange:2001}, since all our Hopf algebras and Hopf algebroids have for the base algebra the polynomial algebra with no more than one variable, i.e., ~ the algebra $A$. 
\end{remark}

Next, we introduce the notion of \emph{Malgrange Hopf algebroid} which is the coordinate ring of what is known in the literature as the \emph{$D$-groupoid}, or \emph{Malgrange groupoid}. We will treat only  the case of Hopf algebroids of over the polynomial algebra  $A$, that is, Hopf quotients of the above Hopf algebroid $\cH_{\Sscript{\Cc}}$, since this is the case we are interested in this section. 

Recall first that \emph{a Hopf ideal}  of a given Hopf algebroid $(A,\cH)$ is an ideal $\cI$ of $\cH$ such that the pair of algebras $(A,\cH/\cI)$ admits a unique structure of Hopf algebroid for which the morphism $(id_A, \pi): (A,\cH) \to (A,\cH/\cI)$ becomes a universal morphism of Hopf algebroids. That is,  any morphism $(id_a,\phiup): (A,\cH) \to (A,\cK)$ of Hopf algebroids such that $\cI \subseteq \ker(\phiup)$, factors  through $(id_A, \pi)$ by a morphism of Hopf algebroids. This in fact is a naive definition  which only  consider subgroupoids of $\hH$ with same set of objects  $\hH_{\Sscript{0}}$. The strict definition is a pair of ideals $(\cI_{\Sscript{0}}, \cI_{\Sscript{1}})$ of the pair of algebras $(A,\cH)$ satisfying pertinent conditions; since we will not use this general notion,  we will not go on to the details.   

Here is the promised definition:

\begin{definition}[\cite{Malgrange:2001, Umemura:2009}]\label{def:MU}
Given $(A,\cH)$ as in Proposition \ref{prop:MU} with $\hH$ the associated presheaf of groupoids. A  \emph{Malgrange Hopf $A$-algebroid} over $\Cc$ is a quotient Hopf algebroid of the form $(A,\cH/\cI)$, where $\cI$ is a differential Hopf ideal, that is,  a Hopf ideal $\cI$ with $\bdelta(\cI) \subseteq \cI$, where $\bdelta$ is the derivation of equation \eqref{Eq:DiffH}. 
Obviously the pair $(A,\cH/\cI)$ defines a presheaf of groupoids which we denote by $\kK_{\Sscript{\cI}}$, and for every commutative $\Cc$-algebra $C$ we have that $ \kK_{\Sscript{\cI}}(C)$ is a  sub-groupoid of $\hH(C)$. Thus, $\kK_{\Sscript{\cI}}$ is a sub-groupoid of $\hH$. An 
alternative terminology and the one which can be found in the literature,  is to say that $\kK_{\Sscript{\cI}}$ is a \emph{$D$-groupoid}  (or \emph{Malgrange groupoid} over $\affLine{\Cc}$) defined by the Hopf ideal $\cI$ (see also \cite{Gasale:2004}, where the notion of Hopf  algebroid or that of presheaf of groupoids were not specified). 
 \end{definition}

\begin{remark}\label{rem:GT}
As we have seen before, it is by construction that the Hopf algebroid $(A,\cH)$ of Proposition \ref{prop:MU} is geometrically transitive in the sense of \cite{ElKaoutit:2015}, see  also Example \ref{exam:GT}. Therefore,  the associated presheaf of groupoids $\hH$ is  a transitive groupoid scheme in the sense of \cite{Deligne:1990}. It is noteworthy to mention that, taking a Malgrange groupoid  $\kK_{\Sscript{\cI}}$ associated to a differential Hopf ideal $\cI$,  even if $\hH$ defines a transitive groupoid scheme, it is not necessarily that this the case for $\kK_{\Sscript{\cI}}$. That is, the $(A\tensor{}A)$-module $\cH/\cI$ does not necessarily have to be faithfully flat. Henceforth, Malgrange Hopf algebroids fail in general to be geometrically transitive. 
\end{remark}

In the subsequent we give examples of Malgrange Hopf algebroids and describe, in some cases,  Umemura's method \cite{Umemura:2009} dealing  with the study of  certain partial differential equation.

\begin{example}[\cite{Umemura:2009}]\label{exam:YI}
Let $\cI$ be the  ideal of $\cH$ generated by the set $\{y_1-1, y_n\}_{n \geq 2}$. Since $\bdelta^k(y_1)=y_{k+1}$, for all $ k \in \mathbb{N}^*$, $\cI$ is  a differential ideal of $\cH$. Let us check that is also a  Hopf ideal. We know that 
$$
\Delta(y_1-1)=(y_1-1)\tensor{A}1+ 1 \tensor{A}(y_1-1)\, \in \cH\tensor{A}\cI + \cI \tensor{A}\cH,
$$
and by using the formula \eqref{Eq:DH} with $n \geq 2$, we  conclude that  $\Delta(\cI) \in \cH\tensor{A}\cI + \cI \tensor{A}\cH$.
On the other hand, we have $\varepsilon(y_1-1)=0$ and $\varepsilon(y_{n})=0$, whence $\varepsilon(\cI) =0$. Concerning the image of $\cI$ by the antipode, we  have that $\sS(y_1-1)=y_1^{-1}-1=y_1^{-1}( 1-y_1) \in \cI$, and by induction employing equation \eqref{Eq:SH}, we get  $\sS(\cI) \subseteq \cI$. Therefore, $(A,\cH/\cI)$ is a Malgrange Hopf $A$-algebroid which can be shown to be isomorphic to the Hopf algebroid $(A, A\tensor{}A)$. The associated groupoid is just the fibrewise groupoid of pairs (\cite[Example 2.2]{ElKaoutit:2015}) and not the action groupoid (\cite[Example 2.1]{ElKaoutit:2015}) of the additive on it self, as was claimed in \cite[page 446]{Umemura:2009}, since obviously none of the images of $x, y$ is a primitive element in $\cH/\cI$.  
\end{example}

\begin{example}(\cite[3.5]{Malgrange:2001})\label{exam:XY}
Consider $A$ with the derivation $X\partial/\partial X$. As we have seen in Remark \ref{rem:XPartial}, in this case the category $\Diff{A}$ fails to be an abelian category. Thus, the classical Tannakian theory fails too, when considering the Lie-Rinehart algebra $L=A.(X\partial/\partial X)$. Following \cite{Malgrange:2001}, this is due perhaps to the fact that  associated $D$-groupoid is not  a reduced one. Let us check this by computing its coordinate ring, and compare Umemura's  \cite{Umemura:2009} method with this case. 
Consider  then the Hopf algebroid $(A,\cH)$  of Proposition \ref{prop:MU} with differential as in equation \eqref{Eq:DiffH}, and take  the  differential ideal $\cI$ generated by $\big\{ \bdelta^{k} (xy-y_{1}) \big\}_{k\geq 0}$ (as we have mentioned before we thing of $y_{1}$ as if it was the first derivative of $y$ with respect to the variable $x$). Let us check first that $\cI$ is a Hopf ideal. So, by the formula 
$$
\bdelta^{n+1}(xy_{1}-y)\, = \, ny_{n+1}+ xy_{n+2},\quad n \geq 0
$$
we have that 
$$
\varepsilon(xy_{1}-y) \, =\, 0, \; \text{ and } \; \varepsilon\big(\bdelta^{n}(xy_{1}-y) \big)\, =\, 0, \; \text{ for any } n; \geq 1
$$
whence $\varepsilon(\cI)=0$. As for the image of $\cI$ by the comuplitplication, we have that 
\begin{multline*}
\Delta(xy_{1}-y)\,=\, (xy_{1}-y)\tensor{A}y_{1} + 1\tensor{A}(xy_{1}-y)\, \in \cI\tensor{A}\cH + \cH\tensor{A}\cI, \\
\Delta(\bdelta(xy_{1}-y))\,=\, \Delta(xy_{2})\,=\, xy_{2}\tensor{A}y_{1} + xy_{1}^{2}\tensor{A}y_{2}\,=\,  xy_{2}\tensor{A}y_{1} + (xy_{1}-y)y_{1}\tensor{A}y_{2} + y_{1}\tensor{A} xy_{2} \, \in  \cI\tensor{A}\cH + \cH\tensor{A}\cI,
\end{multline*}
and 
\begin{eqnarray*}
\Delta\Big( \bdelta^{2}(xy_{1}-y)\Big)  &=& \Delta(y_{2}+xy_{3}) \\ &=& y_{2}\tensor{A}y_{1} +  y_{1}^{2}\tensor{A}y_{2}+ xy_{1}^{3}\tensor{A} y_{3}+ 3xy_{1}y_{2}\tensor{A}y_{2} + xy_{3}\tensor{A}y_{1} \\ &=& 
(y_{2}+xy_{3})\tensor{A} y_{1} + (xy_{1}-y)\tensor{A}y_{3} + y_{1}^{2}\tensor{A}(xy_{3}+y_{2}) + 3xy_{1}y_{2}\tensor{A}y_{2} \, \in \cI\tensor{A}\cH + \cH\tensor{A}\cI
\end{eqnarray*}
Seeking for a general formula of $\Delta\Big(\bdelta^{n}(xy_{1}-y) \Big)$ and using induction shows that $\Delta(\cI) \subseteq  \cI\tensor{A}\cH + \cH\tensor{A}\cI$. Concerning the image by the antipode, we have
\begin{multline*}
\hspace{3cm} \sS( xy_{1}-y)\,=\, yy_{1}^{-1}-x\,=\, -(xy_{1}-y)y_{1}^{-1} \, \in \cI,\\ \sS\Big(  \bdelta(xy_{1}-y)\Big) \,=\,\sS\Big( xy_{2} \Big) \,=\, -y(y_{2}y_{1}^{-3})\,=\, (xy_{1}-y)y_{1}^{-3}y_{2} - xy_{1}y_{2} \, \in \cI \text{ and }    \\ 
\sS\Big(  \bdelta^{2}(xy_{1}-y)\Big) \,=\,\sS\Big(  y_{2}+xy_{3}\Big) \,=\, -(y_{2}+xy_{3})y_{1}^{-3} + (xy_{1}-y)\Big(y_{3}-y_2^2y_1^{-1}\Big)y_1^{-4} +3xy_{2}\big( y_{2}y_{1}^{-4} \big) \, \in \cI, \hspace{2cm}
\end{multline*}
and one can also show, using a kind of induction,  that $\sS\Big(\bdelta^{n}(xy_{1}-y)\Big) \in \cI$, for any $n \geq 3$. Thus $\cI$ is a differential  Hopf ideal and so $(A,\cH/\cI)$ is a Malgrange Hopf algebroid.  Since $xy_{2} \in \cI$ and none of the elements $x, y_{2}$ belong to $\cI$,  the algebra $\cH/\cI$ is not reduced. 

Now, we come back to Umemura's method as promised. Following  \cite{Umemura:2009}, if we use the universal Taylor morphism, 
$$
\biota: \Cc[x] \longrightarrow \Cc[x][[Z]], \quad a \longmapsto \sum_{n \, \geq \, 0} \frac{\bara{\partial}^{n}(a)}{n!} Z^{n}, 
$$
where the differential is $\bara{\partial}= x \frac{\partial}{\partial x}$, then we can see   that  the image of $x$ satisfies  the equality $\biota(x) = x \, {\rm Exp}(Z)$, and so $x\, \frac{\partial \biota(x)}{\partial x} = \biota(x)$.  Moreover, $\biota$ can be extended to an algebra map $\biota: \cH \to A[[Z]]$ by sending $y \mapsto \biota(x)$, $x \mapsto x$ (i.e.,~the power series $(x, 0, \cdots, 0)$) and $\biota(y_n) =\frac{\partial^n \biota(x)}{\partial x^n}$, for $n \geq 1$. Thus, $\biota(y_1)={\rm Exp}(Z)$ and $\biota(y_n)=0$, for $n \geq 2$. Henceforth, we have a morphism $\kappaup: \cH/\cI \rightarrow A[[Z]]$\footnote{ Note that the $I$-adic completion  of this algebra map leads to a morphism of complete Hopf algebroids over $A$, where $A$ is considered as a discrete topological ring, see \cite{Kaoutit/Saracco:2017}.} of $(A\tensor{}A)$-algebras, which is, of course, not injective nor a differential morphism. Thus there is no hope in obtaining an analogue result to the one claimed in \cite[Proposition 6.1]{Umemura:2009} for this case, and so Umemura's method only works for some specific cases. This is due perhaps to the fact that the morphism $\kappaup$ does not take into account 
 the whole system of equations
\begin{equation}\label{Eq:NY}
(n-1) y^{(n)}  -x y^{(n+1)}\,\, =\,\,0, \; \text{ for } \; n \geq 0, 
\end{equation}
in the sense that the images of $y_n$ by $\kappaup$ are zero up to degree $2$. 

There are observations which should be highlighted here with respect to what we have seen so far. The previous power series algebra can in fact be seen as the convolution algebra of the co-commutative Hopf algebroid $U$, that is, we have $U^*\cong A[[Z]]$ where  $U$  is the universal enveloping algebroid of the Lie-Rinehart algebra $A.\, \bara{\partial}$  and where the universal Taylor map $\biota$ is nothing but the target map for the $(A\tensor{}A)$-algebra $U^*$ (this is in fact a topological commutative Hopf algebroid,  see \cite{Kaoutit/Saracco:2017} for more details).  In this way, it is not clear, at least to us, if  the commutative Hopf algebroid $\cH/\cI$ can be identified with  the finite dual of a certain co-commutative Hopf algebroid. According to what was explained in Remark \ref{rem:MJet}, an answer to this question can be perhaps performed by using not only the finite dual of the universal enveloping algebroid of a given Lie-Rinehart algebra, but also the finite duals of the universal enveloping algebroids of the attached jets Lie-Rinehart algebras of any order. In this direction, it is perhaps possible that one could also establish a certain  Picard-Vessiot theory for the system of equation \eqref{Eq:NY}. On the other hand, if we allow solutions of the system \eqref{Eq:NY} to be with coefficients in the localized algebra $\Cc[X^{\pm 1}]$, then the system can be linearized and one can try to employ directly the theory we have developed hereby in order to solve this new linear system. 
\end{example}

In the following remark we illustrate the difference between the approach to the Galois groupoid of Definition \ref{def:GDG} and the one introduced by B. Malgrange in \cite{Malgrange:2001} (see also \cite{Umemura:2009}).

\begin{remark}\label{rem:MU} 
Let place ourselves in the context of  \cite[5.3]{Malgrange:2001} by taking the analytic smooth connected curve $\cX$ to be the affine complex line $\affLine{\Cc}$ and set as before $A=\Cc[X]$ its coordinate ring. This is indeed  the case we are treating in this section. Now given a Malgrange groupoid $\kK_{\Sscript{\cI}}$, as in Definition \ref{def:MU},  defined by a differential Hopf ideal $\cI$,   then, following \cite{Malgrange:2000, Malgrange:2001}, the attached \emph{Galois groupoid} has $\cX$ as the space of objects and an arrow   is a germ of local diffeomorphism $\bo{g}: (\cX, p) \to (\cX, g(p))$ from an open neighbourhood of $p$ in $\cX$ to an open neighbourhood of $g(p)$ in $\cX$, such that $g$ is a solution of the differential equations generating the ideal $\cI$. The structure groupoid is the one induced from the groupoid $Aut(\cX)$ of germs of local diffeomorphisms (\'etale Lie groupoid in fact, see \cite[Example 2.5(4)]{MoeMrc:LGSAC} for details).  
Therefore, for a differential module $(M, \partial)$ of rank one over $(A,\partial)$, the Galois groupoid $\hH_{(M)}(\Cc)$  of Definition \ref{def:GDG}, have the same set of objects as the Galois groupoid attached to a Malgrange groupoid $\kK_{\Sscript{\cI}}$, for a given differential Hopf ideal $\cI$ (notice that there are  case when this two Galois groupoids coincide, for instance, when $\cI$ is the differential  Hopf ideal $\langle y_n \rangle_{n \geq 2}$). 

For a differential module with rank $m \geq 2$ over $A$, the situation  is totally different. Precisely, let us consider such a differential module and consider as in Remark \ref{rem:MJet} the associated locally free vector bundle  $(E,\pi)$ of constant rank $m$ over $\cX$.  Following the procedure of  \cite[5.3]{Malgrange:2001}, in order to define the Galois groupoid $Gal(F)$ (notation of \cite[5.2]{Malgrange:2001})  of the foliation $F$ attached to $(E,\pi)$ (and given by the connection $\nabla$ corresponding to the differential $\partial$ of $M$), one has to consider a Malgrange Hopf algebroid extracted form  a certain  differential Hopf ideal $\cI_{\Sscript{F}}$ of the Hopf algebroid $\cH_{\Sscript{\Cc^{m+1}}}$ of equation \eqref{Eq:HCm}. The existence and the construction of this differential Hopf ideal, or equivalently that of the Malgrange groupoid $\cK_{\Sscript{\cI_{F}}}$ is guaranteed by \cite[Th\'eor\`eme 4.5.1]{Malgrange:2001}. Indeed $\cI_{\Sscript{F}}$ is the largest differential Hopf ideal for which the germs of sections of $F$ are solutions. In this direction, the  Galois groupoid $\hH_{(M)}(\Cc)$ still having $\affLine{\Cc}$ as set of objects, while the Galois groupoid $Gal(F)$ has the space $\mathbb{A}_{\Sscript{\Cc}}^{m+1}$ (the analytic variety underlying the bundle $(E,\pi)$, see \cite[page 493]{Malgrange:2001}) as a set of objects. This  at first glance shows that  $\hH_{(M)}(\Cc)$ is not isomorphic to $Gal(F)$, it is not clear, however,  at least to us, if they are weakly equivalent in the sense of Remark \ref{rema:final}. 
\end{remark}

As we have seen in Remark \ref{rem:MU}, perfunctorily our differential Galois groupoid attached to a differential $A$-module is different from the one introduced by Malgrange in \cite{Malgrange:2001}.  It seems that the two approaches are also far from begin similar. Specifically, 
let $(M,\partial)$ be a differential $A$-module of rank $2$. If we want to study the system of linear differential equations attached to $(M,\partial)$ by using  Malgrange's Hopf algebroids, then the `universal' Hopf algebroid described in Proposition \ref{prop:MU} is useless, since no isotropy group of the attached presheaf of groupoids  will provide us with an algebraic closed subgroup of $GL_2(\Cc)$. Besides, even for differential modules of rank one, like for instance  the one considered in Example \ref{Exm:dimOne}, it is not clear how to construct  a differential Hopf ideal of this Hopf algebroid  form this differential module (apriori the defining ideal should be the differential ideal generated by $y_1-xy$ which is clearly not a Hopf ideal, nor the one generated by $z_1-xz$, where $z=y-x$ and $z_1=z'$). 

On the other hand, the `universal' Hopf algebroid of Proposition \ref{prop:MU} (or the one defined by any reduced algebraic variety as the coordinate ring of invertible jets \cite{Malgrange:2001, Umemura:2009}, like the one in Example \ref{exam:Rank2}), leads to partial differential equations defined by given differential Hopf ideals. This means that this approach and its methods of studying these equations goes somehow in the contrary direction of what is traditionally done in the differential Galois theory of differential fields (unless perhaps one is interested in founding  partial differential equations attached to some dynamical system and providing its groupoid of ``symmetries'' \cite{Umemura:2009, Morikawa/Umemura:2009}).  More precisely, usually  one considers, in the first step, a system of (algebraic) differential equations as an initial data and look for a representation over a  certain Lie algebroid (i.e., Lie-Rinehart algebra) which defines such system of equations. In the second step, one seek for a Picard-Vessiot extension of the differential base algebra as a formal solution space of the starting system of equations. Furthermore,  if we want to use Malgrange's Hopf algebroids, then it is not clear from \cite{Malgrange:2001} or \cite{Umemura:1996}, how to  construct  a Picard-Vessiot extension, in the sense of \cite{Andre:2001},  associated to the system of equations defining a differential Hopf ideal. 

In summary, our (algebraic) approach to differential Galois theory over differential rings  seems to be different form the (geometric) one adopted in  \cite{Malgrange:2001, Umemura:1996},  although, there are some similar aspects between the two approaches.

In the following example, we give a map of bialgebroids involving the underlying bialgebroid of the  Hopf algebroid of Proposition \ref{prop:MU}. Besides, we try to mimic the method employed in \cite[Example 5.1]{Umemura:2009}, for the case affine complex plan (by taking a certain sub Lie-Rinehart algebra of the Lie algebra of derivations of the coordinate ring), and arrive to the conclusion that this method doesn't  behave well in this case, which in some sense bears out the above explanations. 

\begin{example}\label{exam:Rank2}
In this example we compute the structure maps of the  Hopf algebroid describe in Proposition \ref{prop:MU}, but for two variables instead of one. That is, a Hopf algebroid over   $\Cc[x_0,x_1]$ the coordinate ring of the affine plan.  The quotients by differential ideals of this `universal' Hopf algebroid lead to Malgrange Hopf algebroids which allows the study of systems of certain algebraic partial differential equations with two variables.  Let $\{x_{0},x_{1},y_{i\, \alpha} \}_{i=0,1; \, \alpha \in \mathbb{N}^{2}}$ be a set of independent variables over $\Cc$. To simplify the notations we will adopt the following one:
\begin{multline*}
y_{i\, 0} := y_{i}, \quad \text{ for }  i=0,1; \\ 
y_{i\, \epsilon_{j}}:= y_{ij}, \quad \text{ for }  i, j =0,1; \text{ and where }\; \epsilon_{0}=(1,0),\,  \epsilon_{1}=(0,1);\\
y_{i\, \alpha}:= y_{i\alpha_{1}\alpha_{2}},   \quad \text{ where }  \alpha=(\alpha_{1}, \alpha_{2}) \in \mathbb{N}^{2} \setminus \{0, \epsilon_{0}, \epsilon_{1}\}.
\end{multline*}
Then we have   a family of variables:
$$
\begin{cases}
x_{0}, x_{1}, y_{0}, y_{1}; \\
y_{00}, y_{10}, y_{01}, y_{11}; \\
y_{i11}, y_{i02}, y_{i20}, y_{i12}, y_{i21}, y_{i03}, y_{i30}, \cdots\cdots, i=0,1.
\end{cases}
$$
As before, we think of $y_{i}$ as functions of two variables $x_{0}, x_{1}$, and $y_{ij}$ their first partial derivatives $\frac{\partial y_{i}}{\partial x_{j}}$, and $y_{i\alpha_{1}\alpha_{2}}$ are their higher derivatives $\frac{\partial^{|\alpha|} y_{i}}{\partial x_{0}^{\alpha_{1}}\, \partial x_{1}^{\alpha_{2}}}$. In this way the determinant $det(y_{ij})$ can be viewed as the Jacobian of the function $(y_{0}, y_{1})$ in two variables $x_0, x_1$. We consider the following commutative $\Cc$-algebra
$$
\cK:= \cH_{\Sscript{\Cc^2}}=\Cc\Big[x_{0}, x_{1}, y_{0}, y_{1}, y_{ij}, det(y_{ij})^{-1}, y_{i\alpha_{1}\alpha_{2}}\Big]_{i,j=0,1,\, \alpha \, \in \mathbb{N}^{2},\, |\alpha| \geq 2 }
$$
This is a $(B\tensor{}B)$-algebra where $B=\Cc[x_{0}, x_{1}]$ is the polynomial algebra with two variables. Clearly it is a free $(B\tensor{}B)$-module. The source and the target are $\Sf{s}(x_{i})=x_{i}$ and $\Sf{t}(x_{i})=y_{i}$, for every $i=0,1$. The algebra $\cK$ is a partial differential algebra extension of $(B,\partial/\partial x_{0}, \partial/\partial x_{1})$ via the source map. The partial derivations of $\cK$ are given as in \cite[page 470]{Malgrange:2001} by 
$$
\bpartial_{i}P= \frac{\partial P}{\partial x_{i}}  + \sum \frac{\partial P}{ \partial y_{j\alpha}} y_{j(\alpha + \epsilon_{i})}, \quad   i=0,1,
$$
for any polynomial function $P \in \cK$. 

Next, we illustrate the Hopf algebroid structure of the pair $(B, \cK)$. The counit is the map which sends:
\begin{multline*}
\varepsilon(x_i) \,=\, \varepsilon(y_i) \,=\, x_i,\quad \text{ for all } i=0,1; \\ 
\varepsilon(y_{ij})\,=\, \delta_{ij} \; (\text{Kronecker symbol }),\quad \text{ for all } i, j=0,1; \\
\varepsilon(y_{i\alpha})\,=\, 0, \quad \text{ for all } i=0,1 \text{ and } \alpha \in \mathbb{N} \setminus \{ 0,\epsilon_0, \epsilon_1\}. \hspace{2cm}
\end{multline*}
The comultiplication and the antipode are given as follows: 
$$
\Delta(y_{ij})\,=\, \sum_{k} y_{ik}\tensor{A}y_{kj}, \quad \text{ for } i,j=0,1.
$$
While for any $i=0,1$, we have 
\begin{eqnarray*}
\Delta(y_{i11}) &=& y_{i20}\tensor{B}y_{00}y_{01} + y_{i11}\tensor{B}y_{00}y_{11} + y_{i11}\tensor{B}y_{10}y_{01}+ y_{i02}\tensor{B}y_{10}y_{11} + y_{i0}\tensor{B}y_{011} + y_{i1}\tensor{B}y_{111}, \\
\Delta(y_{i02}) &=&  y_{i20}\tensor{B}y_{01}^2 + y_{i11}\tensor{B} y_{11}y_{01} + y_{i0} \tensor{B}y_{002} + y_{i11} \tensor{B}y_{01}y_{11} + y_{i02}\tensor{B} y_{11}^2 + y_{i1}\tensor{B} y_{102}, \\ 
\Delta(y_{i20}) &=& y_{i20}\tensor{B}y_{00}^2 + y_{i11} \tensor{B}y_{10}y_{00} + y_{i0}\tensor{B}y_{020} + y_{i11}\tensor{B}y_{00}y_{10} + y_{i02}\tensor{B}y_{10}^2 + y_{i1}\tensor{B}y_{120},
\end{eqnarray*}
the antipode is given by:
\begin{multline*}
\sS(x_i) \,=\, y_i, \quad \text{ for all } i,j=0,1; \\ \sS(y_{ij}) \,=\, det(y_{ij})^{-1} (-1)^{i+j} y_{ji}, \quad  \text{ for all } i, j=0,1, \hspace{4cm}
\end{multline*}
and for $\alpha \in \mathbb{N}^2$ such that $|\alpha|=2$ and for any $i=0,1$, we have 
\begin{multline*}
\sS(y_{i11}) \, det(y_{ij})^2 \,=\, \big( y_{00}y_{11}+y_{10}y_{01} \big) 	\Big(\sS(y_{i0})y_{011} + \sS(y_{i1})y_{111}\Big) - y_{10}y_{00}\Big(\sS(y_{i1})y_{102} + \sS(y_{i0})y_{002}\Big) \\ -y_{11}y_{01}\Big(\sS(y_{i0})y_{020} + \sS(y_{i1})y_{120}\Big), \\
\sS(y_{i02})\,  det(y_{ij})^2 \,=\, 2 y_{00}y_{01}\Big(\sS(y_{i0})y_{011} + \sS(y_{i1})y_{111}\Big) - y_{00}^2\Big(\sS(y_{i1})y_{102} + \sS(y_{i0})y_{002}\Big) \\ -y_{01}^2\Big(\sS(y_{i0})y_{020} + \sS(y_{i1})y_{120}\Big), \\
\sS(y_{i20}) \, det(y_{ij})^2 \,=\, 2 y_{11}y_{10}\Big(\sS(y_{i0})y_{011} + \sS(y_{i1})y_{111}\Big) - y_{10}^2\Big(\sS(y_{i1})y_{102} + \sS(y_{i0})y_{002}\Big) \\ -y_{11}^2\Big(\sS(y_{i0})y_{020} + \sS(y_{i1})y_{120}\Big).
\end{multline*}
For higher degrees, that is, for $\alpha \in  \mathbb{N}$  with $|\alpha| \geq 3$, one can use the higher partial derivative Chain rules for both comultiplication and the antipode.  We have that $(B,\cK)$ is a geometrically transitive Hopf algebroid.

Assume we are given on $B$ the following derivation $\delta(x_0)=1$ and $ \delta(x_1)=x_0x_1$ (in other words we are considering the sub Lie-Rinehart algebra of $\Der{\Cc}{B}$  generated by $\frac{\partial}{\partial x_0}$ and $x_0x_1\frac{\partial}{\partial x_1}$). As we will observe below, this Lie-Rinehart algebra can be used to seek the solutions of the linear differential equation $y'=x_0y$ over the polynomial algebra $\Cc[x_0]$. 

From the definition of $\delta$,  we have that  $\delta^n(x_1)\,=\, p_n(x_0) x_1$, for any $n \in \mathbb{N}$, where the sequence polynomials $\{p_n(x_0)\}_{n \geq 0}$ in $x_0$, satisfies the following non homogeneous recurrence: 
\begin{equation}\label{Eq:Pis}
\begin{cases}
p_{n+1}(x_0)\,=\, x_0 p_n(x_0) + p_n(x_0)' , \quad n \geq 1,\\
p_0(x_0)\,=\, 1
\end{cases}
\end{equation}
Following the method of \cite{Umemura:2009} and by using the universal Taylor map $\bo{i}: B \to B[[Z]]$ sending $b \mapsto \sum_{n \geq 0} \frac{\delta^{n}(b)}{n!} Z^{n}$, we can conclude the following partial differential equations:
\begin{equation}\label{Eq:PDE}
\frac{\partial \bo{i}(x_0)}{\partial x_0 }\,=\, 1,\quad \frac{\partial \bo{i}(x_0)}{\partial x_1 } \,=\, 0, \quad x_1 \frac{\partial \bo{i}(x_1)}{\partial x_1 }\,=\, \bo{i}(x_1).
\end{equation}
On the other hand, extending  $\delta$ to the $B[[Z]]$ by putting $\delta(Z)=0$ and considering the derivation $\partial_{\bullet}\,=\, \frac{1}{2}\big( \delta + \frac{\partial}{\partial Z}\big)$ on $B[[Z]]$, we have that $\bo{i}: (B,\delta) \to (B[[Z]], \partial_{\bullet})$ is a morphism of  differential algebras, so that  the equation $\partial_{\bullet}\, \bo{i}(x_1) \, =\, x_0 \bo{i}(x_1)$ holds true.  Therefore, $B[[Z]]$ contains a solution of the above linear differential equation (up to a certain scalar).  

Now, if we want to use equations \eqref{Eq:PDE} with the purpose of guessing a differential Hopf ideal and  construct a Malgrange Hopf algebroid according to Definition \ref{def:MU} (this is the method adopted, for instance  in the example expounded in \cite[\S 5]{Umemura:2009}), then the adequate set of generators seems to be  $\{y_{00}-1, y_{01}, x_1y_{11}-y_1\}$ and their higher partial derivatives: $\bpartial_i^k(y_{00}-1), \bpartial_i^l(y_{01}), \bpartial_i^m(x_1y_{11}-y_1)$, $k,l,m \geq 1$, $i=0,1$. Unfortunately, this is not the case, since the differential ideal $\cJ$ generated by this set, only leads to a structure of bialgebroid on the quotient $B$-bimodule $\cK/\cJ$. This is due to the fact that  $\cJ$ is not stable under the antipode of $\cK$, as one can check from the equality $\sS(y_{01}) =det(y_{ij})^{-1}y_{10}$. Nevertheless, we still have a morphism $\cK/\cJ \to B[[Z]]$ of $(B\tensor{}B)$-algebras, and one can also construct a morphism $\cH \to \cK/\cJ$ of bialgebroids, sending $y_1 \mapsto y_{11} + \cJ$ and $y_n \mapsto y_{1(n-1)1} + \cJ$ for $n \geq 2$, where $\cH$ is the Hopf algebroid of Example \ref{exam:YI}. It is noteworthy to mention that it is also possible that $\cK/\cJ$ becomes a Hopf algebroid after localizing on certain denominator set. 

In case of dimension two, that is, for a differential module  $(M, \partial)$ over $A=\Cc[x_0]$ of rank $2$ with matrix $\cM:=\mat{M}=(a_{ij})_{1 \leq i,j \leq 2} \in M_2(A)$, the previous method can be described as follows. Assume this time that $\delta$ is a derivation of the polynomial  complex algebra $C=\Cc[x_0,x_1,x_2]$, sending, $x_0 \mapsto 1$, $x_i \mapsto \sum_{1 \leq j \leq 2} a_{ij}x_j$, for $i=1,2$.    
In this case the Hopf algebroid to be considered is the pair  $(C,\cW)$, where  $\cW$ is the commutative polynomial algebra 
$$
\cW:=\cH_{\Sscript{\Cc^{2}}}=\Cc[x_0,x_1, x_2,y_0,y_1,y_2, y_{ij}, y_{i\alpha}, det(y_{ij})^{-1}]_{i,j=0,1,2,\; \alpha \,\in\, \mathbb{N}^3\setminus\{ \epsilon_0,\epsilon_1,\epsilon_2 \}}
$$
whose structure maps are given as above, by thinking of the $y_i$'s as if they were certain functions on the variables $x_0, x_1, x_2$. 
The equivalent recurrence of the recurrence given in equation \eqref{Eq:Pis}, is the following recurrence on $2 \times 2$-matrices with entries in $\Cc[x_0]$
\begin{equation}\label{Eq:Ais}
\begin{cases}
\cM_{n+1}\,=\, \cM_n^{\prime} + \cM_n\, \cM , \quad n \geq 1,\\
\cM_0\,=\, I_2,
\end{cases}
\end{equation}
where the notation $\cM_n^{\prime}$ stand for the matrix whose entries are the derivations of the entries of $\cM_n$.
Employing the universal Taylor map $\bo{i}: C \to C[[Z]]$, this means that,  we have
$$
\frac{\partial \bo{i}(x_i)}{\partial x_j }\,=\, \sum_{n \geq 0}\frac{a^{(n)}_{ij}}{n!}Z^n, \quad \text{ for any } \, i, j \in \{1,2\},
$$ 
where, for any $n \geq 0$, the coefficients $a^{(n)}_{ij}$ are the entries of the matrix $\cM_{n}$. Using the recurrence system of \eqref{Eq:Ais}, we end up with  the subsequent system of partial differential equations 
\begin{equation}\label{Eq:PDE2}
 \begin{array}{ccc}
 \frac{\partial \bo{i}(x_0)}{\partial x_0 }\,=\, 1,  &  \frac{\partial \bo{i}(x_0)}{\partial x_1 }\,=\,0,  &  \frac{\partial \bo{i}(x_0)}{\partial x_2 }\,=\,0, \\ \frac{\partial \bo{i}(x_1)}{\partial x_0 }\,=\, 0, &    \frac{\partial \bo{i}(x_2)}{\partial x_0 }\,=\, 0, & \\ 
  \bo{i}(x_i)\,=\,   \sum_{k=1}^2 x_k \frac{\partial \bo{i}(x_i)}{\partial x_k }  & \text{ for } i =1,2. & 
 \end{array}  
\end{equation}
The corresponding differential ideal $\cJ$ is the one generated by the set:
$$
\Big\{ y_{00}-1, y_{01}, y_{02}, y_{10}, y_{20}, y_{i}- x_1 y_{i1}-x_2y_{i2}\Big\}_{i=1,2}.
$$
The quotient leads here also to a bialgebroid over the algebra $C$ and it is not clear, up to now,  how to relate this bialgebroid with the Hopf algebroid $\Um$ of subsection \ref{ssec:PV} attached to $(M,\partial)$.
\end{example}

\bigskip
\noindent \textbf{Acknowledgement.} The authors would like to thank the referee for bring our attention to the references \cite{Andre:2001} and  \cite{Malgrange:2001} which leads us to improve considerably the earlier exposition of this paper.

\end{document}